\documentclass[10pt]{amsart}

\usepackage{amsmath, amsfonts, amssymb,mathrsfs,color}
\usepackage{graphicx}
\usepackage{xypic}

\newtheorem{theorem}{Theorem}[section]
\newtheorem{lemma}[theorem]{Lemma}
\newtheorem{corollary}[theorem]{Corollary}
\newtheorem{proposition}[theorem]{Proposition}

\theoremstyle{definition}
\newtheorem{definition}[theorem]{Definition}
\newtheorem{example}[theorem]{Example}

\theoremstyle{remark}
\newtheorem{remark}[theorem]{Remark}

\numberwithin{equation}{section}

\begin{document}

\title{Equivariant Seiberg--Witten--Floer cohomology}

\author{David Baraglia}
\address{School of Mathematical Sciences, The University of Adelaide, Adelaide SA 5005, Australia}
\email{david.baraglia@adelaide.edu.au}

\author{Pedram Hekmati}
\address{Department of Mathematics, The University of Auckland, Auckland, 1010, New Zealand}
\email{p.hekmati@auckland.ac.nz}

\begin{abstract}

We develop an equivariant version of Seiberg--Witten--Floer cohomology for finite group actions on rational homology $3$-spheres. Our construction is based on an equivariant version of the Seiberg--Witten--Floer stable homotopy type, as constructed by Manolescu. We use these equivariant cohomology groups to define a series of $d$-invariants $d_{G,c}(Y,\mathfrak{s})$ which are indexed by the group cohomology of $G$. These invariants satisfy a Fr\o yshov-type inequality under equivariant cobordisms. Lastly we consider a variety of applications of these $d$-invariants: concordance invariants of knots via branched covers, obstructions to extending group actions over bounding $4$-manifolds, Nielsen realisation problems for $4$-manifolds with boundary and obstructions to equivariant embeddings of $3$-manifolds in $4$-manifolds.

\end{abstract}



\date{\today}


\maketitle


\section{Introduction}

In this paper we develop an equivariant version of Seiberg--Witten--Floer cohomology for rational homology $3$-spheres equipped with the action of a finite group. Our approach is modelled on the construction of a Seiberg--Witten--Floer stable homotopy type due to Manolescu \cite{man} which we now briefly recall. Let $Y$ be a rational homology $3$-sphere and $\mathfrak{s}$ a spin$^c$-structure on $Y$. Given a metric $g$ on $Y$, the construction of \cite{man} yields an $S^1$-equivariant stable homotopy type $SWF(Y , \mathfrak{s} , g)$. The Seiberg--Witten--Floer cohomology of $(Y , \mathfrak{s})$ is then given (up to a degree shift) by the $S^1$-equivariant cohomology of $SWF(Y,\mathfrak{s},g)$:
\[
HSW^*(Y , \mathfrak{s}) = \widetilde{H}^{*+2n(Y,\mathfrak{s},g)}_{S^1}( SWF(Y , \mathfrak{s} , g) ),
\]
where $n(Y,\mathfrak{s},g)$ is a rational number given by a certain combination of eta invariants.

The stable homotopy type $SWF(Y,\mathfrak{s},g)$ depends on the choice of metric, but only up to a suspension. Given two metrics $g_0,g_1$ one obtains a canonical homotopy equivalence
\begin{equation}\label{eq:swfsf}
SWF(Y,\mathfrak{s},g_1) \cong \Sigma^{SF( \{ D_s \} ) \mathbb{C}} SWF(Y , \mathfrak{s},g_0),
\end{equation}
where $SF( \{D_s \})$ denotes the spectral flow for the family of Dirac operators $\{ D_s \}$ determined by a path of metrics $\{ g_s \}$ from $g_0$ to $g_1$. The rational numbers $n(Y,\mathfrak{s},g)$ are defined in such a way that they split the spectral flow in the sense that
\begin{equation}\label{eq:sfsplit}
SF( \{ D_s \} ) = n(Y , \mathfrak{s} , g_1) - n(Y , \mathfrak{s} , g_0).
\end{equation}
Hence we obtain a canonical isomorphism
\[
\widetilde{H}^{*+2n(Y,\mathfrak{s},g_1)}_{S^1}( SWF(Y , \mathfrak{s} , g_1) ) \cong \widetilde{H}^{*+2n(Y,\mathfrak{s},g_0)}_{S^1}( SWF(Y , \mathfrak{s} , g_0) ).
\]
This shows that the Seiberg--Witten--Floer cohomology $HSW^*(Y , \mathfrak{s})$ does not depend on the choice of metric $g$.

By working in an appropriately defined $S^1$-equivariant Spanier--Whitehead category in which suspension by fractional amounts of $\mathbb{C}$ is allowed, Manolescu defined the Seiberg--Witten--Floer homotopy type of $(Y,\mathfrak{s})$ to be
\[
SW(Y,\mathfrak{s}) = \Sigma^{-n(Y,\mathfrak{s},g)\mathbb{C}} SWF(Y , \mathfrak{s} , g).
\]

This is independent of the choice of $g$ by (\ref{eq:swfsf}) and (\ref{eq:sfsplit}).

Now suppose that a finite group $G$ acts on $Y$ by orientation preserving diffeomorphisms which preserve the isomorphism class of $\mathfrak{s}$. Let $g$ be a $G$-invariant metric on $Y$. Lifting the action of $G$ to the associated spinor bundle determines an $S^1$ extension
\[
1 \to S^1 \to G_{\mathfrak{s}} \to G \to 1.
\]

Manolescu's construction of the stable homotopy type $SWF(Y,\mathfrak{s},g)$ can be carried out $G_{\mathfrak{s}}$-equivariantly, so that $SWF(Y , \mathfrak{s} , g)$ may be promoted to a $G_{\mathfrak{s}}$-equivariant stable homotopy type. This is analogous to the construction in \cite{man2} of the $Pin(2)$-equivariant Seiberg--Witten--Floer stable homotopy type of $(Y , \mathfrak{s})$ where $\mathfrak{s}$ is a spin-structure on $Y$. The main difference is that in our construction, the additional symmetries that comprise the group $G_{\mathfrak{s}}$ come from symmetries of $Y$ rather than internal symmetries of the Seiberg--Witten equations.

We define the {\em $G$-equivariant Seiberg--Witten--Floer cohomology} of $(Y,\mathfrak{s})$ to be
\[
HSW^*_G(Y , \mathfrak{s}) = \widetilde{H}^{*+2n(Y,\mathfrak{s},g)}_{G_{\mathfrak{s}}}( SWF(Y , \mathfrak{s} , g) ).
\]
The right hand side is independent of the choice of metric $g$ by much the same argument as in the $S^1$-equivariant case.

We make some remarks concerning this construction.
\begin{itemize}
\item[(1)]{In this paper we have chosen to work throughout with cohomology instead of homology. This is simply a matter of preference and we could just as well work with Seiberg--Witten--Floer homology groups.}
\item[(2)]{Instead of Borel equivariant cohomology, we could take co-Borel cohomology or Tate cohomology, which correspond to the different versions of Heegaard--Floer cohomology, \cite[Corollary 1.2.4]{lima}.}
\item[(3)]{In a similar fashion we can also define the $G$-equivariant Seiberg--Witten--Floer $K$-theory
\[
KSW^*_G(Y , \mathfrak{s}) = \widetilde{K}^{*+2n(Y,\mathfrak{s},g)}_{G_{\mathfrak{s}}}( SWF(Y , \mathfrak{s} , g) ).
\]
More generally we could use any generalised equivariant cohomology theory in which the Thom isomorphism holds.}
\item[(4)]{We have not attempted to construct a metric independent $G_{\mathfrak{s}}$-equivariant stable homotopy type. To do this one would need to split the equivariant spectral flow $SF_{G_{\mathfrak{s}}}( \{ D_s \} )$ in the same way that $n(Y,\mathfrak{s},g)$ splits the non-equivariant spectral flow, as in Equation (\ref{eq:sfsplit}). We suspect that in general there are obstructions to carrying this out.}
\end{itemize}

\subsection{Main results}

Throughout we work with cohomology with coefficients in a field $\mathbb{F}$. To avoid the necessity of local systems we assume that either $\mathbb{F} = \mathbb{Z}/2\mathbb{Z}$, or that the order of $G$ is odd (see Section \ref{sec:assumption}). We now outline the main properties of equivariant Seiberg--Witten--Floer cohomology.

{\bf Module structure.} $HSW^*_G(Y , \mathfrak{s})$ is a graded module over $H^*_{G_{\mathfrak{s}}}$ (where for a group $K$ we write $H^*_K$ for $H^*_K(pt)$). In particular if $G_{\mathfrak{s}}$ is the trivial extension then  $HSW^*_G(Y , \mathfrak{s})$ is a graded module over $H^*_G[U]$, where $deg(U)=2$.

\begin{theorem}[Spectral sequence]\label{thm:ss1}
There is a spectral sequence $E_r^{p,q}$ abutting to $HSW^*_G(Y , \mathfrak{s})$ whose second page is given by
\[
E_2^{p,q} = H^p( BG ; HSW^q(Y , \mathfrak{s}) ).
\]
\end{theorem}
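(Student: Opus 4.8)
The plan is to realise the spectral sequence as the (reduced) Leray--Serre spectral sequence of a fibration over $BG$, after reindexing to absorb the degree shift by $2n(Y,\mathfrak{s},g)$.

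Fix a $G$-invariant metric $g$ and recall that $SWF(Y,\mathfrak{s},g)$ is represented by a formal desuspension $\Sigma^{-V}I$, where $I$ is a pointed $G_{\mathfrak{s}}$-space (a Conley index) and $V$ is a finite-dimensional representation of $G_{\mathfrak{s}}$ which, under the standing assumptions on $\mathbb{F}$ and $|G|$, is $G_{\mathfrak{s}}$-orientable, so that the Thom isomorphisms invoked below are available. First I would work with the genuine pointed $G_{\mathfrak{s}}$-space $I$. Choose a model for $EG_{\mathfrak{s}}$ which is simultaneously a model for $ES^1$ under the inclusion $S^1 \hookrightarrow G_{\mathfrak{s}}$, and set $I_{hS^1} := (EG_{\mathfrak{s}})_+ \wedge_{S^1} I$; this is a pointed space with $\widetilde{H}^*(I_{hS^1}) \cong \widetilde{H}^*_{S^1}(I)$ carrying the residual action of $G = G_{\mathfrak{s}}/S^1$. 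Using that $EG \times EG_{\mathfrak{s}}$ is again a model for $EG_{\mathfrak{s}}$, an elementary manipulation of pointed Borel constructions --- quotienting first by the normal subgroup $S^1$ and then by $G$ --- yields a natural homotopy equivalence
\[
(EG_{\mathfrak{s}})_+ \wedge_{G_{\mathfrak{s}}} I \;\simeq\; (EG)_+ \wedge_G I_{hS^1}.
\]

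The right--hand side is the pointed homotopy orbit space of the $G$-space $I_{hS^1}$; together with the section of $EG \times_G I_{hS^1} \to BG$ given by the basepoint of $I_{hS^1}$ it exhibits a fibration over $BG$ with fibre $I_{hS^1}$. Its reduced cohomology Leray--Serre spectral sequence has
\[
E_2^{p,q} = H^p\big( BG ; \widetilde{H}^q(I_{hS^1}) \big) = H^p\big( BG ; \widetilde{H}^q_{S^1}(I) \big)
\]
and converges to $\widetilde{H}^{p+q}_{G_{\mathfrak{s}}}(I)$, the coefficients carrying the local system on $BG$ whose monodromy $\pi_1(BG) = G$-action is precisely the residual $G$-action on $\widetilde{H}^q_{S^1}(I)$ just described. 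Convergence is unconditional: since $I$ is a finite complex, $\widetilde{H}^q_{S^1}(I)$ is bounded below in $q$, and $p \ge 0$, so only finitely many $E_2$-terms are nonzero in each total degree. Finally, both $HSW^*_G(Y,\mathfrak{s})$ and $HSW^*(Y,\mathfrak{s})$ are obtained from $\widetilde{H}^*_{G_{\mathfrak{s}}}(I)$ and $\widetilde{H}^*_{S^1}(I)$ respectively by the \emph{same} degree shift, namely $2n(Y,\mathfrak{s},g) + \dim_{\mathbb{R}} V$, coming from the definition of $HSW$ together with the Thom isomorphism for $V$; applying this shift to the $q$-index converts the spectral sequence above into one with $E_2^{p,q} = H^p(BG; HSW^q(Y,\mathfrak{s}))$ abutting to $HSW^{p+q}_G(Y,\mathfrak{s})$. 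Independence of $g$ follows because the equivalences (\ref{eq:swfsf}) can be taken $G_{\mathfrak{s}}$-equivariantly and hence induce isomorphisms of the resulting spectral sequences compatible with the shift (\ref{eq:sfsplit}).

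The genuinely new input is the factorisation of $G_{\mathfrak{s}}$-Borel cohomology as $G$-Borel cohomology of an $S^1$-homotopy quotient; everything else is formal. I expect the care to be needed only in the bookkeeping: (i) checking that the Thom isomorphisms relating $I$ to $SWF(Y,\mathfrak{s},g)$ are compatible with restriction from $G_{\mathfrak{s}}$ to $S^1$ and with the local-coefficient structure over $BG$, so that the reindexing is valid term by term and produces exactly the $G$-module $HSW^q(Y,\mathfrak{s})$; and (ii) checking that the construction is natural enough in the Conley index, in the finite-dimensional approximation data, and in $g$ that the spectral sequence is well defined on the metric-independent object $SW(Y,\mathfrak{s})$. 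Neither of these goes beyond the invariance arguments already used to define $HSW^*_G(Y,\mathfrak{s})$.
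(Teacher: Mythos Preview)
Your proposal is correct and follows essentially the same approach as the paper: both use the Leray--Serre spectral sequence of the fibration $M_{G_{\mathfrak{s}}} \to BG_{\mathfrak{s}} \to BG$ with fibre $M_{S^1}$ (equivalently, the factorisation of the $G_{\mathfrak{s}}$-Borel construction through the $S^1$-Borel construction followed by the $G$-Borel construction), and then invoke the Thom isomorphism to pass to the formal desuspension and absorb the degree shift. Your version is more explicit about the pointed constructions, convergence, and metric-independence bookkeeping, but the underlying argument is the same.
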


\begin{theorem}[Localisation]
Suppose that the extension $G_{\mathfrak{s}}$ is trivial and choose a trivialisation $G_{\mathfrak{s}} \cong S^1 \times G$. Then $H^*_{G_\mathfrak{s}} \cong H^*_G[U]$ and the localisation $U^{-1}HSW^*_G(Y,\mathfrak{s})$ of $HSW^*_G(Y,\mathfrak{s})$ with respect to $U$ is a free $H^*_G[U,U^{-1}]$-module of rank $1$.
\end{theorem}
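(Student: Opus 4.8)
The plan is to apply the localisation theorem in $S^1$-equivariant cohomology to the $G_{\mathfrak{s}}$-space underlying $SWF(Y,\mathfrak{s},g)$, after identifying its $S^1$-fixed point set. The crucial input is that, since $Y$ is a rational homology sphere, this fixed point set is a single $G$-representation sphere. Indeed, in the $G_{\mathfrak{s}}$-equivariant refinement of Manolescu's construction, $SWF(Y,\mathfrak{s},g)$ is the Conley index of an isolating neighbourhood for the approximate Seiberg--Witten flow on a finite-dimensional $G_{\mathfrak{s}}$-invariant approximation $V$ of the Coulomb-gauge configuration space, on which $S^1$ acts trivially on the reducible ($1$-form) directions and by scalar multiplication on the spinor directions. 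Thus $V^{S^1}$ is the $1$-form part, a finite-dimensional real $G$-representation on which the flow is linear with hyperbolic linearisation (the relevant operator has trivial kernel because $b_1(Y)=0$); it splits $G$-equivariantly as $W_{+}\oplus W_{-}$ into positive and negative eigenspaces, and the Conley index of such a linear flow is the representation sphere $S^{W_{-}}$. Hence $SWF(Y,\mathfrak{s},g)^{S^1}\simeq S^{W_{-}}$ $G_{\mathfrak{s}}$-equivariantly, with $S^1$ acting trivially on $W_{-}$. I expect this identification --- verifying that the fixed set is exactly one sphere, and pinning down the induced $G$-action on it --- to be the main point requiring care.

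Next I would run the localisation argument. Writing $X=SWF(Y,\mathfrak{s},g)$, consider the cofibre sequence of pointed $G_{\mathfrak{s}}$-spaces $X^{S^1}\to X\to C$ with $C=X/X^{S^1}$. On $C$ the subgroup $S^1\subset G_{\mathfrak{s}}$ acts freely away from the basepoint, so $C_{hG_{\mathfrak{s}}}\simeq (C/S^1)_{hG}$, and the class $U\in H^2_{G_{\mathfrak{s}}}$ restricts to the pullback of the Euler class of the principal $S^1$-bundle $C\setminus\{*\}\to (C\setminus\{*\})/S^1$; as $C/S^1$ is finite-dimensional this Euler class is nilpotent, so $U$ acts nilpotently on $\widetilde H^*_{G_{\mathfrak{s}}}(C)$ and $U^{-1}\widetilde H^*_{G_{\mathfrak{s}}}(C)=0$. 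Since localisation is exact, the long exact sequence of the cofibration then gives an $H^*_{G_{\mathfrak{s}}}$-linear isomorphism $U^{-1}\widetilde H^*_{G_{\mathfrak{s}}}(X)\xrightarrow{\ \cong\ }U^{-1}\widetilde H^*_{G_{\mathfrak{s}}}(X^{S^1})$ induced by the inclusion of the fixed set.

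Finally I would compute the right-hand side. Because $S^1$ acts trivially on $S^{W_{-}}$, a Künneth argument identifies $\widetilde H^*_{G_{\mathfrak{s}}}(S^{W_{-}})$ with $\widetilde H^*_G(S^{W_{-}})\otimes_{\mathbb F}\mathbb F[U]$. Under our standing hypothesis on the coefficient field (Section~\ref{sec:assumption}) the $G$-representation $W_{-}$ is $\mathbb F$-orientable --- for $|G|$ odd the homomorphism $G\to O(W_{-})$ lands in $SO(W_{-})$, and over $\mathbb Z/2\mathbb Z$ there is no obstruction --- so the $G$-equivariant Thom isomorphism for $EG\times_G W_{-}\to BG$ yields $\widetilde H^*_G(S^{W_{-}})\cong H^{*-w}_G$ with $w=\dim_{\mathbb R}W_{-}$, a free $H^*_G$-module of rank one generated by the Thom class. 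Hence $\widetilde H^*_{G_{\mathfrak{s}}}(X^{S^1})$ is a free $H^*_{G_{\mathfrak{s}}}=H^*_G[U]$-module of rank one; inverting $U$ and absorbing the degree shift by $2n(Y,\mathfrak{s},g)$ in the definition of $HSW^*_G(Y,\mathfrak{s})$ shows that $U^{-1}HSW^*_G(Y,\mathfrak{s})\cong U^{-1}\widetilde H^*_{G_{\mathfrak{s}}}(X^{S^1})$ is a free $H^*_G[U,U^{-1}]$-module of rank one, as claimed. Besides the fixed-point identification of the first step, the only other delicate point is the orientability needed for the Thom isomorphism, which is precisely what the restriction on $\mathbb F$ is there to guarantee.
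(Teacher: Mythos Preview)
Your proposal is correct and follows essentially the same approach as the paper: identify the $S^1$-fixed set of the Conley index as a $G$-representation sphere (the paper packages this as saying $SWF(Y,\mathfrak{s},g)$ is of type $G$-SWF at level $0$), apply localisation to reduce to the fixed set, and compute its $\overline{G}$-equivariant cohomology via the Thom isomorphism. The only difference is that you spell out the localisation step via the cofibre sequence $X^{S^1}\to X\to X/X^{S^1}$ and nilpotence of $U$ on the quotient, whereas the paper simply invokes the localisation theorem in equivariant cohomology from tom Dieck \cite{die}.
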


{\bf $L$-spaces.} We say that $Y$ is an {\em $L$-space} with respect to $\mathfrak{s}$ and $\mathbb{F}$ if $HSW^*(Y,\mathfrak{s})$ is isomorphic to a free $\mathbb{F}[U]$-module of rank $1$.

\begin{theorem}
Suppose that $G_\mathfrak{s}$ is a split extension. If $Y$ is an $L$-space with respect to $\mathfrak{s}$ and $\mathbb{F}$, then the spectral sequence given in Theorem \ref{thm:ss1} degenerates at $E_2$. Moreover we have
\[
HSW^*_G(Y , \mathfrak{s} ) \cong HSW^*(Y , \mathfrak{s}) \otimes_{\mathbb{F}} H^*_G.
\]
\end{theorem}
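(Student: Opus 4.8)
Write $X=SWF(Y,\mathfrak{s},g)$ and fix the splitting $G_{\mathfrak{s}}\cong S^1\times G$, so that $H^*_{G_{\mathfrak{s}}}\cong H^*_G[U]$ with $\deg(U)=2$. The key structural input is that the spectral sequence of Theorem \ref{thm:ss1} is a spectral sequence of modules over $H^*_{G_{\mathfrak{s}}}$: the $H^*_G$-action is the usual one coming from the base $BG$ of the associated fibration $X_{hS^1}\to X_{hG_{\mathfrak{s}}}\to BG$, while the $U$-action is induced by the fibrewise (and $G$-equivariant) $U$-action on the fibre $X_{hS^1}$, whose cohomology is $HSW^*(Y,\mathfrak{s})$ up to the shift by $2n(Y,\mathfrak{s},g)$. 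In particular every differential $d_r$ is $H^*_{G_{\mathfrak{s}}}$-linear. Under the standing assumption on $\mathbb{F}$ and $|G|$ the coefficient system on the $E_2$-page is untwisted, so $E_2^{p,q}=H^p(BG;\mathbb{F})\otimes_{\mathbb{F}}HSW^q(Y,\mathfrak{s})$ as $H^*_{G_{\mathfrak{s}}}$-modules.

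Now suppose $Y$ is an $L$-space with respect to $\mathfrak{s}$ and $\mathbb{F}$, so $HSW^*(Y,\mathfrak{s})$ is the free $\mathbb{F}[U]$-module of rank one on a generator $\theta$ in the bottom degree $q_0$; in particular $HSW^q(Y,\mathfrak{s})=0$ for $q<q_0$. Then $E_2=H^*(BG;\mathbb{F})\otimes_{\mathbb{F}}\mathbb{F}[U]\theta$ is the free $H^*_{G_{\mathfrak{s}}}$-module of rank one on $\theta\in E_2^{0,q_0}$. Since the differentials are $H^*_{G_{\mathfrak{s}}}$-linear it suffices to show $d_r\theta=0$ for all $r\ge 2$, for then $E_r=E_2$ for all $r$ by induction and the spectral sequence degenerates at $E_2$. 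But $d_r\theta$ lies in $E_r^{r,\,q_0-r+1}$, a subquotient of $E_2^{r,\,q_0-r+1}=H^r(BG;\mathbb{F})\otimes_{\mathbb{F}}HSW^{q_0-r+1}(Y,\mathfrak{s})$, and this vanishes because $q_0-r+1<q_0$ whenever $r\ge 2$. Hence $d_r=0$ for all $r\ge 2$ and $E_\infty^{p,q}=H^p(BG;\mathbb{F})\otimes_{\mathbb{F}}HSW^q(Y,\mathfrak{s})$.

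It remains to resolve the extension problem. In a fixed total degree $n$ the Serre filtration $F^\bullet$ on $HSW^n_G(Y,\mathfrak{s})$ has $F^p/F^{p+1}\cong E_\infty^{p,n-p}$, which is zero unless $0\le p\le n-q_0$ (since $HSW^{n-p}(Y,\mathfrak{s})=0$ once $n-p<q_0$); so the filtration is finite in each degree. In particular $HSW^{q_0}_G(Y,\mathfrak{s})=F^0=E_\infty^{0,q_0}=\mathbb{F}\theta$, and I choose a lift $\tilde\theta\in HSW^{q_0}_G(Y,\mathfrak{s})$ of $\theta$. The $H^*_{G_{\mathfrak{s}}}$-module homomorphism $H^*_{G_{\mathfrak{s}}}\to HSW^*_G(Y,\mathfrak{s})$, $a\mapsto a\tilde\theta$, is compatible with filtrations and induces on associated graded modules the identity of $E_\infty=H^*_{G_{\mathfrak{s}}}\theta$; as the filtrations are exhaustive and finite in each degree, it is an isomorphism. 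Thus $HSW^*_G(Y,\mathfrak{s})\cong H^*_{G_{\mathfrak{s}}}=H^*_G[U]$ as graded $H^*_{G_{\mathfrak{s}}}$-modules, with generator in degree $q_0$, which is precisely $HSW^*(Y,\mathfrak{s})\otimes_{\mathbb{F}}H^*_G$.

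The one step requiring care is the structural input of the first paragraph: one must verify that the spectral sequence of Theorem \ref{thm:ss1} genuinely carries an action of all of $H^*_{G_{\mathfrak{s}}}$ — not merely of $H^*_G$ — with $H^*_{G_{\mathfrak{s}}}$-linear differentials, i.e. that the $U$-action is already defined on the filtered complex and commutes with every $d_r$. This is what reduces the whole argument to the single generator $\theta$; without $U$-linearity the degree count alone does not kill $d_r(U^j\theta)$ for large $j$. Everything else is routine: the degeneration is the degree count above, and the extension problem is automatic because the filtration is finite in each degree.
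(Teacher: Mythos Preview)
Your approach is essentially the paper's: both argue that $E_2$ is the free $H^*_{G_\mathfrak{s}}$-module on a single class $\theta$ in lowest $q$-degree, kill $d_r\theta$ by the degree count, and then use $H^*_{G_\mathfrak{s}}$-linearity of the differentials to propagate. Your explicit resolution of the extension problem is in fact more careful than the paper, which leaves it implicit.

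There is one genuine gap. You assert that ``under the standing assumption on $\mathbb{F}$ and $|G|$ the coefficient system on the $E_2$-page is untwisted.'' The standing assumption (either $\mathbb{F}=\mathbb{Z}/2$ or $|G|$ odd) is there to avoid twisted Thom isomorphisms; it does not by itself force $G$ to act trivially on $HSW^*(Y,\mathfrak{s})$. What you need is that $G$ acts on the one-dimensional space $\mathbb{F}\theta$ through $\{\pm 1\}\subset\mathbb{F}^*$, not through an arbitrary character. When $|G|$ is odd and $\mathbb{F}$ is, say, $\mathbb{C}$ or $\mathbb{Z}/p$ with $p\equiv 1\pmod{|G|}$, there are nontrivial characters $G\to\mathbb{F}^*$, so one-dimensionality alone is insufficient. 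The paper supplies the missing geometric input: since $Y$ is an $L$-space, $\iota^*$ embeds $HSW^*(Y,\mathfrak{s})$ into $\widetilde H^*_{S^1}(I^{S^1})$, and $I^{S^1}$ is a sphere, so $G$ acts on its generator by the orientation character, i.e.\ by $\pm 1$; the standing assumption then forces $+1$. You should insert this step before claiming the local system is trivial.
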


{\bf Correction terms.} Suppose that $G_\mathfrak{s}$ is a split extension. For each non-zero $c \in H^*_G$ we obtain an invariant
\[
d_{G,c}(Y,\mathfrak{s}) \in \mathbb{Q}
\]
which may be thought of as a generalisation to the equivariant setting of the $d$-invariant $d(Y,\mathfrak{s})$. We also set $d_{G,0}(Y,\mathfrak{s}) = -\infty$. 

\begin{theorem}
The equivariant $d$-invariants satisfy the following properties:

\begin{itemize}
\item[(1)]{$d_{G,1}(Y,\mathfrak{s}) \ge d(Y,\mathfrak{s})$, where $1$ is the generator of $H^0_G(pt)$.}
\item[(2)]{$d_{G,c_1+c_2}(Y,\mathfrak{s}) \le \max\{ d_{G,c_1}(Y,\mathfrak{s}), d_{G,c_2}(Y,\mathfrak{s}) \}$.}
\item[(3)]{$d_{G,c_1c_2}(Y,\mathfrak{s}) \le \min\{ d_{G,c_1}(Y,\mathfrak{s}), d_{G,c_2}(Y,\mathfrak{s}) \}$.}
\item[(4)]{$d_{G,c_1}(Y , \mathfrak{s}) + d_{G,c_2}( \overline{Y} , \mathfrak{s}) \ge 0$ whenever $c_1c_2 \neq 0$.}
\item[(5)]{If $Y$ is an $L$-space with respect to $\mathfrak{s}$ and $\mathbb{F}$, then $d_{G,c}(Y , \mathfrak{s}) = d(Y,\mathfrak{s})$ for all $c \neq 0$.}
\item[(6)]{$d_{G,c}(Y,\mathfrak{s})$ is invariant under equivariant rational homology cobordism.}
\end{itemize}
\end{theorem}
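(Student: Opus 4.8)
The plan is to derive all six assertions from the definition of $d_{G,c}$ together with the Localisation theorem and routine functoriality of equivariant Seiberg--Witten--Floer cohomology, so that each item reduces to a brief algebraic manipulation. Recall the relevant algebra: by the Localisation theorem $U^{-1}HSW^*_G(Y,\mathfrak{s})$ is free of rank one over $R:=H^*_G[U,U^{-1}]$, with generator $\theta$ say; write $\iota\colon HSW^*_G(Y,\mathfrak{s})\to U^{-1}HSW^*_G(Y,\mathfrak{s})$ for the localisation map and $I:=\mathrm{im}(\iota)$, which is an $H^*_{G_\mathfrak{s}}\cong H^*_G[U]$-submodule of $R\theta$, bounded below in grading, with $\mathbb{F}[U,U^{-1}]\cdot I=R\theta$. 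For $0\ne c\in H^*_G$ the invariant $d_{G,c}(Y,\mathfrak{s})$ is obtained from the minimal grading of a multiple $cU^m\theta$ lying in $I$ by subtracting $\deg c$ and applying the universal shift by $2n(Y,\mathfrak{s},g)$. This minimum is attained: $\theta\in R\theta=\mathbb{F}[U,U^{-1}]\cdot I$ forces $U^m\theta\in I$ for $m\gg0$, hence $cU^m\theta=c\cdot U^m\theta\in I$ since $I$ is an $H^*_G$-submodule, while boundedness below of $HSW^*_G(Y,\mathfrak{s})$ prevents the grading from decreasing indefinitely.

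Properties (1), (2), (3) and (5) are then formal. For (2), if $c_1U^m\theta$ and $c_2U^m\theta$ lie in $I$ then so does $(c_1+c_2)U^m\theta$, since $I$ is additively closed; choosing $m$ in the grading range lying above both $d_{G,c_1}$ and $d_{G,c_2}$, and using $\deg(c_1+c_2)=\deg c_1=\deg c_2$, gives $d_{G,c_1+c_2}\le\max\{d_{G,c_1},d_{G,c_2}\}$. For (3), $c_1U^m\theta\in I$ implies $c_1c_2U^m\theta=c_2\cdot(c_1U^m\theta)\in I$, and the grading rises by $\deg c_2$; subtracting $\deg(c_1c_2)$ yields $d_{G,c_1c_2}\le d_{G,c_1}$, and symmetrically $\le d_{G,c_2}$. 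The cases $c_1+c_2=0$ and $c_1c_2=0$ are covered by the convention $d_{G,0}=-\infty$. For (5), the preceding theorem gives $HSW^*_G(Y,\mathfrak{s})\cong HSW^*(Y,\mathfrak{s})\otimes_{\mathbb{F}}H^*_G$, which for an $L$-space is free of rank one over $H^*_G[U]$ with bottom class in normalised grading $d(Y,\mathfrak{s})$; thus $I=H^*_G[U]\theta$ and every $c\ne0$ achieves its minimum at $m=0$, so $d_{G,c}(Y,\mathfrak{s})=d(Y,\mathfrak{s})$. For (1), restriction along $S^1\hookrightarrow G_\mathfrak{s}$ induces a grading-preserving map $HSW^*_G(Y,\mathfrak{s})\to HSW^*(Y,\mathfrak{s})$ that commutes with the two localisation maps and with the Borel spectral sequence of Theorem \ref{thm:ss1}; since that spectral sequence degenerates after inverting $U$ for dimension reasons, $\theta$ maps to a nonzero (hence generating) element of $U^{-1}HSW^*(Y,\mathfrak{s})$, so the bottom class of the $1$-tower in $I$ maps to a nonzero element of $\mathrm{im}\big(HSW^*(Y,\mathfrak{s})\to U^{-1}HSW^*(Y,\mathfrak{s})\big)$ of the same grading; as the minimal such grading is $d(Y,\mathfrak{s})$ we get $d_{G,1}(Y,\mathfrak{s})\ge d(Y,\mathfrak{s})$.

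Property (4) I would obtain from $G_\mathfrak{s}$-equivariant Spanier--Whitehead duality: $SWF(\overline Y,\mathfrak{s},g)$ is the $G_\mathfrak{s}$-equivariant dual of $SWF(Y,\mathfrak{s},g)$ up to suspension by a representation whose virtual dimension is controlled by the relation between $n(Y,\mathfrak{s},g)$ and $n(\overline Y,\mathfrak{s},g)$. Under the standing hypothesis on $\mathbb{F}$ and $G$ this produces a pairing $HSW^*_G(Y,\mathfrak{s})\otimes HSW^*_G(\overline Y,\mathfrak{s})\to H^*_{G_\mathfrak{s}}$ that becomes perfect after inverting $U$, so $\langle\theta^Y,\theta^{\overline Y}\rangle$ is a unit of $R$. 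Pairing the bottom classes $c_1U^{m_1}\theta^Y\in I_Y$ and $c_2U^{m_2}\theta^{\overline Y}\in I_{\overline Y}$ gives a nonzero scalar multiple of $c_1c_2U^{m_1+m_2+k}$ in $H^*_{G_\mathfrak{s}}$---nonzero exactly because $c_1c_2\ne0$---and since $H^*_{G_\mathfrak{s}}$ vanishes in negative degrees and contains no negative powers of $U$, this element sits in nonnegative grading; unwinding the $n$-normalisations turns this into $d_{G,c_1}(Y,\mathfrak{s})+d_{G,c_2}(\overline Y,\mathfrak{s})\ge0$. Property (6) follows from the $G_\mathfrak{s}$-equivariant cobordism maps: an equivariant rational homology cobordism $W$ from $(Y_0,\mathfrak{s}_0)$ to $(Y_1,\mathfrak{s}_1)$, together with its reverse $\overline W$, induces maps of $G_\mathfrak{s}$-equivariant Floer spectra whose associated $H^*_{G_\mathfrak{s}}$-module maps on $HSW^*_G$ preserve the normalised grading (the relative Dirac-index and intersection-form corrections vanish for a rational homology cobordism), commute with $\iota$, and are mutually inverse isomorphisms after inverting $U$; these three facts force $d_{G,c}(Y_0,\mathfrak{s}_0)=d_{G,c}(Y_1,\mathfrak{s}_1)$, i.e. the equality case of the equivariant Fr\o yshov-type cobordism inequality.

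The bulk of the difficulty lies in (4) and (6). For (4), the genuine task is to install $G_\mathfrak{s}$-equivariant duality with the correct fractional grading shifts, so that the $n$-normalisations for $Y$ and $\overline Y$ conspire to give the constant $0$ on the nose; this is delicate bookkeeping rather than a new idea. For (6), the content is external to the present theorem: one needs the construction and naturality of the $G_\mathfrak{s}$-equivariant cobordism maps and the fact that a rational homology cobordism induces an isomorphism on the $U$-inverted groups. Everything else---(1), (2), (3), (5)---is bookkeeping with the submodule $I$ and the module structure coming from the Localisation theorem.
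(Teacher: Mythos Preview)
Your recollection of the definition of $d_{G,c}$ is not quite right, and this matters. In the paper, $d_{G,c}(X)$ is defined via
\[
\iota^*(x)\equiv c\,U^k\tau \pmod{F_{|c|+1}},
\]
where $F_j = H^{*\ge j}_G\Lambda_G(X)$; equivalently (Proposition~\ref{prop:altd}) one asks for $U^n x \equiv cU^k\theta \pmod{\mathcal F_{|c|+1}}$. Your description ``the minimal grading of a multiple $cU^m\theta$ lying in $I$'' drops this filtration and demands $cU^m\theta\in I$ \emph{exactly}. These are not the same: if, say, $I$ contains $U\tau + Q^2\tau$ but not $U\tau$, the paper's definition gives $d_{G,1}=s+2$ while yours gives something strictly larger. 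Indeed the filtration is precisely what makes $d_{G,c}$ independent of the choice of splitting of $G_{\mathfrak s}$ (Remark~\ref{rem:ext}); your version would be splitting-dependent. Once you reinstate the filtration, your arguments for (1), (2), (3), (5) go through and match the paper's proofs (Propositions~\ref{prop:Gand1}, \ref{prop:din}, \ref{prop:lspaced}) essentially verbatim: one multiplies or adds witnesses and checks that the filtration level behaves correctly.

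For (4), the paper takes a different route from your duality pairing. Rather than pairing classes into $H^*_{G_\mathfrak s}$, it applies the finite-dimensional approximation to the trivial cobordism $W=[0,1]\times Y$, obtaining a stable $G_\mathfrak s$-map
\[
f\colon \operatorname{Ker}_{APS}(D^+)^+\to \operatorname{Coker}_{APS}(D^+)^+\wedge SWF(\overline Y)\wedge SWF(Y)
\]
with $\deg(f^{S^1})=1$, and then invokes the general inequality of Proposition~\ref{prop:ineq} together with the sub-additivity $\delta_{G,c_1c_2}(A\wedge B)\le \delta_{G,c_1}(A)+\delta_{G,c_2}(B)$ (Theorem~\ref{thm:positive}). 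Your pairing sketch is plausible in spirit, but the Borel-cohomology pairing induced by $V$-duality lands in co-Borel theory (or in $H^*_{G_\mathfrak s}[U,U^{-1}]$ after localising), not in $H^*_{G_\mathfrak s}$ itself, so the ``no negative powers of $U$'' step is not immediate; the cobordism argument sidesteps this. For (6), the paper likewise reduces to the equivariant Fr{\o}yshov inequality (Theorem~\ref{thm:don}) applied in both directions, which is the inequality you allude to in your last sentence (Corollary~\ref{cor:inv}).
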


We find it convenient to also define corresponding equivariant $\delta$-invariants by setting 
\[
\delta_{G,c}(Y,\mathfrak{s}) = d_{G,c}(Y,\mathfrak{s})/2.
\]

Our primary motivation for considering the equivariant $d$-invariants is that they are necessary for the formulation of our equivariant generalisation of Fr\o yshov's inequality described below.

{\bf Cobordism maps.} Suppose that $(W,\mathfrak{s})$ is a $G$-equivariant cobordism from $(Y_1,\mathfrak{s}_1)$ to $(Y_2,\mathfrak{s}_2)$ (see Section \ref{sec:indcobord} for the precise statement). Then $W$ induces a morphism of graded $H^*_{G_{\mathfrak{s}}}$-modules
\[
SW_G(W,\mathfrak{s}) \colon HSW^*_G(Y_2 , \mathfrak{s}_2) \to HSW^{*+b_+(W)-2\delta(W,\mathfrak{s})}_G(Y_1, \mathfrak{s}_1)
\]
where $\delta(W,\mathfrak{s}) = (c_1(\mathfrak{s})^2 - \sigma(W))/8$.

\begin{theorem}[Equivariant Fr\o yshov inequality]
Let $W$ be a smooth, compact, oriented $4$-manifold with boundary and with $b_1(W) = 0$. Suppose that $G$ acts smoothly on $W$ preserving the orientation and a spin$^c$-structure $\mathfrak{s}$. Suppose that the extension $G_{\mathfrak{s}}$ is trivial. Suppose each component of $\partial W$ is a rational homology $3$-sphere and that $G$ sends each component of $\partial W$ to itself. Let $e \in H^{b_+(W)}_G$ be the image in $H^*_G(pt ; \mathbb{F})$ of the Euler class of any $G$-invariant maximal positive definite subspace of $H^2(W ; \mathbb{R})$. Let $c \in H^*_G$ and suppose that $c e \neq 0$.
\begin{itemize}
\item[(1)]{If $\partial W = Y$ is connected, then 
\[
\delta(W , \mathfrak{s}) \le \delta_{G,c}(Y , \mathfrak{s}|_Y) \; \; \text{ and } \; \; \delta_{G,c e}(\overline{Y} , \mathfrak{s}|_Y ) \le \delta( \overline{W} , \mathfrak{s} ).
\]
}
\item[(2)]{If $\partial W = \overline{Y}_1 \cup Y_2$ has two connected components, then 
\[
\delta_{G,c e}(Y_1 , \mathfrak{s}|_{Y_1}) + \delta(W , \mathfrak{s}) \le \delta_{G,c}(Y_2 , \mathfrak{s}|_{Y_2}).
\]
}
\end{itemize}

\end{theorem}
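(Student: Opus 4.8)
The plan is to extract the inequalities from the $G_{\mathfrak{s}}$-equivariant relative Bauer--Furuta invariant of $W$ by an equivariant dimension count, generalising the non-equivariant Fr\o yshov inequality — which is the case $b_+(W)=0$ — and using the equivariant Euler class $e$ of $H^+(W)$ as the obstruction that replaces negative-definiteness once $b_+(W)>0$.

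The first task is to refine the cobordism-maps construction of Section~\ref{sec:indcobord} so as to keep track of $H^+(W)$. Carrying out the $G_{\mathfrak{s}}$-equivariant finite-dimensional approximation of the Seiberg--Witten equations on $W$ — with a $G$-invariant cylindrical-end metric, so that the chosen $G$-invariant maximal positive-definite subspace $H^+ = H^+(W) \subseteq H^2(W;\mathbb{R})$ is realised as the cokernel of $d^+$ — produces a stable $G_{\mathfrak{s}}$-equivariant map
\[
\psi_W \colon SWF(Y_1,\mathfrak{s}|_{Y_1}) \longrightarrow SWF(Y_2,\mathfrak{s}|_{Y_2}) \wedge (H^+)^+,
\]
well-defined up to suspension by $\mathbb{C}$-representations whose net index is the Atiyah--Patodi--Singer index of the Dirac operator on $W$; passing to Borel cohomology and using the equivariant Thom isomorphism recovers the cobordism map $SW_G(W,\mathfrak{s})$. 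The class $e$ enters because the Thom class of the $(H^+)^+$-factor restricts to $e$ on the zero section, while the APS index theorem identifies the $\mathbb{C}$-suspension correction with the degree shift $-2\delta(W,\mathfrak{s})$, just as $n(Y,\mathfrak{s},g)$ splits the spectral flow in (\ref{eq:swfsf})--(\ref{eq:sfsplit}).

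The heart of the argument is the equivariant dimension count. Since $b_1(W)=0$ the reducible configuration is isolated, so the $S^1$-fixed point set of each $SWF(Y_i,\mathfrak{s}|_{Y_i})$ is a single $0$-sphere, and the $S^1$-fixed part of $\psi_W$ carries the Thom--Pontryagin data of $W$ weighted by $(H^+)^+$. Take a class of $HSW^*_G(Y_2,\mathfrak{s}|_{Y_2})$ realising $\delta_{G,c}(Y_2,\mathfrak{s}|_{Y_2})$ in the sense of the defining characterisation in terms of the $U$-localisation map, and push it forward by $SW_G(W,\mathfrak{s})$, using $H^*_{G_{\mathfrak{s}}}$-linearity, the localisation theorem, and the $e$-factor identified above: the output is nonzero precisely when $ce\neq 0$, detects $\delta_{G,ce}(Y_1,\mathfrak{s}|_{Y_1})$, and sits in degree equal to that of the starting class shifted by $b_+(W)-2\delta(W,\mathfrak{s})$. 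Since $\deg e = b_+(W)$, the summand $b_+(W)$ is exactly absorbed in passing from detecting $c$ to detecting $ce$, and comparing degrees yields
\[
\delta_{G,ce}(Y_1,\mathfrak{s}|_{Y_1}) + \delta(W,\mathfrak{s}) \le \delta_{G,c}(Y_2,\mathfrak{s}|_{Y_2}),
\]
which is part (2). Part (1), first inequality, is the special case $Y_1 = \emptyset$, where $HSW^*_G(\emptyset) = H^*_{G_{\mathfrak{s}}}$ with tower generator in degree $0$, and the same comparison gives $\delta(W,\mathfrak{s}) \le \delta_{G,c}(Y,\mathfrak{s}|_Y)$. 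The second inequality of part (1) follows by running the argument for the Seiberg--Witten--Floer dual $SWF(\overline{Y},\mathfrak{s}|_Y) \simeq D(SWF(Y,\mathfrak{s}|_Y))$ (up to suspension) of the relative invariant: dualising leaves the $G$-module $H^+(W)$, hence $e$, unchanged up to sign, and only replaces $-\sigma(W)$ by $+\sigma(W)$ in the APS index, so $\delta(W,\mathfrak{s})$ becomes $\delta(\overline{W},\mathfrak{s})$ and we obtain $\delta_{G,ce}(\overline{Y},\mathfrak{s}|_Y) \le \delta(\overline{W},\mathfrak{s})$.

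The main obstacle is building the $G_{\mathfrak{s}}$-equivariant relative Bauer--Furuta invariant with its $(H^+)^+$-twist, establishing its metric independence and compatibility with $SW_G(W,\mathfrak{s})$, and carrying out the equivariant dimension count that produces the factor $e$ and the shift $-2\delta(W,\mathfrak{s})$; this needs $G_{\mathfrak{s}}$-equivariant versions of the gluing and compactness theory underlying the cobordism maps of Section~\ref{sec:indcobord}. Granted these, the passage to the stated inequalities from the module structure and the definition of $\delta_{G,c}$ is formal, mirroring the non-equivariant Fr\o yshov inequality and its proof via Manolescu's framework \cite{man}.
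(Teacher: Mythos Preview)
Your proposal is correct and follows essentially the same route as the paper: construct the $G_{\mathfrak{s}}$-equivariant relative Bauer--Furuta map (the paper's $f$ in Section~\ref{sec:cobordismfda}), identify the degree of its $S^1$-fixed part as the Euler class $e$, and then extract the inequality by a degree comparison (the paper packages this as Proposition~\ref{prop:ineq}, applied to $f$ for the first inequality and to the dualised map $h$ for the second and for part~(2)). Your ordering---proving (2) first and reading (1) off as the degenerate case $Y_1=\emptyset$---is the reverse of the paper's, and you phrase the degree comparison in terms of pushing classes through $SW_G(W,\mathfrak{s})$ rather than invoking Proposition~\ref{prop:ineq} directly, but these are cosmetic differences.
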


{\bf Knot concordance invariants.}
Let $K$ be a knot in $S^3$ and let $Y = \Sigma_2(K)$ be the double cover of $S^3$ branched over $K$. Then $Y$ has an action of $G = \mathbb{Z}_2$ generated by the covering involution. Further, $Y$ has a spin$^c$-structure $\mathfrak{t}_0$ uniquely determined by the condition that it arises from a spin structure. Set $\mathbb{F} = \mathbb{Z}_2$. Then $H^*_G \cong \mathbb{F}[Q]$, where $deg(Q)=1$. For each $j \ge 0$, we define an invariant $\delta_j(K) \in \mathbb{Z}$ by setting
\[
\delta_j(K) = 4\delta_{\mathbb{Z}_2 , Q^j}(\Sigma_2(K) , \mathfrak{t}_0).
\]

Let $\sigma(K)$ and $g_4(K)$ denote the signature and smooth $4$-genus of $K$.

\begin{theorem}
The invariants $\delta_j(K)$ have the following properties:
\begin{itemize}
\item[(1)]{$\delta_j(K)$ is a knot concordance invariant.}
\item[(2)]{$\delta_0(K) \ge \delta(K)$, where $\delta(K)$ is the Manolescu--Owens invariant \cite{mo}.}
\item[(3)]{$\delta_{j+1}(K) \le \delta_j(K)$ for all $j \ge 0$.}
\item[(4)]{$\delta_j(K) \ge -\sigma(K)/2$ for all $j \ge 0$ and $\delta_j(K) = -\sigma(K)/2$ for $j \ge g_4(K)-\sigma(K)/2$.}
\item[(5)]{$\delta_j(-K) \ge \sigma(K)/2$ for all $j \ge 0$ and $\delta_j(-K) = \sigma(K)/2$ for $j \ge g_4(K)+\sigma(K)/2$.} 
\item[(6)]{If $\Sigma_2(K)$ is an $L$-space, then $\delta_j(K) = \delta(K)=-\sigma(K)/2$ and $\delta_j(-K) = \delta(-K) = \sigma(K)/2$ for all $j \ge 0$.}
\end{itemize}

\end{theorem}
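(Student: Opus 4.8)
The plan is to reduce all six assertions to the formal properties of the equivariant $d$-invariants and the equivariant Fr\o yshov inequality proved above, after rephrasing everything in terms of $d_{\mathbb{Z}_2,Q^j}(\Sigma_2(K),\mathfrak{t}_0)$. Throughout one uses $\delta_j(K)=2\,d_{\mathbb{Z}_2,Q^j}(\Sigma_2(K),\mathfrak{t}_0)$ and, by \cite{mo}, $\delta(K)=2\,d(\Sigma_2(K),\mathfrak{t}_0)$. Integrality of $\delta_j(K)$ follows from the known integrality of $\delta(K)$ together with the fact that $d(Y,\mathfrak{s})$ and all of the $d_{G,c}(Y,\mathfrak{s})$ lie in the single coset $-2n(Y,\mathfrak{s},g)+\mathbb{Z}$, so that $d_{\mathbb{Z}_2,Q^j}(\Sigma_2(K),\mathfrak{t}_0)-d(\Sigma_2(K),\mathfrak{t}_0)\in\mathbb{Z}$ and hence $\delta_j(K)\in\delta(K)+2\mathbb{Z}\subset\mathbb{Z}$.

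For (1), if $K_0$ and $K_1$ are concordant via an annulus $A\subset S^3\times[0,1]$ I would take $W=\Sigma_2(S^3\times[0,1],A)$, the double branched cover, which is an equivariant rational homology cobordism from $\Sigma_2(K_0)$ to $\Sigma_2(K_1)$: the covering involution on $W$ restricts to the covering involutions on the two ends and the spin structure of $W$ restricts to $\mathfrak{t}_0$ on each end, so property (6) of the equivariant $d$-invariants gives $d_{\mathbb{Z}_2,c}(\Sigma_2(K_0),\mathfrak{t}_0)=d_{\mathbb{Z}_2,c}(\Sigma_2(K_1),\mathfrak{t}_0)$ for all $c$. Items (2) and (3) are then immediate: $\delta_0(K)=2\,d_{\mathbb{Z}_2,1}(\Sigma_2(K),\mathfrak{t}_0)\ge 2\,d(\Sigma_2(K),\mathfrak{t}_0)=\delta(K)$ by property (1) of the $d$-invariants, and $\delta_{j+1}(K)=2\,d_{\mathbb{Z}_2,Q\cdot Q^j}(\Sigma_2(K),\mathfrak{t}_0)\le 2\,d_{\mathbb{Z}_2,Q^j}(\Sigma_2(K),\mathfrak{t}_0)=\delta_j(K)$ by property (3).

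The core is (4)--(6). Here I would pick a properly embedded oriented surface $F\subset B^4$ of minimal genus $g_4(K)$ with $\partial F=K$ and set $W=\Sigma_2(B^4,F)$: a smooth compact oriented $4$-manifold with $b_1(W)=0$, $\partial W=\Sigma_2(K)$, carrying the covering involution, which preserves the orientation and the spin structure, and whose associated extension $G_{\mathfrak{s}}$ is trivial. Standard facts about double branched covers give $\sigma(W)=\sigma(K)$ and $b_2(W)=2g_4(K)$, hence $b_+(W)=g_4(K)+\sigma(K)/2$, and choosing $\mathfrak{s}$ with $c_1(\mathfrak{s})$ torsion gives $\delta(W,\mathfrak{s})=-\sigma(K)/8$. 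By Smith theory the $\mathbb{Z}_2$-invariant part of $H^2(W;\mathbb{Q})$ is the image of $H^2(B^4;\mathbb{Q})=0$, so the involution acts as $-1$ on $H^2(W;\mathbb{R})$; therefore every maximal positive definite subspace is a sum of $b_+(W)$ copies of the sign representation and the relevant Euler class is $e=Q^{b_+(W)}\in\mathbb{F}[Q]$, in particular $Q^j e\ne 0$ for every $j$. The first inequality of the equivariant Fr\o yshov theorem with $c=Q^j$ then yields $-\sigma(K)/8=\delta(W,\mathfrak{s})\le\delta_{\mathbb{Z}_2,Q^j}(\Sigma_2(K),\mathfrak{t}_0)=\delta_j(K)/4$, i.e.\ $\delta_j(K)\ge-\sigma(K)/2$, which is the first clause of (4) and, applied to $-K$, the first clause of (5). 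The second inequality applied to the same $W$ gives $\delta_{\mathbb{Z}_2,Q^{j+b_+(W)}}(\overline{\Sigma_2(K)},\mathfrak{t}_0)\le\delta(\overline{W},\mathfrak{s})=\sigma(K)/8$; using $\overline{\Sigma_2(K)}\cong\Sigma_2(-K)$ this says $\delta_m(-K)\le\sigma(K)/2$ for all $m\ge g_4(K)+\sigma(K)/2$, and combined with the lower bound just obtained for $-K$ it pins $\delta_m(-K)=\sigma(K)/2$ in that range, which is the equality clause of (5); swapping $K\leftrightarrow-K$ and using $g_4(-K)=g_4(K)$, $\sigma(-K)=-\sigma(K)$ gives the equality clause of (4). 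Finally, for (6): if $\Sigma_2(K)$ is an $L$-space with respect to $\mathfrak{t}_0$ and $\mathbb{F}$, then property (5) of the equivariant $d$-invariants gives $\delta_j(K)=2\,d(\Sigma_2(K),\mathfrak{t}_0)=\delta(K)$ for all $j$, and comparing with the equality clause of (4) forces $\delta(K)=-\sigma(K)/2$; since $\overline{\Sigma_2(K)}=\Sigma_2(-K)$ is again an $L$-space the same argument for $-K$ gives $\delta_j(-K)=\delta(-K)=\sigma(K)/2$.

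The main obstacle will be the geometric set-up for (4) and (5): checking that $W=\Sigma_2(B^4,F)$ satisfies every hypothesis of the equivariant Fr\o yshov inequality --- above all triviality of the extension $G_{\mathfrak{s}}$ --- and carrying out the exact computation $e=Q^{b_+(W)}$ of the equivariant Euler class, which is precisely what makes the signature and genus bounds sharp. Once these are in place the remaining steps are routine bookkeeping with signs and indices.
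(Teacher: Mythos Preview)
Your proposal is correct and follows essentially the same route as the paper: concordance invariance via the branched double cover of the concordance annulus and Corollary~\ref{cor:inv}; parts (2), (3), (6) from the formal properties of $d_{G,c}$ (Theorem~\ref{thm:deltaz2} and Proposition~\ref{prop:lspaced}); and parts (4), (5) from the equivariant Fr\o yshov inequality applied to $W=\Sigma_2(D^4,F)$ with $F$ a minimal-genus surface. The only cosmetic difference is that the paper packages your direct Fr\o yshov argument into Corollary~\ref{cor:deltasigma}, and your worry about triviality of $G_{\mathfrak{s}}$ is unnecessary since this holds automatically for $G=\mathbb{Z}_p$ (Section~\ref{sec:zp}).
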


In particular, if $K$ is quasi-alternating, then $\Sigma_2(K)$ is an $L$-space \cite{os3}. So we recover the main result of \cite{liow} that $\delta(K) = -\sigma(K)/2$ for quasi-alternating knots.

The concordance invariants $\delta_j(K)$ can also be used to obtain a strengthening of the inequality $g_4(K) \ge |\sigma(K)|/2$ \cite{mur}.

\begin{theorem} 
For a knot $K$, let $j_+(K)$ be the smallest positive integer such that $\delta_j(K) =  -\sigma(K)/2$ and $j_-(K)$ the smallest positive integer such that $\delta_j(-K) =  \sigma(K)/2$. Then 
\[
g_4(K) \ge \max\{ -\sigma(K)/2+j_-(K) , \sigma(K)/2+j_+(K)\}.
\]
\end{theorem}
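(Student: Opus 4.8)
The plan is to derive the inequality directly from properties (3)--(5) of the invariants $\delta_j$, together with the Murasugi inequality $g_4(K)\ge|\sigma(K)|/2$ for the base case. The first step is to reduce the two-sided maximum to a single inequality by the symmetry $K\mapsto -K$: since $\sigma(-K)=-\sigma(K)$ and $g_4(-K)=g_4(K)$, the least index $j$ with $\delta_j(-K)=-\sigma(-K)/2$ coincides with the least $j$ with $\delta_j(-K)=\sigma(K)/2$, so $j_+(-K)=j_-(K)$. Hence it suffices to prove $g_4(K)\ge\sigma(K)/2+j_+(K)$ for every knot $K$; applying this to $-K$ yields $g_4(K)\ge-\sigma(K)/2+j_-(K)$, and the theorem follows on taking the maximum.

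For the remaining inequality I would argue as follows. Put $m=g_4(K)-\sigma(K)/2$; since $\sigma(K)$ is even this is an integer, and $m\ge0$ by Murasugi's inequality. Property (4) supplies two facts: $\delta_j(K)\ge-\sigma(K)/2$ for all $j$, and $\delta_j(K)=-\sigma(K)/2$ for all $j\ge m$. In particular $\delta_m(K)$ already attains the minimal value $-\sigma(K)/2$, so $m$ is an index at which the defining condition for $j_+$ holds; since $j_+(K)$ is the least such index, $j_+(K)\le m$, and rearranging gives the claim. (Some care with the index convention at the lower endpoint is needed so that the equality cases of the Murasugi bound are treated correctly.)

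It is worth stressing that the real geometric content has already been folded into property (4), whose proof applies the equivariant Fr\o yshov inequality to a branched double cover: one takes a smooth genus-$g_4(K)$ surface $F\subset B^4$ with $\partial F=K$, forms $W=\Sigma_2(B^4,F)$ with its covering $\mathbb{Z}_2$-action and the spin$^c$-structure $\mathfrak s$ restricting to $\mathfrak t_0$, checks $b_1(W)=0$, computes $b_+(W)$ and $\delta(W,\mathfrak s)$ from $g_4(K)$ and $\sigma(K)$, and invokes the Fr\o yshov inequality with $c=Q^j$ and $e\in H^{b_+(W)}_{\mathbb{Z}_2}$ the associated Euler class; the requirement $ce\ne0$ is automatic because $H^*_{\mathbb{Z}_2}\cong\mathbb{F}[Q]$ is an integral domain. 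Granting property (4), the work left in the present statement is purely index bookkeeping -- matching the exponent of $Q$ with $b_+(W)$ (equivalently the threshold $g_4(K)-\sigma(K)/2$ appearing in property (4)), tracking the factor $4$ in $\delta_j(K)=4\delta_{\mathbb{Z}_2,Q^j}(\Sigma_2(K),\mathfrak t_0)$, and pinning down the convention for $j_\pm$ at the endpoints -- and that, rather than any fresh geometry, is where I expect the only care to be needed.
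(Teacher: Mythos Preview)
Your proof is correct and follows essentially the same approach as the paper: both deduce the inequality directly from the stabilisation property $\delta_j(K)=-\sigma(K)/2$ for $j\ge g_4(K)-\sigma(K)/2$ (and its companion for $-K$), concluding that $j_\pm(K)$ is bounded above by the relevant threshold. The paper's proof is in fact even terser---it simply cites both parts (3) and (4) of the preceding proposition to bound $j_+(K)$ and $j_-(K)$ separately, without invoking the symmetry $K\mapsto -K$ to reduce to one case; your reduction is valid but not needed.
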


\begin{corollary}
If $\delta(K) > -\sigma(K)/2$ and $\sigma(K) \ge 0$, then
\[
g_4(K) \ge |\sigma(K)|/2 + 1
\]
\end{corollary}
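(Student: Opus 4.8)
The plan is to read the corollary off the preceding theorem, the two hypotheses being used only in mild ways: since $\sigma(K) \ge 0$ we have $|\sigma(K)|/2 = \sigma(K)/2$, so the assertion reduces to $g_4(K) \ge \sigma(K)/2 + 1$, and this will follow from the inequality $g_4(K) \ge \sigma(K)/2 + j_+(K)$ supplied by the theorem once we show $j_+(K) \ge 1$ using the hypothesis $\delta(K) > -\sigma(K)/2$.

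To show $j_+(K) \ge 1$: the invariant $j_+(K)$ is well-defined because, by property (4) of the $\delta_j$, one has $\delta_j(K) = -\sigma(K)/2$ for every $j \ge g_4(K) - \sigma(K)/2$, in particular for the positive integer $j = \max\{1, g_4(K) - \sigma(K)/2\}$. On the other hand property (2) gives $\delta_0(K) \ge \delta(K)$, and the hypothesis gives $\delta(K) > -\sigma(K)/2$; hence $\delta_0(K) > -\sigma(K)/2$, so the eventual value $-\sigma(K)/2$ of the nonincreasing (property (3)) sequence $(\delta_j(K))_{j \ge 0}$ is not attained at $j = 0$. Consequently the first index at which it is attained, namely $j_+(K)$, is at least $1$. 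Substituting into the theorem yields $g_4(K) \ge \sigma(K)/2 + 1 = |\sigma(K)|/2 + 1$.

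I do not expect any genuine obstacle; the statement is a formal consequence of the theorem together with properties (2)--(4). The one point worth a remark is the implicit use of integrality: $\sigma(K)$ is even, so $g_4(K) - \sigma(K)/2 \in \mathbb{Z}$, and this is what lets $j_+(K) \ge 1$ (equivalently, the bare inequality $\delta_0(K) \ne -\sigma(K)/2$) produce the strict improvement $g_4(K) \ge \sigma(K)/2 + 1$ rather than merely $g_4(K) > \sigma(K)/2$. In fact one can dispense with the theorem entirely and argue directly: if $g_4(K) - \sigma(K)/2 \le 0$, then property (4) applied with $j = 0$ forces $\delta_0(K) = -\sigma(K)/2$, contradicting $\delta_0(K) > -\sigma(K)/2$; hence $g_4(K) - \sigma(K)/2 \ge 1$, and the sign hypothesis $\sigma(K) \ge 0$ converts this into the stated bound.
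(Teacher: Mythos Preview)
Your argument is correct and follows the same approach as the paper: from $\delta_0(K) \ge \delta(K) > -\sigma(K)/2$ conclude $j_+(K) \ge 1$, then invoke the theorem and use $\sigma(K) \ge 0$ to rewrite $\sigma(K)/2$ as $|\sigma(K)|/2$. Your write-up is more detailed than the paper's (which is a one-line sketch), and your closing remark giving a direct argument via property (4) at $j=0$ is a nice alternative, but the substance is the same.
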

\begin{proof}
If $\delta(K) > -\sigma(K)/2$, then $\delta_0(K) \ge \delta(K) > -\sigma(K)/2$, hence $j_+(K) \ge 1$. Hence $g_4(K) \ge \sigma(K)/2+1 =|\sigma(K)|/2+1$.
\end{proof}

One can obtain even more knot concordance invariants by considering higher order cyclic branched covers, see Remark \ref{rem:higher}.

\subsection{Applications}

We outline here some of the applications of equivariant Seiberg--Witten--Floer cohomology. These are considered in more detail in Section \ref{sec:caa}. 

\subsubsection{Non-extendable actions {\em (Section \ref{sec:app1})}} Let $Y$ be a rational homology $3$-sphere equipped with an orientation preserving action of $G$ and let $W$ be a smooth $4$-manifold which bounds $Y$. The equivariant $d$-invariants give obstructions to extending the action of $G$ over $W$. 

\begin{example}
The Brieskorn homology sphere $Y = \Sigma(p,q,r)$ where $p,q,r$ are pairwise coprime is the branched cyclic $p$-fold cover of the torus knot $T_{q,r}$. Let $\tau \colon Y \to Y$ be a generator of the $\mathbb{Z}_p$-action determined by this covering. For certain values of $p,q,r$ it can be shown that $Y$ bounds a contractible $4$-manifold, for example $\Sigma(2,3,13)$ bounds a contractible $4$-manifold \cite{ak}. It can be shown that $\tau$ is smoothly isotopic to the identity, hence it follows that $\tau$ can be extended as a diffeomorphism over any $4$-manifold bounded by $Y$. On the other hand we show in Proposition \ref{prop:casson} that if $p$ is prime then $\delta_{\mathbb{Z}_p , 1}( Y , \mathfrak{s}) = -\lambda( Y )$ is minus the Casson invariant of $Y$ (where $\mathfrak{s}$ is the unique spin$^c$-structure on $Y$), which is non-zero. We futher show that the non-vanishing of $\delta_{\mathbb{Z}_p , 1}(Y , \mathfrak{s})$ implies that $\tau$ can not be extended as a smooth $\mathbb{Z}_p$-action to any contractible $4$-manifold bounded by $Y$. This partially recovers the non-extendability results of Anvari--Hambleton \cite{ah1,ah2} for Brieskorn homology $3$-spheres bounded by contractible $4$-manifolds.

On the other hand, our non-extendability result also holds in situations not covered by Anvari--Hambleton. Suppose now that $Y = \Sigma(p,q,r)$ bounds a {\em rational} homology $4$-ball $W$. For example, Fintushel--Stern showed that $\Sigma(2,3,7)$ bounds a rational homology $4$-ball, although it does not bound an integral homology $4$-ball \cite{fs0}. More examples can be found in \cite{al,savk}. We show in Section \ref{sec:app1} that the non-vanishing of $\delta_{\mathbb{Z}_p , 1}( \Sigma(p,q,r) )$, where $p$ is prime implies that the $\mathbb{Z}_p$-action can not be extended to any rational homology $4$-ball $W$ bounded by $Y$, provided that $p$ does not divide the order of $H^2( W ; \mathbb{Z})$. 
\end{example}

\subsubsection{Realisation problems {\em (Section \ref{sec:app2})}} Let $W$ be a smooth $4$-manifold with boundary an integral homology sphere $Y$. Suppose that a finite group $G$ acts on $H^2(W ; \mathbb{Z})$ preserving the intersection form. We say that the action of $G$ on $H^2(W ; \mathbb{Z})$ can be realised by diffeomorphisms if there is a smooth orientation preserving action of $G$ on $W$ inducing the given action on $H^2(W ; \mathbb{Z})$. The equivariant $d$-invariants give obstructions to realising such actions by diffeomorphism. This extends the non-realisation results of \cite{bar1}, \cite{bar} for closed $4$-manifolds to the case of $4$-manifolds with non-empty boundary.

\begin{example} Suppose that $b_1(W)=0$, that $H^2(W ; \mathbb{Z})$ has no $2$-torsion and even intersection form. Suppose that $Y$ is an $L$-space. Suppose that an action of $G = \mathbb{Z}_p$ on $H^2(W ; \mathbb{Z})$ is given, where $p$ is prime and that the subspace of $H^2(W ; \mathbb{R})$ fixed by $G$ is negative definite. If $\sigma(W)/8 < -\delta(Y,\mathfrak{s})$ (where $\mathfrak{s}$ is the unique spin$^c$ structure on $Y$) then the action of $\mathbb{Z}_p$ on $H^2(W ; \mathbb{Z})$ is not realisable by a smooth $\mathbb{Z}_p$-action on $W$. Note that we are not making any assumptions about the action of $\mathbb{Z}_p$ on the boundary.
\end{example}

\subsubsection{Equivariant embeddings of $3$-manifolds in $4$-manifolds {\em (Section \ref{sec:app3})}} Let $Y$ be a rational homology $3$-sphere equipped with an orientation preserving action of $G$. By an equivariant embedding of $Y$ into a $4$-manifold $X$, we mean an embedding $Y \to X$ such that the action of $G$ on $Y$ extends over $X$.

\begin{example}
Let $Y = \Sigma(2,2s-1,2s+1)$ where $s$ is odd, equipped with the involution $\tau$ obtained from viewing $Y$ as the branched double cover $\Sigma_2( T_{2s-1,2s+1})$. Then $Y$ embeds in $S^4$ \cite[Theorem 2.13]{bb}. On the other hand, $\delta_j( Y , \mathfrak{s}) \neq 0$ for some $j$. We will show that the non-vanishing of this invariant implies that $Y$ can not be equivariantly embedded in $S^4$.
\end{example}

It is known that every $3$-manifold $Y$ embeds in the connected sum $\#^n (S^2 \times S^2)$ of $n$ copies of $S^2 \times S^2$ for some sufficiently large $n$ \cite[Theorem 2.1]{agl}. Aceto--Golla-Larson define the {\em embedding number} $\varepsilon(Y)$ of $Y$ to be the smallest $n$ for which $Y$ embeds in $\#^n( S^2 \times S^2)$. Here we consider an equivariant version of the embedding number. To obtain interesting results we need to make an assumption on the kinds of group actions allowed.

\begin{definition}
Let $G = \mathbb{Z}_p = \langle \tau \rangle$ where $p$ is a prime number. We say that a smooth, orientation preserving action of $G$ on $X = \#^n(S^2 \times S^2)$ is {\em admissible} if $H^2( X ; \mathbb{Z})^{\tau} = 0$, where $H^2(X ; \mathbb{Z})^{\tau} = \{ x \in H^2(X ; \mathbb{Z}) \; | \; \tau(x) = x \}$. We define the {\em equivariant embedding number $\varepsilon(Y,\tau)$} of $(Y,\tau)$ to be the smallest $n$ for which $Y$ embeds equivariantly in $\#^n (S^2 \times S^2)$ for some admissible $\mathbb{Z}_p$-action on $\#^n( S^2 \times S^2)$, if such an embedding exists. We set $\varepsilon(Y,\tau) = \infty$ if there is no such embedding.
\end{definition}

\begin{example}
Let $Y = \Sigma(2,3,6n+1) = \Sigma_2(T_{3,6n+1})$ and equip $Y$ with the covering involution $\tau$. We show that
\[
2n \le \varepsilon( \Sigma(2,3,6n+1) , \tau ) \le 12n.
\]
Suppose that $n$ is odd. Then from \cite[Proposition 3.5]{agl}, the (non-equivariant) embedding number of $\Sigma(2,3,6n+1)$ is $10$. In particular, we see that $\varepsilon( \Sigma(2,3,6n+1) , \tau ) > \varepsilon( \Sigma(2,3,6n+1) )$ for all odd $n > 5$. We also show that
\[
\varepsilon( \Sigma( 2 , 3 , 7) , \tau ) = 12,
\]
whereas $\varepsilon( \Sigma( 2 , 3 , 7) ) = 10$. 
\end{example}

\subsection{Comparison with other works}

In \cite{hls}, the authors introduce equivariant versions of several types of Floer homology, mostly focusing on the case that the group is $\mathbb{Z}_2$. In particular they define a $\mathbb{Z}_2$-equivariant version of $HF^-$, which is a module over $H^*_{S^1 \times \mathbb{Z}_2}(pt ; \mathbb{Z}_2) = \mathbb{Z}_2[U,Q]$. This construction shares many similarities with the equivariant Seiberg--Witten--Floer cohomology constructed in this paper, such as a localisation isomorphism and a spectral sequence relating the equivariant and ordinary Floer homologies. In fact, it seems reasonable to conjecture that our constructions are isomorphic.

In \cite{aks}, the authors consider a $\mathbb{Z}_2$-equivariant Heegaard--Floer homology $HFB^-(K)$ for a branched double cover $Y = \Sigma_2(K)$ of a knot $K$, constructed in a manner similar to involutive Heegaard--Floer homology \cite{hema} except that the involution arises from the covering involution on $Y$. These groups are modules over the ring $\mathbb{Z}_2[U,Q]/(Q^2)$. From this group they obtain knot concordance invariants $\overline{\delta}(K), \underline{\delta}(K)$. A similar approach was taken in \cite{dhm}, where the authors obtain $\iota$-complexes \cite[Definition 8.1]{hmz} associated to involutions on $Y$. Since $\mathbb{Z}_2[U,Q]/(Q^2) = H^*_{S^1 \times \mathbb{Z}}(pt ; \mathbb{Z}_2)$, we suspect that the group $HFB^-(K)$ may be isomorphic to the $\mathbb{Z}$-equivariant Seiberg--Witten--Floer homology of $\Sigma_2(K)$.

In \cite[Remark 3.1]{lima2}, the authors define equivariant Seiberg--Witten--Floer homology in the special case that $G$ acts freely on $Y$. Their construction coincides with ours in such cases.

\subsection{Structure of the paper}

In Section \ref{sec:swfs} we recall the construction of Seiberg--Witten--Foer spectra using finite dimensional approximation and the Conley index. In Section \ref{sec:eswfc} we extend this construction to the $G$-equivariant setting, arriving at the construction of the $G$-equivariant Seiberg--Witten--Floer cohomology in \textsection \ref{sec:gswfc}. In the remainder of Section \ref{sec:eswfc} we introduce the equivariant $d$-invariants and establish their basic properties. Section \ref{sec:cobord} is concerned with the behaviour of equivariant Seiberg--Witten--Floer cohomology and the $d$-invariants under equivariant cobordism. In Section \ref{sec:zp} we specialise to the case that $G$ is a cyclic group of prime order. In Section \ref{sec:bdc} we consider the case of branched double covers of knots with their natural involution to obtain knot concordance invariants. Finally in Section \ref{sec:caa} we carry out some explicit computations of $d$-invariants and consider various applications.

\section{Seiberg--Witten--Floer spectra}\label{sec:swfs}

\subsection{Seiberg--Witten trajectories}

Throughout we let $Y$ be a rational homology $3$-sphere, i.e. $Y$ is a compact, oriented, smooth $3$-manifold with $b_1(Y)=0$. References for the material in this section are \cite{man}, \cite{lima}.

Let $g$ be a Riemannian metric on $Y$ and let $\mathfrak{s}$ be a spin$^c$-structure with associated spinor bundle $S$. Let $\rho \colon TY \to End(S)$ denote Clifford multiplication, satisfying $\rho(v)\rho(w) + \rho(w)\rho(v) = -2g(v,w)$. The spinor bundle $S$ is equipped with a Hermitian metric $\langle \; , \; \rangle$ which we take to be anti-linear in the first variable. Let $\mathfrak{su}(S)$ be the Lie algebra bundle of trace-free skew-adjoint endomorphisms of $S$ and $\mathfrak{sl}(S)$ the Lie algebra bundle of trace-free endomorphisms of $S$. Then $\rho$ induces an isomorphism $\rho \colon TY \to \mathfrak{su}(S)$ which extends by complexification to an isomorphism $\rho \colon TY_\mathbb{C} \to \mathfrak{sl}(S)$ satisfying $\rho( \bar{v} ) = - \rho(v)^*$. Using the metric $g$ to identify $TY$ and $T^*Y$ we will also view $\rho$ as a map $\rho \colon T^*Y \to \mathfrak{su}(S)$. We extend $\rho$ to $2$-forms by the rule $\rho( v \wedge w) = \frac{1}{2} [ \rho(v) , \rho(w) ]$. It follows that $\rho(\lambda) = -\rho( *\lambda)$ for any $2$-form $\lambda$. Define a Hermitian inner product on $\mathfrak{su}(S)$ by $\langle a , b \rangle = \frac{1}{2} tr( a^*b )$. Then for any tangent vectors $u,v$, we have $\langle \rho(u) , \rho(v) \rangle = g(\overline{u},v)$. Define a map
\[
\tau \colon S \times S \to T^*Y_{\mathbb{C}}
\]
by setting
\[
\tau( \phi , \psi ) = \rho^{-1}( \phi \otimes \psi^* )_0,
\]
where $(\phi \otimes \psi^*)_0$ is the trace-free part of $\phi \otimes \psi^*$. That is, if $\xi$ is any spinor, then
\[
(\phi \otimes \psi^*)(\xi) = \phi \langle \psi , \xi \rangle - \frac{1}{2} \langle \psi , \phi \rangle \xi.
\]
Then it follows that
\[
\tau(\psi , \phi) = -\overline{ \tau(\phi,\psi) }, \; \; \tau( a \psi , b \phi ) = a \overline{b} \tau(\psi , \phi).
\]
In particular, $\tau( \phi , \phi)$ is imaginary and $\tau( c\phi , c\phi) = |c|^2 \tau(\phi , \phi)$. We also have the identity
\[
\langle \tau(\phi , \phi) , v \rangle = \frac{1}{2} \langle \phi , \rho(v)\phi \rangle,
\]
for all spinors $\phi$ and vectors $v$.

Let $L = det(S)$ be the determinant line bundle of $S$. Let $\Gamma(L)$ denote the space of $U(1)$-connections on $L$, which is an affine space over $i \Omega^1(Y)$. We will write such a connection as $2A$. Then $2A$ determines a $Spin^c(3)$-connection on $S$ whose $\mathfrak{u}(1)$ part is $A$ and whose $\mathfrak{spin}(3)$ part is the Levi-Civita connection. Abusing terminology, we will refer to $A$ as a spin$^c$-connection.

Given a spin$^c$-connection $A$, we let $D_A$ denote the associated Dirac operator on $S$. Fix a reference spin$^c$-connection $A_0$. Then we may write $A = A_0 + a$ for some $a \in i \Omega^1(Y)$. It follows that
\[
D_A(\psi) = D_{A_0 + a}(\psi) = D_{A_0}(\psi) + \rho(a)\psi.
\]
Since $b_1(Y) = 0$, it follows that $L$ admits a flat connection. We will assume that $A_0$ defines a flat connection on $L$.

We define the configuration space of $Y$ to be
\[
C(Y) = \Gamma(L) \times \Gamma(S).
\]
$C(Y)$ depends on $g$ and $\mathfrak{s}$ but we omit this from the notation. $C(Y)$ is an affine space modelled on $i \Omega^1(Y) \oplus \Gamma(S)$. In particular, the tangent space $T_{(A,\phi)} C(Y)$ to any point $(A , \phi) \in C(Y)$ can naturally be identified with $i \Omega^1(Y) \oplus \Gamma(S)$. There is a natural metric on $i\Omega^1(Y) \oplus \Gamma(S)$, the $L^2$-metric
\[
\langle ( a_1 , \phi_1 ) , (a_2 , \phi_2) \rangle_{L^2} = - \int_Y a_1 \wedge * a_2 + \int_Y Re \langle \phi_1 , \phi_2 \rangle dvol_Y.
\]
This defines a (constant) Riemannian metric on $C(Y)$. We will need to work with Sobolev completions. Given a flat reference spin$^c$-connection $A_0$, Sobolev norms are defined using $A_0$ and $g$. Fix an integer $k  \ge 4$. Later we will work with the $L^2_{k+1}$-completion of $C(Y)$ and $L^2_{k+2}$-gauge transformations.

Having fixed a reference connection $A_0$, we identify $C(Y)$ with $i \Omega^1(Y) \oplus \Gamma(S)$. Thus an element $(A , \phi) \in C(Y)$ will be identified with $(a , \phi) \in i\Omega^1(Y) \oplus \Gamma(S)$, where $A = A_0 + a$. To simplify notation, we will write $D_a$ in place of $D_{A_0 + a}$.

The {\em Chern--Simons--Dirac functional} $\mathcal{L} \colon C(Y) \to \mathbb{R}$ (with respect to $A_0$) is defined as
\[
\mathcal{L}( a , \phi ) = \frac{1}{2} \left( \int_Y \langle \phi , D_{a} \phi \rangle dvol_Y - \int_Y a \wedge da \right).
\]

The gauge group $\mathcal{G} = \mathcal{C}^\infty( Y , S^1 )$ acts on $C(Y)$ by
\[
u \cdot ( a , \phi ) = (a - u^{-1} du , u \cdot \phi ).
\]
Observe that
\[
D_{a - u^{-1}du}( u \phi ) = u D_a \phi.
\]
So
\[
\langle u \phi , D_{a - u^{-1}du} (u \phi ) \rangle = \langle u \phi , u D_a \phi \rangle = \langle \phi , D_a \phi \rangle.
\]
It follows that $\mathcal{L}$ is gauge invariant and we can regard $\mathcal{L}$ as a function on the quotient space $C(Y)/\mathcal{G}$. The goal of Seiberg--Witten--Floer theory is to construct some sensible notion of Morse homology of $\mathcal{L}$ on $C(Y)/\mathcal{G}$. Consider the formal $L^2$-gradient of $\mathcal{L}$. By this, we mean the function $grad( \mathcal{L}) \colon C(Y) \to i\Omega^1(Y) \oplus \Gamma(S)$ such that
\[
\langle grad( \mathcal{L})(a , \phi ) , (a' , \phi' ) \rangle_{L^2} = \left. \frac{d}{dt} \right|_{t=0} \left( \mathcal{L}( (a,\phi) + t(a',\phi') \right)
\]
for all $(a,\phi) \in C(Y)$ and all $(a',\phi') \in i\Omega^1(Y) \oplus \Gamma(S)$. A short calculation gives
\[
grad( \mathcal{L} ) = ( *da + \tau(\phi , \phi) , D_a \phi ).
\]

A critical point of $\mathcal{L}$ is a point where $grad( \mathcal{L})$ vanishes. So $(a , \phi)$ is a critical point if and only if it satisfies
\[
*da + \tau(\phi , \phi) = 0, \quad D_a \phi = 0.
\]
These are the $3$-dimensional Seiberg--Witten equations.

A trajectory for the downwards gradient flow is a differentiable map $x : \mathbb{R} \to L^2_{k+1}( C(Y) )$ such that
\[
\frac{d}{dt} x(t) = - grad(\mathcal{L})( x(t) ).
\]
If  $x(t) = (a(t) , \phi(t) ) \in L^2_{k+1}( Y , i \, T^*Y \oplus S)$, then
\[
\frac{d}{dt} a(t) = -*da(t) - \tau( \phi(t) , \phi(t) ), \quad \frac{d}{dt} \phi(t) = -D_{a(t)} \phi(t).
\]

A key observation is that such trajectories can be re-interpreted as solutions of the $4$-dimensional Seiberg--Witten equations on the cylinder $X = \mathbb{R} \times Y$. 

\begin{definition}
A Seiberg--Witten trajectory $x(t) = (a(t) , \phi(t))$ is said to be of {\em finite type} if both $\mathcal{L}(x(t))$ and $|| \phi(t) ||_{\mathcal{C}^0}$ are bounded functions of $t$.
\end{definition}

\subsection{Restriction to the global Coulomb slice}

Define the {\em global Coulomb slice} (with respect to $A_0$) to be the subspace
\[
V = Ker(d^*) \oplus \Gamma(S) \subset C(Y).
\]
Given $(a,\phi) \in C(Y)$, there exists an element of $V$ which is gauge equivalent to $(a,\phi)$, namely
\[
(a - df , e^f \phi )
\]
where $d^*(a-df) = 0$, so $\Delta f = d^*(a)$. If we impose the condition $\int_Y f dvol_Y = 0$, then there is a unique solution to these equations given by $f = G d^*a$, where $G$ is the Green's operator for the Laplacian $\Delta = d d^*$ on functions.

We have a globally defined map $\Pi \colon C(Y) \to V$, called the {\em global Coulomb projection}
\[
\Pi(a,\phi) = (a -df , e^f \phi )
\]
where $\Delta f = d^*(a)$ and $\int_Y f dvol = 0$.

Restricting to the global Coulomb slice $V$ uses up all of the gauge symmetry except for the $S^1$ subgroup of constant gauge transformations. Instead of working on $C(Y)$ with full gauge symmetry, we work on $V$ with $S^1$ symmetry.

As $b_1(Y) = 0$, every map $u \colon Y \to S^1$ can be written as $u = e^f$ for some $f \colon Y \to i\mathbb{R}$. Moreover, $f$ is unique up to addition of an integer multiple of $2\pi i$. We define $\mathcal{G}_0$ to be the subgroup of gauge transformations of the form $u = e^f$ for some $f \colon Y \to i\mathbb{R}$ with $\int_Y f dvol = 0$. It is easy to see that $\mathcal{G} = \mathcal{G}_0 \times S^1$.

We have that $\mathcal{G}_0$ acts freely on $C(Y)$ and the quotient space can be identified with $V$. This determines a metric $\tilde{g}$ on $V$ as follows. Take the restriction of the $L^2$-metric on $C(Y)$ to the subbundle of the tangent bundle orthogonal to the gauge orbits. This construction is $\mathcal{G}_0$-invariant and   descends to a metric $\tilde{g}$ on $V$.

The Chern--Simons--Dirac functional $\mathcal{L}$ is gauge invariant, hence the gradient $grad(\mathcal{L})$ is orthogonal to the gauge orbits. It follows that the projection of $grad(\mathcal{L})$ to $V$ coincides with taking the gradient of $\mathcal{L}|_{V}$ with respect to $\tilde{g}$. So the trajectories of $grad(\mathcal{L})$ on $C(Y)$ project to the trajectories of $\mathcal{L}|_V$, where the gradient of $\mathcal{L}|_V$ is taken using the metric $\tilde{g}$. Thus the trajectories on $V$ have the form
\[
\frac{d}{dt} ( a(t) , \phi(t) ) = ( -*da - \tau(\phi , \phi) , -D_a \phi ) - ( -df , f \phi )
\]
for a function $f \colon Y \to i\mathbb{R}$. The function $f$ is uniquely determined by the conditions that $\int_Y f dvol_Y = 0$ and that $*da + \tau(\phi , \phi) - df$ is in the kernel of $d^*$. Hence $df = (1-\pi)\tau(\phi , \phi)$, where $\pi$ denotes the $L^2$ orthogonal projection to $Ker(d^*)$. We have that
\[
\frac{d}{dt} ( a(t) , \phi(t) ) = ( -*da - \pi \tau(\phi , \phi) , -D_a \phi-f\phi ) = -(l+c)(a,\phi),
\]
where
\[
l(a , \phi) = ( *da , D \phi)
\]
is the linear part and
\[
c(a,\phi) = ( \pi \tau(\phi , \phi ) , \rho(a)\phi + f\phi)
\]
is given by the non-linear terms.

Let $\chi$ denote the gradient of $\mathcal{L}|_V$ with respect to $\tilde{g}$. Then $\chi = l + c$ extends to a map
\[
\chi = l + c : V_{k+1} \to V_{k}
\]
where $V_k$ denotes the $L^2_k$-Sobolev completion of $V$. The map $l$ is a linear Fredholm operator. Using Sobolev multiplication and an estimate on the unique solution to $df = (1-\pi)\tau(\phi , \phi)$, $\int_Y f dvol_Y = 0$, it follows that $c$ viewed as a map $V_{k+1} \to V_{k+1}$ is continuous. Hence $c : V_{k+1} \to V_k$ is compact. The flow lines of $\chi$ on $V$ will be called Seiberg--Witten trajectories in the Coulomb gauge. We say that such a trajectory $x(t) = (a(t) , \phi(t))$ is of finite type if $\mathcal{L}(x(t))$ and $|| \phi(t) ||_{\mathcal{C}^0}$ are bounded independent of $t$. Clearly the finite type Seiberg--Witten trajectories in the Coulomb gauge are precisely the projection to $V$ of the Seiberg--Witten trajectories in $C(Y)$ of finite type.

\subsection{Finite dimensional approximation}

Let $V_\lambda^\mu$ denote the direct sum of all eigenspaces of $l$ in the range $(\lambda , \mu]$ and let $\tilde{p}^\mu_\lambda$ be the $L^2$-orthogonal projection from $V$ to $V_\lambda^\mu$. Note that $V_\lambda^\mu$ is a finite dimensional subspace of $V$. For technical reasons we replace the projections $\tilde{p}^\mu_\lambda$ with smoothed out versions
\[
p^\mu_\lambda = \int_0^1 \rho(\theta) \tilde{p}^{\mu-\theta}_{\lambda+\theta} d\theta
\]
where $\rho \colon \mathbb{R} \to \mathbb{R}$ is smooth, non-negative, non-zero precisely on $(0,1)$ and $\int_{\mathbb{R}} \rho(\theta) d\theta  = 1$. This is to make $p^\mu_\lambda$ vary continuously with $\mu$ and $\lambda$. The reason for doing this is  to show that the Conley index is independent of the choices of $\mu, \lambda$, up to a suspension. This is achieved by continuously increasing or decreasing $\mu, \lambda$ to get a continuous family of flows and using homotopy invariance of the Conley index under continuous deformation of the flow.

Consider the gradient flow equation
\[
\frac{d}{dt} x(t) = -(l + p^\mu_\lambda c) x(t)
\]
where $x \colon \mathbb{R} \to V^\mu_\lambda$. We call this an {\em approximate Seiberg--Witten trajectory}.

Let $B(R)$ denote the open ball of radius $R$ in $L^2_{k+1}(V)$. Using the a priori estimates for the Seiberg--Witten equations, it can be shown that there exists an $R > 0$ such that all the finite type trajectories of $l+c$ are in $B(R)$ \cite[Proposition 1]{man}. This boundedness property does not necessarily hold for approximate trajectories, since the crucial estimates that hold for the Seiberg--Witten equations do not apply to the approximate trajectories. However, we have the following result which acts as a kind of substitute:

\begin{proposition}[\cite{man}, Proposition 3]\label{prop:R}
For any $-\lambda, \mu$ sufficiently large, if an approximate trajectory $x \colon \mathbb{R} \to L^2_{k+1}( V^\mu_\lambda)$ satisfies $x(t) \in \overline{B(2R)}$ for all $t$, then in fact $x(t) \in B(R)$ for all $t$.
\end{proposition}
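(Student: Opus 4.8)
The plan is to argue by contradiction, extracting from a hypothetical family of ``bad'' approximate trajectories an honest finite-type Seiberg--Witten trajectory which, by the a priori bound of \cite[Proposition 1]{man}, must lie in $B(R)$ --- contradicting that some of its points have norm at least $R$.

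Suppose the statement fails. Then there are sequences $\lambda_n \to -\infty$, $\mu_n \to +\infty$ and approximate trajectories $x_n \colon \mathbb{R} \to L^2_{k+1}(V^{\mu_n}_{\lambda_n})$ with $x_n(t) \in \overline{B(2R)}$ for all $t$, yet $\sup_t \| x_n(t) \|_{L^2_{k+1}} \ge R$. Choose $t_n$ with $\| x_n(t_n) \|_{L^2_{k+1}} \ge R - 1/n$; since the flow equation is autonomous we may replace $x_n$ by $x_n(\,\cdot + t_n\,)$ and so assume $\| x_n(0) \|_{L^2_{k+1}} \ge R - 1/n$. The next step is uniform regularity. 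Because $c$ carries bounded subsets of $V_{k+1}$ into bounded subsets of $V_{k+1}$ (hence so does $p^{\mu_n}_{\lambda_n}c$, uniformly in $n$), the equation $\tfrac{d}{dt}x_n = -(l + p^{\mu_n}_{\lambda_n}c)(x_n)$ shows that $\tfrac{d}{dt}x_n$ is uniformly bounded in $V_k$; reinterpreting $x_n$ as a solution of a four-dimensional Seiberg--Witten-type equation on the cylinder $\mathbb{R} \times Y$ and bootstrapping with interior elliptic estimates, exactly as in the analysis of finite-type trajectories in \cite{man}, yields uniform bounds on $x_n$ and all of its $t$-derivatives on every compact time interval. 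A diagonal Arzel\`a--Ascoli argument then produces a subsequence converging, uniformly on compact subsets of $\mathbb{R}$ and in $L^2_{k+1}$, to a limit $x \colon \mathbb{R} \to V_{k+1}$. Since $p^{\mu_n}_{\lambda_n} \to \mathrm{id}$ as $n \to \infty$ and, by compactness of $c$, the set $c(\overline{B(2R)})$ is precompact, the terms $p^{\mu_n}_{\lambda_n}c(x_n)$ converge to $c(x)$; passing to the limit in the flow equation gives $\tfrac{d}{dt}x = -(l+c)(x)$, so $x$ is a genuine Seiberg--Witten trajectory in the Coulomb gauge.

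Finally, $x$ is of finite type: from $x(t) \in \overline{B(2R)}$ and the Sobolev embedding $L^2_{k+1} \hookrightarrow \mathcal{C}^0$ one gets $\| \phi(t) \|_{\mathcal{C}^0} \le C\| x(t)\|_{L^2_{k+1}} \le 2CR$, while a direct estimate of the two terms of the Chern--Simons--Dirac functional shows that $\mathcal{L}$ is bounded on the $L^2_{k+1}$-ball $\overline{B(2R)}$, so $\mathcal{L}(x(t))$ is bounded in $t$. By \cite[Proposition 1]{man}, every finite-type trajectory of $l+c$ lies in $B(R)$, hence $\|x(0)\|_{L^2_{k+1}} < R$; but $\|x(0)\|_{L^2_{k+1}} = \lim_n \|x_n(0)\|_{L^2_{k+1}} \ge \lim_n (R - 1/n) = R$, a contradiction. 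The main obstacle is the uniform regularity and compactness step: since the approximate trajectories do not satisfy the favourable a priori estimates enjoyed by genuine Seiberg--Witten trajectories, one must squeeze enough uniform control out of the $\overline{B(2R)}$ hypothesis and the structure of $c$ (via the elliptic bootstrapping available on $\mathbb{R} \times Y$) to guarantee that the limit is a bona fide trajectory rather than a weaker object.
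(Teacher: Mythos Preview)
The paper does not prove this proposition; it is stated with attribution to \cite[Proposition 3]{man} and used as a black box. Your sketch reproduces the strategy of Manolescu's original proof: argue by contradiction, take a sequence of approximate trajectories violating the conclusion, use uniform $\overline{B(2R)}$ bounds together with the structure of the equation to bootstrap and extract a $C^\infty_{\mathrm{loc}}$-convergent subsequence, show the limit is a genuine finite-type Coulomb-gauge trajectory, and contradict the a priori bound of \cite[Proposition 1]{man}.

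Two small points worth tightening. First, your final step needs \emph{strong} $L^2_{k+1}$ convergence at $t=0$ so that $\|x(0)\|_{L^2_{k+1}} = \lim_n \|x_n(0)\|_{L^2_{k+1}} \ge R$; this is available once bootstrapping yields uniform bounds in a strictly higher norm (so that Rellich compactness upgrades the convergence), but you should say so explicitly rather than leave it implicit in the Arzel\`a--Ascoli step. Second, the elliptic bootstrapping on $\mathbb{R}\times Y$ does not apply verbatim to the approximate equation because of the projection $p^{\mu_n}_{\lambda_n}$; the way around this (as in \cite{man}) is that $c$ maps $\overline{B(2R)}\subset V_{k+1}$ into a bounded set of $V_{k+1}$, so $p^{\mu_n}_{\lambda_n}c(x_n)$ is uniformly bounded in $V_{k+1}$ independently of $n$, and one bootstraps using the linear operator $\partial_t + l$ with this term on the right-hand side. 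You allude to this but it is the crux of the uniformity and deserves a sentence more.
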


This result will allow us to construct the Seiberg--Witten--Floer homotopy type of $(Y , \mathfrak{s})$ using Conley indices.

\subsection{The Conley index}

Suppose we have a $1$-parameter group $\{ \varphi_t \}$ of diffeomorphisms of an $n$-dimensional manifold $M$ (not necessarily compact). The example to keep in mind is the gradient flow of a Morse function. Given a compact subset $N \subseteq M$, the invariant set of $N$ is
\[
Inv(N,\varphi) = \{ x \in N \; | \; \varphi_t(x) \in N \; \text{for all } t \in \mathbb{R} \}.
\]
A compact subset $N \subseteq M$ is called an {\em isolating neighbourhood} if $Inv(N , \varphi) \subseteq {\rm int} N$. An {\em isolated invariant set} is a subset $S \subseteq M$ such that $S = Inv(N , \varphi)$ for some isolating neighbourhood. Note that $S$ must be compact since it is a closed subset of $N$ and $N$ is required to be compact.

\begin{definition}
Let $S$ be an isolated invariant set. An {\em index pair} $(N,L)$ for $S$ is a pair of compact sets $L \subseteq N \subseteq M$ such that
\begin{itemize}
\item{$Inv(N-L,\varphi) = S \subseteq {\rm int}(N-L)$}
\item{$L$ is an exit set for $N$, that is, for all $x \in N$, if there exists $t > 0$ such that $\varphi_t(x)$ is not in $N$, then there exists $\tau$ with $0 \le \tau < t$ with $\varphi_\tau(x) \in L$.}
\item{$L$ is positively invariant in $N$, that is, if $x \in L$ and $t>0$ and such that $\varphi_s(x) \in N$ for all $0 \le s \le t$, then $\varphi_s(x) \in L$ for all $0 \le s \le t$.}
\end{itemize}
Any isolated invariant set $S$ admits an index pair $(N,L)$. The {\em Conley index} of $S$ is the based homotopy type
\[
I(S) = (N/L , [L]).
\]
\end{definition}

The Conley index is independent of the choice of index pair $(N , L)$ in a strong way. Namely for any two pairs $(N_1,L_1), (N_2,L_2)$, there is a canonical homotopy equivalence $N_1/L_1 \cong N_2/L_2$. The composition of two such canonical homotopy equivalences $N_1/L_1 \cong N_2/L_2$ and $N_2/L_2 \cong N_3/L_3$ coincides up to homotopy with the canonical homotopy equivalence $N_1/L_1 \cong N_3/L_3$ (one says that the collection of Conley indices $N/L$ forms a {\em connected simple system}). By abuse of terminology, if $(N,L)$ is an index pair for $S$ we say that $I = N/L$ is ``the" Conley index of $S$.

\begin{example}
Consider a Morse function with critical point of index $p$, say
\[
f(x_1, \dots , x_n) = \frac{1}{2} \left( -x_1^2 - \cdots -x_p^2 + x_{p+1}^2 + \cdots + x_n^2 \right).
\]
The negative gradient of $f$ using the Euclidean metric is
\[
-grad(f)(x) = (x_1 , \dots , x_p , -x_{p+1} , \dots , -x_{n}).
\]
It follows that the downwards gradient flow is given by
\[
\varphi_t( x ) = ( e^t x_1 , \dots , e^t x_p , e^{-t}x_{p+1} , \dots , e^{-t}x_n).
\]
Let $S = \{ 0 \}$ be the critical point. This is an isolated invariant set. In fact, the only invariant point of $\varphi$ is the origin, so we could take $N = D^p \times D^{n-p}$ as an isolating neighbourhood (where $D^j$ is the closed $j$-dimensional unit disc). Then $L = S^{p-1} \times D^{n-p}$ is an exit set for $N$. It is easy to see that $(N,L)$ satisfies the condition for an index pair for $S$. The Conley index is $I(S) = D^p \times D^{n-p} / ( S^{p-1} \times D^{n-p})$, which is homotopy equivalent to $S^p$, a $p$-dimensional sphere.
\end{example}

\begin{example}
If $M$ is a compact manifold and $\varphi$ is a Morse--Smale gradient flow on $M$, then the set $S$ of all critical points and all flow lines between them is an isolated invariant set. The reduced homology of $I(S)$ is known to be isomorphic to the homology of $M$.

On the other hand, if $M$ is non-compact, then we can not take $S$ to be all critical points of $M$ and all flow lines starting or terminating at a critical point, because there could be flow lines going off to $-\infty$ or coming in from $+\infty$ and then $S$ would not be compact.
\end{example}

We also need the equivariant Conley index. Let $G$ be a compact Lie group acting smoothly on $M$, preserving a flow $\varphi$ and an isolated invariant set $S$. It turns out that one can find a $G$-invariant index pair $(N,L)$ for $S$ and one can define the $G$-equivariant Conley index to be the pointed $G$-equivariant homotopy type
\[
I_G(S) = (N/L , [L]).
\]
It can be shown that this is well-defined, up to $G$-equivariant homotopy equivalence. Moreover $I_G(S)$ has the based homotopy type of a finite $G$-CW complex.

\begin{example}\label{ex:linear}
Consider again the example of the Morse function on $\mathbb{R}^n$ given by
\[
f(x_1, \dots , x_n) = \frac{1}{2} \left( -x_1^2 - \cdots -x_p^2 + x_{p+1}^2 + \cdots + x_n^2 \right).
\]
Now suppose that $G$ is a compact Lie group which acts linearly on $\mathbb{R}^n$ preserving $f$. Note that $f$ defines an $O(n-p,p)$-structure on $\mathbb{R}^n$ and the fact that $G$ preserves $f$ just means that the action of $G$ on $\mathbb{R}^n$ factors through a homomorphism $G \to O(n-p,p)$. As $G$ is compact, we may as well assume (after a linear change of coordinates) that $G$ maps to the maximal compact subgroup $O(n-p) \times O(p)$. So we can decompose $\mathbb{R}^n$ as
\[
\mathbb{R}^n = V_+ \oplus V_-,
\]
where $V_+, V_-$ are real orthogonal representations of $G$ of dimensions $n-p$ and $p$ respectively. Once again, take $S = \{0\}$ as our isolated invariant set. As our Conley index, we can take $N = D(V_-) \times D(V_+)$ and $L = S(V_-) \times D(V_+)$, hence
\[
I_G(S) = D(V_-) \times D(V_+) / S(V_-) \times D(V_+) \cong D(V_-)/S(V_-) \cong (V_-)^+,
\]
where $(V_-)^+$ is the one-point compactification of $V_-$. We see that the action of $G$ on the Conley index is determined by the representation of $G$ on the subspace of the tangent space at the critical point in the direction of the negative eigenvalues of the Hessian of $f$.
\end{example}

Let $G$ and $H$ be compact Lie groups and suppose that $G \times H$ acts smoothly on $M$. Suppose that $\{ \varphi_t \}$ is a $G \times H$-invariant flow. Then $G$ acts smoothly on the submanifold $M^H$ and the restriction of $\{ \varphi_t\}$ defines a $G$-invariant flow on $M^H$. In such a situation we can consider the relation between $G \times H$-equivariant Conley indices for the flow on $M$ and $G$-equivariant Conley indices for the restriction of the flow to $M^H$.

\begin{proposition}\label{prop:restrict}
Let $G \times H$ act smoothly on $M$ preserving a flow $\{ \varphi_t \}$. Let $(N,L)$ be a $G \times H$-equivariant index pair for an isolated invariant set $S = Inv(A)$. Then $(N^H , L^H)$ is a $G$-equivariant index pair for the isolated invariant set $S^H = Inv( A^H)$. Moreover, $(N/L)^H \cong N^H/L^H$.
\end{proposition}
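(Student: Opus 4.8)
The plan is to observe that every ingredient in the definition of an index pair is preserved under passing to $H$-fixed points, so that the proposition reduces to elementary bookkeeping. The starting point is that $\{\varphi_t\}$, being $H$-invariant, restricts to a flow on the smooth submanifold $M^H$ (whose components may have varying dimension), that $G$ still acts on $M^H$ because the $G$- and $H$-actions commute, and that the operation $Inv$ commutes with taking $H$-fixed points: for any $H$-invariant subset $C \subseteq M$ one has $Inv(C \cap M^H , \varphi|_{M^H}) = Inv(C,\varphi) \cap M^H$. This last fact is immediate, since for $x \in M^H$ the whole orbit $\{\varphi_t(x)\}$ lies in $M^H$, so requiring $\varphi_t(x) \in C \cap M^H$ for all $t$ is the same as requiring $\varphi_t(x) \in C$ for all $t$. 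Applying it to the isolating neighbourhood $A$ gives $Inv(A^H) = S \cap M^H = S^H$, and since ${\rm int}_M(A) \cap M^H$ is open in $M^H$, contained in $A^H$, and contains $S^H$, it also gives $S^H \subseteq {\rm int}_{M^H}(A^H)$; hence $A^H$ is an isolating neighbourhood in $M^H$ and $S^H$ is genuinely an isolated invariant set there.

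Next I would check the three index-pair axioms for $(N^H , L^H)$ in $M^H$ directly. Both $N^H$ and $L^H$ are closed in the compact sets $N$ and $L$, hence compact; they satisfy $L^H \subseteq N^H$; and they are $G$-invariant since $N$ and $L$ are $G\times H$-invariant. Because $L \subseteq N$ we have $N^H \setminus L^H = (N \setminus L) \cap M^H$, so the commutation fact yields $Inv(N^H \setminus L^H) = Inv(N \setminus L) \cap M^H = S \cap M^H = S^H$, and the interior condition $S^H \subseteq {\rm int}_{M^H}(N^H \setminus L^H)$ follows from $S \subseteq {\rm int}_M(N \setminus L)$ exactly as in the previous paragraph. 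For the exit-set axiom: if $x \in N^H$ and $\varphi_t(x) \notin N^H$ for some $t > 0$, then in fact $\varphi_t(x) \notin N$ (as $\varphi_t(x)$ automatically lies in $M^H$), so by the exit-set property of $L$ in $N$ there is $\tau \in [0,t)$ with $\varphi_\tau(x) \in L$, and since $\varphi_\tau(x) \in M^H$ we get $\varphi_\tau(x) \in L^H$. Positive invariance of $L^H$ in $N^H$ is the same argument: if $x \in L^H$ and $\varphi_s(x) \in N^H \subseteq N$ for all $0 \le s \le t$, then $\varphi_s(x) \in L$ by positive invariance of $L$ in $N$, and $\varphi_s(x) \in M^H$, so $\varphi_s(x) \in L^H$.

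Finally, for the statement $(N/L)^H \cong N^H/L^H$ I would use the elementary fact that taking based $H$-fixed points commutes with collapsing a closed $H$-invariant subspace: a non-basepoint class $[x] \in N/L$ can be fixed by $h \in H$ only if $hx = x$, for otherwise $hx \in L$ would force $x = h^{-1}(hx) \in L$ by $H$-invariance of $L$, a contradiction. Hence $(N/L)^H$ is the basepoint together with the image of $N^H \setminus L^H = (N\setminus L) \cap M^H$, which is exactly $N^H/L^H$; combined with the previous paragraph this shows that $(N^H/L^H, [L^H])$ is a $G$-equivariant index pair for $S^H$ and agrees with $(N/L)^H$.

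I do not anticipate a genuine obstacle; the proposition is essentially the assertion that $(\cdot)^H$ commutes with all the relevant operations, and the only places calling for a little care are (i) recalling that $M^H$ is a smooth submanifold on which Conley index theory applies and on which the restricted flow and residual $G$-action are defined, (ii) the passage between interiors in $M$ and relative interiors in $M^H$, where only the easy inclusion ${\rm int}_M(U) \cap M^H \subseteq {\rm int}_{M^H}(U \cap M^H)$ is ever needed, and (iii) noting that it is the $H$-invariance of $L$, not merely of $N$, that makes fixed points commute with the quotient. If desired one could also check that these identifications are compatible with the canonical homotopy equivalences between index pairs, so that the $H$-fixed points of the $G\times H$-equivariant Conley index of $S$ furnish a well-defined model for the $G$-equivariant Conley index of $S^H$, but this is not needed for the statement as given.
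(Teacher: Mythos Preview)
Your proof is correct and follows essentially the same route as the paper: both verify directly that compactness, the interior containment (via the easy inclusion ${\rm int}_M(P)\cap M^H \subseteq {\rm int}_{M^H}(P\cap M^H)$), the exit-set and positive-invariance axioms all survive passage to $H$-fixed points, and then identify $(N/L)^H$ with $N^H/L^H$. Your write-up is in fact a bit more explicit than the paper's, which dismisses the last identification as ``straightforward to check'' and does not isolate the general principle that $Inv$ commutes with taking $H$-fixed points.
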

\begin{proof}
First note that $A^H$ is compact because $A$ is compact. Moreover 
\[
Inv( A^H ) = \{ a \in A^H \; | \varphi_t(a) \in A^H \text{ for all } t \} = Inv(A) \cap A^H = S \cap A^H = S^H.
\]
Let $M$ be a topological space and let $P, Q \subseteq M$ be subspaces. Give $Q$ the induced topology. Then
\[
int_M(P) \cap Q = \left( \bigcup_{U \subseteq P \text{ open in } M} U \right) \cap Q = \bigcup_{U \subseteq P \text{ open in } M } (U \cap Q) \subseteq int_Q(P \cap Q).
\]
Applying this to $P = A$, $Q = M^H$, we get
\[
int_M(A) \cap M^H \subseteq int_{M^H}( A^H ).
\]
Then since $S \subseteq int_M(A)$ by the assumption that $A$ is an isolating neighbourhood, it follows that
\[
S^H \subseteq int_M(A) \cap M^H \subseteq int_{M^H}(A^H).
\]
So $A^H$ is an isolating neighbourhood in $M^H$ for $S^H$.

Now let $(N , L)$ be an index pair for $S$. So $N,L$ are compact and $L \subseteq N$. This implies that $N^H, L^H$ are compact and $L^H \subseteq N^H$. Next, since
\[
Inv( N - L ) = S \subseteq int_M(N - L),
\]
it follows that
\begin{align*}
Inv( N^H - L^H) = S^H = S \cap M^H & \subseteq int_M( N - L) \cap M^H \\
& \subseteq int_{M^H}( (N-L) \cap M^H ) = int_{M^H}( N^H - L^H).
\end{align*}
We verify that $L^H$ is an exit set for $N^H$. Let $x \in N^H$ and suppose $\varphi_t(x) \notin N^H$ for some $t > 0$. Then it follows that $\varphi_t(x) \notin N$, for if $\varphi_t(x) \in N$, then it would imply that $\varphi_t(x) \in N \cap M^H = N^H$, since $\varphi_t$ preserves $M^H$. But $L$ is an exit set for $N$, so there exists $\tau \in [0,t)$ with $\varphi_\tau(x) \in L$. It follows that $\varphi_\tau(x) \in L \cap M^H = L^H$. Hence $L^H$ is an exit set for $N^H$.

We check that $L^H$ is positively invariant in $N^H$. Suppose $x \in L^H$ and there exists a $t > 0$ for which $\varphi_s(x) \in N^H$ for all $s \in [0,t]$. Then since $L$ is positively invariant in $N$, it follows that $\varphi_s(x) \in L$ for all $s \in [0,t]$. Hence $\varphi_s(x) \in L \cap M^H = L^H$ for all $s \in [0,t]$.

We have verified that $(N^H , L^H)$ is an index pair for $S^H$. Moreover it is straightforward to check that $(N/L)^H = N^H/L^H$.
\end{proof}

\subsection{Equivariant Spanier--Whitehead category}\label{ss:swc}

In this section we recall the construction of the category $\mathfrak{C}$ from \cite{man}, which is an $S^1$-equivariant version of the Spanier--Whitehead category. In Section \ref{ss:equivswc} we will modify this construction to accommodate a finite group action on $Y$.

We work with pointed topological spaces with a basepoint preserving action of $S^1$. The objects of $\mathfrak{C}$ are triples $(X , m , n)$, where $X$ is a pointed topological space with $S^1$-action, and $m,n \in \mathbb{Z}$.\footnote{In \cite{man} $n$ is allowed to take on rational values. This is needed to construct a Seiberg--Witten--Floer spectrum which does not depend on the choice of metric. For our purposes it suffices to consider only integral values of $n$.} We further require that $X$ has the $S^1$-homotopy type of an $S^1$-CW complex, which holds for Conley indices on manifolds. The set of morphisms between two objects  $(X,m,n)$ to $(X',m',n')$ will be denoted by $\{ (X,m,n) , (X',m',n') \}_{S^1}$ and is defined to be
\[
\{ (X,m,n) , (X',m',n') \}_{S^1} = colim [ (\mathbb{R}^k \oplus \mathbb{C}^l)^+ \wedge X , (\mathbb{R}^{k+m-m'} \oplus \mathbb{C}^{l+n-n'})^+ \wedge X' ]_{S^1},
\]
where $[ \; , \; ]_{S^1}$ denotes the set of $S^1$-equivariant homotopy classes and the colimit is taken over all $k,l$ such that $k \ge m'-m$ and $l \ge n'-n$. The maps that define the colimit are given by suspensions where we smash on the left and for any topological space $Z$, we let $Z^+$ denote the one-point compactification with its obvious basepoint.

Any pointed space $X$ with $S^1$-action defines an object of $\mathfrak{C}$, namely $(X,0,0)$. We often simply write this as $X$. For any finite dimensional representation $E$ of $S^1$, we let $\Sigma^E$ denote the reduced suspension operation
\[
\Sigma^E X = E^+ \wedge X.
\]
This operation extends to $\mathfrak{C}$ by taking $\Sigma^E (X , m , n ) = (\Sigma^E X , m , n)$. We are mainly interested in the case that $E$ is a real vector space with trivial $S^1$-action, or $E$ is a complex vector space with $S^1$ acting by scalar multiplication. If $E$ is a real vector space with trivial action, then one finds that
\[
\Sigma^{E} (X,m,n) \cong (X , m - dim_{\mathbb{R}}(E) , n).
\]
The isomorphism depends on a choice of isomorphism $E \cong \mathbb{R}^{dim_{\mathbb{R}}(E)}$. Up to homotopy there are two choices since $GL(E,\mathbb{R})$ has two components. If $E$ is a complex vector space and $S^1$ acts by scalar multiplication, then
\[
\Sigma^{E}(X,m,n) \cong (X , m , n - dim_{\mathbb{C}}(E) ).
\]
The isomorphism is unique up to homotopy as $GL(E,\mathbb{C})$ is connected. We can define desuspension by a real vector space $E$ with trivial $S^1$-action as follows:
\[
\Sigma^{-E} (X,m,n) = ( (E)^+ \wedge X,m+2dim_{\mathbb{R}}(E) , n).
\]
Then $\Sigma^{-E} \Sigma^{E} Z \cong Z$ by an isomorphism which is canonical up to homotopy. We can define desuspension by a complex vector space $E$ with $S^1$ acting by scalar multiplication by:
\[
\Sigma^{-E} (X,m,n) = (X , m , n + dim_{\mathbb{C}}(E)).
\]
Then $\Sigma^{-E} \Sigma^{E} Z \cong Z$ by an isomorphism which is canonical up to homotopy.

For $Z = (X,m,n) \in \mathfrak{C}$, we define the reduced equivariant cohomology of $Z$ to be 
\[
\widetilde{H}^j_{S^1}(Z ) = \widetilde{H}^{j+m+2n}_{S^1}( X ).
\]
The cohomology is well defined as a consequence of the Thom isomorphism.

\subsection{Seiberg--Witten--Floer cohomology}\label{ss:swfh}

Consider as before a rational homology $3$-sphere $Y$ and a spin$^c$-structure $\mathfrak{s}$. Let $R > 0$, $\lambda, \mu$ be as in Proposition \ref{prop:R}. We want to take the Conley index of the set of all critical points in $B(R)$ and flow lines between them which lie in $B(R)$ for all time for the approximate Seiberg--Witten flow $l + p^\mu_\lambda c$. The problem is that there could be trajectories that go to infinity in a finite amount of time. Hence we do not have a flow $\{ \varphi_t \}$ in the sense of a $1$-parameter group of diffeomorphisms. To get around this issue, let $u^\mu_\lambda$ be a compactly supported smooth cutoff function which is identically $1$ on $B(3R)$. For consistency purposes we assume that $u^\mu_\lambda = u^{\mu'}_{\lambda'}|_{V^\mu_\lambda}$ for $\lambda' \le \lambda$, $\mu' \ge \mu$. One way of doing this is to take $u^\mu_\lambda( v ) = \rho(||v||)$, where $\rho$ is smooth, compactly supported and $\rho( t ) = 1$ for $t < 3$.

For each $\mu, \lambda$, the vector field $u^\mu_\lambda( l + p^\mu_\lambda c)$ is compactly supported, so it generates a well-defined flow $\varphi^\mu_{\lambda,t}$ on $V^\mu_\lambda$. Since $u^\mu_\lambda = 1$ on $\overline{B(2R)}$, Proposition \ref{prop:R} still applies to the trajectories of $u^\mu_\lambda( l + p^\mu_\lambda c)$. It follows that
\[
Inv( V^\mu_\lambda \cap \overline{B(2R)} ) = S^\mu_\lambda
\]
where $S^\mu_\lambda$ is the set of critical points and flow lines between critical points for the approximate Seiberg--Witten flow $l + p^\mu_\lambda c$ which lie in $B(R)$. Therefore $S^\mu_\lambda$ is an isolated invariant set. Moreover $S^1$ preserves the approximate flow, hence we may take the $S^1$-equivariant Conley index
\[
I^\mu_\lambda = I_{S^1}(S^\mu_\lambda).
\]
This is an $S^1$-equivariant homotopy type. However it is not quite an invariant of $(Y , \mathfrak{s})$ because it depends on the choice of metric $g$ as well as the values of $\lambda, \mu, R$. Note that it is independent of the choice of $u^\mu_\lambda$ because of the assumption that $u^\mu_\lambda = 1$ on $B(3R)$. To get a genuine invariant we must understand how $I^\mu_\lambda$ changes as we vary these parameters.

Let $\lambda^*, \mu^*$ satisfy Proposition \ref{prop:R}. Suppose that $\lambda' \le \lambda \le \lambda^*$ and $\mu' \ge \mu \ge \mu^*$. We wish to compare the Conley indices $I^\mu_\lambda, I^{\mu'}_\lambda$ and $I^\mu_{\lambda'}$. In other words, what happens if we increase either $\mu$ or $-\lambda$, staying in the range where $\mu$ and $-\lambda$ are sufficiently large.

We use the following invariance property of the Conley index: suppose we have a family $\{ \varphi_t(s)\}$ of flows depending continuously on $s \in [0,1]$. Suppose that a fixed compact set $A$ is an isolating neighbourhood for all $s \in [0,1]$ and let $S(s) = Inv( A , \varphi_t(s) )$. Then $I( S_0 , \varphi_t(0)) \cong I(S_1 , \varphi_t(1))$ by a canonical homotopy equivalence.

Consider increasing $\mu$ to $\mu'$. The finite energy trajectories of $l + p^\mu_\lambda c p^\mu_\lambda$ in $V^{\mu'}_\lambda$ must actually lie in $V^\mu_\lambda$. Therefore $A = \overline{B}(2R) \cap V^{\mu'}_\lambda$ is an isolating neighbourhood for $S^\mu_\lambda$ in $V^{\mu'}_\lambda$. Let $\mu(s) = (1-s)\mu + s\mu'$, $s \in [0,1]$, and let $\widetilde{\varphi}^{\mu(s)}_\lambda$ denote the flow of $u^{\mu(s)}_\lambda( l + p^{\mu(s)}_\lambda c p^{\mu(s)}_\lambda)$ on $V^{\mu'}_\lambda$. Then for each $s \in [0,1]$, $A$ is an isolating neighbourhood for $S^{\mu(s)}_\lambda$ in $V^{\mu'}_\lambda$ with respect to the flow $\widetilde{\varphi}^{\mu(s)}_\lambda$. Hence
\[
Inv( \widetilde{\varphi}^\mu_\lambda , A) \cong Inv( \widetilde{\varphi}^{\mu'}_\lambda , A) = I^{\mu'}_\lambda.
\]
But $\widetilde{\varphi}^\mu$ is easily seen to be homotopic to the product of the flow of $u^{\mu}_\lambda (l + p^\mu_\lambda c)$ on $V^\mu_\lambda$ with a linear flow on $W$ generated by $l|_W$, where $W$ is the orthogonal complement of $V^\mu_\lambda$ in $V^{\mu'}_{\lambda}$. The Conley index of a product of flows is just the smash product of Conley indices. Combined with Example \ref{ex:linear}, we see that
\[
Inv( \widetilde{\varphi}^\mu_\lambda , A) \cong I^\mu_\lambda \wedge W_-^+,
\]
where $W_-$ is the part of $W$ spanned by negative eigenvalues of $l$. But $W$ is contained in the positive eigenvalues of $l$, so $W_- = 0$ and hence
\[
I^{\mu'}_\lambda \cong I^\mu_\lambda.
\]

Now consider decreasing $\lambda$ to $\lambda'$. An identical argument to the one above gives
\[
I^\mu_{\lambda'} \cong I^\mu_\lambda \wedge W^+_-
\]
where $W$ is the orthogonal complement of $V^\mu_\lambda$ in $V^\mu_{\lambda'}$. In this case $W$ is spanned by negative eigenspaces of $l$, so $W_- = W = V^\lambda_{\lambda'}$ and
\[
I^\mu_{\lambda'} \cong I^\mu_\lambda \wedge (V^\lambda_{\lambda'})^+.
\]
This implies that
\[
\Sigma^{-V^0_\lambda} I^\mu_\lambda
\]
does not depend on the values of $\mu, \lambda$ (provided $\mu, -\lambda$ are sufficiently large).

\begin{definition}
Given $(Y,\mathfrak{s})$ and a metric $g$, we set
\[
SWF(Y , \mathfrak{s} , g) = \Sigma^{-V^0_\lambda(g)} I^\mu_\lambda(g)
\]
for suitably chosen $\mu,\lambda$ and $R$. 
\end{definition}

We have established that the homotopy type of $SWF(Y,\mathfrak{s},g)$ does not depend on the choices of $\mu$ and $\lambda$, or more precisely, any two choices of $\mu,\lambda$ are related by a canonical homotopy equivalence. One also checks that it does not depend on the choice of $R$. So up to homotopy, $SWF(Y , \mathfrak{s} , g)$ depends only on $Y , \mathfrak{s}$ and $g$.

Next we consider varying the metric $g$. Consider a smooth homotopy $g_s$, $s \in [0,1]$ joining two metrics $g_0, g_1$, which is constant near $s=0$. Assuming that the $g_s$ are all sufficiently close to each other in a suitable topology, we can arrange that there exists $R, \mu^*, \lambda^*$ such that Proposition \ref{prop:R} is true for all $s \in [0,1]$ and all $\mu,\lambda$ with $\mu \ge \mu^*$, $\lambda \le \lambda^*$. This suffices, as compactness of $[0,1]$ implies that any smooth path $g_s$ can be broken up into finitely many sub-paths over which this assumption holds.

We assume that there exists some $\lambda < \lambda^*$ and $\mu > \mu^*$ such that $\lambda, \mu$ are not eigenvalues of $l_s$ for any $s \in [0,1]$. This property will hold for all sufficiently small paths. The spaces $(V^\mu_\lambda)_s$ then form a smooth vector bundle over $[0,1]$. We can trivialise this vector bundle and identify all these spaces with a single $V^\mu_\lambda$. Further, we assume that  $B(R)_{s_1} \subset B(2R)_{s_2}$ for each $s_1,s_2 \in [0,1]$. Here we think of the balls as subsets of the same space $V^\mu_\lambda$. Once again, this property will hold for all small enough paths. Then
\[
\cap_{s \in [0,1] } \overline{B(2R)_s}
\]
is a compact isolating neighbourhood for $S^\mu_\lambda$ in any metric $g_s$ with the flow $(\varphi^\mu_\lambda)_s$. The Conley index will be independent of $s$ and hence
\[
(I^\mu_\lambda)_0 \cong (I^\mu_\lambda)_1.
\]
However, we do not have that $\Sigma^{-(V^0_\lambda)_0} (I^\mu_\lambda)_0$ equals $\Sigma^{-(V^0_\lambda)_1} (I^\mu_\lambda)_1$. The reason is that some eigenvalues in $(\lambda , \mu)$ may change sign. On the other hand, any eigenvalue greater than $\mu$ or less than $\lambda$ can not change sign, by our assumption that $\mu, \lambda$ are not eigenvalues of $l_s$ for any $s \in [0,1]$. Hence the difference between $(V^0_\lambda)_0$ and $(V^0_\lambda)_1$ is given in terms of the spectral flow of the family of operators $\{ l_s \}$, $s \in [0,1]$.

The operator $l$ can be split into real and complex components. The real part has no spectral flow, so we only need to consider the complex part, which is the Dirac operator $D_s$. The spectral flow can be expressed using the Atiyah--Patodi--Singer (APS) index theorem on the cylinder $X = [0,1] \times Y$, see \cite{aps1,aps3}. Let $\widehat{g}$ be the metric on $X$ given by $g_s$ in the vertical direction and $(ds)^2$ in the horizontal direction. Let $S_s$ denote the spinor bundle associated to $(\mathfrak{s} , g_s)$. The bundles $S_s$ can all be identified with $S = S_0$, but with varying Clifford multiplication. The spin$^c$ structure $\mathfrak{s}$ lifts to a spin$^c$ structure on $X$. Let $S^\pm$ denote the spinor bundles of this spin$^c$ structure. Then $S^\pm$ can be identified with the pullback of $S$ to $X$. Suppose for each $s$ we have chosen a flat reference connection $A_s$. Since we have identified $S_s$ with $S$ for all $s$, we get an induced identification of $L_s = det(S_s)$ with $L = L_0$. Then $A_s = A_0 + i\alpha_s$ for some closed real $1$-form $\alpha_s$. The path of spin$^c$ connections $\{ A_s\}$ fit together to form a spin$^c$ connection $\widehat{A}$ on the determinant line $L$ pulled back to $X$. Let $\widehat{D}$ be the Dirac operator determined by $\widehat{g}$ and $\widehat{A}$. Then $\widehat{D}(\psi) = \partial_s \psi + D_s \psi$. After a possible reparametrisation we can assume that $(g_s , A_s )$ is constant near the boundary. Applying the APS index theorem to the Dirac operator $\widehat{D}$ on the cylinder $[0,1] \times Y$, one can write the spectral flow $SF(\{D_s \})$ as
\[
SF(\{D_s\}) = \frac{1}{2}( \eta(D_1) - k(D_1) ) - \frac{1}{2}( \eta(D_0) - k(D_0) ) + \int_{[0,1] \times Y} \left( -\frac{1}{24} p_1( \hat{g} ) + \frac{ c_1(\widehat{A})^2 }{8} \right),
\]
where $\eta(D)$ is the eta invariant of $D$, $k(D) = dim_{\mathbb{C}}(Ker(D))$, $p_1(\hat{g})$ is the first Pontryagin form of $\hat{g}$, $c_1(\widehat{A})$ is the Chern form $(i/2\pi) F_{2\widehat{A}}$, where $F_{2\widehat{A}}$ is the curvature of the induced connection $2\widehat{A}$ on $L$. Now since $\alpha_s$ is closed for each $s$, we get $F_{2 \widehat{A}} = ds \wedge 2i \partial_s \alpha_s$ and hence $c_1(\widehat{A})^2 = 0$. So
\begin{equation}\label{equ:sf1}
SF(\{D_s\}) = \frac{1}{2}( \eta(D_1) - k(D_1) ) - \frac{1}{2}( \eta(D_0) - k(D_0) ) - \frac{1}{24} \int_{[0,1] \times Y} p_1( \hat{g} ).
\end{equation}
Let $\eta_{sign}(g_s)$ denote the eta invariant of the signature operator on $Y$ defined by $g_s$. Then from the APS index theorem for the signature operator together with the fact that the signature operator has no spectral flow, we find
\begin{equation}\label{equ:sf2}
\eta_{sign}(g_1) - \eta_{sig}(g_0) = \frac{1}{3} \int_{[0,1] \times Y} p_1( \hat{g} ).
\end{equation}\label{equ:sf2}
Combining Equations (\ref{equ:sf1}) and (\ref{equ:sf2}), we see that
\[
SF( \{ D_s \} ) = \frac{1}{2}( \eta(D_1) - k(D_1) ) - \frac{1}{2}( \eta(D_0) - k(D_0) ) - \frac{1}{8}( \eta_{sign}(g_1) - \eta_{sign}(g_0) ),
\]
and hence
\[
SF( \{D_s \} ) = n(Y , \mathfrak{s} , g_0) - n(Y , \mathfrak{s} , g_1)
\]
where we have defined
\[
n(Y , \mathfrak{s} , g) = \frac{1}{2} \eta(D) - \frac{1}{2} k(D) - \frac{1}{8}\eta_{sign}(g)
\]
We will show that $n(Y , \mathfrak{s} , g)$ is a rational number. Let $(W , \mathfrak{s}_W)$ be a spin$^c$ $4$-manifold bounding $(Y,\mathfrak{s})$. This always exists because $\Omega_3^{{\rm spin}^c} = 0$. Extend the Dirac operator $D$ on $Y$ to a Dirac operator $\widehat{D}$ in the same way as we did for the cylinder $[0,1] \times Y$. The APS index theorem for the Dirac operator and signature operator on $W$ combined give
\[
ind_{APS}( \widehat{D}) = \frac{ c_1(\mathfrak{s}_W)^2 - \sigma(W) }{8} + \frac{\eta_{dir} - k}{2} - \frac{\eta_{sign}}{8}
\]
and thus
\begin{equation}\label{equ:aps}
n(Y , \mathfrak{s} , g) = ind_{APS}(\widehat{D}) - \delta(W , \mathfrak{s})
\end{equation}
where we set
\[
\delta(W , \mathfrak{s}) = \frac{ c_1(\mathfrak{s}_W)^2 - \sigma(W) }{8}.
\]
This shows that $n(Y , \mathfrak{s} , g)$ is a rational number since $ind_{APS}(\widehat{D})$ is an integer and $\delta(W , \mathfrak{s})$ is a rational number.

\begin{definition}
The {\em Seiberg--Witten--Floer cohomology} of $(Y , \mathfrak{s},g)$ is defined as
\[
HSW^j(Y , \mathfrak{s}) = \widetilde{H}^{j+2n(Y , \mathfrak{s} , g)}_{S^1}( SWF(Y , \mathfrak{s} , g) ),
\]
where $j \in \mathbb{Q}$ and as usual the coefficient group $\mathbb{F}$ has been omitted from the notation.
\end{definition}

Below we will show that $HSW^*(Y , \mathfrak{s})$ is independent of the choice of metric $g$ (and other auxiliary choices), hence it is a well defined topological invariant of the pair $(Y , \mathfrak{s})$.

Notice that because of the grading shift by $2n(Y , \mathfrak{s} , g)$ the cohomology groups $HSW^*(Y , \mathfrak{s})$ are concentrated in rational degrees. It was shown by Lidman--Manolescu \cite{lima} that $HSW^*(Y , \mathfrak{s})$ is isomorphic to the Seiberg--Witten monopole Floer cohomology as defined by Kronheimer--Mrowka \cite{km}. Together with the equivalence of monopole Floer homology and Heegaard Floer homology due to the work of Kutluhan--Lee--Taubes \cite{klt1,klt2,klt3,klt4,klt5}, Colin--Ghiggini--Honda \cite{cgh1,cgh2,cgh3} and Taubes \cite{tau}, we have isomorphisms
\[
HSW^*(Y , \mathfrak{s}) \cong HF^-_{-*}( \overline{Y} , \mathfrak{s}) \cong HF_+^*(Y , \mathfrak{s})
\]
where $HF^-_*$ denotes the minus version of Heegaard Floer homology and $HF_+^*$ denotes the plus version of Heegaard Floer {\em cohomology} (taken with respect to the same coefficient group $\mathbb{F}$). Here we use a grading convention for $HF^-$ such that $HF^-(S^3)$ starts in degree $0$. Through the work of \cite{cg,hr,ra}, the isomorphism is known to preserve the absolute gradings. Using co-Borel, Tate or non-equivariant cohomologies gives similar isomorphisms to the other versions of Heegaard--Floer homology, see \cite[Corollary 1.2.4]{lima} for the precise statement.

 If we have two metrics $g_0,g_1$, then the spectral flow of a path joining them satisfies
\[
SF( \{D_s \} ) = n(Y , \mathfrak{s} , g_1) - n(Y , \mathfrak{s} , g_0).
\]
On the other hand, from the definition of spectral flow we   have
\[
SF( \{D_s \} ) = dim(V^0_\lambda(g_0)) - dim(V^0_\lambda(g_1)).
\]
It follows that
\[
SWF(Y , \mathfrak{s} , g_1) \cong \Sigma^{SF(\{D_s \})} SWF(Y , \mathfrak{s} , g_0)
\]
and hence
\[
\widetilde{H}^{j+2SF(\{D_s\})}_{S^1}( SWF(Y , \mathfrak{s} , g_1) ) \cong \widetilde{H}^{j}_{S^1}( SWF(Y , \mathfrak{s} , g_0) )
\]
by the Thom isomorphism. Replacing $j$ by $j + 2n(Y,\mathfrak{s},g_0)$, we have
\[
\widetilde{H}^{j+2 n(Y,\mathfrak{s},g_1)}_{S^1}( SWF(Y , \mathfrak{s} , g_1) ) \cong \widetilde{H}^{j+2n(Y,\mathfrak{s},g_0)}_{S^1}( SWF(Y , \mathfrak{s} , g_0) ).
\]

Hence the Seiberg--Witten--Floer cohomology $\widetilde{H}^{j+2 n(Y,\mathfrak{s},g)}_{S^1}( SWF(Y , \mathfrak{s} , g) )$ is independent of the metric. The above isomorphism is canonical in the sense that it does not depend on the choice of path from $g_0$ to $g_1$. This follows from the fact that the space of all metrics on $Y$ is contractible, so   any two paths with the same endpoints are homotopic.

\subsection{Duality}\label{sec:duality}

\begin{definition}
Let $V$ be a finite dimensional representation of a compact Lie group $G$. Two pointed, finite $G$-CW complexes $X,X'$ are equivariantly {\em $V$-dual} if there exists a $G$-map
\[
\varepsilon \colon  X \wedge X' \to V^+
\]
such that for any subgroup $H \subseteq G$, the fixed point map
\[
\varepsilon^H \colon X^H \wedge (X')^H \to (V^H)^+
\]
induces a non-equivariant duality between $X^H$ and $(X')^H$, in the sense of non-equivariant Spanier--Whitehead duality.
\end{definition}

Consider the Conley index $I^\mu_\lambda$ associated to $(Y , \mathfrak{s} , g)$ for suitably chosen $R , \mu , \lambda$. One finds that reversing orientation of $Y$ has the effect of reversing the Chern--Simons--Dirac flow. From \cite{cor}, it follows that $I^\mu_\lambda(Y)$ and $I_\mu^\lambda( \overline{Y})$ are $V^\mu_\lambda$-dual, so there exists a duality map
\[
\varepsilon \colon I^\mu_\lambda(Y) \wedge I^\lambda_\mu( \overline{Y}) \to (V^\mu_\lambda)^+.
\]
Notice that $dim( V^0_\lambda(Y) ) + dim( V^0_{-\mu}( \overline{Y}) ) - 2k(D) = dim( V^\mu_\lambda(Y))$, where $k(D)$ is the dimension of the kernel of $D$. Desuspending, we obtain a duality map
\[
\varepsilon \colon SWF(Y , \mathfrak{s} , g) \wedge SWF( \overline{Y} , \mathfrak{s} , g) \to S^{-k(D) \mathbb{C}}.
\]
We also have
\begin{equation}\label{equ:nnbar}
n(Y , \mathfrak{s} , g ) + n( \overline{Y} , \mathfrak{s} , g) = -k(D).
\end{equation}

\subsection{Fixed points}

\begin{definition}
Let $s \ge 0$ be an integer. We say that a finite pointed $S^1$-$CW$ complex $X$ is of {\em type SWF at level $s$} if
\begin{itemize}
\item{The $S^1$-fixed point set $X^{S^1}$ is homotopy equivalent to the sphere $(\mathbb{R}^s)^+$.}
\item{The action of $S^1$ is free on the complement $X - X^{S^1}$.}
\end{itemize}
\end{definition}

\begin{proposition}
Given $(Y, \mathfrak{s}, g)$, let $R, \mu, \lambda$ be as in Proposition \ref{prop:R}. Then $\overline{B(2R)} \cap V^\mu_\lambda$ is an isolating neighbourhood for $S^\mu_\lambda = Inv( \overline{B(2R)} \cap V^\mu_\lambda )$. Let $I^\mu_\lambda = I_{S^1}( S^\mu_\lambda)$ be the Conley index. Then $I^\mu_\lambda$ is of type SWF at level $s = dim( V^0_\lambda(\mathbb{R}))$, where $V^0_\lambda(\mathbb{R})$ denotes the $S^1$-invariant part of $V^0_\lambda$.
\end{proposition}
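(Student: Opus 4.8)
The plan is to treat the two assertions of the proposition separately: that $N := \overline{B(2R)} \cap V^\mu_\lambda$ is an isolating neighbourhood, and that the resulting Conley index is of type SWF at the stated level. The second assertion I would reduce to the linear model of Example \ref{ex:linear} via the fixed-point comparison of Proposition \ref{prop:restrict}.

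The isolating neighbourhood claim is immediate from Proposition \ref{prop:R}. The set $N$ is compact, and since $u^\mu_\lambda \equiv 1$ on $B(3R) \supseteq \overline{B(2R)}$, any trajectory of the cutoff flow $\varphi^\mu_{\lambda,t}$ that stays in $N$ for all time is a genuine approximate Seiberg--Witten trajectory lying in $\overline{B(2R)}$, hence by Proposition \ref{prop:R} it lies in $B(R)$. Therefore $Inv(N) = S^\mu_\lambda \subseteq B(R) \cap V^\mu_\lambda \subseteq \mathrm{int}_{V^\mu_\lambda}(N)$, so $S^\mu_\lambda$ is an isolated invariant set and $I^\mu_\lambda = I_{S^1}(S^\mu_\lambda)$ is defined.

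For the freeness condition I would use that the $S^1$-action on $V^\mu_\lambda$ is by constant gauge transformations, $u \cdot (a,\phi) = (a, u\phi)$, so the stabiliser of $(a,\phi)$ is all of $S^1$ when $\phi = 0$ and is trivial when $\phi \neq 0$; in particular there are no non-trivial finite stabilisers and $(V^\mu_\lambda)^{S^1}$ is exactly the reducible locus $V^\mu_\lambda(\mathbb{R})$. For any $S^1$-invariant index pair $(N,L)$, and every index pair here is a pair of subsets of $V^\mu_\lambda$, a non-basepoint of $N/L$ is the image of a unique $p \in N \setminus L$, and $S^1$-invariance of $L$ forces $\mathrm{Stab}([p]) = \mathrm{Stab}(p)$; together with Proposition \ref{prop:restrict} applied with trivial $G$ and $H = S^1$, which gives $(N/L)^{S^1} = N^{S^1}/L^{S^1}$, this shows that the points of $I^\mu_\lambda$ outside $(I^\mu_\lambda)^{S^1}$ are images of irreducible configurations and hence have trivial stabiliser.

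It remains to compute $(I^\mu_\lambda)^{S^1}$, which is the main point. By Proposition \ref{prop:restrict} it is the Conley index (with trivial group) of the restriction of the flow to $(V^\mu_\lambda)^{S^1} = V^\mu_\lambda(\mathbb{R})$, the span of the eigenspaces of $*d$ on $Ker(d^*)$ with eigenvalue in $(\lambda,\mu]$. On the reducible locus the nonlinear term $c$ vanishes, since $\tau(0,0)=0$ and the function $f$ solving $df = (1-\pi)\tau(0,0)$ with $\int_Y f\, dvol_Y = 0$ is $f=0$, so $p^\mu_\lambda c$ vanishes there as well; and since $u^\mu_\lambda \equiv 1$ on $N$ the restricted flow is the genuine linear flow $\dot a = -*da$. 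As $b_1(Y)=0$ there are no harmonic $1$-forms, so $*d$ is invertible on $Ker(d^*)$; thus its only bounded orbit is the origin and $0$ is not among the eigenvalues in $(\lambda,\mu]$. Example \ref{ex:linear} then identifies the Conley index of $\{0\}$ with the one-point compactification of the sum of the negative eigenspaces, namely the span of the eigenspaces with eigenvalue in $(\lambda,0)$, which is precisely $V^0_\lambda(\mathbb{R})$. Hence $(I^\mu_\lambda)^{S^1} \simeq (V^0_\lambda(\mathbb{R}))^+ \simeq (\mathbb{R}^s)^+$ with $s = \dim V^0_\lambda(\mathbb{R})$, and $I^\mu_\lambda$ is of type SWF at level $s$. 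The only points demanding care are matching the sign conventions so that the negative, rather than positive, eigenspace appears in the Conley index, and checking that the cutoff $u^\mu_\lambda$ is irrelevant on $N$; the rest is bookkeeping.
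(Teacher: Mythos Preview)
Your proposal is correct and follows essentially the same approach as the paper: use Proposition~\ref{prop:restrict} to identify $(I^\mu_\lambda)^{S^1}$ with the Conley index of the restricted flow on the reducible locus, observe that $c$ vanishes there so the flow is linear, invoke $b_1(Y)=0$ to ensure $*d$ has trivial kernel, and apply Example~\ref{ex:linear} to obtain $(V^0_\lambda(\mathbb{R}))^+$; freeness away from the fixed set comes from the explicit $S^1$-action on spinors. Your treatment is somewhat more detailed than the paper's (particularly on the stabiliser computation and the reason $c$ vanishes), but the logical structure is identical.
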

\begin{proof}
Let $(N , L)$ be an index pair for $S^\mu_\lambda$ so that $I^\mu_\lambda = N/L$. Then by Proposition \ref{prop:restrict}, $(I^\mu_\lambda)^{S^1} = N^{S^1}/L^{S^1}$ is the Conley index of $(S^\mu_\lambda)^{S^1}$. Further, we have that $\overline{B(2R)} \cap V^\mu_\lambda(\mathbb{R})$ is an isolating neighbourhood for $(S^\mu_\lambda)^{S^1}$, where $V^\mu_\lambda(\mathbb{R})$ denotes the $S^1$-invariant part of $V^\mu_\lambda$. It is easy to see that $c = 0$ on $V^\mu_\lambda(\mathbb{R})$, where $c$ is the non-linear part of the Seiberg--Witten flow. Thus the restriction of the approximate Seiberg--Witten flow $u^\mu_\lambda( l + p^\mu_\lambda c)$ to $V^\mu_\lambda(\mathbb{R})$ is the flow $u^\mu_\lambda l$. Restricted to $B(3R)$ this is just the linear flow associated to $l$. The real part of $l$ has zero kernel, because $b_1(Y) = 0$. It follows that the Conley index of $(S^\mu_\lambda)^{S^1}$ is the Conley index of $\{0\}$ in $V^\mu_\lambda(\mathbb{R})$ with respect to the linear flow of $l$. This is $( V^0_\lambda(\mathbb{R}) )^+$. Thus we have shown that the $S^1$-fixed point set of $I^\mu_\lambda$ is homotopy equivalent to $( V^0_\lambda(\mathbb{R}) )^+$. Furthermore, $S^1$ acts freely on $V^\mu_\lambda - V^\mu_\lambda(\mathbb{R})$, hence $S^1$ acts freely on $N - N^{S^1}$ and therefore also on $(I^\mu_\lambda) - (I^\mu_\lambda)^{S^1}$. This proves the result.
\end{proof}

Using the identities
\begin{equation}\label{equ:fix}
(\mathbb{R}^+ \wedge X)^{S^1} = \mathbb{R}^+ \wedge X^{S^1}, \quad (\mathbb{C}^+ \wedge X)^{S^1} = X^{S^1}
\end{equation}
we see that:
\begin{itemize}
\item{If $X$ is of type SWF at level $s$, then $\mathbb{R}^+ \wedge X$ is of type SWF at level $s+1$,}
\item{If $X$ is of type SWF at level $s$, then $\mathbb{C}^+ \wedge X$ is of type SWF at level $s$.}
\end{itemize}

Now let $Z = (X , m , n )$ belong to the equivariant Spanier-Whitehead category $\mathfrak{C}$. We say that $Z$ is of type SWF at level $s$ if $X$ is of type SWF of level $s+m$. The above remarks shows that this is a well-defined notion.

We have shown that the Conley index $I^\mu_{\lambda}$ is of type SWF at level $s = dim(V^0_\lambda(\mathbb{R}))$, where $V^0_\lambda = V^0_\lambda(\mathbb{R}) \oplus V^0_\lambda(\mathbb{C})$ denotes the decomposition of $V^0_\lambda$ into copies of $\mathbb{R}$ and $\mathbb{C}$. Now we recall that
\[
SWF(Y , \mathfrak{s} , g) = \Sigma^{-V^0_\lambda} I^\mu_\lambda.
\]
It follows that $SWF(Y , \mathfrak{s} , g)$ is of type SWF at level $0$.

Let $X$ be a space of type SWF at level $s$. Let $\iota \colon X^{S^1} \to X$ denote the inclusion of the fixed point set. Using the localisation theorem in equivariant cohomology \cite[III (3.8)]{die}, it follows that the pullback map $\iota^* \colon \widetilde{H}^*_{S^1}(X) \to \widetilde{H}^*_{S^1}(X^{S^1})$ is not identically zero. Therefore, we may define the $d$-invariant $d(X)$ of $X$ by
\[
d(X) = {\rm min}\{ j \; | \; \exists x \in \widetilde{H}^j_{S^1}( X^{S^1}), x \neq 0, x \in Im(\iota^*) \}.
\]
Note that $d(X)$ could potentially depend on the choice of coefficient group, so we may write the invariant as $d(X ; \mathbb{F})$ if we wish to indicate the dependence on $\mathbb{F}$. 

We also define the $\delta$-invariant of $X$ by $\delta(X) = d(X)/2$. Using Equation (\ref{equ:fix}) and the Thom isomorphism, one finds:
\[
d(\mathbb{R}^+ \wedge X) = d(X)+1, \quad d( \mathbb{C}^+ \wedge X) = d(X)+2.
\]

Now if $Z = (X,m,n)$ is of type SWF, we define the $d$-invariant $d(Z)$ of $Z$ to be
\[
d(Z) = d(X) - m - 2n \in \mathbb{Z}.
\]

From \cite[Corollary 1.2.3]{lima}, it follows that the $d$-invariant $d(Y,\mathfrak{s})$ as defined by Heegaard Floer homology (with coefficient group $\mathbb{F}$) is given in terms of $SWF(Y , \mathfrak{s},g)$ by
\[
d(Y , \mathfrak{s}) = d( SWF(Y , \mathfrak{s},g)) -2n(Y,\mathfrak{s},g).
\]
For notational convenience we also define $\delta(Y , \mathfrak{s}) = d(Y , \mathfrak{s})/2$. 

\section{Equivariant Seiberg--Witten--Floer cohomology}\label{sec:eswfc}

\subsection{Assumption on $G$ and $\mathbb{F}$}\label{sec:assumption}
Throughout this paper we will assume that one of the two following conditions hold:
\begin{itemize}
\item[(1)]{$G$ is an arbitrary finite group and $\mathbb{F} = \mathbb{Z}/2\mathbb{Z}$, or}
\item[(2)]{$\mathbb{F}$ is an arbitrary field and the order of $G$ is odd.}
\end{itemize}

Condition (1) ensures that we do not need to concern ourselves with questions of orientability. Condition (2) ensures that any $S^1$-central extension $\widetilde{G}$ acts orientation preservingly on all of its finite dimensional representations. Hence under either condition, the Thom isomorphism holds without requiring local coefficients:
\[
\widetilde{H}^*_{\widetilde{G}}( X ) \cong \widetilde{H}^{* + dim_{\mathbb{R}}(V)}_{\widetilde{G}}( V^+ \wedge X ).
\]
Here $X$ is any $\widetilde{G}$-space and $V$ is any finite dimensional representation of $\widetilde{G}$.

\subsection{Lifting $G$-actions}\label{sec:lifting}

Recall that $Y$ denotes a rational homology $3$-sphere. Suppose that a finite group $G$ acts on $Y$ by orientation preserving diffeomorphisms and suppose $G$ preserves the isomorphism class of a spin$^c$-structure $\mathfrak{s}$. We will construct a $G$-equivariant version of the Seiberg--Witten--Floer cohomology of $(Y , \mathfrak{s})$.

Choose a $G$-invariant metric $g$ on $Y$ and a reference spin$^c$-connection $A_0$ such that the connection on the determinant line $L$ is flat. Let $g \in G$ and choose a lift $\hat{g} \colon S \to S$ of $g$ to the spinor bundle $S$, which is possible since $G$ preserves the isomorphism class of $\mathfrak{s}$. Then $\hat{g}^{-1}A_0 \hat{g} = A_0 + a$ for some $a \in i \Omega^1(Y)$. Since $A_0$ and $\hat{g}^{-1}A_0 \hat{g}$ are flat, we must have $da = 0$. Moreover,  $b_1(Y)=0$ implies that $a = df$ for some $f \colon Y \to i\mathbb{R}$. Setting $\widetilde{g} = e^{-f}\hat{g}$, it follows that $\widetilde{g}$ is a lift of $g$ which preserves $A_0$. Any other lift of $g$ that preserves $A_0$ is of the form $c\widetilde{g}$ with $c \in U(1)$ a constant. Let $G_{\mathfrak{s}}$ denote the set of all possible lifts of elements of $G$ which preserve $A_0$. Then $G_{\mathfrak{s}}$ is a group and we have a central extension
\[
1 \to S^1 \to G_{\mathfrak{s}} \to G \to 1.
\]

Now we carry out the construction of the Conley index of a finite-dimensional approximation of the Chern-Simons-Dirac flow $G_{\mathfrak{s}}$-equivariantly, instead of just $S^1$-equivariantly. 

\subsection{$G_{\mathfrak{s}}$-equivariant Spanier--Whitehead category}\label{ss:equivswc} 
In this section $\widetilde{G}$ denotes any $S^1$ central extension of $G$. We will construct a category $\mathfrak{C}(\widetilde{G})$, the $\widetilde{G}$-equivariant version of $\mathfrak{C}$.

Recall from Section \ref{ss:swc} that the category $\mathfrak{C}$ was constructed   so that there exists a desuspension functor $\Sigma^{-V}$ for any real vector space $V$ with trivial $S^1$-action or any complex vector space where $S^1$ acts by scalar multiplication. We now construct a category $\mathfrak{C}(\widetilde{G})$ in which we can desuspend by real representations of $\widetilde{G}$, where $S^1$ acts trivially, and by complex representations, where $S^1$ acts by scalar multiplication. We are lead to consider the following two types of finite dimensional representations of $\widetilde{G}$:
\begin{itemize}
\item{Type (1): $V$ is a real representation of $\widetilde{G}$ and $S^1$ acts trivially.}
\item{Type (2): $V$ is a complex representation $\widetilde{G}$ and $S^1$ acts on $V$ by scalar multiplication.}
\end{itemize}

Type (1) representations correspond canonically to real representations of $G$.

Type (2) representations correspond to projective unitary representations of $G$ such that the pullback to $G$ of the central extension $S^1 \to U(n) \to PU(n)$ gives an extension isomorphic to $\widetilde{G}$. If $\widetilde{G}$ is split, then type (2) representations are in bijection with complex representations of $G$. However, the bijection depends on a choice of splitting of $\widetilde{G}$.

To define stable homotopy groups we need to consider suspensions with explicitly chosen representations. In other words, we need to work at the level of representations and not just isomorphism classes. Let $V_1, \dots , V_p$ be a complete set of irreducible representations of type (1), and $W_1, \dots, W_q$ a complete set of irreducible representations of type (2). Any representation of type (1) is isomorphic to a direct sum of copies of $V_1, \dots , V_p$ and likewise any representation of type (2) is a direct sum of copies of $W_1, \dots , W_q$.

If $m = (m_1 , \dots , m_p), m' = (m'_1 , \dots , m'_p) \in \mathbb{Z}^p$, we say $m \ge m'$ if $m_i \ge m'_i$ for each $i$. If $m \in \mathbb{Z}^p$ satisfies $m \ge 0$, then we set
\[
V(m) = V_1^{\oplus m_1} \oplus \cdots \oplus V_p^{\oplus m_p}.
\]
Similarly, if $n = (n_1 , \dots , n_q) \in \mathbb{Z}^q$ satisfies $n \ge 0$, then we set
\[
W(n) = W_1^{\oplus n_1} \oplus \cdots \oplus W_q^{\oplus n_q}.
\]
The category $\mathfrak{C}(\widetilde{G})$ has as objects  triples $(X , m , n )$ where
\begin{itemize}
\item{$X$ is a pointed topological space with a basepoint preserving $\widetilde{G}$-action and the homotopy type of a $\widetilde{G}$-CW complex.}
\item{$m \in \mathbb{Z}^p$}
\item{$n \in \mathbb{Z}^q$}.
\end{itemize}

Let $(X,m,n), (X',m',n')$ be two objects of $\mathfrak{C}(\widetilde{G})$. The set of morphisms from $(X,m,n)$ to $(X',m',n')$, denoted $\{ (X,m,n) , (X' , m' , n') \}^{\widetilde{G}}$, is defined to be:
\[
\underset{k,l}{{\rm colim}} \left[ ( V(k) )^+ \wedge (W(l))^+ \wedge X , (V(k+m-m'))^+ \wedge (W(l+n-n'))^+ \wedge X' \right]^{\widetilde{G}}.
\]
The colimit is taken over all $k \in \mathbb{N}^p$, $l \in \mathbb{N}^q$ such that $k \ge m'-m$ and $l \ge n'-n$. The maps that define the colimit are given by suspensions where we smash on the left. 

Let $Y$ be any pointed $\widetilde{G}$-space. We obtain a functor $Y \wedge : \mathfrak{C}(\widetilde{G}) \to \mathfrak{C}(\widetilde{G})$ which is defined on objects by $Y \wedge (X , m , n) = (Y \wedge X , m , n)$ and on morphisms in the evident way. In particular, if $V$ is any finite dimensional representation of $\widetilde{G}$, we define the reduced suspension
\[
\Sigma^{V} Z = V^+ \wedge Z.
\]

We define desuspension by a representation $V$ of type (1) as follows:
\[
\Sigma^{-V} (X,m,n) = ( (V)^+ \wedge X,m+2[V] , n),
\]
where $[V] = (v_1 , \dots , v_p)$ and $v_i$ is the multiplicity of $V_i$ in $V$. Then $\Sigma^{-V} \Sigma^{V} Z \cong Z$, where the isomorphism  is canonical up to homotopy. For any representation $W$ of type (2) we define
\[
\Sigma^{-W} (X,m,n) = (X , m , n + [W])
\]
where $[W] = (w_1, \dots , w_q)$ and $w_i$ is the multiplicity of $W_i$ in $W$. We have that $\Sigma^{-W} \Sigma^{W} Z \cong Z$ by an isomorphism which is canonical up to homotopy. In fact, such an isomorphism is induced by a choice of isomorphism $W \cong W([W])$. But for any pair of isomorphic {\em complex} representations, the space of isomorphism is connected (by Schur's lemma it is a torsor for a product of complex general linear groups). Therefore the isomorphism $W \cong W([W])$ is unique up to homotopy.

For $Z = (X,m,n) \in \mathfrak{C}(\widetilde{G})$, we define the reduced equivariant cohomology of $Z$ to be \[
\widetilde{H}^j_{\widetilde{G}}(Z ) = \widetilde{H}^{j+|m|+2|n|}_{\widetilde{G}}( X ),
\]
where $|m|$ and $|n|$ are defined as follows:
\[
| m | = \sum_{i=1}^p m_i \, {\rm dim}_{\mathbb{R}}(V_i) \text{ for } m = (m_1, \dots , m_p),
\]
\[
| n | = \sum_{i=1}^q n_i \, {\rm dim}_{\mathbb{C}}(W_i) \text{ for } n = (n_1 , \dots , n_q).
\]

The cohomology is well defined as a consequence of the Thom isomorphism. 

\subsection{$G$-equivariant Seiberg--Witten--Floer cohomology}\label{sec:gswfc}

Let $Y$ be a rational homology $3$-sphere and $G$ a finite group acting on $Y$ preserving the isomorphism class of a spin$^c$-structure $\mathfrak{s}$. Let $G_{\mathfrak{s}}$ be the $S^1$-central extension of $G$ obtained by lifting $G$ to the spinor bundle corresponding to $\mathfrak{s}$. We repeat the construction of the Conley index $I^\mu_\lambda(g)$ from Section \ref{ss:swfh}, except that now we carry out the construction $G_{\mathfrak{s}}$-equivariantly. Restricting to the subgroup $S^1 \subseteq G_{\mathfrak{s}}$, $I^\mu_\lambda(g)$ agrees with the $S^1$-equivariant Conley index as previously constructed.

We need to understand how $I^\mu_\lambda(g)$ depends on $\mu,\lambda$, the choice of $G$-invariant metric $g$ and the constant $R$. As in the $S^1$ case, first consider variations of $\mu,\lambda$. Carrying out a similar argument but $G_{\mathfrak{s}}$-equivariantly, we see that $I^\mu_\lambda(g)$ simply changes by suspension. Analogous to the non-equivariant case we define
\[
SWF(Y , \mathfrak{s} , g) = \Sigma^{-V^0_\lambda(g)} I^\mu_\lambda(g) \in \mathfrak{C}(G_{\mathfrak{s}}),
\]
where $V^0_\lambda(g)$ is defined as before, but now carries a $G_{\mathfrak{s}}$-action. Note that $V^0_\lambda(g)$ is the sum of a representation of type (1) and a representation of type (2), so the desuspension $\Sigma^{-V^0_\lambda(g)}$ is defined. Then up to canonical isomorphisms $SWF(Y , \mathfrak{s} , g)$ depends only on the triple $(Y,\mathfrak{s},g)$.

We consider the dependence of $SWF(Y,\mathfrak{s},g)$ on the metric $g$. The argument is much the same as before except done $G_{\mathfrak{s}}$-equivariantly. Let $g_0,g_1$ be two $G$-invariant metrics. The space of such metrics is contractible, so we may choose a path $\{ g_s \}$ from $g_0$ to $g_1$. Then as in the non-equivariant case, the signature operator has no spectral flow and   we have
\[
SWF(Y , \mathfrak{s} , g_1) = \Sigma^{SF_{G_{\mathfrak{s}}}(\{D_s \})} SWF(Y , \mathfrak{s} , g_0),
\]
where now $SF_{G_\mathfrak{s}}( \{ D_s \} )$ is the equivariant spectral flow of $\{ D_s \}$. Thus $SF_{G_\mathfrak{s}}(\{D_s\})$ is to be understood as a virtual representation of $G_{\mathfrak{s}}$ \cite[\textsection 2]{fang}. Since the $S^1$ subgroup of $G_{\mathfrak{s}}$ acts by scalar multiplication on spinors, it follows that $SF_{G_\mathfrak{s}}(\{D_s\})$ is a type (2) virtual representation. From the Thom isomorphism and the fact that the underlying rank of $SF_{G_{\mathfrak{s}}}(\{D_s\})$ is $SF( \{ D_s \} ) = n(Y , \mathfrak{s} , g_1) - n(Y , \mathfrak{s} , g_0)$, we obtain a canonical isomorphism
\[
\widetilde{H}^{j+2 n(Y,\mathfrak{s},g_1)}_{G_{\mathfrak{s}}}( SWF(Y , \mathfrak{s} , g_1) ) \cong \widetilde{H}^{j+2n(Y,\mathfrak{s},g_0)}_{G_{\mathfrak{s}}}( SWF(Y , \mathfrak{s} , g_0) ).
\]

This motivates the following definition:

\begin{definition}
The {\em $G$-equivariant Seiberg--Witten--Floer cohomology} of $(Y , \mathfrak{s},g)$ is defined as
\[
HSW^j_G(Y , \mathfrak{s}) = \widetilde{H}^{j+2n(Y , \mathfrak{s} , g)}_{G_{\mathfrak{s}}}( SWF(Y , \mathfrak{s} , g) ).
\]
\end{definition}

By the argument above, the $HSW^*_G(Y , \mathfrak{s})$ depends only on $(Y,\mathfrak{s})$ and the $G$-action.

For a group $K$ we write $H^*_K$ for $H^*_K(pt)$. Since $HSW^*(Y , \mathfrak{s})$ is defined using equivariant cohomology, it is a graded module over the ring $H^*_{S^1} = \mathbb{F}[U]$, where $deg(U) = 2$. Similarly $HSW^*_G(Y , \mathfrak{s})$ is a graded module over $H^*_{G_{\mathfrak{s}}}$. Restricting from $G_\mathfrak{s}$ to $S^1$, we obtain forgetful maps
\[
HSW^*_G(Y , \mathfrak{s}) \to HSW^*(Y , \mathfrak{s}), \quad H^*_{G_\mathfrak{s}} \to H^*_{S^1}
\]
compatible with the module structures.

Observe that since $S^1$ is the identity component of $G_\mathfrak{s}$, the action of $G_\mathfrak{s}$ on $HSW^*(Y , \mathfrak{s})$ descends to an action of $G$. So we may regard $HSW^*(Y , \mathfrak{s})$ as a $G$-module.

\begin{theorem}\label{thm:ss}
There is a spectral sequence $E_r^{p,q}$ abutting to $HSW^*_G(Y , \mathfrak{s})$ whose second page is given by
\[
E_2^{p,q} = H^p( BG ; HSW^q(Y , \mathfrak{s}) ).
\]

\end{theorem}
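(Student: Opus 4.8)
The plan is to realise this as the Leray--Serre spectral sequence of the Borel fibration attached to the extension $1 \to S^1 \to G_{\mathfrak{s}} \to G \to 1$ (an equivariant-cohomology analogue of the Lyndon--Hochschild--Serre spectral sequence). First I would reduce to an honest pointed $G_{\mathfrak{s}}$-space. Writing $SWF(Y,\mathfrak{s},g) = \Sigma^{-V^0_\lambda(g)} I^\mu_\lambda(g)$ as in Sections \ref{ss:swfh} and \ref{sec:gswfc}, set $X = I^\mu_\lambda(g)$, a finite pointed $G_{\mathfrak{s}}$-CW complex. Under the hypotheses of Section \ref{sec:assumption} the Thom isomorphism for $V^0_\lambda(g)$ holds with untwisted coefficients, both $S^1$- and $G_{\mathfrak{s}}$-equivariantly, and the resulting degree shift of $\widetilde{H}^*_{S^1}$ and of $\widetilde{H}^*_{G_{\mathfrak{s}}}$ is the same integer $a = \dim_{\mathbb{R}} V^0_\lambda(g)$. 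So, with $\sigma = 2n(Y,\mathfrak{s},g) + a$, we have $HSW^q(Y,\mathfrak{s}) = \widetilde{H}^{q+\sigma}_{S^1}(X)$ and $HSW^q_G(Y,\mathfrak{s}) = \widetilde{H}^{q+\sigma}_{G_{\mathfrak{s}}}(X)$, compatibly with the $G$-actions; it then suffices to produce a spectral sequence $E_2^{p,q} = H^p(BG; \widetilde{H}^q_{S^1}(X)) \Rightarrow \widetilde{H}^{p+q}_{G_{\mathfrak{s}}}(X)$ and re-index by $q \mapsto q + \sigma$.

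Next I would set up the fibration. Use $EG_{\mathfrak{s}} \times EG$ as a model for $EG_{\mathfrak{s}}$, with $G_{\mathfrak{s}}$ acting on $EG$ through $G_{\mathfrak{s}} \to G$; restricted to $S^1 \hookrightarrow G_{\mathfrak{s}}$ this is also a model for $ES^1$, with $S^1$ acting trivially on $EG$. Quotienting $(EG_{\mathfrak{s}} \times EG) \times X$ first by $S^1$ and then by $G = G_{\mathfrak{s}}/S^1$ identifies $EG_{\mathfrak{s}} \times_{G_{\mathfrak{s}}} X$ with $EG \times_G (EG_{\mathfrak{s}} \times_{S^1} X)$, the Borel construction over $BG$ of the $G$-space $EG_{\mathfrak{s}} \times_{S^1} X$. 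The basepoint of $X$ gives a section, whose image $EG \times_G (EG_{\mathfrak{s}}/S^1)$ is canonically $BG_{\mathfrak{s}}$, so one obtains a fibration of pairs
\[
(EG_{\mathfrak{s}} \times_{S^1} X,\ BS^1) \longrightarrow (EG_{\mathfrak{s}} \times_{G_{\mathfrak{s}}} X,\ BG_{\mathfrak{s}}) \longrightarrow BG
\]
with fibrewise relative cohomology $\widetilde{H}^*_{S^1}(X)$ and total relative cohomology $\widetilde{H}^*_{G_{\mathfrak{s}}}(X)$. Its Leray--Serre spectral sequence reads
\[
E_2^{p,q} = H^p(BG;\ \widetilde{H}^q_{S^1}(X)) \Longrightarrow \widetilde{H}^{p+q}_{G_{\mathfrak{s}}}(X),
\]
where the coefficients form the monodromy local system on $BG = K(G,1)$, i.e. the $G$-module structure on $\widetilde{H}^q_{S^1}(X)$ coming from the residual $G_{\mathfrak{s}}/S^1$-action; by construction this is the $G$-module $HSW^q(Y,\mathfrak{s})$ recalled before the statement, and $H^p(BG;-)$ with such coefficients is group cohomology. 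Re-indexing by $\sigma$ yields the spectral sequence claimed in the theorem.

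I expect the only genuine content to be the geometric set-up in the second paragraph: identifying $EG_{\mathfrak{s}} \times_{G_{\mathfrak{s}}} X$, relative to the section given by the basepoint, as a fibration over $BG$ with fibre the $S^1$-Borel construction, and verifying that the monodromy agrees with the $G$-action fixed in the text. The remaining ingredients are routine: that the degree shifts coming from $\Sigma^{-V^0_\lambda(g)}$ and from the constant $2n(Y,\mathfrak{s},g)$ are uniform and carry no orientation twist (this is exactly where the hypotheses of Section \ref{sec:assumption} enter, via the untwisted Thom isomorphism), and that the Leray--Serre machinery applies to a fibration of pairs over the CW base $BG$ and converges, since in each total degree only finitely many $E_2^{p,q}$ are nonzero.
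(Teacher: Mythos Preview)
Your proposal is correct and follows essentially the same route as the paper: both apply the Leray--Serre spectral sequence to the fibration $X_{G_{\mathfrak{s}}} \to BG$ with fibre $X_{S^1}$ (the paper factors this through $BG_{\mathfrak{s}} \to BG$), and then use the Thom isomorphism to pass from the Conley index to its formal desuspension. Your treatment is considerably more explicit about the model for $EG_{\mathfrak{s}}$, the handling of basepoints via pairs, and the identification of the monodromy with the $G$-module structure on $HSW^*(Y,\mathfrak{s})$, but these are exactly the details the paper's brief argument leaves implicit.
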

\begin{proof}
For a $G_\mathfrak{s}$-space $M$, let $M_{G_\mathfrak{s}}$ denote the Borel model for the $G_\mathfrak{s}$-action and $M_{S^1}$ the Borel model for the $S^1$-action obtained by restriction. The composition $M_{G_\mathfrak{s}} \to BG_\mathfrak{s} \to BG$ is a fibration with fibre $M_{S^1}$. Applying the Leray--Serre spectral sequence, we get a spectral sequence which abuts to $\widetilde{H}^*_{G_\mathfrak{s}}( M )$ and has $E_2^{p,q} = H^p( BG ; \widetilde{H}^q_{S^1}( M ) )$. More generally if $M$ is the formal desuspension of a $G_\mathfrak{s}$-space, then via an application of the Thom isomorphism a similar spectral sequence exists. Applying this to $HSW_G^*(Y , \mathfrak{s})$ gives the theorem.
\end{proof}

\begin{definition}
Let $Y$ be a rational homology $3$-sphere and $\mathfrak{s}$ a spin$^c$-structure. We say that $Y$ is an {\em $L$-space} (with respect to $\mathfrak{s}$ and $\mathbb{F}$) if the action of $U$ on $HSW^*( Y , \mathfrak{s})$ is injective. Equivalently $HSW^*(Y , \mathfrak{s})$ is a free $\mathbb{F}[U]$-module of rank $1$.
\end{definition}

\begin{remark}
The usual definition of an $L$-space is that $HF^+_{red}(Y , \mathfrak{s}) = 0$ for all spin$^c$-structures and where the coefficient group is $\mathbb{Z}$. From the universal coefficient theorem it follows that an $L$-space in this sense is an $L$-space with respect to any spin$^c$-structure $\mathfrak{s}$ and any coefficient group $\mathbb{F}$.
\end{remark}

Suppose that the extension $G_\mathfrak{s}$ is split. A choice of splitting induces an isomorphism $G_\mathfrak{s} \cong S^1 \times G$ and an isomorphism $H^*_{G_{\mathfrak{s}}} \cong H^*_G[U]$. We stress that these isomorphisms depend on the choice of splitting.

\begin{theorem}\label{thm:degen}
Suppose that $G_\mathfrak{s}$ is a split extension. If $Y$ is an $L$-space (with respect to $\mathfrak{s}$ and $\mathbb{F}$), then the spectral sequence given in Theorem \ref{thm:ss} degenerates at $E_2$. Moreover we have
\[
HSW^*_G(Y , \mathfrak{s} ) \cong HSW^*(Y , \mathfrak{s}) \otimes_{\mathbb{F}} H^*_G \cong H^*_G[U] \theta,
\]
where $\theta$ has degree $d(Y,\mathfrak{s})$.
\end{theorem}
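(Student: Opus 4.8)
The plan is to exploit the $L$-space hypothesis, which by definition means $HSW^*(Y,\mathfrak{s})$ is a free $\mathbb{F}[U]$-module of rank one; since $SWF(Y,\mathfrak{s},g)$ has type SWF at level $0$ and the bottom generator restricts nontrivially to the $S^1$-fixed point set by the localisation theorem, this generator $\theta_0$ sits in degree $d(Y,\mathfrak{s})$ and $HSW^q(Y,\mathfrak{s}) = 0$ for $q < d(Y,\mathfrak{s})$. The first step is to check that $G$ acts trivially on $HSW^*(Y,\mathfrak{s})$. As noted before the statement, the $G_\mathfrak{s}$-action descends to a $G$-action, and since $U$ comes from the central $S^1$ it commutes with $G$; hence $g\cdot\theta_0 = \chi(g)\theta_0$ for a character $\chi\colon G \to \mathbb{F}^\times$. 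Under assumption (1) we have $\mathbb{F}^\times = \{1\}$ and there is nothing to prove; under assumption (2) one restricts $\theta_0$ to the $S^1$-fixed points, where by Proposition \ref{prop:restrict} the relevant cohomology is that of the $G$-representation sphere $(V^0_\lambda(\mathbb{R}))^+$, and the $G$-action on the line hit by $\iota^*(\theta_0)$ is through the orientation character of the real representation $V^0_\lambda(\mathbb{R})$, which is trivial precisely because $|G|$ is odd (the same point that makes the Thom isomorphism hold without local coefficients, Section \ref{sec:assumption}). With $G$ acting trivially, $E_2^{p,q} = H^p(BG;\mathbb{F}) \otimes_{\mathbb{F}} HSW^q(Y,\mathfrak{s})$.

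Next I would prove the spectral sequence degenerates. The key inputs are that $E_2 = H^*(BG;\mathbb{F}) \otimes_\mathbb{F} \mathbb{F}[U]\theta_0$ is generated as a module over $H^*(BG;\mathbb{F})[U]$ by the single class $1\otimes\theta_0$ in bidegree $(0,d(Y,\mathfrak{s}))$, and that the spectral sequence of Theorem \ref{thm:ss} --- being the Leray--Serre spectral sequence of $M_{S^1} \to M_{G_\mathfrak{s}} \to BG$, with $U$ acting fibrewise --- is a spectral sequence of modules over $H^*(BG;\mathbb{F})[U]$, so every $d_r$ is linear over this ring. Proceeding by induction on $r \ge 2$: assuming $d_2 = \cdots = d_{r-1} = 0$ we have $E_r = E_2$, hence $E_r$ is still generated by $1\otimes\theta_0$; but $d_r(1\otimes\theta_0)$ lies in $E_r^{\,r,\,d(Y,\mathfrak{s})-r+1}$, a subquotient of $H^r(BG;\mathbb{F}) \otimes_\mathbb{F} HSW^{d(Y,\mathfrak{s})-r+1}(Y,\mathfrak{s})$, which vanishes since $d(Y,\mathfrak{s})-r+1 < d(Y,\mathfrak{s})$. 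By linearity $d_r = 0$, completing the induction, so $E_\infty = E_2$.

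Finally I would resolve the extension problem. We know $HSW^*_G(Y,\mathfrak{s})$ has associated graded $E_\infty = E_2 = H^*_G[U]\,\theta_0$ for the filtration by the base degree $p$, and in each total degree this is finite-dimensional, so the filtration is bounded. Using the chosen splitting, identify $H^*_{G_\mathfrak{s}} \cong H^*_G[U]$, and via the edge homomorphism $HSW^*_G(Y,\mathfrak{s}) \twoheadrightarrow E_\infty^{0,*} = HSW^*(Y,\mathfrak{s})$ (the forgetful map) choose $\theta \in HSW^{d(Y,\mathfrak{s})}_G(Y,\mathfrak{s})$ lifting $\theta_0$. The $H^*_{G_\mathfrak{s}}$-module map $H^*_G[U] \to HSW^*_G(Y,\mathfrak{s})$ sending $1 \mapsto \theta$ preserves filtrations ($\theta$ and $U$ lie in filtration $0$, while $H^p_G$ raises filtration by $p$) and induces the identity on associated graded modules; since the filtrations are bounded in each degree this forces it to be an isomorphism. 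Hence $HSW^*_G(Y,\mathfrak{s}) \cong H^*_G[U]\,\theta \cong HSW^*(Y,\mathfrak{s}) \otimes_\mathbb{F} H^*_G$ with $\deg\theta = d(Y,\mathfrak{s})$.

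The main obstacle I anticipate is the first step: establishing that $G$ acts trivially on $HSW^*(Y,\mathfrak{s})$ genuinely uses the hypotheses on $(G,\mathbb{F})$, and requires identifying the $G$-action on the image of $\theta_0$ in the cohomology of the $S^1$-fixed points with an orientation character, so that the localisation theorem and Proposition \ref{prop:restrict} must be applied $G_\mathfrak{s}$-equivariantly. A secondary, more routine point is to justify that the spectral sequence of Theorem \ref{thm:ss} carries the structure of a module over $H^*(BG;\mathbb{F})[U]$ compatibly with the $H^*_{G_\mathfrak{s}}$-module structure on the abutment.
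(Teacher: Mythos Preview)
Your proposal is correct and follows essentially the same approach as the paper: both first show that $G$ acts trivially on $HSW^*(Y,\mathfrak{s})$ by restricting the generator to the $S^1$-fixed point set (where the action is through an orientation character, trivial under the standing hypotheses on $(G,\mathbb{F})$), and then kill all differentials using that $\theta_0$ sits in the lowest $q$-degree together with the $H^*_G[U]$-module structure on the spectral sequence. Your treatment is in fact slightly more thorough than the paper's, which stops after showing $E_2 = E_\infty$ and does not explicitly resolve the extension problem; your filtration argument via a lift $\theta$ of $\theta_0$ and the induced map $H^*_G[U] \to HSW^*_G(Y,\mathfrak{s})$ is the clean way to finish.
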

\begin{proof}
If $Y$ is an $L$-space (with respect to $\mathfrak{s}$ and $\mathbb{F}$) then
\[
HSW^*(Y , \mathfrak{s} ) \cong \mathbb{F}[U]\theta,
\]
where $\theta$ has degree $d(Y , \mathfrak{s})$. We claim that $G$ acts trivially on $HSW^*(Y , \mathfrak{s})$. This can be seen as follows. First, since $HSW^*( Y , \mathfrak{s})$ is up to a degree shift the $S^1$-equivariant cohomology of the Conley index $I = I^\mu_\lambda$, it suffices to prove the result for $I$. Let $\iota \colon I^{S^1} \to I$ be the inclusion of the $S^1$ fixed point set. Since $Y$ is an $L$-space, $U$ acts injectively on $HSW^*(Y , \mathfrak{s})$. Together with the localisation theorem in equivariant cohomology this implies that $\iota^*$ is injective. Hence it suffices to show that $G$ acts trivially on $\widetilde{H}^*_{S^1}( I^{S^1} )$. But $I^{S^1}$ has the homotopy type of a sphere, so if $\nu$ is a generator of $\widetilde{H}^*_{S^1}( I^{S^1})$ and $g \in G$, then $g^*(\nu) = \pm \nu$ according to whether or not $g$ acts orientation preservingly. Our assumptions on $G$ and $\mathbb{F}$ (see Section \ref{sec:assumption}) ensures that $g^*(\nu) = \nu$ for all $g \in G$. This proves the claim.

Letting $E_r^{p,q}$ denote the spectral sequence for $HSW^*_G(Y , \mathfrak{s})$, it follows easily that
\[
E_2^{p,q} \cong H^*(BG ; \mathbb{F}[U] \theta ) \cong H^*_G[U] \theta \cong HSW^*(Y , \mathfrak{s}) \otimes_{\mathbb{F}} H^*_G.
\]
It remains to show that the differentials $d_2,d_3, \dots $ are all zero. In fact since $\theta$ has the lowest $q$-degree of any term in $E_2^{p,q}$, it follows that $d_j( \theta ) = 0$ for all $j \ge 2$. Then since the differentials commute with the $H^*_{G_\mathfrak{s}} \cong H^*_G[U]$-module structure, it follows that $d_2,d_3, \dots $ all vanish.
\end{proof}

\subsection{Spaces of type $G$-SWF}\label{sec:gswf}

We introduce a $G$-equivariant analogue of spaces of type SWF. We then define a $G$-equivariant analogue of the $d$-invariant.

Let $\widetilde{G}$ be an extension of $G$ by $S^1$. If $\widetilde{G}$ acts on a space $X$, then we get an induced action of $G = \widetilde{G}/S^1$ on the fixed point set $X^{S^1}$. We write $\overline{G} = S^1 \times G$ for the trivial extension of $G$. 

\begin{definition}
Let $s \ge 0$ be an integer. We say that a finite pointed $\widetilde{G}$-$CW$ complex $X$ is of {\em type $G$-SWF at level $s$} if
\begin{itemize}
\item{The $S^1$-fixed point set $X^{S^1}$ is $G$-homotopy equivalent to a sphere $(V)^+$, where $V$ is a real representation of $G$ of dimension $s$.}
\item{The action of $S^1$ is free on the complement $X - X^{S^1}$.}
\end{itemize}
More generally, let $V$ be a finite dimensional representation which is the direct sum of representations of type (1) and (2). An equivariant spectrum $Z = \Sigma^{-V} X \in \mathfrak{C}(\widetilde{G})$ is said to be of {\em type $G$-SWF at level $s$} if $X$ is $G$-SWF at level $s + dim( V^{S^1})$.
\end{definition}

Assume that $\widetilde{G}$ is split and choose a splitting $\widetilde{G} \cong \overline{G}$. Let $X$ be a space of type $G$-SWF at level $s$. Let $\iota \colon X^{S^1} \to X$ denote the inclusion of the fixed point set. Recall that $H^*_{S^1} \cong \mathbb{F}[U]$, where $deg(U) = 2$. Similarly $H^*_{\overline{G}} \cong H^*_G[U]$. The localisation theorem in equivariant cohomology implies that 
\[
\iota^* \colon U^{-1} \widetilde{H}^*_{\overline{G}}(X) \to U^{-1} \widetilde{H}^*_{\overline{G}}( X^{S^1} )
\]
is an isomorphism. Note $X^{S^1} \cong (V)^+$ where $V$ is $s$-dimensional, so
\[
\widetilde{H}^*_{\overline{G}}( X^{S^1} ) \cong H^*_G[U] \tau
\]
where $deg(\tau) = s$. Therefore it also follows that
\[
U^{-1}\widetilde{H}^*_{\overline{G}}( X^{S^1} ) \cong H^*_G[U,U^{-1}] \tau.
\]
Then for each $c \in H^*_G$, it follows that there exists an $x \in \widetilde{H}^*_{\overline{G}}( X)$ for which $\iota^*(x) = c\, U^k \, \tau$, for some $k \ge 0$. Set $\Lambda_G(X) = \widetilde{H}^*_{\overline{G}}(X^{S^1})$. Then $\Lambda_G(X)$ is a free $H^*_G[U]$-module of rank $1$ and $\iota \colon X^{S^1} \to X$ induces a map 
\[
\iota^*  \colon \widetilde{H}^*_{\overline{G}}( X ) \to \Lambda_G(X)
\]
of $H^*_G[U]$-modules. Introduce a filtration
\[
\Lambda_G(X) = F_0 \supseteq F_1 \supseteq F_2 \supseteq \cdots
\]
on $\Lambda_G(X)$ by setting 
\[
F_j = H^{* \ge j}_G \Lambda_G(X),
\]
where $H^{* \ge j}_G = \bigoplus_{k \ge j} H^k_G$. This is the filtration induced by the fibration 
\[
X^{S^1} \times_{\overline{G}} B\overline{G} \to BG.
\]
Let $\tau$ denote the generator of $\Lambda_G(X)$. Then for $j \ge 0$ we have obvious identifications
\[
F_j/F_{j+1} \cong H^j_G[U] \tau.
\]
Now let $c$ be a non-zero element in $H^*_G$ of degree $|c| = deg(c)$. By the discussion above we know that $c \, U^k\tau$ is in the image of $\iota^*$ for some $k \ge 0$. Hence we may define:

\begin{definition}\label{def:dinv}
Let $c$ be a non-zero element in $H^*_G$ of degree $|c| = deg(c)$. We define $d_{G,c}(X) \in \mathbb{Z}$ by
\[
d_{G,c}(X) = {\rm min}\{ 2k+s | \; \exists x \in \widetilde{H}^{s+2k+|c|}_{\overline{G}}(X), \; \iota^*(x) \in F_{|c|}, \; \iota^*(x) = c \, U^k \, \tau \; ({\rm mod} \; F_{|c|+1} ) \}.
\]
For convenience we set $d_{G,0}(X) = -\infty$. Then if $c$ is an element of $H^*_G$, we write $c = c_0 + c_1 + \cdots + c_r$, where $c_i \in H^i_G$ and set
\[
d_{G,c}(X) = {\rm max}\{ d_{G,c_0}(X) , \dots , d_{G,c_r}(X) \}.
\]
\end{definition}

Note that $d_{G,ac}(X) = d_{G,c}(X)$ for any $a \in \mathbb{F}^*$.

In concrete terms, the condition that $\iota^*(x) \in F_{|c|}$ and $\iota^*(x) = c \, U^k \, \tau \; ({\rm mod} \; F_{|c|+1} )$ means that $\iota^*(x)$ is of the form
\[
\iota^*(x) = c \, U^k \, \tau + c_1 \, U^{k-1} \, \tau + \cdots + c_r \, U^{k-r} \, \tau
\]
for some $r \ge 0$ and some $c_1, \dots c_r \in H^{*\ge (|c|+1)}_G$.

\begin{remark}\label{rem:ext}
Let $X$ be a space of type $G$-SWF. The definition of $d_{G,c}(X)$ does not depend on a choice of splitting of $S^1 \to \widetilde{G} \to G$. Indeed, two splittings differ by a homomorphism $\phi \colon G \to S^1$. Let $\alpha = \phi^*(U) \in H^2_G$. The change of splitting acts on $H^*_G[U]$ by sending $U$ to $U + \alpha$. Then since $(U+\alpha)^k = U^k + \cdots $, where $\cdots$ denotes terms involving lower powers of $U$, it follows that $d_{G,c}(X)$ does not depend on the choice of splitting of $\widetilde{G}$. 
\end{remark}

\begin{proposition}\label{prop:din}
Let $X$ be a space of type $G$-SWF for the trivial extension. Then for all $c_1, c_2 \in H^*_G$, we have
\[
d_{G,c_1+c_2}(X) \le {\rm max}\{ d_{G,c_1}(X) , d_{G,c_2}(X) \}
\]
and
\[
d_{G,c_1c_2}(X) \le {\rm min}\{ d_{G,c_1}(X) , d_{G,c_2}(X) \}.
\]
\end{proposition}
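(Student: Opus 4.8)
The plan is to unwind Definition~\ref{def:dinv} in both cases and exhibit explicit classes in $\widetilde{H}^*_{\overline{G}}(X)$ realising the required bounds, using the $H^*_G[U]$-module structure of $\iota^*$. First I would reduce to the case that $c_1, c_2$ are each \emph{homogeneous} of pure degree: since $d_{G,c}(X)$ for a non-homogeneous $c$ is defined as the max over its homogeneous components, and since forming $c_1+c_2$ or $c_1c_2$ and then taking homogeneous components interacts in the obvious way with taking max (for the sum) and with pairwise products (for the product), the general statements follow formally from the homogeneous case once one is careful with the bookkeeping. I would state this reduction explicitly but not belabour it.

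For the subadditivity inequality $d_{G,c_1+c_2}(X)\le\max\{d_{G,c_1}(X),d_{G,c_2}(X)\}$ with $c_1,c_2$ homogeneous, the key point is that if $|c_1|=|c_2|$ then by definition there are classes $x_i\in\widetilde{H}^*_{\overline{G}}(X)$ with $\iota^*(x_i)=c_i\,U^{k_i}\,\tau$ modulo $F_{|c_i|+1}$, where $2k_i + s = d_{G,c_i}(X)$. Put $k=\max\{k_1,k_2\}$; multiplying the class with the smaller $k_i$ by the appropriate power of $U$ (which raises $d$-value but we only need an upper bound) we may assume $k_1=k_2=k$ with $2k+s=\max\{d_{G,c_1}(X),d_{G,c_2}(X)\}$. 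Then $\iota^*(x_1+x_2) = (c_1+c_2)U^k\tau$ modulo $F_{|c_1|+1}$, so $x_1+x_2$ witnesses $d_{G,c_1+c_2}(X)\le 2k+s$, provided $c_1+c_2\neq 0$; if $c_1+c_2=0$ the inequality holds trivially since the left side is $-\infty$. The case $|c_1|\ne|c_2|$ is even easier since then $c_1+c_2$ is not homogeneous and we are back to comparing the max of the two homogeneous $d$-values, which is an identity. The only subtlety is that when we inflate $k_i$ to $k$ by multiplying by $U^{k-k_i}$, the resulting class has $\iota^*$ equal to $c_i U^k\tau$ modulo $F_{|c_i|+1}$ exactly because multiplication by $U$ is $H^*_G[U]$-linear and preserves the filtration $F_\bullet$; this should be noted.

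For the multiplicativity inequality $d_{G,c_1c_2}(X)\le\min\{d_{G,c_1}(X),d_{G,c_2}(X)\}$ the idea is to take a class $x_1$ with $\iota^*(x_1)=c_1 U^{k_1}\tau$ modulo $F_{|c_1|+1}$, where $2k_1+s=d_{G,c_1}(X)$, and multiply it by the scalar $c_2\in H^*_G\subseteq H^*_{\overline{G}}$ using the module structure. Since $\iota^*$ is a map of $H^*_G[U]$-modules, $\iota^*(c_2\cdot x_1) = c_2\cdot\iota^*(x_1) = c_1 c_2\, U^{k_1}\,\tau$ modulo $F_{|c_1 c_2|+1}$ (here one uses that multiplication by $c_2$ shifts the filtration $F_j$ into $F_{j+|c_2|}$, so the ``error terms'' in $F_{|c_1|+1}$ land in $F_{|c_1 c_2|+1}$). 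Hence $d_{G,c_1 c_2}(X)\le 2k_1+s=d_{G,c_1}(X)$ whenever $c_1 c_2\ne 0$, and symmetrically $\le d_{G,c_2}(X)$; if $c_1 c_2=0$ both sides behave correctly with the $-\infty$ convention. I expect the main obstacle, such as it is, to be purely organisational: keeping straight the interaction between homogeneity, the filtration $F_\bullet$, and the $-\infty$ convention, and making sure the step of multiplying by a power of $U$ or by a class $c_2$ genuinely respects the defining congruence ``$\iota^*(x)=cU^k\tau\pmod{F_{|c|+1}}$'' rather than just the weaker statement $\iota^*(x)\in F_{|c|}$. None of the steps requires any geometric input beyond the $H^*_G[U]$-linearity of $\iota^*$ and the description of $\Lambda_G(X)$ as a free rank-one module already recorded above.
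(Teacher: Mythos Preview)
Your proposal is correct and follows essentially the same approach as the paper's own proof: reduce to homogeneous $c_1,c_2$, then for the sum use $U^{k-k_1}x_1+U^{k-k_2}x_2$ with $k=\max\{k_1,k_2\}$, and for the product use $c_2\cdot x_1$, finishing with the formal bookkeeping for the non-homogeneous case. If anything, your write-up is slightly more careful than the paper's about why multiplication by $U$ and by $c_2$ preserves the required congruence modulo $F_{|c|+1}$.
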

\begin{proof}
Let $s$ be the level of $X$. First consider the case that $c_1,c_2$ are homogeneous, that is, $c_1 \in H^{|c_1|}_G, c_2 \in H^{|c_2|}_G$ for some $|c_1|, |c_2|$. Then by Definition \ref{def:dinv}, we have that there exist $x_1 \in \widetilde{H}^{d_{G,c_1}(X) + |c_1|}_{\overline{G}}(X)$ and $x_2 \in \widetilde{H}^{d_{G,c_2}(X) + |c_2|}_{\overline{G}}(X)$ such that
\[
\iota^*(x_1) = c_1 \, U^{k_1} \, \tau + \cdots, \quad \iota^*(x_2) = c_2 \, U^{k_2} \, \tau + \cdots
\]
where $+ \cdots $ denotes terms that are in the next stage of the filtration and $k_i = (d_{G,c_i}(X)-s)/2$ for $i=1,2$. Note that if $c_1$ or $c_2$ are zero then we take $x_1$ or $x_2$ to be zero.

If $|c_1| \neq |c_2|$, then by Definition \ref{def:dinv}, we have $d_{G,c_1+c_2}(X) = {\rm max}\{ d_{G,c_1}(X) , d_{G,c_2}(X) \}$. Now suppose that $|c_1| = |c_2|$. Let $k = {\rm max}\{k_1,k_2\}$ and set $x = U^{k-k_1}x_1 + U^{k-k_2}x_2 \in \widetilde{H}^{2k + s + |c_1|}_{\overline{G}}(X)$. Then
\[
\iota^*(x) = (c_1+c_2) \, U^k \, \tau + \cdots
\]
and hence, from the definition of $d_{G,c_1+c_2}(X)$, we have
\[
d_{G,c_1+c_2}(X) \le 2k + s = {\rm max}\{ 2k_1 + s , 2k_2+s\} = {\rm max}\{ d_{G,c_1}(X) , d_{G,c_2}(X) \}.
\]

Next we observe that $c_2 x_1 \in \widetilde{H}^{d_{G,c_1}(X) + |c_1|+|c_2|}_{\overline{G}}(X)$ and
\[
\iota^*( c_2 x_1 ) = (c_1c_2) \, U^{k_1} \, \tau
\]
and so it follows that $d_{G,c_1c_2}(X) \le d_{G,c_1}(X)$. Exchanging the roles of $x_1,x_2$ and $c_1,c_2$, we similarly find that $d_{G,c_1c_2}(X) \le d_{G,c_1}(X)$, hence
\[
d_{G,c_1c_2}(X) \le {\rm min}\{ d_{G,c_1}(X) , d_{G,c_2}(X) \}.
\]

Now suppose that $c_1,c_2$ are not necessarily homogeneous. We may write $c_1 = a_0 + a_1 + \cdots + a_r$, $c_2 = b_0 + b_1 + \cdots + b_r$, for some $r \ge 0$, where $a_i,b_i \in H^i_G$. By Definition \ref{def:dinv} we have
\[
d_{G,c_1}(X) = {\rm max}_i \{ d_{G,a_i}(X) \}, \quad d_{G , c_2}(X) = {\rm max}_i \{ d_{G,b_i}(X) \}.
\]
Then since $c_1+c_2 = (a_0+b_0) + (a_1+b_1) + \cdots + (a_r+b_r)$, we get
\begin{align*}
d_{G,c_1+c_2}(X) &= {\rm max}_i \{ d_{G, a_i+b_i}(X) \} \\
& \le {\rm max}_i \{ {\rm max}\{ d_{G,a_i}(X) , d_{G,b_i}(X) \} \\
&= {\rm max} \{ {\rm max}_i \{ d_{G,a_i}(X) \} , {\rm max}_i \{ d_{G,b_i}(X) \} \} \\
&= {\rm max} \{ d_{G,c_1}(X) , d_{G,c_2}(X) \}.
\end{align*}
Next, we have $c_1 c_2 = \sum_{i,j} a_i b_j$ and hence
\begin{align*}
d_{G , c_1c_2}(X) &\le {\rm max}_{i,j} \{ d_{G,a_i b_j}(X) \} \\
&\le {\rm max}_{i,j} \{ d_{G , a_i}(X) \} \\
&= {\rm max}_i \{ d_{G , a_i}(X) \} \\
&= d_{G , c_1}(X)
\end{align*}
where we used $d_{G,a_ib_j}(X) \le d_{G , a_i}(X)$. Similarly we get $d_{G , c_1c_2}(X) \le d_{G , c_2}(X)$ and hence 
\[
d_{G , c_1c_2}(X) \le {\rm min}\{ d_{G,c_1}(X) , d_{G,c_2}(X) \}.
\]
\end{proof}

Recall that the ordinary (non-equivariant) $d$-invariant of $X$, $d(X)$, is defined by
\[
d(X) = {\rm min}\{ j \; | \exists x \in \widetilde{H}^{j}_{S^1}(X) \, \iota^*(x) \neq 0 \}.
\]
It is not hard to see that $d(X) = d_{ \{e\},1}(X)$, where $\{e\}$ denotes the trivial group and $1$ is the generator of $H^0(pt)$.

\begin{proposition}\label{prop:Gand1}
Let $X$ be a space of type $G$-SWF for the trivial extension. Then
\[
d_{G,1}(X) \ge d(X).
\]
\end{proposition}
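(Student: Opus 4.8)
The plan is to push everything forward to $S^1$-equivariant cohomology via the forgetful map associated to the subgroup $S^1\subseteq\overline{G}=S^1\times G$, and then read off a nonzero class in the image of $\iota^*$ in the non-equivariant sense. First I would observe that after forgetting the $G$-action, a space $X$ of type $G$-SWF at level $s$ for the trivial extension is a space of type SWF at the same level $s$: its $S^1$-fixed set is $(V)^+\cong(\mathbb{R}^s)^+$ and $S^1$ acts freely on the complement. Hence $d(X)$ is defined. The inclusion $S^1\hookrightarrow\overline{G}$ as the first factor induces, by pullback along the natural map of Borel constructions $(-)_{S^1}\to(-)_{\overline{G}}$, a natural transformation $r\colon\widetilde{H}^*_{\overline{G}}(-)\to\widetilde{H}^*_{S^1}(-)$ (one of the forgetful maps already appearing in the paper). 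Because both $r$ and the fixed-point restriction $\iota^*$ are pullbacks along maps that are natural in $X$, they commute: $r\circ\iota^*=\iota^*\circ r$ as maps $\widetilde{H}^*_{\overline{G}}(X)\to\widetilde{H}^*_{S^1}(X^{S^1})$.

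Next I would identify $r$ on the fixed-point set. Since $X^{S^1}$ is $G$-homotopy equivalent to $(V)^+$ with $V$ a real $s$-dimensional $G$-representation carrying the trivial $S^1$-action, the equivariant Thom isomorphism (available under the standing assumptions of Section \ref{sec:assumption}) gives $\widetilde{H}^*_{\overline{G}}(X^{S^1})\cong H^*_G[U]\,\tau$ with $\deg(\tau)=s$, while forgetting $G$ gives $\widetilde{H}^*_{S^1}(X^{S^1})\cong\mathbb{F}[U]\,\tau_{S^1}$, and $r(\tau)$ is again a Thom class, hence a unit multiple of $\tau_{S^1}$. On coefficients, $r\colon H^*_G[U]\to\mathbb{F}[U]$ is the augmentation: it is the identity on $\mathbb{F}[U]$ and annihilates $H^{>0}_G$, since the composite $BS^1\to B\overline{G}\to BG$ is null-homotopic. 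In particular $r$ kills the filtration piece $F_1=H^{\ge 1}_G\Lambda_G(X)$.

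Now I would take $x\in\widetilde{H}^{s+2k}_{\overline{G}}(X)$ attaining the minimum that defines $d_{G,1}(X)=2k+s$ (such $x$ exists, since by the localisation discussion preceding Definition \ref{def:dinv} one even has $\iota^*(x)=U^k\tau$ for some $k$). By definition $\iota^*(x)\equiv U^k\tau \pmod{F_1}$, i.e. $\iota^*(x)-U^k\tau$ lies in $F_1$. Applying $r$ and using commutativity,
\[
\iota^*\bigl(r(x)\bigr)=r\bigl(\iota^*(x)\bigr)=r(U^k\tau)+r\bigl(\iota^*(x)-U^k\tau\bigr)=U^k\,r(\tau)\neq 0
\]
in $\widetilde{H}^{s+2k}_{S^1}(X^{S^1})=\mathbb{F}[U]\,\tau_{S^1}$, because the $F_1$-part is annihilated and $r(\tau)$ generates. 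Therefore $r(x)\in\widetilde{H}^{s+2k}_{S^1}(X)$ has nonzero image under $\iota^*$, and the definition of the non-equivariant $d$-invariant gives $d(X)\le s+2k=d_{G,1}(X)$.

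I expect the only subtle points to be the two bookkeeping verifications: that $r$ and $\iota^*$ genuinely commute (a consequence of naturality of Borel cohomology in the space variable), and the identification of $r$ on the fixed-point set with the augmentation on coefficients together with $r(\tau)$ being a Thom class. Neither is a genuine obstacle, and the argument uses none of the deeper structure—no $L$-space hypothesis, no cobordism maps—only naturality and the Thom isomorphism. For the variant in which one allows a desuspended object $Z=\Sigma^{-V}X$ of type $G$-SWF rather than a space, the same argument applies after desuspension, using the compatibility of $r$ with the Thom isomorphisms built into $\mathfrak{C}(\widetilde{G})$.
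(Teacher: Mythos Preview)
Your proof is correct and follows essentially the same approach as the paper: take a class $x$ realizing $d_{G,1}(X)$, push it to $S^1$-equivariant cohomology via the forgetful map induced by $S^1\hookrightarrow\overline{G}$, and observe that the image of $\iota^*(r(x))$ is $U^k\tau_{S^1}\neq 0$ because the augmentation kills the $F_1$-part. The paper's proof is more terse but identical in substance; your additional remarks about why $r$ annihilates $F_1$ and why $r(\tau)$ is a generator are helpful elaborations of steps the paper leaves implicit.
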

\begin{proof}
By the definition of $d_{G,1}(X)$, there exists $x \in \widetilde{H}^{d_{G,1}(X)}_{\overline{G}}(X)$ such that $\iota^*(x) = U^k \, \tau + \cdots$, where $k = (d_{G,1}(X)-s)/2$ and $s$ is the level of $X$. Let $y \in \widetilde{H}^{d_{G,1}(X)}_{S^1}(X)$ be the image of $x$ under the map induced by $S^1 \to \overline{G}$. Then it follows that $\iota^*(y) = U^k \, \tau \in \widetilde{H}^{d_{G,1}(X)}_{S^1}( X^{S^1})$. In particular, $\iota^*(y) \neq 0$ and hence $d_{G,1}(X) \ge d(X)$ by the definition of $d(X)$.
\end{proof}

Let $S^1$ act trivially on $\mathbb{R}$ and act by scalar multiplication on $\mathbb{C}$. Let $V$ be a real representation of $G$. Then $V_{\mathbb{R}} = \mathbb{R} \otimes_{\mathbb{R}} V$ and $V_{\mathbb{C}} = \mathbb{C} \otimes_\mathbb{R} V$ may be regarded as representations of $\overline{G} = S^1 \times G$, where $S^1$ acts on the first factor and $G$ on the second. 

\begin{proposition}\label{prop:susdinv}
Let $X$ be a space of type $G$-SWF for the trivial extension and let $V$ be a finite dimensional representation of $G$ of type (1) or (2), as in Section \ref{ss:equivswc}. Then for any $c \in H^*_G$, we have
\[
d_{G,c}( V^+ \wedge X ) = d_{G,c}(X) + dim_{\mathbb{R}}(V).
\]
\end{proposition}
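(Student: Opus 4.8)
The plan is to derive both cases from the Thom (suspension) isomorphism
$\Phi\colon \widetilde{H}^{*}_{\overline{G}}(X)\xrightarrow{\cong}\widetilde{H}^{*+n}_{\overline{G}}(V^{+}\wedge X)$, where $n=\dim_{\mathbb{R}}(V)$ and which holds without local coefficients by the assumptions of Section~\ref{sec:assumption}, together with its naturality (and $H^{*}_{\overline{G}}$-linearity) with respect to the inclusion $\iota\colon X^{S^{1}}\hookrightarrow X$ of the $S^{1}$-fixed point set. First I would record that $V^{+}\wedge X$ is again of type $G$-SWF: $S^{1}$ acts freely on $V\setminus 0$ (trivially if $V$ has type (1), by scalar multiplication if it has type (2)), hence freely on $(V^{+}\wedge X)\setminus(V^{+}\wedge X)^{S^{1}}$, while $(V^{+}\wedge X)^{S^{1}}=(V^{S^{1}})^{+}\wedge X^{S^{1}}$. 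If $V$ has type (1) this is $(V\oplus W)^{+}$, where $X^{S^{1}}$ is $G$-homotopy equivalent to $W^{+}$ for a real $G$-representation $W$ of dimension $s$, so the level becomes $s+n$; if $V$ has type (2) it is $X^{S^{1}}$, so the level stays $s$ while $n=2\dim_{\mathbb{C}}(V)$. It is also enough to treat homogeneous $c$, since $d_{G,c}=\max_{i}d_{G,c_{i}}$ over homogeneous components and both sides of the asserted identity respect this.

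\emph{Type (1).} Here $\iota'\colon(V^{+}\wedge X)^{S^{1}}=V^{+}\wedge X^{S^{1}}\hookrightarrow V^{+}\wedge X$ is $\mathrm{id}_{V^{+}}\wedge\iota$, so naturality of $\Phi$ gives a commuting square linking $\iota^{*}$ with $(\iota')^{*}$ via the Thom isomorphisms $\Phi$ and $\Phi_{\mathrm{fix}}\colon\Lambda_{G}(X)\to\Lambda_{G}(V^{+}\wedge X)$. The map $\Phi_{\mathrm{fix}}$ is $H^{*}_{\overline{G}}$-linear, hence $H^{*}_{G}[U]$-linear and carries $U$ to $U$; being an isomorphism it sends the generator $\tau$ (of degree $s$) to a generator $\tau'$ (of degree $s+n$, the new level), and therefore preserves the filtration $F_{\bullet}$. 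Consequently $\iota^{*}(x)\in F_{|c|}$ with $\iota^{*}(x)\equiv cU^{k}\tau\pmod{F_{|c|+1}}$ if and only if $(\iota')^{*}(\Phi x)\in F_{|c|}$ with $(\iota')^{*}(\Phi x)\equiv cU^{k}\tau'\pmod{F_{|c|+1}}$, while $\deg(\Phi x)=\deg(x)+n$. Since $\Phi$ is an isomorphism this is a degree-$n$-shifted bijection between the sets of witnesses in Definition~\ref{def:dinv}, so $d_{G,c}(V^{+}\wedge X)=d_{G,c}(X)+n$.

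\emph{Type (2).} Write $n=2m$. Now $\Lambda_{G}(V^{+}\wedge X)=\widetilde{H}^{*}_{\overline{G}}((V^{+}\wedge X)^{S^{1}})=\widetilde{H}^{*}_{\overline{G}}(X^{S^{1}})=\Lambda_{G}(X)$, with the same generator $\tau$, the same filtration, and level $s$. Factor $\iota'$ as $X^{S^{1}}=(V^{S^{1}})^{+}\wedge X^{S^{1}}\hookrightarrow V^{+}\wedge X^{S^{1}}\xrightarrow{\mathrm{id}\wedge\iota}V^{+}\wedge X$. Naturality of $\Phi$ handles the second arrow, and for the first arrow the standard formula that the restriction of a Thom class to the Thom space of the $S^{1}$-fixed subbundle equals the Euler class of the complement gives, after the Thom-class identifications, that $(\iota')^{*}(\Phi x)=e(V)\cdot\iota^{*}(x)$ with $e(V)\in H^{n}_{\overline{G}}(\mathrm{pt})=H^{2m}_{G}[U]$. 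The key computation is that restriction along $\overline{G}\to S^{1}$ sends $e(V)$ to $U^{m}$; together with the grading this forces $e(V)=U^{m}+c_{1}U^{m-1}+\cdots+c_{m}$ with each $c_{i}\in H^{2i}_{G}\subseteq H^{*\ge 2}_{G}$. Hence $e(V)\cdot(cU^{k}\tau)\equiv cU^{k+m}\tau\pmod{F_{|c|+1}}$ and $e(V)\cdot F_{|c|+1}\subseteq F_{|c|+1}$ (as $F_{\bullet}$ is an $H^{*}_{G}[U]$-submodule), which pushes any witness for $d_{G,c}(X)$ at level $2k+s$ forward through $\Phi$ to a witness for $d_{G,c}(V^{+}\wedge X)$ at level $2(k+m)+s$, giving ``$\le$''. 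For ``$\ge$'' one must invert this: if $w\in\Lambda_{G}(X)$ is homogeneous and $e(V)w\equiv cU^{k'}\tau\pmod{F_{|c|+1}}$, then $w\in F_{|c|}$ and $w\equiv cU^{k'-m}\tau\pmod{F_{|c|+1}}$. This holds because a homogeneous element of $\Lambda_{G}(X)=H^{*}_{G}[U]\tau$ carries at most one power of $U$ in each filtration (i.e.\ $H^{*}_{G}$-)degree, so the congruence becomes one equation in each filtration degree $0,1,\dots,|c|$; the system is triangular with the ``$U^{m}$''-part of $e(V)$ as the diagonal and the $c_{i}$ strictly raising filtration degree, and is solved inductively from degree $0$ upward (the hypothesis $c\neq 0$ forcing $k'\ge m$ along the way). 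Pulling a minimal witness for $d_{G,c}(V^{+}\wedge X)$ back through $\Phi^{-1}$ then gives $d_{G,c}(X)\le d_{G,c}(V^{+}\wedge X)-n$, and equality follows; the non-homogeneous case follows by taking maxima over homogeneous components.

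The step I expect to be the main obstacle is this last point in the type (2) case: controlling the interaction between multiplication by $e(V)$ and the filtration on $\Lambda_{G}(X)$, i.e.\ showing it shifts leading terms by exactly $U^{m}$ in \emph{both} directions. Everything rests on the two facts that $e(V)$ is monic of the shape $U^{m}+(\text{strictly higher }H^{*}_{G}\text{-filtration})$, which comes from restricting to $S^{1}\subseteq\overline{G}$, and that homogeneity rigidly pairs each power of $U$ with a single $H^{*}_{G}$-degree, turning the inversion into an invertible triangular linear system over $H^{*}_{G}$. By comparison the type (1) case and the preliminary reductions are routine.
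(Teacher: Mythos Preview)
Your proof is correct and follows essentially the same approach as the paper: the paper's proof is a one-sentence sketch citing the Thom isomorphism together with the fact that in the type~(2) case the $\overline{G}$-equivariant Euler class has the form $e_{\overline{G}}(V)=U^{\dim V}+c_{G,1}(V)U^{\dim V-1}+\cdots+c_{G,\dim V}(V)$ with $c_{G,j}(V)\in H^{2j}_G$. You have supplied exactly the details the paper leaves implicit, in particular the triangular ``inversion'' argument for the reverse inequality in type~(2), which is the only nontrivial point.
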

\begin{proof}
This result follows easily from the Thom isomorphism, together with the fact that in the type (2) case, the $\overline{G}$-equivariant Euler class of $V$ has the form
\[
e_{\overline{G}}(V) = U^{dim(V)} + c_{G,1}(V) U^{dim(V)-1} + \cdots + c_{G,dim(V)}(V),
\]
where $c_{G,j}(V) \in H^{2j}_G$ denotes the $j$-th $G$-equivariant Chern class of $V$.
\end{proof}

If $Z = \Sigma^{-V} X \in \mathfrak{C}(\overline{G})$ is an equivariant spectrum of type $G$-SWF, we define the $d_{G,c}$-invariant $d_{G,c}(Z)$ of $Z$ to be
\[
d_{G,c}(Z) = d_{G,c}(X) - dim_{\mathbb{R}}(V).
\]
This definition is well-defined by Proposition \ref{prop:susdinv}. We also define a corresponding $\delta$-invariant by setting $\delta_{G,c}(Z) = d_{G,c}(Z)/2$.

\begin{definition}\label{def:deg}
Let $X,Y$ be spaces of type $G$-SWF for the trivial extension of $G$, where $X$ has level $s$ and $Y$ has level $t$. Let $f \colon X \to Y$ be an $S^1 \times G$-equivariant map. Consider the restriction 
\[
{f}^{S^1} \colon X^{S^1} \to Y^{S^1}
\]
of $f$ to the fixed point set. Note that $\widetilde{H}^*_G( X^{S^1} )$ is a free $H^*_G$-module starting in degree $s$. Let $\tau_{X^{S^1}}$ denote a generator. Then $\tau_{X^{S^1}}$ is unique up to an element of $\mathbb{F}^*$. Similarly $\widetilde{H}^*_G( Y^{S^1} )$ is a free $H^*_G$-module starting in degree $t$ and we let $\tau_{Y^{S^1}}$ denote a generator. Then there exists a uniquely determined $\mu \in H^{t-s}_G$ such that 
\[
{(f^{S^1})}^*( \tau_{Y^{S^1}} ) = \mu \, \tau_{X^{S^1}}.
\]
We call $\mu = deg(f^{S^1})$ the {\em degree} of ${f^{S^1}}$. If we choose different generators for $\widetilde{H}^*_G( X^{S^1})$ or $\widetilde{H}^*_G( Y^{S^1})$, then $deg( f^{S^1})$ changes by an element of $\mathbb{F}^*$, hence $deg( f^{S^1})$ is well-defined up to multiplication by elements of $\mathbb{F}^*$. If $t < s$, then $deg( f^{S^1}) = 0$.
\end{definition}

Note that suspension does not change the degree of $f^{S^1}$. Hence we can more generally speak of the degree of $f^{S^1}$ when $f$ is a stable map between spectra of type $G$-SWF.

\begin{proposition}\label{prop:ineq}
Let $f \colon X \to Y$ be a $\overline{G}$-equivariant map of spaces of type $G$-SWF for the trivial extension, where $X$ has level $s$ and $Y$ has level $t$. Let $\mu = deg( f^{S^1} ) \in H^{t-s}_G$ be the degree of $f^{S^1}$. Then for any non-zero $c \in H^*_G$, we have
\[
d_{G, c \mu }(X)-s \le d_{G , c}(Y)-t.
\]
\end{proposition}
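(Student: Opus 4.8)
The plan is to realise the invariant $d_{G,c}(Y)$ by an explicit class on the Borel construction of $Y$, pull it back along $f$, and keep track of how the $S^1$-fixed-point restriction and the degree filtration on $\Lambda_G$ transform. Throughout, $\iota_X\colon X^{S^1}\to X$ and $\iota_Y\colon Y^{S^1}\to Y$ denote the inclusions of the fixed-point sets.

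First I would reduce to the case where $c$ is homogeneous. If $t<s$ then $\mu=0$ by Definition~\ref{def:deg}, so $c\mu=0$, $d_{G,c\mu}(X)=-\infty$, and the inequality is vacuous; so assume $t\ge s$. Writing $c=c_0+\dots+c_n$ with $c_i\in H^i_G$, we have $c\mu=\sum_i c_i\mu$ with $c_i\mu\in H^{i+(t-s)}_G$, so the $c_i\mu$ are exactly the homogeneous components of $c\mu$. By Definition~\ref{def:dinv}, $d_{G,c}(Y)=\max_i d_{G,c_i}(Y)$ and $d_{G,c\mu}(X)=\max_i d_{G,c_i\mu}(X)$, so it is enough to establish $d_{G,c_i\mu}(X)-s\le d_{G,c_i}(Y)-t$ for each $i$ (the terms with $c_i=0$ or $c_i\mu=0$ contributing $-\infty$ on the left).

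Next, with $c\in H^{|c|}_G$ homogeneous and $d_{G,c}(Y)=2k+t$, I would pick $y\in\widetilde H^{t+2k+|c|}_{\overline G}(Y)$ with $\iota_Y^*(y)\in F_{|c|}$ and $\iota_Y^*(y)\equiv c\,U^k\,\tau_Y\pmod{F_{|c|+1}}$, as in Definition~\ref{def:dinv}; here $\tau_Y$ generates $\widetilde H^*_{\overline G}(Y^{S^1})\cong H^*_G[U]\tau_Y$ (using that $S^1$ acts trivially on $Y^{S^1}$) and $\deg\tau_Y=t$. Expanding in powers of $U$, the condition $\iota_Y^*(y)\in F_{|c|}$ forces $\iota_Y^*(y)=\sum_{j\le k}a_jU^j\tau_Y$ with $a_j\in H^{2k+|c|-2j}_G$ and $a_k=c$. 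Set $x=f^*(y)$. Since $f$ is $S^1$-equivariant it restricts to $f^{S^1}\colon X^{S^1}\to Y^{S^1}$ with $\iota_Y\circ f^{S^1}=f\circ\iota_X$, so $\iota_X^*(x)=(f^{S^1})^*(\iota_Y^*(y))$; and since $f^{S^1}$ is $\overline G$-equivariant, $(f^{S^1})^*$ is $H^*_G[U]$-linear and sends $\tau_Y\mapsto\mu\,\tau_X$ by Definition~\ref{def:deg}. Hence
\[
\iota_X^*(x)=\sum_{j\le k}(a_j\mu)\,U^j\,\tau_X=(c\mu)\,U^k\,\tau_X+\sum_{j<k}(a_j\mu)\,U^j\,\tau_X .
\]
Since $|c\mu|=|c|+(t-s)$, we have $x\in\widetilde H^{s+2k+|c\mu|}_{\overline G}(X)$, and each term $a_j\mu$ with $j<k$ lies in degree $2k+|c\mu|-2j\ge|c\mu|+2$, hence in $F_{|c\mu|+1}$, while the $U^k$-coefficient is exactly $c\mu\in H^{|c\mu|}_G$. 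Thus $\iota_X^*(x)\in F_{|c\mu|}$ and $\iota_X^*(x)\equiv(c\mu)U^k\tau_X\pmod{F_{|c\mu|+1}}$; if $c\mu\ne0$, Definition~\ref{def:dinv} gives $d_{G,c\mu}(X)\le 2k+s$, which is the desired inequality, and if $c\mu=0$ there is nothing to prove.

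The argument is essentially formal, so I do not expect a serious obstacle; the points to be careful about are: the identity $\iota_X^*f^*=(f^{S^1})^*\iota_Y^*$ (from commutativity of the square of inclusions and $S^1$-equivariance of $f$); the $H^*_G[U]$-linearity of $(f^{S^1})^*$, which relies on $\overline G$-equivariance so that the induced map of Borel constructions lies over $B\overline G$; and the verification that the higher-filtration tail of $\iota_Y^*(y)$ is carried into $F_{|c\mu|+1}$, which works because those terms occur only for $j<k$ and multiplication by $\mu$ raises degree uniformly by $t-s\ge0$.
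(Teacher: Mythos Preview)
Your proof is correct and follows essentially the same route as the paper: reduce to homogeneous $c$, use the commutative square relating $\iota_X^*f^*$ and $(f^{S^1})^*\iota_Y^*$, and observe that $(f^{S^1})^*(\tau_Y)=\mu\,\tau_X$ converts a witness for $d_{G,c}(Y)$ into one for $d_{G,c\mu}(X)$ with the required filtration shift. Your write-up is a bit more explicit about the homogeneous reduction and the filtration bookkeeping, but the argument is the same.
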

\begin{proof}
We prove the result when $c \in H^{|c|}_G$ is homogeneous. The general case follows easily from this. The inclusion of the fixed points sets gives a commutative diagram.
\[
\xymatrix{
X \ar[r]^-f & Y  \\
X^{S^1} \ar[u]^-\iota \ar[r]^-{f^{S^1}} & Y^{S^1} \ar[u]^-\iota
}
\]
Consider the induced commutative diagram in equivariant cohomology:
\[
\xymatrix{
\widetilde{H}^*_{\overline{G}}( Y ) \ar[d]^-{\iota^*} \ar[r]^-{f^*} & \widetilde{H}^*_{\overline{G}}( X ) \ar[d]^-{\iota^*} \\
\widetilde{H}^*_{\overline{G}}( Y^{S^1} ) \ar[r]^-{(f^{S^1})^*} & \widetilde{H}^*_{\overline{G}}( X^{S^1} )
}
\]
From the definition of $d_{G , c}(Y)$, there exists some $x \in \widetilde{H}^{d_{G , c}(Y)+|c|}_{\overline{G}}( Y )$ such that 
\[
\iota^*(x) = c \, U^k \, \tau_{Y^{S^1}} + \cdots
\]
where $k = (d_{G,c}(Y) - t)/2$. Then by commutativity of the diagram, we have
\begin{align*}
\iota^*( f^*(x) ) &= (f^{S^1})^*(  \iota^*(x) )\\
&= (f^{S^1})^*( c \, U^k \, \tau_{Y^{S^1}} + \cdots) \\
&= c \mu \, U^k \, \tau_{X^{S^1}} + \cdots
\end{align*}
It follows that
\[
d_{G , c \mu}(X) \le d_{G,c}(Y) + |c| - |c\mu| = d_{G,c}(Y) - |\mu| = d_{G,c}(Y) - t + s.
\]
Hence
\[
d_{G , c\mu}(X) - s \le d_{G,c}(Y) - t.
\]
\end{proof}

\subsection{Alternative characterisation of $d_{G,c}$}

In this section we will give an alternative characterisation of $d_{G,c}$ which does not directly refer to $\iota^*$ and is sometimes more convenient for computations.

Let $X$ be a space of type $G$-SWF for the trivial extension $\overline{G}$. Set $\Lambda^*_G = \widetilde{H}^*_{\overline{G}}(X^{S^1})$. The inclusion of the fixed points $\iota \colon X^{S^1} \to X$ induces a map $\iota^* \colon \widetilde{H}^*_{\overline{G}}(X) \to \Lambda^*_G$. Recall that $\Lambda^*_G$ is a free $H^*_G[U]$ module of rank $1$. Let $\tau$ denote a generator of $\Lambda^*_G$, so $\Lambda^*_G \cong H^*_G[U]\tau$. Recall that we have a filtration $F_j$ on $\Lambda^*_G$ given by $F_j = H_G^{* \ge j} \Lambda_G^*$. Similarly there is a filtration on $\widetilde{H}^*_{\overline{G}}(X)$ which comes from the spectral sequence for equivariant cohomology. We will denote this filtration by $\mathcal{F}_j$. Then $\iota^*(\mathcal{F}_j) \subseteq F_j$ because the inclusion $\iota$ induces a map between spectral sequences.

Let $c \in H^*_G$ be a non-zero element of degree $|c|$. Recall that the invariant $d_{G,c}(X)$ is defined by
\[
d_{G,c}(X) = \min\{ i \; | \; \exists x \in \widetilde{H}^i_{\overline{G}}(X), \; \iota^*(x) = c U^k \tau \; ({\rm mod} \; F_{|c|+1}) \text{ for some } k \ge 0 \} - |c|.
\]

The localisation theorem in equivariant cohomology implies that upon localising with respect to $U$, $\iota^*$ becomes an isomorphism:
\[
\iota^* \colon U^{-1} \widetilde{H}^*_{\overline{G}}(X) \to U^{-1}\Lambda_G \cong H^*_G[U,U^{-1}]\tau.
\]
In particular, there exists an element $\theta \in \widetilde{H}^{2k+deg(\tau)}_{\overline{G}}(X)$ such that $\iota^*(\theta) = U^l \tau$ for some $l \ge 0$. Fix a choice of such a $\theta$. The localisation isomorphism implies that $\iota^*(x) = 0$ if and only if $U^k x = 0$ for some $k \ge 0$.

\begin{proposition}\label{prop:altd}
Let $c \in H^*_G$ be a non-zero element of degree $|c|$. Then
\[
d_{G,c}(X) = \min \{ i \; | \; \exists x \in \widetilde{H}^i_{\overline{G}}(X), \; U^n x = c U^k \theta \; ({\rm mod} \; \mathcal{F}_{|c|+1}) \text{ for some } n,k \ge 0 \} - |c|.
\]
\end{proposition}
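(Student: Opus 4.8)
The plan is to prove the proposition by identifying the two minima on the nose. Write $r=|c|$ and let
\[
D_1=\min\{\,i\mid \exists\,x\in\widetilde{H}^i_{\overline{G}}(X),\ \iota^*(x)=cU^k\tau\ (\mathrm{mod}\ F_{r+1})\ \text{for some }k\ge 0\,\},
\]
so that $d_{G,c}(X)=D_1-r$ by definition, and let $D_2$ denote the minimum appearing on the right-hand side of the proposition (before subtracting $r$); proving $D_1=D_2$ gives the statement. I will establish $D_1\le D_2$ and $D_2\le D_1$ separately, using throughout that $\iota^*(\mathcal{F}_j)\subseteq F_j$, that $\Lambda_G=H^*_G[U]\tau$ is free of rank one, that $\iota^*(\theta)=U^l\tau$ for a fixed homogeneous $\theta$ (so $\deg\theta=2l+\operatorname{level}(X)$, which will make all the classes below homogeneous of matching degree), and that $U$ is a permanent cycle respecting both filtrations, i.e. $U\mathcal{F}_j\subseteq\mathcal{F}_j$ and $UF_j\subseteq F_j$.

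For $D_1\le D_2$ I would start from a homogeneous witness $x$ of degree $D_2$ with $U^nx=cU^k\theta\ (\mathrm{mod}\ \mathcal{F}_{r+1})$, apply $\iota^*$, and use $\iota^*(\mathcal{F}_{r+1})\subseteq F_{r+1}$ and $\iota^*(\theta)=U^l\tau$ to get $U^n\iota^*(x)=cU^{k+l}\tau\ (\mathrm{mod}\ F_{r+1})$. Writing $\iota^*(x)=P\tau$ with $P\in H^*_G[U]$, this reads $U^nP\equiv cU^{k+l}$ modulo the ideal $H^{\ge r+1}_G[U]$. The remaining point is purely algebraic: the quotient $H^*_G[U]/H^{\ge r+1}_G[U]$ is a polynomial ring in $U$ over the truncated algebra $H^*_G/H^{\ge r+1}_G$, so $U$ is a non-zero-divisor there and the class of $c$ (of degree $r$) is nonzero; comparing the lowest power of $U$ on the two sides forces $n\le k+l$ and $P\equiv cU^{k+l-n}$ mod $H^{\ge r+1}_G[U]$, i.e. $\iota^*(x)=cU^{k+l-n}\tau\ (\mathrm{mod}\ F_{r+1})$ with $k+l-n\ge0$. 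Hence $x$ already witnesses $D_1\le D_2$.

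For $D_2\le D_1$ I would start from a homogeneous witness $x$ of degree $D_1$ with $\iota^*(x)=cU^k\tau\ (\mathrm{mod}\ F_{r+1})$ and set $y=U^lx-cU^k\theta$; applying $\iota^*$ cancels the leading terms and leaves $\iota^*(y)=U^lw$ with $w\in F_{r+1}$, so $\iota^*(y)\in F_{r+1}$. The proof then finishes provided one knows the following lemma: if $v\in\widetilde{H}^*_{\overline{G}}(X)$ satisfies $\iota^*(v)\in F_j$, then $U^Mv\in\mathcal{F}_j$ for some $M=M(X,j)\ge0$. Granting this, $U^{M+l}x-cU^{M+k}\theta\in\mathcal{F}_{r+1}$ exhibits $x$ as a witness for $D_2\le D_1$. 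The hard part, and the only non-formal step of the argument, is exactly this lemma, which is where the localization theorem must enter (on associated gradeds $\iota^*$ is merely $U$-isomorphism, not an isomorphism, so one genuinely needs to pay a power of $U$).

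To prove the lemma I would realise $\mathcal{F}_j$ as the kernel of the restriction $\widetilde{H}^*_{\overline{G}}(X)\to\widetilde{H}^*\big((X_{\overline{G}})^{(j-1)}\big)$, where $(X_{\overline{G}})^{(j-1)}$ is the preimage under the Borel fibration $X_{\overline{G}}\to BG$ of the $(j-1)$-skeleton of $BG$ (chosen with finitely many cells in each dimension), and similarly for $X^{S^1}$, the two being compatible under $\iota$. Then $\iota^*(v)\in F_j$ says that $\bar v:=v|_{(X_{\overline{G}})^{(j-1)}}$ maps to zero in $\widetilde{H}^*\big((X^{S^1}_{\overline{G}})^{(j-1)}\big)$, so $\bar v$ lifts to the relative group $\widetilde{H}^*\big((X_{\overline{G}})^{(j-1)},(X^{S^1}_{\overline{G}})^{(j-1)}\big)$. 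This relative group is finite-dimensional over $\mathbb{F}$: it is computed by a Serre spectral sequence over the finite complex $BG^{(j-1)}$ with $E_2^{p,q}=H^p\big(BG^{(j-1)};\widetilde{H}^q_{S^1}(X/X^{S^1})\big)$, and by the localization theorem $\widetilde{H}^*_{S^1}(X/X^{S^1})$ is finite-dimensional over $\mathbb{F}$ since $S^1$ acts freely on $X/X^{S^1}$ away from the basepoint (so its Borel cohomology is that of the finite complex $(X/X^{S^1})/S^1$). As $U$ has positive degree it acts nilpotently on any finite-dimensional graded module, so $U^M$ annihilates this relative group for $M$ large; then $U^M\bar v=0$, i.e. $U^Mv\in\mathcal{F}_j$. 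With the lemma in hand both inequalities hold, so $D_1=D_2$ and the proposition follows.
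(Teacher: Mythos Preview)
Your proof is correct, and the direction $D_1\le D_2$ is essentially the paper's argument (the paper phrases the cancellation simply as ``$U$ is injective on $\Lambda_G$'', but your non-zero-divisor argument in $(H^*_G/H^{\ge r+1}_G)[U]$ is the same thing).

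For the direction $D_2\le D_1$, however, the paper takes a noticeably shorter and more algebraic route that avoids your lemma entirely. Starting from a witness $x$ with $\iota^*(x)=cU^k\tau+c_1U^{k-1}\tau+\cdots+c_k\tau$ where $c_i\in H^{|c|+2i}_G$, the paper simply observes that $\iota^*(U^lx)=\iota^*\big(cU^k\theta+c_1U^{k-1}\theta+\cdots+c_k\theta\big)$, and then uses only that $\ker\iota^*$ is $U$-torsion (immediate from localisation) to conclude $U^{n+l}x=cU^{n+k}\theta+\sum_{i\ge1}c_iU^{n+k-i}\theta$ for some $n$. The tail is in $\mathcal{F}_{|c|+1}$ automatically, because multiplication by $c_i\in H^{\ge |c|+1}_G$ lands in $\mathcal{F}_{|c|+1}$ by multiplicativity of the Serre filtration. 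So the only input from localisation is ``$\iota^*(y)=0\Rightarrow U^ny=0$'', and no skeleta of $BG$ or finiteness of relative groups are needed. Your approach, by contrast, establishes the genuinely stronger intermediate statement ``$\iota^*(v)\in F_j\Rightarrow U^Mv\in\mathcal{F}_j$'', which could be useful in its own right but costs a page of Serre-spectral-sequence bookkeeping that the paper's explicit expansion of $\iota^*(x)$ sidesteps.
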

\begin{proof}
Let
\[
a_{G,c}(X) = \min \{ i \; | \; \exists x \in \widetilde{H}^i_{\overline{G}}(X), \;  U^n x = c U^k \theta \; ({\rm mod} \; \mathcal{F}_{|c|+1}) \text{ for some } n,k \ge 0 \} - |c|.
\]
Then we need to show that $d_{G,c}(X) = a_{G,c}(X)$. Suppose $x \in \widetilde{H}^{a_{G,c}(X)+|c|}_{\overline{G}}(X)$ satisfies $U^n x = c U^k \theta \; ({\rm mod} \; \mathcal{F}_{|c|+1})$ for some $n,k \ge 0$. Then
\[
U^n \iota^*(x) = c U^k \iota^*(\theta) = c U^{k+l} \tau \; ({\rm mod} \; F_{|c|+1}).
\]
Since $U$ is injective on $\Lambda_G$ we must have $k+l \ge n$ and we can cancel $U^n$ from both sides to get
\[
\iota^*(x) = c U^{k+l-n} \tau \; ({\rm mod} \; F_{|c|+1}).
\]
Hence $d_{G,c}(X) \le deg(x) - |c| = a_{G,c}(X)$. Conversely, let $x \in \widetilde{H}^{d_{G,c}(X)+|c|}_{\overline{G}}(X)$ be such that $\iota^*(x) = c U^k \tau \; ({\rm mod} \; F_{|c|+1} )$ for some $k \ge 0$. Then
\[
\iota^*(x) = c U^k \tau + c_1 U^{k-1} \tau + c_2 U^{k-2} \tau + \cdots + c_k \tau
\]
where $c_i \in H_G^{|c|+2i}$. Since $\iota^*(\theta) = U^l \tau$, it follows that
\[
\iota^*( U^l x ) = \iota^*( c U^k \theta + c_1 U^{k-1} \theta + \cdots + c_k \theta ).
\]
Next recall that $\iota^*$ is an isomorphism after localising with respect to $U$. Hence if $\iota^*(y_1) = \iota^*(y_2)$, then $U^n y_1 = U^n y_2$ for some $n \ge 0$ and we have
\[
U^{n+l} x = c U^{n+k}\theta + c_1 U^{n+k-1}\theta + \cdots + c_k U^n \theta = c U^{n+k} \theta \; ({\rm mod} \; \mathcal{F}_{|c|+1} ).
\]
From the definition of $a_{G,c}(X)$, it follows that $a_{G,c}(X) \le deg(x) - |c| = d_{G,c}(X)$. We have shown $d_{G,c}(X) \le a_{G,c}(X)$ and $a_{G,c}(X) \le d_{G,c}(X)$, hence $d_{G,c}(X) = a_{G,c}(X)$.
\end{proof}

\subsection{Equivariant $d$-invariants for rational homology $3$-spheres}

We return to the setting that $Y$ is a rational homology $3$-sphere, $G$ is a finite group acting on $Y$ preserving the isomorphism class of a spin$^c$-structure $\mathfrak{s}$. Choose a $G$-invariant metric $g$ and let $G_{\mathfrak{s}}$ be the $S^1$-central extension of $G$ obtained by lifting $G$ to the spinor bundle corresponding to $\mathfrak{s}$. Now suppose that $G_{\mathfrak{s}}$ is a trivial extension, hence $G_{\mathfrak{s}} \cong \overline{G}$. From the construction of the Conley index, one finds that $SWF(Y , \mathfrak{s},g)$ is of type $G$-SWF at level $0$. 

\begin{definition}
Let $G$ act on $Y$ and let $\mathfrak{s}$ be a $G$-invariant spin$^c$-structure. Suppose that the corresponding $S^1$-extension $G_{\mathfrak{s}}$ is trivial and choose an isomorphism of extensions $G_{\mathfrak{s}} \cong \overline{G}$. For any $c \in H^*_G$ we  define the invariant $d_{G,c}(Y , \mathfrak{s})$ by
\[
d_{G,c}(Y , \mathfrak{s}) = d_{G,c}( SWF(Y , \mathfrak{s} , g) ) - 2n(Y,\mathfrak{s},g).
\]
We also set $\delta_{G,c}(Y,\mathfrak{s}) = d_{G,c}(Y , \mathfrak{s})/2$. 
\end{definition}

The definition of $d_{G,c}(Y,\mathfrak{s})$ does not depend on the choice of isomorphism $G_{\mathfrak{s}} \cong \overline{G}$ by Remark \ref{rem:ext}. The definition also does not depend on the choice of metric $g$ as a consequence of the relation $SWF(Y , \mathfrak{s} , g_1) = \Sigma^{SF_{G_{\mathfrak{s}}}(\{D_s \})} SWF(Y , \mathfrak{s} , g_0)$ and the Thom isomorphism.

We only define the invariants $d_{G,c}(Y , \mathfrak{s})$ in the case that $G_{\mathfrak{s}}$ is a trivial extension. This is because the definition of $d_{G,c}(Y , \mathfrak{s})$ uses localisation by $U$, but $U \in H^*_{S^1}$ does not necessarily extend to a class in $H^*_{G_{\mathfrak{s}}}$, unless $G_{\mathfrak{s}}$ is a trivial extension.

The inclusion $\iota \colon (V^0_\lambda(\mathbb{R}) )^+ \to I^{\mu}_{\lambda}$ of the $S^1$-fixed points of the Conley index desuspends to a map $\iota \colon \Sigma^{-V^{0}_{\lambda}(\mathbb{C})} S^0 \to SWF(Y , \mathfrak{s} , g)$, hence we get a homomorphism
\[
\iota^* \colon HSW^*_G(Y , \mathfrak{s}) \to \Lambda_G^*(Y,\mathfrak{s}),
\]
where we have set $\Lambda_G^*(Y,\mathfrak{s}) = \widetilde{H}^{* + 2n(Y , \mathfrak{s} , g)}_{\overline{G}}(  \Sigma^{-V^{0}_{\lambda}(\mathbb{C})} S^0 )$. This is a free $H^*_G[U]$-module and we let $\tau$ denote a generator. As in Section \ref{sec:gswf} we filter $\Lambda_G^*(Y,\mathfrak{s})$ by setting $F_j = H^{* \ge j}_G \Lambda_G^*(Y,\mathfrak{s})$. The construction of $\iota^*$ and $\Lambda_G^*(Y,\mathfrak{s})$ depend on the choice of metric $g$, but the construction for any two metrics are related by a canonical homomorphism. The $d$-invariants of $(Y,\mathfrak{s})$ are given by
\[
d_{G,c}(Y,\mathfrak{s}) = {\rm min}\{ 2k+j | \; \exists x \in {SWF}^{j+2k+|c|}_{G}(Y,\mathfrak{s}), \; \iota^*(x) \in F_{|c|}, \; \iota^*(x) = c \, U^k \, \tau \; ({\rm mod} \; F_{|c|+1} ) \}.
\]

Recall that $d(\overline{Y} , \mathfrak{s}) = -d(Y , \mathfrak{s})$. On the other hand, the behaviour of the invariants $d_{G,c}(Y,\mathfrak{s})$ under orientation reversal is not so straightforward. For example, it follows from Proposition \ref{prop:Gand1} that
\begin{equation}\label{equ:ineq}
-d_{G,1}(\overline{Y},\mathfrak{s}) \le d(Y , \mathfrak{s}) \le d_{G,1}(Y , \mathfrak{s}).
\end{equation}
In particular, $d_{G,1}(\overline{Y} , \mathfrak{s}) = -d_{G,1}(Y,\mathfrak{s})$ can only occur if $d_{G,1}(Y , \mathfrak{s}) = d(Y , \mathfrak{s})$ and $d_{G,1}(\overline{Y} , \mathfrak{s}) = d(\overline{Y},\mathfrak{s})$. From (\ref{equ:ineq}), we also get that
\[
d_{G,1}(Y , \mathfrak{s}) + d_{G,1}(\overline{Y} , \mathfrak{s}) \ge 0.
\]
We will show in Theorem \ref{thm:positive} that the invariants $d_{G,c}$ satisfy a stronger positivity condition.

\begin{proposition}\label{prop:lspaced}
Let $G$ act on $Y$ and let $\mathfrak{s}$ be a $G$-invariant spin$^c$-structure. Suppose that the corresponding extension $G_{\mathfrak{s}}$ is trivial. If $Y$ is an $L$-space (with respect to $\mathfrak{s}$ and $\mathbb{F})$, then for all non-zero $c \in H^*_G$  we have
\[
d_{G,c}(Y , \mathfrak{s}) = d(Y , \mathfrak{s}).
\]
\end{proposition}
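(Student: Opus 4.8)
The plan is to reduce the claim to a space-level statement and then to an algebraic computation governed by the localisation theorem.

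\smallskip
\noindent\emph{Reduction.} Since $d_{G,c}(Y,\mathfrak{s}) = d_{G,c}(SWF(Y,\mathfrak{s},g)) - 2n(Y,\mathfrak{s},g)$ and likewise $d(Y,\mathfrak{s}) = d(SWF(Y,\mathfrak{s},g)) - 2n(Y,\mathfrak{s},g)$, and since both $d_{G,c}$ and the ordinary $d$-invariant increase by $\dim_{\mathbb{R}}(V)$ under a suspension $\Sigma^{V}$ with $V$ a sum of representations of type $(1)$ and $(2)$ (Proposition \ref{prop:susdinv} together with $d(\mathbb{R}^+ \wedge X) = d(X)+1$ and $d(\mathbb{C}^+ \wedge X) = d(X)+2$), it suffices to prove: if $X$ is a genuine space of type $G$-SWF at level $s$ for the trivial extension and $\widetilde{H}^*_{\overline{G}}(X) \cong H^*_G[U]\,\theta_0$ is a free $H^*_G[U]$-module of rank one with $\deg(\theta_0) = d(X)$, then $d_{G,c}(X) = d(X)$ for every nonzero $c \in H^*_G$. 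These hypotheses hold for $X = I^\mu_\lambda$ by Theorem \ref{thm:degen} (transported via the Thom isomorphism). Finally, since $d_{G,c}(X)$ is by definition the maximum of the $d_{G,c_i}(X)$ over the homogeneous components $c_i$ of $c$, we may assume $c$ homogeneous of degree $|c|$.

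\smallskip
\noindent\emph{The class $\iota^*(\theta_0)$.} Write $\Lambda = \widetilde{H}^*_{\overline{G}}(X^{S^1}) = H^*_G[U]\,\tau$ with $\deg(\tau) = s$ and $F_j = H^{*\ge j}_G \cdot \Lambda$, and set $k_0 = (d(X)-s)/2 \ge 0$. The first step is to show that $\iota^*(\theta_0) = f\,\tau$ for a homogeneous $f \in H^*_G[U]$ of degree $2k_0$ whose coefficient $a$ of $U^{k_0}$ lies in $H^0_G = \mathbb{F}$ and is nonzero; in particular $f$ has $U$-degree exactly $k_0$. Indeed, by the localisation theorem $\iota^*$ becomes an isomorphism after inverting $U$, so $f$ is a unit in $H^*_G[U,U^{-1}]$; applying the ring homomorphism $H^*_G[U,U^{-1}] \to \mathbb{F}[U,U^{-1}]$ induced by the augmentation $H^*_G \to \mathbb{F}$ sends $f$ to $a\,U^{k_0}$ (the other terms of $f$ have positive $H^*_G$-degree and die), and the image of a unit is a unit, so $a \ne 0$.

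\smallskip
\noindent\emph{Upper bound.} Take $x = a^{-1} c\,\theta_0 \in \widetilde{H}^{\,d(X)+|c|}_{\overline{G}}(X)$. Then $\iota^*(x) = a^{-1} c\,f\,\tau$, and every term of $a^{-1}cf$ other than $c\,U^{k_0}$ has $H^*_G$-degree strictly larger than $|c|$; hence $\iota^*(x) \in F_{|c|}$ and $\iota^*(x) \equiv c\,U^{k_0}\,\tau \pmod{F_{|c|+1}}$. By Definition \ref{def:dinv} this gives $d_{G,c}(X) \le 2k_0 + s = d(X)$.

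\smallskip
\noindent\emph{Lower bound (the crux).} Suppose $x \in \widetilde{H}^{\,2k+s+|c|}_{\overline{G}}(X)$ satisfies $\iota^*(x) \in F_{|c|}$ and $\iota^*(x) \equiv c\,U^k\,\tau \pmod{F_{|c|+1}}$ with $c \ne 0$. Write $x = g\,\theta_0$ with $g \in H^*_G[U]$ homogeneous of degree $2k + |c| - 2k_0$; then $g \ne 0$ (otherwise $\iota^*(x) = 0$, whereas $c\,U^k\,\tau \ne 0$ in $F_{|c|}/F_{|c|+1}$), and $\iota^*(x) = g f\,\tau$ is homogeneous of degree $2k+|c|$, so the coefficient of $U^p$ in $gf$ lies in $H^{2k+|c|-2p}_G$. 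This group meets $H^{*\ge|c|}_G$ only for $p \le k$, so the hypothesis $\iota^*(x) \in F_{|c|} = H^{*\ge|c|}_G \cdot \Lambda$ forces $gf$ to contain no power $U^p$ with $p > k$. On the other hand, since the top $U$-coefficient $a$ of $f$ is a nonzero scalar, the leading coefficients of $g$ and $f$ multiply without cancellation, so the $U$-degree of $gf$ equals $(\text{$U$-degree of }g) + k_0 \ge k_0$. Therefore $k \ge k_0$, whence $d_{G,c}(X) = 2k+s \ge 2k_0 + s = d(X)$, and together with the upper bound we get $d_{G,c}(X) = d(X)$; unwinding the reduction proves the proposition. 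The main obstacle is the identification of $f = \iota^*(\theta_0)/\tau$ — recognising via the localisation theorem and the augmentation $H^*_G \to \mathbb{F}$ that its leading $U$-coefficient is a unit — and then noticing that the inequality $k \ge k_0$ is forced by the filtration condition $\iota^*(x) \in F_{|c|}$, not by the leading-term condition.
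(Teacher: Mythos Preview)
Your proof is correct and follows essentially the same route as the paper: invoke Theorem \ref{thm:degen} to see that the equivariant Floer cohomology is a free $H^*_G[U]$-module of rank one on a generator in degree $d(Y,\mathfrak{s})$, then analyse $\iota^*$ of that generator to obtain both bounds. Your treatment of the lower bound---extracting the leading $U$-coefficient of $\iota^*(\theta_0)$ via localisation plus the augmentation $H^*_G\to\mathbb{F}$, and then comparing $U$-degrees of $gf$---supplies the detail that the paper leaves implicit when it declares ``it is clear that there is no class of lower degree''.
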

\begin{proof}
If $Y$ is an $L$-space (with respect to $\mathfrak{s}$ and $\mathbb{F}$) then
\[
HSW^*(Y , \mathfrak{s} ) \cong \mathbb{F}[U]\theta,
\]
where $\theta$ has degree $d(Y , \mathfrak{s})$. From Theorem \ref{thm:degen}, there exists a class $\widehat{\theta} \in HSW^*_G(Y,\mathfrak{s})$ which maps to $\theta$ under the forgetful map $HSW^*_G(Y , \mathfrak{s}) \to HSW^*(Y , \mathfrak{s})$ and we have that $HSW^*_G(Y , \mathfrak{s})$ is a free $H^*_G[U]$-module generated by $\widehat{\theta}$. We must also have that $\iota^*(\widehat{\theta}) = U^k \tau \; ({\rm mod} \; F_1)$ for some $k \ge 0$, where $\tau$ is a generator of $\Lambda_G(Y , \mathfrak{s})$. This holds because $F_1$ is the kernel of the forgetful map $\Lambda_G(Y,\mathfrak{s}) \to \Lambda_{\{1\}}(Y,\mathfrak{s})$. So for any non-zero $c \in H^j_G$ we have that $\iota^*(c \widehat{\theta}) = c \iota^*(\widehat{\theta}) = c U^k \tau \; ({\rm mod} \; F_{1+|c|})$, hence $d_{G,c}(Y , \mathfrak{s}) \le deg(c \widehat{\theta}) - j = \deg(\widehat{\tau}) = d(Y , \mathfrak{s})$. That is, $d_{G,c}(Y,\mathfrak{s}) \le d(Y , \mathfrak{s})$ for all non-zero homogeneous $c$. On the other hand it is clear that there is no class of lower degree in $HSW^*_G(Y , \mathfrak{s})$ which maps under $\iota^*$ to a class of the form $c U^{k'} \tau \; ({\rm mod} \; F_{1+|c|})$. Hence $d_{G,c}(Y,\mathfrak{s}) = d(Y , \mathfrak{s})$ for all non-zero homogeneous $c$. Clearly the result extends to all non-zero $c$.
\end{proof}

\section{Behaviour under cobordisms}\label{sec:cobord}

We show that equivariant cobordisms of rational homology $3$-spheres induce maps on equivariant Seiberg--Witten--Floer cohomology. We follow the construction of Manolescu \cite{man}, incorporating the corrections due to Khandhawit \cite{kha}. Since our construction is a straightforward extension of that of Manolescu and Khandhawit, differing only in the replacement of $S^1$ by the larger group $G_{\mathfrak{s}}$, we will be brief.

\subsection{Finite dimensional approximation}\label{sec:cobordismfda}

Let $W$ be a compact, oriented smooth $4$-manifold with boundary $Y = \partial W$ a disjoint union of rational homology spheres $Y = \cup_j Y_j$. Assume further that $b_1(W) = 0$ and that $W$ is connected. If $\mathfrak{s}$ is a spin$^c$-structure on $W$, then the restriction of $\mathfrak{s}$ to $Y$ determines a spin$^c$-structure $\mathfrak{s}|_Y$ on $Y$. Since the boundary of $W$ is a union of rational homology $3$-spheres, we have $H^2(W , \partial W ; \mathbb{R}) \cong H^2(W ; \mathbb{R})$ and by Poincar\'e--Lefschetz duality we obtain a non-degenerate intersection form on $H^2(W ; \mathbb{R})$. Given a metric $g$ on $W$ which is isometric to a product metric in a collar neighbourhood of $\partial W$, we let $H^+(W)$ denote the space of self-dual $L^2$-harmonic $2$-forms on the cylindrical end manifold $\hat{W}$ obtained from $W$ by attaching half-infinite cylinders $[0,\infty) \times Y$ to $W$. It follows from \cite[Proposition 4.9]{aps1} that the natural map $H^+(W) \to H^2(W ; \mathbb{R})$ is injective and identifies $H^+(W)$ with a maximal positive definite subspace of $H^2(W ; \mathbb{R})$.

Suppose now that $G$ acts smoothly and orientation preservingly on $W$ and that this action sends each connected component of $\partial W$ to itself. Hence by restriction $G$ acts on each $Y_i$ by orientation preserving diffeomorphisms. Assume further that $G$ preserves the isomorphism class of a spin$^c$-structure $\mathfrak{s}$ on $W$. Set $\mathfrak{s}_i = \mathfrak{s}|_{Y_i}$. Then the action of $G$ on $Y_i$ preserves $\mathfrak{s}_i$. Similar to Section \ref{sec:lifting} we obtain an $S^1$-extension $G_{\mathfrak{s}}$ of $G$. Restricting to $Y_i$, we obtain an isomorphism of extensions $G_{\mathfrak{s}} \cong G_{\mathfrak{s}_i}$. Hence if $G_{\mathfrak{s}}$ is split, then it follows that each of the extensions $G_{\mathfrak{s}_i}$ is also split. Moreover a splitting of $G_{\mathfrak{s}}$ determines corresponding splittings of each $G_{\mathfrak{s}_i}$.

Choose a $G$-invariant metric $g$ on $W$ which is isometric to a product $( -\epsilon , 0] \times Y$ in some equivariant collar neighbourhood of $Y$ (see \cite[Theorem 3.5]{kan} for existence of equivariant collar neighbourhoods). To see that such a metric exists, first choose a $G$-invariant metric $g_Y$ on $Y$. Then choose an arbitrary metric $g'$ on $W$ which equals $(dt)^2 + g_Y$ in some equivariant collar neighbourhood $(-\epsilon , 0] \times Y$. Then let $g$ be obtained from $g'$ by averaging over $G$. Let $S^\pm$ denote the spinor bundles on $W$ corresponding to $\mathfrak{s}$. We note here that under these assumptions $G$ preserves the subspace $H^+(W)$ of $H^2(W ; \mathbb{R})$ defined by $g$. Let $\Omega^1_g(W)$ denote the space of $1$-forms on $W$ in double Coulomb gauge with respect to $Y$ \cite[Definition 1]{kha}. This space is easily seen to be preserved by the action of $G$ on $1$-forms. The double Coulomb gauge condition ensures that if $a \in \Omega^1_g(W)$ and $\phi \in \Gamma(S^+)$, then $(a , \phi)|_{Y_j}$ lies in the global Coulomb slice corresponding to $Y_j$. Let us temporarily assume that $Y = \partial W$ is connected. Let $\widehat{A}$ be a spin$^c$-connection on $W$ such that in a collar neighbourhood of $Y$ it equals the pullback of $A_0$. Using the same argument as in Section \ref{sec:lifting}, we can assume that $\widehat{A}$ is $G_{\mathfrak{s}}$-invariant. Then using $\widehat{A}$ as a reference connection, we obtain a map which may be thought of as the Seiberg--Witten equations on $W$ together with boundary conditions:
\begin{align*}
SW^\mu \colon & i \Omega^1_g(W) \oplus \Gamma(S^+) \to i \Omega^2_+(W) \oplus \Gamma(S^-) \oplus V^\mu_{-\infty}, \\
( a , \phi ) &\mapsto ( F^+_{\widehat{A}+a} - \sigma(\phi,\phi) , D_{\widehat{A}+a}(\phi) ,  p^\mu (a,\phi)|_{Y} )
\end{align*}
where $p^\mu$ is the orthogonal projection from $V$ to $V^\mu_{-\infty}$. Taking a finite dimensional approximation as described in \cite{man}, \cite{kha}, one obtains a map
\[
\Psi_{\mu,\lambda,U,U'} \colon (U')^+ \to (U^+) \wedge I^\mu_{\lambda},
\]
where $U' \subset i \Omega^1_g(W) \oplus \Gamma(S^+)$ and $U \subset i \Omega^2_+(W) \oplus \Gamma(S^-)$ are finite dimensional $G$-invariant subspaces which satisfy 
\begin{equation}\label{equ:uu'}
U \oplus V^0_\lambda \oplus Ker(L^0) \cong U' \oplus Coker(L^0)
\end{equation}
and $L^0$ is a Fredholm linear operator defined in \cite[Section 9]{man}. Since $SWF(Y , \mathfrak{s} , g) = \Sigma^{-V^0_\lambda} I^\mu_\lambda$, we can re-write the map $\Psi_{\mu , \lambda , U, U'}$ as
\[
\Psi_{\mu,\lambda,U,U'} \colon (U')^+ \to (U)^+ \wedge (V^0_\lambda)^+ \wedge SWF(Y , \mathfrak{s} , g).
\]
Taking the smash product with $Ker(L^0)$ and using (\ref{equ:uu'}), we see that $\Psi_{\mu,\lambda,U,U'}$ is stably equivalent to a map
\[
f \colon Ker(L^0)^+ \to Coker(L^0)^+ \wedge SWF(Y , \mathfrak{s} , g).
\]
The real part of $L^0$ has zero kernel and cokernel isomorphic to $H^+(W)$. The complex part of $L^0$ can be identified with the Dirac operator $D_{\widehat{A}}$ with Atiyah--Patodi--Singer (APS) boundary conditions. Thus
\[
Ker(L^0) \cong Ker_{APS}( D^+_{\widehat{A}} ), \quad Coker(L^0) \cong H^+(W) \oplus Coker_{APS}( D^+_{\widehat{A}} )
\]
where $Ker_{APS}( D^+_{\widehat{A}} )$, $Coker_{APS}( D^+_{\widehat{A}} )$ denote the kernel and cokernel of $D^+_{\widehat{A}}$ with APS boundary conditions. Hence we obtain a $G_{\mathfrak{s}}$-equivariant map
\[
f \colon Ker_{APS}( D^+_{\widehat{A}} )^+ \to (H^+(W))^+ \wedge Coker_{APS}( D^+_{\widehat{A}} )^+ \wedge SWF(Y , \mathfrak{s} , g).
\]
Note that $f$ is only a map in the stable sense. That is, $f$ is a morphism in the category $\mathfrak{C}(G_{\mathfrak{s}})$.

Recall that the $S^1$-fixed point set of $I^\mu_\lambda$ is $V^0_\lambda(\mathbb{R})^+$. The inclusion $(V^0_\lambda(\mathbb{R})) \to I^\mu_\lambda$ of the $S^1$-fixed points desuspends to a map $\iota \colon S^0 \to SWF(Y , \mathfrak{s} , g)$. By restricting to $S^1$-fixed points we obtain a commutative diagram
\[
\xymatrix{
Ker_{APS}( D^+_{\widehat{A}} )^+ \ar[r]^-{f} & (H^+(W))^+ \wedge Coker_{APS}( D^+_{\widehat{A}} )^+ \wedge SWF(Y , \mathfrak{s} , g) \\
S^0 \ar[u] \ar[r]^-{f^{S^1}} & (H^+(W))^+ \ar[u]
}
\]
Using that the Seiberg--Witten equations reduce to linear equations on the $S^1$-fixed point set, one finds that $f^{S^1} \colon S^0 \to (H^+(W))^+$ is the obvious map given by the one-point compactification of the inclusion $\{0\} \to H^+(W)$. Thus according to Definition \ref{def:deg}, $f^{S^1}$ has degree equal to $e(H^+(W))$, the image of the equivariant Euler class of $H^+(W)$ in $H^*_G(pt ; \mathbb{F})$. For instance if $\mathbb{F} = \mathbb{Z}/2\mathbb{Z}$, then $e(H^+(W))$ is the $b_+(W)$-th equivariant Stiefel--Whitney class. We will refer to $e(H^+(W))$ as the {\em $\mathbb{F}$-Euler class of $H^+(W)$}.

So far we have restricted to the case that the boundary $\partial W$ is connected. More generally if $\partial W = \cup_{j} Y_j$ is a union of rational homology $3$-spheres then much the same construction applies. The Conley index $I^\mu_\lambda$ is now given by the smash product of the Conley indices of each component, hence $f$ is now a map of the form
\[
f \colon Ker_{APS}( D^+_{\widehat{A}} )^+ \to (H^+(W))^+ \wedge Coker_{APS}( D^+_{\widehat{A}} )^+ \wedge \bigwedge_{j} SWF(Y_j , \mathfrak{s}_j , g_j).
\]
We still have that the degree of $f^{S^1}$ is $e(H^+(W))$.

\subsection{Equivariant Fr\o yshov inequality}

In this section we prove an equivariant generalisation of Fr\o yshov's inequality \cite{fro}.

\begin{theorem}\label{thm:don}
Let $W$ be a smooth, compact, oriented $4$-manifold with boundary and with $b_1(W) = 0$. Suppose that $G$ acts smoothly on $W$ preserving the orientation and a spin$^c$-structure $\mathfrak{s}$. Suppose that the extension $G_{\mathfrak{s}}$ is trivial. Suppose each component of $\partial W$ is a rational homology $3$-sphere and that $G$ sends each component of $\partial W$ to itself. Let $e \in H^{b_+(W)}_G$ be the $\mathbb{F}$-Euler class of any $G$-invariant maximal positive definite subspace of $H^2(W ; \mathbb{R})$. Let $c \in H^*_G$ and suppose that $c e \neq 0$.
\begin{itemize}
\item[(1)]{If $\partial W = Y$ is connected, then 
\[
\delta(W , \mathfrak{s}) \le \delta_{G,c}(Y , \mathfrak{s}|_Y) \; \; \text{ and } \; \; \delta_{G,c e}(\overline{Y} , \mathfrak{s}|_Y ) \le \delta( \overline{W} , \mathfrak{s} ),
\]
where we have defined
\[
\delta(W , \mathfrak{s}) = \frac{ c_1(\mathfrak{s})^2 - \sigma(W)}{8}.
\]
}
\item[(2)]{If $\partial W = \overline{Y}_1 \cup Y_2$ has two connected components, then 
\[
\delta_{G,c e}(Y_1 , \mathfrak{s}|_{Y_1}) + \delta(W , \mathfrak{s}) \le \delta_{G,c}(Y_2 , \mathfrak{s}|_{Y_2}).
\]
}
\end{itemize}

\end{theorem}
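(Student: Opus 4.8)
The plan is to deduce all four inequalities from the relative Bauer--Furuta-type cobordism map of Section~\ref{sec:cobordismfda} together with the degree estimate of Proposition~\ref{prop:ineq}, tracking the grading shifts by means of the Atiyah--Patodi--Singer index theorem. Fix a splitting $G_{\mathfrak{s}}\cong\overline G$, and take for the $G$-invariant positive definite subspace the space $H^+(W)$ attached to the $G$-invariant metric $g$ used in the construction, so $e=e(H^+(W))\in H^{b_+(W)}_G$. For the first inequality of (1), start from the $G_{\mathfrak{s}}$-equivariant stable map
\[
f\colon Ker_{APS}(D^+_{\widehat A})^+ \to (H^+(W))^+ \wedge Coker_{APS}(D^+_{\widehat A})^+ \wedge SWF(Y,\mathfrak{s}|_Y,g)
\]
of Section~\ref{sec:cobordismfda}. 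Since $S^1\subseteq G_{\mathfrak{s}}$ acts by scalar multiplication on spinors, $Ker_{APS}(D^+_{\widehat A})^+$ and $Coker_{APS}(D^+_{\widehat A})^+$ are one-point compactifications of type (2) representations while $(H^+(W))^+$ compactifies a type (1) representation, so source and target of $f$ are spectra of type $G$-SWF, of levels $0$ and $b_+(W)$; moreover $\deg(f^{S^1})=e$ by Section~\ref{sec:cobordismfda}. Applying Proposition~\ref{prop:ineq} (whose proof is a diagram chase in equivariant cohomology and extends verbatim to stable maps of spectra of type $G$-SWF), and computing both $d$-invariants via Proposition~\ref{prop:susdinv}, $d_{G,c}(S^0)=0$, and $d_{G,c}(SWF(Y,\mathfrak{s}|_Y,g))=d_{G,c}(Y,\mathfrak{s}|_Y)+2n(Y,\mathfrak{s}|_Y,g)$, one gets
\[
2\dim_{\mathbb{C}} Ker_{APS}(D^+) \le d_{G,c}(Y,\mathfrak{s}|_Y) + 2n(Y,\mathfrak{s}|_Y,g) + 2\dim_{\mathbb{C}} Coker_{APS}(D^+).
\]
By (\ref{equ:aps}) the difference of the two $\dim_{\mathbb{C}}$ terms is $ind_{APS}(\widehat D)=\delta(W,\mathfrak{s})+n(Y,\mathfrak{s}|_Y,g)$; substituting this the $n$-terms cancel and the inequality collapses to $\delta(W,\mathfrak{s})\le\delta_{G,c}(Y,\mathfrak{s}|_Y)$.

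For the second inequality of (1) and for part (2), first compose $f$ with the $G_{\mathfrak{s}}$-equivariant Spanier--Whitehead duality of Section~\ref{sec:duality} to move the incoming boundary factor to the source: using $SWF(Y)\wedge SWF(\overline Y)\to S^{-k(D)\mathbb{C}}$ one reinterprets $f$ as a stable map
\[
\tilde f\colon S^{k(D)\mathbb{C}} \wedge Ker_{APS}(D^+)^+ \wedge SWF(\overline Y) \to (H^+(W))^+ \wedge Coker_{APS}(D^+)^+
\]
in case (1), with $\overline Y$ replaced by $\overline{Y_1}$ and an extra smash factor $SWF(Y_2)$ on the target in case (2). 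The duality map restricts to a homotopy equivalence on $S^1$-fixed point sets (both sides being $S^0$), so the transpose has unchanged fixed-point degree, $\deg(\tilde f^{S^1})=e$. Applying Proposition~\ref{prop:ineq} to $\tilde f$ and simplifying with Proposition~\ref{prop:susdinv}, the relation $n(Y,\mathfrak{s}|_Y,g)+n(\overline Y,\mathfrak{s}|_Y,g)=-k(D)$ from (\ref{equ:nnbar}), the APS index identity for $W$ (which for two boundary components reads $ind_{APS}(\widehat D)=\delta(W,\mathfrak{s})+n(Y_2,\mathfrak{s}|_{Y_2},g)+n(\overline{Y_1},\mathfrak{s}|_{Y_1},g)$), and $\delta(\overline W,\mathfrak{s})=-\delta(W,\mathfrak{s})$, all $n$- and $k(D)$-terms again cancel, leaving $\delta_{G,ce}(\overline Y,\mathfrak{s}|_Y)\le\delta(\overline W,\mathfrak{s})$ in case (1) and $\delta_{G,ce}(Y_1,\mathfrak{s}|_{Y_1})+\delta(W,\mathfrak{s})\le\delta_{G,c}(Y_2,\mathfrak{s}|_{Y_2})$ in case (2).

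The main obstacle is making the duality transpose precise: one must check that composing the relative Bauer--Furuta map with the equivariant Spanier--Whitehead duality of Section~\ref{sec:duality} yields a map which is again of type $G$-SWF at the expected level and whose restriction to $S^1$-fixed points still has degree $e$, and one must verify that the APS index contributions for the (possibly disconnected) boundary combine with (\ref{equ:nnbar}) precisely so as to eliminate every kernel and eta correction. Granting these points, each of the four inequalities is a direct application of Propositions~\ref{prop:ineq} and~\ref{prop:susdinv}.
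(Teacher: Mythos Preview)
Your proposal is correct and follows essentially the same approach as the paper: apply Proposition~\ref{prop:ineq} directly to the Bauer--Furuta map $f$ for the first inequality, and to its composition with the duality pairing $\varepsilon$ (smashing with the appropriate $SWF$ factor) for the remaining ones, using that $\varepsilon^{S^1}$ has degree $1$---which is precisely how the paper resolves the concern you flag about the fixed-point degree of the transpose. One small slip: in case (2) the transpose should carry $SWF(Y_1)$, not $SWF(\overline{Y_1})$, to the source (the target factor being dualised away is $SWF(\overline{Y_1})$, so its dual $SWF(Y_1)$ appears on the left), consistent with your stated conclusion $\delta_{G,ce}(Y_1)+\delta(W,\mathfrak{s})\le\delta_{G,c}(Y_2)$.
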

\begin{proof}
We will give the proof in the case that $W$ is connected. The general case follows easily from this by applying the theorem to each component of $W$. To simplify notation we will write $\mathfrak{s}$ instead of $\mathfrak{s}|_Y$ and write $g$ instead of $g|_Y$. In case (1), $\partial W = Y$ is connected.

As in Section \ref{sec:cobordismfda}, choosing suitable metrics and reference connections we obtain a stable map
\[
f \colon Ker_{APS}( D^+_{\widehat{A}} )^+ \to (H^+(W))^+ \wedge Coker_{APS}( D^+_{\widehat{A}} )^+ \wedge SWF(Y , \mathfrak{s} , g)
\]
such that the degree of $f^{S^1}$ is $e$. Applying Proposition \ref{prop:ineq} to $f$, we obtain
\[
2 dim_{\mathbb{C}}( Ker_{APS}( D^+_{\widehat{A}} )) + b_+(W) \le  b_+(W) + 2 dim_{\mathbb{C}}( Coker_{APS}( D^+_{\widehat{A}} )) + d_c(Y , \mathfrak{s}) + 2 n(Y , \mathfrak{s} , g)
\]
which simplifies to
\[
ind_{APS}( D^+_{\widehat{A}} ) \le d_c(Y , \mathfrak{s})/2 + n(Y , \mathfrak{s} , g) = \delta_c(Y , \mathfrak{s}) + n(Y , \mathfrak{s} , g).
\]
Combined with Equation (\ref{equ:aps}) we get $\delta(W , \mathfrak{s} ) \le \delta_c(Y , \mathfrak{s})$.

Next recall from Section \ref{sec:duality} the duality map
\[
\varepsilon \colon SWF(Y , \mathfrak{s} , g) \wedge SWF( \overline{Y} , \mathfrak{s} , g) \to S^{-k(D) \mathbb{C}}
\]
where $k(D) = dim_{\mathbb{C}}( Ker(D) )$. By the definition of equivariant duality, we have that
\[
\varepsilon^{S^1} \colon SWF(Y , \mathfrak{s} , g)^{S^1} \wedge SWF( \overline{Y} , \mathfrak{s} , g)^{S^1} \to S^0
\]
is a non-equivariant duality. It follows that $\varepsilon^{S^1}$ has degree $1$. Taking the map $f$, suspending by $SWF(\overline{Y} , \mathfrak{s} , g)$ and composing with $\varepsilon$, we obtain a stable map
\[
h \colon Ker_{APS}( D^+_{\widehat{A}} )^+ \wedge SWF( \overline{Y} , \mathfrak{s} , g) \to (H^+(W))^+ \wedge Coker_{APS}( D^+_{\widehat{A}} )^+ \wedge S^{-k(D) \mathbb{C}}
\]
such that the degree of $h^{S^1}$ is $e$. Applying Proposition \ref{prop:ineq} to $h$ we obtain
\begin{align*}
2 dim_{\mathbb{C}}( Ker_{APS}( D^+_{\widehat{A}} ) + & d_{G,ce}(\overline{Y} , \mathfrak{s} , g) + 2n(\overline{Y} , \mathfrak{s} , g) + b_+(W) \\
& \quad \le  b_+(W) + 2 dim_{\mathbb{C}}( Coker_{APS}( D^+_{\widehat{A}} ) - 2 k(D)
\end{align*}
which simplifies to
\[
ind_{APS}( D^+_{\widehat{A}} ) +\delta_{G,ce}(\overline{Y} , \mathfrak{s} , g) + n(\overline{Y} , \mathfrak{s} , g) \le - k(D).
\]
Using Equations (\ref{equ:aps}) and (\ref{equ:nnbar}), we obtain $\delta(W , \mathfrak{s}) \le -\delta_{G,ce}(\overline{Y} , \mathfrak{s})$, or equivalently $\delta_{G,c e}(\overline{Y} , \mathfrak{s}|_Y ) \le \delta( \overline{W} , \mathfrak{s} )$.

The proof of case (2) is similar. We start with the map
\[
f \colon Ker_{APS}( D^+_{\widehat{A}} )^+ \to (H^+(W))^+ \wedge Coker_{APS}( D^+_{\widehat{A}} )^+ \wedge SWF(\overline{Y_1} , \mathfrak{s} , g) \wedge SWF(Y_2 , \mathfrak{s} , g).
\]
Suspending by $SWF(Y_1 , \mathfrak{s} , g)$ and applying the duality map corresponding to $Y_1$ we obtain a map
\begin{align*}
h \colon & Ker_{APS}( D^+_{\widehat{A}} )^+ \wedge SWF( Y_1 , \mathfrak{s} , g) \\
& \quad \to (H^+(W))^+ \wedge Coker_{APS}( D^+_{\widehat{A}} )^+ \wedge SWF(Y_2 , \mathfrak{s} , g) \wedge S^{-k(D_1) \mathbb{C}}
\end{align*}
where $k(D_1)$ is the dimension of the kernel of the Dirac operator on $Y_1$. Applying Proposition \ref{prop:ineq} to this map and simplifying, we obtain the inequality $\delta_{G,c e}(Y_1 , \mathfrak{s}|_{Y_1}) + \delta(W , \mathfrak{s}) \le \delta_{G,c}(Y_2 , \mathfrak{s}|_{Y_2})$.

\end{proof}

\begin{definition}
Let $(Y_1,\mathfrak{s}_1),(Y_2,\mathfrak{s}_2)$ be rational homology $3$-spheres equipped with spin$^c$-structures. Suppose that $G$ acts orientation preservingly on $Y_1,Y_2$ and preserves the spin$^c$-structures $\mathfrak{s}_1,\mathfrak{s}_2$. A {\em $G$-equivariant rational homology cobordism from $(Y_1 , \mathfrak{s}_1)$ to $(Y_2 , \mathfrak{s}_2)$} is a rational homology cobordism $W$ from $Y_1$ to $Y_2$ such that the $G$-action and spin$^c$-structure $\mathfrak{s}_1 \cup \mathfrak{s}_2$ on $\partial W$ extend over $W$. We say that $(Y_1 , \mathfrak{s}_1), (Y_2 , \mathfrak{s}_2)$  are {\em $G$-equivariantly rational homology cobordant} if there exists a $G$-equivariant rational homology cobordism from $(Y_1 , \mathfrak{s}_1)$ to $(Y_2 , \mathfrak{s}_2)$.

Similarly we define the notion of a {\em $G$-equivariant integral homology cobordism} and say that two integral homology $3$-spheres $Y_1,Y_2$ on which $G$ acts are {\em $G$-equivariantly integral homology cobordant} if there is a $G$-equivariant integral homology cobordism from $Y_1$ to $Y_2$. Note that since $Y_1,Y_2$ are integral homology $3$-spheres, they have unique spin$^c$-structures which are automatically $G$-invariant and any $G$-equivariant integral homology cobordism from $Y_1$ to $Y_2$ has a unique spin$^c$-structure which restricts on the boundary to the unique spin$^c$-structures on $Y_1,Y_2$.
\end{definition}

\begin{corollary}\label{cor:inv}
The $G$-equivariant $\delta$-invariants $\delta_{G,c}(Y , \mathfrak{s})$ are invariant under $G$-equivariant rational homology cobordism, that is, if $(W,\mathfrak{s})$ is a $G$-equivariant rational homology cobordism from $(Y_1, \mathfrak{s}_1)$ to $(Y_2 , \mathfrak{s}_2)$ and if the extensions $G_{\mathfrak{s}_1}$, $G_{\mathfrak{s}_2}$ are trivial, then $\delta_{G,c}(Y_1 , \mathfrak{s}_1) = \delta_{G,c}(Y_2 , \mathfrak{s}_2)$ for all $c \in H^*_G$.
\end{corollary}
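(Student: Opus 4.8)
The plan is to obtain the corollary as an immediate consequence of the equivariant Fr\o yshov inequality, Theorem \ref{thm:don}(2), applied to $W$ and to its orientation-reversal $\overline{W}$. The only real work is bookkeeping: checking that all hypotheses of Theorem \ref{thm:don} hold and that the relevant quantities vanish.

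First I would record the numerical input. Since $W$ is a rational homology cobordism from $Y_1$ to $Y_2$, we have $b_1(W)=0$ and $H^2(W;\mathbb{Q})=0$; in particular $b_+(W)=0$, $\sigma(W)=0$, and $c_1(\mathfrak{s})^2=0$, so $\delta(W,\mathfrak{s})=(c_1(\mathfrak{s})^2-\sigma(W))/8=0$. Moreover the (unique) $G$-invariant maximal positive definite subspace of $H^2(W;\mathbb{R})$ is $\{0\}$, whose $\mathbb{F}$-Euler class is $e=1\in H^0_G$, so $ce=c$ for every $c\in H^*_G$. Finally, as noted in Section \ref{sec:cobordismfda}, restriction to a boundary component induces an isomorphism $G_{\mathfrak{s}}\cong G_{\mathfrak{s}_i}$, so the hypothesis that $G_{\mathfrak{s}_1}$ (equivalently $G_{\mathfrak{s}_2}$) is trivial guarantees that $G_{\mathfrak{s}}$ is trivial, as Theorem \ref{thm:don} requires; the remaining hypotheses ($G$ acts smoothly, orientation preservingly, preserving $\mathfrak{s}$, and fixing each boundary component) are built into the definition of a $G$-equivariant rational homology cobordism.

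Next I would apply Theorem \ref{thm:don}(2) to $W$, whose boundary is $\overline{Y}_1\cup Y_2$: for every nonzero $c\in H^*_G$ it gives $\delta_{G,c}(Y_1,\mathfrak{s}_1)+\delta(W,\mathfrak{s})\le \delta_{G,c}(Y_2,\mathfrak{s}_2)$, i.e. $\delta_{G,c}(Y_1,\mathfrak{s}_1)\le\delta_{G,c}(Y_2,\mathfrak{s}_2)$. Then I would run the identical argument for $\overline{W}$, which is a $G$-equivariant rational homology cobordism from $(Y_2,\mathfrak{s}_2)$ to $(Y_1,\mathfrak{s}_1)$ with boundary $\overline{Y}_2\cup Y_1$ and likewise has $\delta(\overline{W},\mathfrak{s})=0$ and trivial induced $S^1$-extension (orientation reversal does not affect the spinor bundle lift, so $G_{\mathfrak{s}}$ is unchanged); this yields $\delta_{G,c}(Y_2,\mathfrak{s}_2)\le\delta_{G,c}(Y_1,\mathfrak{s}_1)$ for every nonzero $c$. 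Combining the two inequalities gives $\delta_{G,c}(Y_1,\mathfrak{s}_1)=\delta_{G,c}(Y_2,\mathfrak{s}_2)$ for all $c\neq 0$, and for $c=0$ both sides equal $-\infty$ by convention, completing the proof. There is no genuine obstacle here—the entire content lies in Theorem \ref{thm:don}—and the only point deserving a sentence of care is the verification that $\overline{W}$ satisfies the hypotheses of that theorem, in particular that its induced $S^1$-extension remains trivial.
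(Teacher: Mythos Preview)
Your proof is correct and follows essentially the same approach as the paper: apply Theorem \ref{thm:don}(2) to $W$ and to $\overline{W}$, using that $H^2(W;\mathbb{R})=0$ forces $\delta(W,\mathfrak{s})=0$ and $e=1$, and combine the two resulting inequalities. Your version is in fact slightly more careful than the paper's in explicitly verifying that $G_{\mathfrak{s}}$ is trivial and in handling the $c=0$ case.
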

\begin{proof}
Since $W$ is a rational homology cobordism, we have $H^2(W ; \mathbb{R}) = 0$. So $\delta(W , \mathfrak{s}) = 0$ and $e = e(H^+(W)) = 1$. Therefore Theorem \ref{thm:don} gives
\[
\delta_{G,c}(Y_1,\mathfrak{s}_1) \le \delta_{G,c}(Y_2 , \mathfrak{s}_2).
\]
Similarly, viewing $\overline{W}$ as a $G$-equivariant rational homology cobordism from $Y_2$ to $Y_1$, we get
\[
\delta_{G,c}(Y_2,\mathfrak{s}_2) \le \delta_{G,c}(Y_1 , \mathfrak{s}_1).
\]
Hence $\delta_{G,c}(Y_1,\mathfrak{s}_1) = \delta_{G,c}(Y_2 , \mathfrak{s}_2)$.

\end{proof}

\begin{theorem}\label{thm:positive}
Let $Y$ be a rational homology $3$-sphere, $G$ a finite group acting on $Y$ preserving orientation and the isomorphism class of a spin$^c$-structure $\mathfrak{s}$ and suppose that $G_{\mathfrak{s}}$ is a trivial extension. Then for any $c_1,c_2 \in H^*_G$ with $c_1 c_2 \neq 0$ we have
\[
\delta_{c_1}(Y) + \delta_{c_2}(\overline{Y}) \ge 0.
\]
\end{theorem}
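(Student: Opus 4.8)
The plan is to deduce the inequality from equivariant Spanier--Whitehead duality together with Proposition~\ref{prop:ineq} and a cross-product estimate for the invariants $d_{G,c}$. Throughout write $X = SWF(Y,\mathfrak{s},g)$ and $X' = SWF(\overline{Y},\mathfrak{s},g)$, and use the hypothesis $G_{\mathfrak{s}} \cong \overline{G}$ to regard everything as $\overline{G}$-equivariant; recall that $X$ and $X'$ are spectra of type $G$-SWF at level $0$. I would first observe that $X \wedge X'$ is again of type $G$-SWF at level $0$: its $S^1$-fixed point set is $X^{S^1} \wedge (X')^{S^1}$, which is $G$-homotopy equivalent to the one-point compactification of a $0$-dimensional real $G$-representation, and $S^1$ acts freely on the complement since it already does so on $X - X^{S^1}$ and on $X' - (X')^{S^1}$. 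Likewise $S^{-k(D)\mathbb{C}}$ is of type $G$-SWF at level $0$, and from $d_{G,c}(S^0) = 0$ together with the definition of $d_{G,c}$ on spectra one gets $d_{G,c}(S^{-k(D)\mathbb{C}}) = -2k(D)$ for every non-zero $c \in H^*_G$.

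The key input is the duality map $\varepsilon \colon X \wedge X' \to S^{-k(D)\mathbb{C}}$ of Section~\ref{sec:duality}. Since it exhibits $X$ and $X'$, up to the shift by $S^{-k(D)\mathbb{C}}$, as equivariant Spanier--Whitehead duals, it is part of a duality datum and so comes with a coevaluation $\eta \colon S^{-k(D)\mathbb{C}} \to X \wedge X'$ satisfying the triangle identities. Restricting to $S^1$-fixed points, $\varepsilon^{S^1}$ is a non-equivariant duality map between spheres and hence has degree a unit in $\mathbb{F}$; the triangle identities then force $\mu := \deg(\eta^{S^1}) \in H^0_G = \mathbb{F}$ to be a unit as well. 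Applying Proposition~\ref{prop:ineq} to $\eta$ (applied, as in the proof of Theorem~\ref{thm:don}, to a stable map; both spectra are at level $0$) yields, for every non-zero $c$,
\[
d_{G,c}(S^{-k(D)\mathbb{C}}) = d_{G,c\mu}(S^{-k(D)\mathbb{C}}) \le d_{G,c}(X \wedge X'),
\]
that is, $-2k(D) \le d_{G,c}(X \wedge X')$.

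Next I would establish the cross-product estimate $d_{G,c_1 c_2}(X \wedge X') \le d_{G,c_1}(X) + d_{G,c_2}(X')$ whenever $c_1 c_2 \neq 0$. For homogeneous $c_1,c_2$ one takes classes $x_1,x_2$ realising $d_{G,c_1}(X)$ and $d_{G,c_2}(X')$ in the sense of Definition~\ref{def:dinv} and forms the external product $x_1 \times x_2$. The inclusions of the $S^1$-fixed point sets are compatible with cross products; on the fixed point sets $\tau_X \times \tau_{X'}$ generates $\Lambda_G(X \wedge X')$ (the cross product of the fundamental classes of spheres being the fundamental class of the smash) and the filtration $F_\bullet$ is multiplicative, so one computes $\iota^*(x_1 \times x_2) = c_1 c_2\, U^{k_1 + k_2}\, \tau \pmod{F_{|c_1 c_2| + 1}}$ with $k_i = d_{G,c_i}/2$, which gives the bound. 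The general case follows by splitting $c_1,c_2$ into homogeneous components and invoking Proposition~\ref{prop:din}.

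Combining the last two steps, for $c_1 c_2 \neq 0$ we obtain
\[
-2k(D) \le d_{G,c_1}\!\big(SWF(Y,\mathfrak{s},g)\big) + d_{G,c_2}\!\big(SWF(\overline{Y},\mathfrak{s},g)\big).
\]
Substituting $d_{G,c}(Y,\mathfrak{s}) = d_{G,c}(SWF(Y,\mathfrak{s},g)) - 2n(Y,\mathfrak{s},g)$, the analogous identity for $\overline{Y}$, and the relation $n(Y,\mathfrak{s},g) + n(\overline{Y},\mathfrak{s},g) = -k(D)$ from Equation~(\ref{equ:nnbar}), the right-hand side becomes $d_{G,c_1}(Y,\mathfrak{s}) + d_{G,c_2}(\overline{Y},\mathfrak{s}) - 2k(D)$, and the inequality rearranges to $\delta_{c_1}(Y) + \delta_{c_2}(\overline{Y}) \ge 0$, as desired. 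The main obstacle is the opening claim of the second paragraph: Section~\ref{sec:duality} only records that $\varepsilon$ restricts to a non-equivariant duality on each fixed point set, so one must invoke the equivariant duality theory behind \cite{cor} (a fixed-point-detected duality between finite $\overline{G}$-CW spectra is a genuine categorical duality in $\mathfrak{C}(\overline{G})$) in order to produce the coevaluation $\eta$ and to control $\deg(\eta^{S^1})$; once that is in place the remaining steps are routine.
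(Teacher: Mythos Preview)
Your argument is correct modulo the point you yourself flag, and the overall structure---obtain a map from a sphere spectrum into $SWF(Y)\wedge SWF(\overline{Y})$, apply Proposition~\ref{prop:ineq}, then use the cross-product estimate $d_{G,c_1c_2}(X\wedge X')\le d_{G,c_1}(X)+d_{G,c_2}(X')$---is exactly what the paper does. The cross-product estimate is stated in the paper without proof (``from the definition of the $\delta$-invariant it is clear that\ldots''), so your sketch of it is a welcome addition.

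The genuine difference is in how the map into $SWF(Y)\wedge SWF(\overline{Y})$ is produced. You manufacture it as the coevaluation $\eta$ of an equivariant duality, which requires upgrading the fixed-point-wise duality of Section~\ref{sec:duality} to a categorical duality in $\mathfrak{C}(\overline{G})$; as you note, this is the delicate step. The paper sidesteps this entirely: it takes $W=[0,1]\times Y$ as a cobordism from $Y$ to itself and invokes the finite-dimensional approximation machinery of Section~\ref{sec:cobordismfda}, which directly yields a stable map
\[
f \colon Ker_{APS}(D^+_{\widehat{A}})^+ \to Coker_{APS}(D^+_{\widehat{A}})^+ \wedge SWF(\overline{Y},\mathfrak{s},g)\wedge SWF(Y,\mathfrak{s},g)
\]
with $\deg(f^{S^1})=e(H^+(W))=1$ since $b_+(W)=0$. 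Applying Proposition~\ref{prop:ineq} gives $ind_{APS}(D^+_{\widehat{A}})\le \delta_{G,c_1c_2}(SWF(\overline{Y})\wedge SWF(Y))$, and for the product cobordism Equation~(\ref{equ:aps}) reduces to $ind_{APS}(D^+_{\widehat{A}})=n(\overline{Y},\mathfrak{s},g)+n(Y,\mathfrak{s},g)$, which combined with the cross-product estimate gives the result. Morally the cobordism map for the cylinder \emph{is} your coevaluation $\eta$, but the paper's route is self-contained within the framework already built in Section~\ref{sec:cobord} and never needs to appeal to general equivariant Spanier--Whitehead duality theory.
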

\begin{proof}
The proof is similar to that of Theorem \ref{thm:don}. Let $W = [0,1] \times Y$ be the trivial cobordism from $Y$ to itself. Choosing suitable metrics and reference connections we obtain a stable map 
\[
f \colon Ker_{APS}( D^+_{\widehat{A}} )^+ \to Coker_{APS}( D^+_{\widehat{A}} )^+ \wedge SWF(\overline{Y} , \mathfrak{s} , g) \wedge SWF(Y , \mathfrak{s} , g).
\]
Note that $H^+(W) = \{0\}$ and hence $e(H^+(W)) = 1$. Applying Proposition \ref{prop:ineq} to this map we see that for any $c_1,c_2 \in H^*_G$ with $c_1 c_2 \neq 0$, we have
\[
ind_{APS}( D^+_{\widehat{A}} ) \le \delta_{G,c_1c_2}( SWF(\overline{Y},\mathfrak{s},g) \wedge SWF(Y , \mathfrak{s} , g) ).
\]
From the definition of the $\delta$-invariant it is clear that 
\[
\delta_{G,c_1c_2}( SWF(\overline{Y},\mathfrak{s},g) \wedge SWF(Y , \mathfrak{s} , g) ) \le \delta_{G,c_1}( SWF(\overline{Y} , \mathfrak{s} , g)) + \delta_{G,c_2}(SWF( Y , \mathfrak{s} , g) )
\]
and hence
\[
ind_{APS}( D^+_{\widehat{A}} ) \le \delta_{G,c_1}(\overline{Y} , \mathfrak{s} ) + \delta_{G,c_2}(Y , \mathfrak{s} ) + n(\overline{Y},\mathfrak{s},g) + n(Y , \mathfrak{s} , g).
\]
On the other hand, for $W = [0,1] \times Y$, Equation (\ref{equ:aps}) reduces to
\[
ind_{APS}( D^+_{\widehat{A}}) = n(\overline{Y},\mathfrak{s},g) + n(Y , \mathfrak{s} , g).
\]
Hence we obtain $0 \le \delta_{G,c_1}(\overline{Y} , \mathfrak{s} ) + \delta_{G,c_2}(Y , \mathfrak{s} )$.
\end{proof}

\subsection{Induced cobordism maps}\label{sec:indcobord}

In this section we show that equivariant cobordisms induce maps on equivariant Seiberg--Witten--Floer cohomology.

\begin{theorem}
Let $W$ be a smooth, compact, oriented $4$-manifold with boundary and with $b_1(W) = 0$. Suppose that $G$ acts smoothly on $W$ preserving the orientation and a spin$^c$-structure $\mathfrak{s}$. Suppose that $\partial W = \overline{Y}_1 \cup Y_2$ where $Y_1,Y_2$ are rational homology $3$-spheres and set $\mathfrak{s}_i = \mathfrak{s}|_{Y_i}$. Suppose $G$ sends $Y_i$ to itself. Then there is a morphism of graded $H^*_{G_{\mathfrak{s}}}$-modules
\[
SW_G(W,\mathfrak{s}) \colon HSW^*_G(Y_2 , \mathfrak{s}_2) \to HSW^{*+b_+(W)-2\delta(W,\mathfrak{s})}_G(Y_1, \mathfrak{s}_1)
\]
such that the following diagram commutes
\[
\xymatrix{
HSW^*_G(Y_2 , \mathfrak{s}_2) \ar[rr]^-{SW_G(W,\mathfrak{s})} \ar[d] & & HSW^{*+b_+(W)-2\delta(W,\mathfrak{s})}_G(Y_1 , \mathfrak{s}_1) \ar[d] \\
HSW^*(Y_2 , \mathfrak{s}_2) \ar[rr]^-{SW(W,\mathfrak{s})} & & HSW^{*+b_+(W)-2\delta(W,\mathfrak{s})}(Y_1 , \mathfrak{s}_1)
}
\]
where the vertical arrows are the forgetful maps to non-equivariant Seiberg--Witten--Floer cohomology and $SW(W,\mathfrak{s})$ is the morphism of Seiberg--Witten--Floer cohomology groups induced by $(W,\mathfrak{s})$.

\end{theorem}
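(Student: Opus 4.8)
The plan is to extract the cobordism map from the stable map $f$ of Section~\ref{sec:cobordismfda}, turning it into a map \emph{out of} $SWF(Y_1,\mathfrak{s}_1,g)$ by means of the duality of Section~\ref{sec:duality}, and then passing to equivariant cohomology. For $\partial W = \overline{Y}_1 \cup Y_2$, the finite dimensional approximation of Section~\ref{sec:cobordismfda} produces a morphism in $\mathfrak{C}(G_{\mathfrak{s}})$
\[
f \colon Ker_{APS}(D^+_{\widehat{A}})^+ \to (H^+(W))^+ \wedge Coker_{APS}(D^+_{\widehat{A}})^+ \wedge SWF(\overline{Y}_1,\mathfrak{s}_1,g) \wedge SWF(Y_2,\mathfrak{s}_2,g).
\]
Smashing $f$ with $SWF(Y_1,\mathfrak{s}_1,g)$ and composing with the duality map $\varepsilon\colon SWF(Y_1,\mathfrak{s}_1,g)\wedge SWF(\overline{Y}_1,\mathfrak{s}_1,g)\to S^{-k(D_1)\mathbb{C}}$ of Section~\ref{sec:duality} (which exists $G_{\mathfrak{s}}$-equivariantly by the equivariant Conley index duality of \cite{cor}, exactly as in the $S^1$ case there), and permuting the smash factors, one obtains a morphism in $\mathfrak{C}(G_{\mathfrak{s}})$
\[
h\colon Ker_{APS}(D^+_{\widehat{A}})^+ \wedge SWF(Y_1,\mathfrak{s}_1,g) \to (H^+(W))^+\wedge Coker_{APS}(D^+_{\widehat{A}})^+\wedge S^{-k(D_1)\mathbb{C}}\wedge SWF(Y_2,\mathfrak{s}_2,g).
\]
Applying $\widetilde{H}^*_{G_{\mathfrak{s}}}$ contravariantly and then stripping the suspensions by $(H^+(W))^+$, $Coker_{APS}(D^+_{\widehat{A}})^+$, $S^{-k(D_1)\mathbb{C}}$ and $Ker_{APS}(D^+_{\widehat{A}})^+$ via the Thom isomorphism (valid without local coefficients under the hypotheses of Section~\ref{sec:assumption}) defines the desired map $SW_G(W,\mathfrak{s})$. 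As in the proof of the equivariant Fr\o yshov inequality, it suffices to treat $W$ connected, the general case following by applying the construction componentwise.

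It remains to identify the degree shift. Keeping track of the four Thom isomorphisms and the normalisation $HSW^j_G(Y,\mathfrak{s}) = \widetilde{H}^{j+2n(Y,\mathfrak{s},g)}_{G_{\mathfrak{s}}}(SWF(Y,\mathfrak{s},g))$, one finds that $SW_G(W,\mathfrak{s})$ shifts degree by
\[
b_+(W) - 2\,ind_{APS}(D^+_{\widehat{A}}) - 2k(D_1) + 2n(Y_2,\mathfrak{s}_2,g) - 2n(Y_1,\mathfrak{s}_1,g).
\]
The cobordism form of the Atiyah--Patodi--Singer index theorem (the evident analogue of Equation~(\ref{equ:aps}), combined with Equation~(\ref{equ:nnbar}) applied to $Y_1$) gives $ind_{APS}(D^+_{\widehat{A}}) = \delta(W,\mathfrak{s}) - k(D_1) - n(Y_1,\mathfrak{s}_1,g) + n(Y_2,\mathfrak{s}_2,g)$; substituting reduces the shift to $b_+(W) - 2\delta(W,\mathfrak{s})$, as asserted. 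Only the underlying ranks of the operators enter this computation, so the refinement of the index to a virtual representation of $G_{\mathfrak{s}}$ plays no role here.

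Because the smash product, the duality pairing $\varepsilon$, and the Thom isomorphisms are all $H^*_{G_{\mathfrak{s}}}$-linear (the Thom isomorphisms being cup products with equivariant Thom classes), $SW_G(W,\mathfrak{s})$ is a morphism of graded $H^*_{G_{\mathfrak{s}}}$-modules. For the commuting square, restrict the $G_{\mathfrak{s}}$-action throughout to the central subgroup $S^1 \subseteq G_{\mathfrak{s}}$: the maps $f$ and $h$ restrict, on forgetting down to $S^1$, to the stable maps underlying the non-equivariant cobordism map $SW(W,\mathfrak{s})$ of Manolescu \cite{man} and Khandhawit \cite{kha}, the Thom isomorphisms are compatible with restriction of the acting group, and the forgetful maps $HSW^*_G(Y_i,\mathfrak{s}_i)\to HSW^*(Y_i,\mathfrak{s}_i)$ and $H^*_{G_{\mathfrak{s}}}\to H^*_{S^1}$ are by definition induced by this restriction; hence the diagram commutes. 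Independence of $SW_G(W,\mathfrak{s})$ from the $G$-invariant metric on $W$, the reference connection $\widehat{A}$, and the finite dimensional approximation parameters $\mu,\lambda,U,U'$ follows from the corresponding arguments of \cite{man,kha} carried out $G_{\mathfrak{s}}$-equivariantly (different choices are joined by paths inducing homotopic maps, and enlarging the approximation is compatible with the canonical suspension isomorphisms in $\mathfrak{C}(G_{\mathfrak{s}})$).

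The construction contains no genuinely new difficulty beyond the non-equivariant one; it is a $G_{\mathfrak{s}}$-equivariant transcription of the work of Manolescu \cite{man} and Khandhawit \cite{kha}. The only point that really has to be checked is that the ingredients they use non-equivariantly, in particular the a priori estimates guaranteeing the existence of the finite dimensional approximation $f$ and the equivariant Spanier--Whitehead duality of Section~\ref{sec:duality}, remain valid for the larger group $G_{\mathfrak{s}}$. Since $G_{\mathfrak{s}}$ acts by isometries commuting with the operators in play and preserves the relevant configuration spaces and gauges, each of these verifications is routine, which is why we may be brief.
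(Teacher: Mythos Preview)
Your proof is correct and follows essentially the same approach as the paper: you build the map $h$ from $f$ by smashing with $SWF(Y_1,\mathfrak{s}_1,g)$ and applying the duality $\varepsilon$ for $Y_1$, pass to $G_{\mathfrak{s}}$-equivariant cohomology, strip the suspensions via Thom isomorphisms, and identify the degree shift using Equations~(\ref{equ:aps}) and~(\ref{equ:nnbar}); the commutativity of the square then follows by restricting to $S^1\subseteq G_{\mathfrak{s}}$. Your additional remarks on independence of auxiliary choices go slightly beyond what the paper records here, but are in the same spirit.
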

\begin{proof}
We give the proof in the case $W$ is connected. The general case follows by a similar argument. As in the proof of Theorem \ref{thm:don}, choosing suitable metrics and reference connections, we obtain a stable map
\[
h \colon S^{ind_{APS}(D^+_{\widehat{A}})} \wedge SWF( Y_1 , \mathfrak{s}_1 , g_1) \to (H^+(W))^+ \wedge SWF(Y_2 , \mathfrak{s}_2 , g_2) \wedge S^{-k(D_1) \mathbb{C}}
\]
where $k(D_1)$ is the dimension of the kernel of the Dirac operator on $Y_1$. The induced map in equivariant cohomology takes the form
\[
h^* \colon \widetilde{H}^j_{G_{\mathfrak{s}}}((H^+(W))^+ \wedge SWF(Y_2 , \mathfrak{s}_2 , g_2) \wedge S^{-k(D_1) \mathbb{C}}) \to \widetilde{H}^j_{G_{\mathfrak{s}}}( S^{ind_{APS}(D^+_{\widehat{A}})} \wedge SWF( Y_1 , \mathfrak{s}_1 , g_1) ).
\]
Using the Thom isomorphism, this is equivalent to
\[
h^* \colon \widetilde{H}^{j-b_+(W)+2k(D_1)}_{G_{\mathfrak{s}}}( SWF(Y_2 , \mathfrak{s}_2 , g_2) ) \to \widetilde{H}^{j-2 ind_{APS}(D^+_{\widehat{A}})}_{G_{\mathfrak{s}}}( SWF( Y_1 , \mathfrak{s}_1 , g_1) ).
\]
From Equation (\ref{equ:aps}), $ind_{APS}=\delta(W,\mathfrak{s})+n(Y_2,\mathfrak{s}_2,g_2)+n(\overline{Y}_1,\mathfrak{s}_1,g_1)$ and from Equation (\ref{equ:nnbar}) $n(Y_1,\mathfrak{s}_1,g_1) + n(\overline{Y}_1,\mathfrak{s}_1,g_1) = -k(D_1)$. Replacing $j$ by $j+b_+(W)-2k(D_1)+2n(Y_2,\mathfrak{s}_2,g_2)$, we see that $h^*$ takes the form
\[
h^* \colon \widetilde{H}^{j+2n(Y_2,\mathfrak{s}_2,g_2)}_{G_{\mathfrak{s}}}( SWF(Y_2 , \mathfrak{s}_2 , g_2) ) \to \widetilde{H}^{j+b_+(W)-2\delta(W,\mathfrak{s})+2n(Y_1,\mathfrak{s}_1,g_1) }_{G_{\mathfrak{s}}}( SWF( Y_1 , \mathfrak{s}_1 , g_1) ).
\]
Then since $HSW^*_G(Y_i , \mathfrak{s}_i) = \widetilde{H}^{*+2n(Y_i , \mathfrak{s}_i , g_i)}_{G_{\mathfrak{s}}}( SWF(Y_i , \mathfrak{s}_i , g_i) )$, we see that $h^*$ is equivalent to a map
\[
SW_G(W,\mathfrak{s}) \colon HSW^*_G(Y_2 , \mathfrak{s}_2) \to HSW^{*+b_+(W)-2\delta(W,\mathfrak{s})}_G(Y_1, \mathfrak{s}_1).
\]
Since this is a map of equivariant cohomologies induced by an equivariant map of spaces, it follows that $SW_G(W,\mathfrak{s})$ is a morphism of graded $H^*_{G_{\mathfrak{s}}}$-modules. Restricting to the subgroup $S^1 \to G_{\mathfrak{s}}$, we obtain the commutative diagram in the statement of the theorem.

\end{proof}

\section{The case $G = \mathbb{Z}_p$}\label{sec:zp}

In this section we specialise to the case $G = \mathbb{Z}_p$, $\mathbb{F} = \mathbb{Z}_p$, where $p$ is a prime number. Then for $p=2$ we have $H^*_G \cong \mathbb{F}[Q]$, where $deg(Q) = 1$ and if $p$ is odd we have $H^*_G \cong \mathbb{F}[R,S]/(R^2)$, where $deg(R)=1$, $deg(S)=2$. Suppose $G = \langle \tau \rangle$ acts smoothly and orientation preservingly on a rational homology $3$-sphere $Y$, preserving a spin$^c$-structure $\mathfrak{s}$. The action of $G$ is equivalent to giving an orientation preserving diffeomorphism $\tau \colon Y \to Y$ such that $\tau^p = id$ and $\tau^*(\mathfrak{s}) = \mathfrak{s}$. Choose a lift $\tau' \in G_{\mathfrak{s}}$ of $\tau$. Then $(\tau')^p = \zeta$ for some $\zeta \in S^1$. Replacing $\tau'$ by $\widetilde{\tau} = \zeta^{-1/p} \tau'$, where $\zeta^{1/p}$ is a $p$-th root of $\zeta$, we see that $\widetilde{\tau}^p = id$. Hence $G_{\mathfrak{s}}$ is a trivial extension. 

\subsection{$\delta$-invariants}

\begin{definition}
If $p=2$, then for any integer $j \ge 0$, we define $d_{j}(Y , \mathfrak{s} , \tau,2) = d_{\mathbb{Z}_2 , Q^j}(Y , \mathfrak{s})$. If $p$ is odd, then for any integer $j \ge 0$, we define $d_{j}(Y , \mathfrak{s} , \tau , p) = d_{\mathbb{Z}_p , S^j}(Y , \mathfrak{s})$. We also set $\delta_{j}(Y , \mathfrak{s} , \tau,p) = d_{j}(Y , \mathfrak{s} , \tau,p)/2$. When $p$ and $\tau$ are understood we will omit them from the notation and simply write $d_j(Y,\mathfrak{s})$ and $\delta_j(Y ,\mathfrak{s})$.
\end{definition}

In the case $p$ is odd, one may also consider the invariants $d_{\mathbb{Z}_p , RS^j}(Y , \mathfrak{s})$. For simplicity we will not consider these invariants.

\begin{theorem}\label{thm:deltaz2}
We have the following properties:
\begin{itemize}
\item[(1)]{$\delta_{0}(Y , \mathfrak{s}) \ge \delta(Y , \mathfrak{s})$.}
\item[(2)]{$\delta_{j+1}(Y , \mathfrak{s}) \le \delta_{j}(Y , \mathfrak{s})$ for all $j \ge 0$.}
\item[(3)]{The sequence $\{ \delta_{j}(Y , \mathfrak{s}) \}_{j \ge 0}$ is eventually constant.}
\item[(4)]{$\delta_j(Y , \mathfrak{s}) + \delta_j( \overline{Y} , \mathfrak{s}) \ge 0$ for all $j \ge 0$.}
\item[(5)]{If $Y$ is an $L$-space, then $\delta_j(Y , \mathfrak{s}) = \delta(Y , \mathfrak{s})$ for all $j \ge 0$.}
\end{itemize}
\end{theorem}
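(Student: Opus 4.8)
The plan is to derive each item by specialising to $G=\mathbb{Z}_p$ the general facts about the invariants $d_{G,c}$ already established for spaces (and spectra) of type $G$-SWF, applied to $SWF(Y,\mathfrak{s},g)$. Write $c_j$ for the class defining $d_j$, so $c_j=Q^j$ when $p=2$ and $c_j=S^j$ when $p$ is odd, and let $g_1$ denote the generator of $H^*_{\mathbb{Z}_p}$ in degree $1$ (if $p=2$) or $2$ (if $p$ odd), so that $c_0=1$ and $c_{j+1}=c_jg_1$. The only algebraic input needed is that $c_jc_k\neq 0$ for all $j,k\ge 0$, which holds because $\mathbb{F}[g_1]\subseteq H^*_{\mathbb{Z}_p}$ is a polynomial ring. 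I will also use that $d_{G,c}(Y,\mathfrak{s})=d_{G,c}(SWF(Y,\mathfrak{s},g))-2n(Y,\mathfrak{s},g)$ and that, for a fixed $G$-invariant metric $g$, all the numbers $d_{G,c}(Y,\mathfrak{s})$ lie in the coset $\mathbb{Z}-2n(Y,\mathfrak{s},g)$ of $\mathbb{Z}$ in $\mathbb{Q}$ (since $d_{G,c}$ of a space of type $G$-SWF is an integer and desuspension only shifts it by an integer).

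Items (1), (2) and (5) are then one-line deductions. For (1), since $c_0=1$, Proposition \ref{prop:Gand1} gives $d_{G,1}(SWF(Y,\mathfrak{s},g))\ge d(SWF(Y,\mathfrak{s},g))$; subtracting $2n(Y,\mathfrak{s},g)$ and invoking $d(Y,\mathfrak{s})=d(SWF(Y,\mathfrak{s},g))-2n(Y,\mathfrak{s},g)$ yields $d_0(Y,\mathfrak{s})\ge d(Y,\mathfrak{s})$, hence $\delta_0(Y,\mathfrak{s})\ge\delta(Y,\mathfrak{s})$. For (2), the product inequality of Proposition \ref{prop:din} with $c_1=c_j$, $c_2=g_1$ gives $d_{G,c_{j+1}}(SWF(Y,\mathfrak{s},g))\le d_{G,c_j}(SWF(Y,\mathfrak{s},g))$; subtracting the common shift gives $d_{j+1}(Y,\mathfrak{s})\le d_j(Y,\mathfrak{s})$, hence $\delta_{j+1}(Y,\mathfrak{s})\le\delta_j(Y,\mathfrak{s})$. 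For (5), Proposition \ref{prop:lspaced} says $d_{G,c}(Y,\mathfrak{s})=d(Y,\mathfrak{s})$ for every nonzero $c$ when $Y$ is an $L$-space; taking $c=c_j$ gives $\delta_j(Y,\mathfrak{s})=\delta(Y,\mathfrak{s})$.

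Item (4) is Theorem \ref{thm:positive} applied with $c_1=c_2=c_j$: since $c_j^2\neq 0$, we get $\delta_{G,c_j}(Y,\mathfrak{s})+\delta_{G,c_j}(\overline{Y},\mathfrak{s})\ge 0$, i.e.\ $\delta_j(Y,\mathfrak{s})+\delta_j(\overline{Y},\mathfrak{s})\ge 0$. For item (3), I would argue that $\{\delta_j(Y,\mathfrak{s})\}_{j\ge 0}$ is non-increasing by (2), and bounded below because Theorem \ref{thm:positive} with $c_1=c_j$, $c_2=1$ gives $\delta_j(Y,\mathfrak{s})\ge-\delta_{G,1}(\overline{Y},\mathfrak{s})=-\delta_0(\overline{Y},\mathfrak{s})$, a fixed finite number; since the $\delta_j(Y,\mathfrak{s})$ all lie in the discrete set $\tfrac12(\mathbb{Z}-2n(Y,\mathfrak{s},g))$, a bounded-below non-increasing sequence there is eventually constant.

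The main obstacle, such as it is, is item (3): the earlier results do not literally assert stabilisation, so one must combine the Fr\o yshov-type lower bound of Theorem \ref{thm:positive} with the observation that the $d$-invariants take values in a fixed coset of $\mathbb{Z}$, which turns ``non-increasing and bounded below'' into ``eventually constant''. Everything else is a direct transcription of an already-proved inequality, once one checks that powers of $Q$ (resp.\ $S$) and their products are nonzero.
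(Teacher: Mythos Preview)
Your proof is correct and for items (1), (2), (4), (5) it coincides verbatim with the paper's argument: Proposition~\ref{prop:Gand1}, Proposition~\ref{prop:din} with $c_1=c_j$ and $c_2=Q$ or $S$, Theorem~\ref{thm:positive} with $c_1=c_2=c_j$, and Proposition~\ref{prop:lspaced}, respectively.

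For (3) you take a slightly different but equally valid path. The paper observes that $\delta_j(Y,\mathfrak{s})+\delta_j(\overline{Y},\mathfrak{s})$ is a non-negative, non-increasing, integer-valued sequence (using $n(Y,\mathfrak{s},g)+n(\overline{Y},\mathfrak{s},g)\in\mathbb{Z}$), hence eventually constant, and then uses monotonicity of each summand to conclude both stabilise. You instead bound $\delta_j(Y,\mathfrak{s})$ below directly by $-\delta_0(\overline{Y},\mathfrak{s})$ via Theorem~\ref{thm:positive} with $c_1=c_j$, $c_2=1$, and appeal to discreteness of the value set. Your route is marginally more direct since it avoids the detour through the sum; the paper's route has the small advantage of making the integrality of $\delta_j(Y,\mathfrak{s})+\delta_j(\overline{Y},\mathfrak{s})$ explicit along the way.
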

\begin{proof}
(1) is a restatement of Proposition \ref{prop:Gand1}. (2) follows from Proposition \ref{prop:din} taking $c_1 = Q^j$ and $c_2 = Q$ in the case $p=2$ and $c_1 = S^j$, $c_2 = S$ in the case $p$ is odd. (4) is a special case of Theorem \ref{thm:positive}. For (3), first note that the difference $\delta_j(Y , \mathfrak{s}) - \delta_{j+1}(Y , \mathfrak{s})$ is always an integer because $\delta_{G,c}(Y , \mathfrak{s}) + n(Y , \mathfrak{s} , g) \in \mathbb{Z}$ for any metric $g$. From (2) and (4) and the fact that $n(Y , \mathfrak{s} , g) + n(\overline{Y} , \mathfrak{s} , g) \in \mathbb{Z}$, it follows that $\delta_j(Y , \mathfrak{s}) + \delta_j( \overline{Y} , \mathfrak{s})$ is a non-negative, decreasing, integer-valued function. Hence the value of $\delta_j(Y , \mathfrak{s}) + \delta_j( \overline{Y} , \mathfrak{s})$ must eventually be constant. Using (2) again, it follows that $\delta_j(Y , \mathfrak{s})$ and $\delta_j(\overline{Y} , \mathfrak{s})$ are eventually constant. (5) is a restatement of Proposition \ref{prop:lspaced}.
\end{proof}

Next, we specialise Theorem \ref{thm:don} to the case $G = \mathbb{Z}_p$.

\begin{theorem}\label{thm:don2}
Let $W$ be a smooth, compact, oriented $4$-manifold with boundary and with $b_1(W) = 0$. Suppose that $\tau \colon W \to W$ is an orientation preserving diffeomorphism of order $p$ and $\mathfrak{s}$ a spin$^c$-structure preserved by $\tau$. Suppose each component of $\partial W$ is a rational homology $3$-sphere and that $\tau$ sends each component of $\partial W$ to itself. Suppose that the subspace of $H^2(W ; \mathbb{R})$ fixed by $\tau$ is negative definite. Then for all $j \ge 0$ we have
\begin{itemize}
\item[(1)]{If $\partial W = Y$ is connected, then 
\[
\delta(W , \mathfrak{s}) \le \delta_{j}(Y , \mathfrak{s}|_Y) \; \; \text{ and } \; \; \begin{cases} \delta_{j+b_+(W)}(\overline{Y} , \mathfrak{s}|_Y ) \le \delta( \overline{W} , \mathfrak{s} ) & (p = 2), \\ \delta_{j+b_+(W)/2}(\overline{Y} , \mathfrak{s}|_Y ) \le \delta( \overline{W} , \mathfrak{s} ) & (p \text{ odd}). \end{cases}
\]
}
\item[(2)]{If $\partial W = \overline{Y}_1 \cup Y_2$ has two connected components, then 
\[
\begin{cases} \delta_{j+b_+(W)}(Y_1 , \mathfrak{s}|_{Y_1}) + \delta(W , \mathfrak{s}) \le \delta_{j}(Y_2 , \mathfrak{s}|_{Y_2}) & (p=2), \\
\delta_{j+b_+(W)/2}(Y_1 , \mathfrak{s}|_{Y_1}) + \delta(W , \mathfrak{s}) \le \delta_{j}(Y_2 , \mathfrak{s}|_{Y_2}) & (p \text{ odd}).
\end{cases}
\]
}
\end{itemize}

\end{theorem}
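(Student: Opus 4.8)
The plan is to deduce the statement directly from the equivariant Fr\o yshov inequality, Theorem~\ref{thm:don}, by computing the $\mathbb{F}$-Euler class $e = e(H^+(W))$ explicitly when $G = \mathbb{Z}_p$ and $\mathbb{F} = \mathbb{Z}_p$. Fix a $G$-invariant maximal positive definite subspace $H^+(W) \subseteq H^2(W;\mathbb{R})$; this is a real $\mathbb{Z}_p$-representation of dimension $b_+(W)$. The crucial use of the hypothesis is the following observation: $(H^+(W))^{\mathbb{Z}_p}$ is positive definite, being a subspace of $H^+(W)$, and it is contained in the $\tau$-fixed subspace of $H^2(W;\mathbb{R})$, which is negative definite; hence $(H^+(W))^{\mathbb{Z}_p} = 0$, so $H^+(W)$ is a direct sum of nontrivial irreducible real representations of $\mathbb{Z}_p$.

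When $p = 2$ the only nontrivial irreducible real representation is the sign representation $\mathbb{R}_-$, whose $\mathbb{Z}_2$-equivariant top Stiefel--Whitney class is the generator $Q \in H^1_{\mathbb{Z}_2}(\mathrm{pt};\mathbb{Z}_2)$; thus $H^+(W) \cong \mathbb{R}_-^{\oplus b_+(W)}$ and $e = Q^{b_+(W)}$. When $p$ is odd every nontrivial irreducible real representation of $\mathbb{Z}_p$ is $2$-dimensional, so $b_+(W)$ is automatically even, and each such summand, regarded as a complex line on which $\tau$ acts by a primitive $p$-th root of unity, has equivariant Euler class $a_i S$ with $a_i \in \mathbb{F}^*$, since $H^2_{\mathbb{Z}_p}(\mathrm{pt};\mathbb{Z}_p)$ is spanned by $S$ and the Euler class of a nontrivial complex $\mathbb{Z}_p$-line is nonzero. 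Therefore $e = a\, S^{b_+(W)/2}$ with $a = \prod_i a_i \in \mathbb{F}^*$.

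Now apply Theorem~\ref{thm:don} with $c = Q^j$ when $p = 2$, respectively $c = S^j$ when $p$ is odd, for each $j \ge 0$. For $p = 2$ we have $ce = Q^{j + b_+(W)} \ne 0$ in $\mathbb{F}[Q]$, so the hypothesis $ce \ne 0$ holds, and $\delta_{G,c}(\cdot) = \delta_j(\cdot)$ while $\delta_{G,ce}(\cdot) = \delta_{j+b_+(W)}(\cdot)$. For $p$ odd, $ce = a\,S^{j+b_+(W)/2} \ne 0$ in $\mathbb{F}[R,S]/(R^2)$, and since $\delta_{G,ac'}(\cdot) = \delta_{G,c'}(\cdot)$ for $a \in \mathbb{F}^*$ we get $\delta_{G,c}(\cdot) = \delta_j(\cdot)$ and $\delta_{G,ce}(\cdot) = \delta_{j+b_+(W)/2}(\cdot)$. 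Substituting these identifications into parts (1) and (2) of Theorem~\ref{thm:don} yields exactly the inequalities claimed. The only real content beyond invoking Theorem~\ref{thm:don} is the Euler class computation, whose heart is the observation that negative-definiteness of the fixed subspace forces $H^+(W)$ to have no trivial summand; the rest is bookkeeping in $H^*_{\mathbb{Z}_p}(\mathrm{pt};\mathbb{Z}_p)$, with the mild caveat that for $p$ odd one must note $b_+(W)$ is even so that $b_+(W)/2$ is a well-defined nonnegative integer.
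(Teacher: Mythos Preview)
Your proof is correct and follows essentially the same approach as the paper's own proof: both reduce to Theorem~\ref{thm:don} by computing the $\mathbb{F}$-Euler class $e$ of $H^+(W)$, using the key observation that the negative-definiteness hypothesis forces $H^+(W)$ to have no trivial $\mathbb{Z}_p$-summand, so that $e = Q^{b_+(W)}$ for $p=2$ and $e$ is a nonzero multiple of $S^{b_+(W)/2}$ for $p$ odd.
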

\begin{proof}
Let $H^+(W)$ denote a $\tau$-invariant maximal positive definite subspace of $H^2(W ; \mathbb{R})$ (which always exits because $G = \langle \tau \rangle$ is finite) and let $e$ denote the image of the Euler class of $H^+(W)$ in $H^*_{\mathbb{Z}_p}$. To deduce the result from Theorem \ref{thm:don}, we just need to check that $e Q^j \neq 0$ for all $j \ge0$ if $p=2$ and $e S^j \neq 0$ for all $j \ge 0$ if $p$ is odd. 

In the case $p=2$, $e$ is the top Stiefel-Whitney class of $H^+(W)$, which is easily seen to be $Q^{b_+(W)}$ because our assumption that the subspace of $H^2(W ; \mathbb{R})$ fixed by $\tau$ is negative definite implies that $\tau$ acts as $-1$ on $H^+(W)$. Then clearly $eQ^j \neq 0$ for all $j \ge 0$.

Now suppose $p$ is odd. Let $L_i$ be the complex $1$-dimensional representation on which $\tau$ acts as multiplication by $\zeta^i$, $\zeta = e^{2\pi i/p}$. Any finite dimensional real representation of $G$ is the direct sum of a trivial representation and copies of the underlying real representations of the $L_i$ for $1 \le i \le p-1$. The hypothesis that the subspace of $H^2(W ; \mathbb{R})$ fixed by $\tau$ is negative definite means that as a representation of $G$, $H^+(W)$ contains no trivial summand. Hence $H^+(W)$ admits a complex structure such that $H^+(W) \cong \bigoplus_{i=1}^{p-1} L_i^{m_i}$ for some integers $m_i \ge 0$. The Euler class of $H^+(W)$ is equal to its top Chern class. Under the map $H^2(\mathbb{Z}_p ; \mathbb{Z}) \to H^2(\mathbb{Z}_p ; \mathbb{Z}_p)$ one finds that $c_1(L_i) $ gets sent to $i S$. Hence
\[
e = \prod_{i=1}^{p-1} (iS)^{m_i},
\]
from which it is clear that $e S^j \neq 0$ for all $j \ge 0$.
\end{proof}

\begin{remark}
Suppose that $p$ is odd. Then as in the proof of Theorem \ref{thm:don2}, $H^+(V)$ admits a complex structure. So if $p$ is odd and the assumptions of Theorem \ref{thm:don2} hold, then $b_+(W)$ must be even.
\end{remark}

To keep notation simple, we will henceforth set $b'_{\pm}(W) = b_{\pm}(W)$ if $p=2$ and $b'_{\pm}(W) = b_{\pm}(W)/2$ if $p$ is odd. Then (1) and (2) of Theorem \ref{thm:don2} can be written more uniformly as:
\[
\delta(W , \mathfrak{s}) \le \delta_{j}(Y , \mathfrak{s}|_Y) \; \; \text{ and } \delta_{j+b'_+(W)}(\overline{Y} , \mathfrak{s}|_Y ) \le \delta( \overline{W} , \mathfrak{s} )
\]
and
\[
\delta_{j+b'_+(W)}(Y_1 , \mathfrak{s}|_{Y_1}) + \delta(W , \mathfrak{s}) \le \delta_{j}(Y_2 , \mathfrak{s}|_{Y_2}).
\]

\begin{corollary}\label{cor:deltasigma}
Let $W$ be a smooth, compact, oriented $4$-manifold with boundary and with $b_1(W) = 0$. Suppose that $\tau \colon W \to W$ is an orientation preserving diffeomorphism of order $p$ and $\mathfrak{s}$ a spin$^c$-structure preserved by $\tau$. Suppose that $Y = \partial W$ is a rational homology $3$-sphere. Suppose that the subspace of $H^2(W ; \mathbb{R})$ fixed by $\tau$ is zero. Then
\begin{itemize}
\item[(1)]{$\delta_j(Y,\mathfrak{s}|_Y) \ge -\sigma(W)/8$ for all $j \ge 0$ and $\delta_j(Y,\mathfrak{s}|_Y) = -\sigma(W)/8$ for $j \ge b'_-(W)$.}
\item[(2)]{$\delta_j( \overline{Y},\mathfrak{s}|_Y) \ge \sigma(W)/8$ for all $j \ge 0$ and $\delta_j(\overline{Y},\mathfrak{s}|_Y) = \sigma(W)/8$ for $j \ge b'_+(W)$.}
\end{itemize}
\end{corollary}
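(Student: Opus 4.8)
The plan is to deduce everything from Theorem~\ref{thm:don2}, applied both to $W$ and to its orientation reversal $\overline{W}$. The first step is to record that under the stated hypothesis the $\delta$-invariants of $W$ and $\overline{W}$ are controlled entirely by $\sigma(W)$. Since $\tau$ preserves $\mathfrak{s}$, the class $c_1(\mathfrak{s}) \in H^2(W;\mathbb{Z})$ is $\tau$-invariant, so its image in $H^2(W;\mathbb{R})$ lies in the $\tau$-fixed subspace, which is zero by hypothesis; hence $c_1(\mathfrak{s})$ is torsion and pairs trivially against the rational intersection form, so $c_1(\mathfrak{s})^2 = 0$. Therefore $\delta(W,\mathfrak{s}) = -\sigma(W)/8$ and $\delta(\overline{W},\mathfrak{s}) = -\sigma(\overline{W})/8 = \sigma(W)/8$.

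Next I would check that both $W$ and $\overline{W}$ satisfy the hypotheses of Theorem~\ref{thm:don2}(1): the zero fixed subspace is vacuously negative definite, the boundaries $\partial W = Y$ and $\partial \overline{W} = \overline{Y}$ are connected rational homology $3$-spheres, and $\tau$ fixes the unique boundary component in each case. I would also note that reversing orientation exchanges $b_+$ and $b_-$, so $b'_\pm(\overline{W}) = b'_\mp(W)$; when $p$ is odd, the positive and negative definite parts of $H^2(W;\mathbb{R})$ each carry a $\tau$-invariant complex structure exactly as in the remark following Theorem~\ref{thm:don2}, so $b_\pm(W)$ are even and the halved quantities $b'_\pm(W)$ are integers.

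With this in place the corollary is a direct application of the uniform form of Theorem~\ref{thm:don2}(1). Applied to $W$ it gives, for all $j \ge 0$,
\[
-\sigma(W)/8 = \delta(W,\mathfrak{s}) \le \delta_j(Y,\mathfrak{s}|_Y), \qquad \delta_{j+b'_+(W)}(\overline{Y},\mathfrak{s}|_Y) \le \delta(\overline{W},\mathfrak{s}) = \sigma(W)/8.
\]
Applied to $\overline{W}$ (for which $\overline{\overline{W}} = W$, $\overline{\overline{Y}} = Y$, and $b'_+(\overline{W}) = b'_-(W)$) it gives, for all $j \ge 0$,
\[
\sigma(W)/8 = \delta(\overline{W},\mathfrak{s}) \le \delta_j(\overline{Y},\mathfrak{s}|_Y), \qquad \delta_{j+b'_-(W)}(Y,\mathfrak{s}|_Y) \le \delta(W,\mathfrak{s}) = -\sigma(W)/8.
\]
The first inequality of the first line together with the second inequality of the second line (reindexed by $j \mapsto j - b'_-(W)$) yields part~(1); symmetrically, the first inequality of the second line together with the second inequality of the first line (reindexed by $j \mapsto j - b'_+(W)$) yields part~(2).

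The argument is essentially bookkeeping, and the one point needing genuine care is the vanishing $c_1(\mathfrak{s})^2 = 0$: this uses that the fixed subspace is exactly $0$ rather than merely negative definite, together with the standard fact that for a $4$-manifold with rational homology sphere boundary the self-intersection $c_1(\mathfrak{s})^2$ is computed via the rational intersection form and hence vanishes on torsion classes. Everything else is the mechanical substitution of the identities $\delta(W,\mathfrak{s}) = -\sigma(W)/8$, $\delta(\overline{W},\mathfrak{s}) = \sigma(W)/8$ and $b'_\pm(\overline{W}) = b'_\mp(W)$ into Theorem~\ref{thm:don2}.
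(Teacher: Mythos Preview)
Your proof is correct and follows essentially the same approach as the paper: both arguments observe that $c_1(\mathfrak{s})$ is torsion because its real image lies in the $\tau$-fixed subspace, deduce $\delta(W,\mathfrak{s}) = -\sigma(W)/8$, and then apply Theorem~\ref{thm:don2}(1) to both $W$ and $\overline{W}$. The paper proves (1) and remarks that (2) follows by orientation reversal, while you carry out both parts explicitly, but the content is identical.
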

\begin{proof}
It suffices to prove (1) since (2) follows by reversing orientation on $W$ and $Y$. Since $\mathbb{Z}_p$ preserves $\mathfrak{s}$, it follows that the image of $c_1(\mathfrak{s})$ in real cohomology lies in the subspace of $H^2(W ; \mathbb{R})$ fixed by $\mathbb{Z}_p$. By assumption this space is zero, hence $c_1(\mathfrak{s}) = 0$ in real cohomology and hence $c_1(\mathfrak{s})^2 = 0$. So $\delta(W , \mathfrak{s}) = -\sigma(W)/8$. Then from Theorem \ref{thm:don2} (1), we get $\delta_j(Y , \mathfrak{s}) \ge -\sigma(W)/8$ for all $j \ge 0$. Reversing orientation on $W$ and $Y$ an applying Theorem \ref{thm:don2} (1), we also get that $\delta_{j+b'_-(W)}(Y , \mathfrak{s}) \le -\sigma(W)/8$ for all $j \ge 0$ or equivalently, $\delta_j(Y ,\mathfrak{s}) \le -\sigma(W)/8$ for all $j \ge b'_-(W)$. Combining inequalities, we see that $\delta_j(Y , \mathfrak{s}) = -\sigma(W)/8$ for $j \ge b'_-(W)$.
\end{proof}

\subsection{Some algebraic results}

In this section we collect some algebraic results which will be useful for computing $\delta$ invariants. 

Let $Y$ be a rational homology $3$-sphere, $\tau \colon Y \to Y$ an orientation preserving diffeomorphism of prime order $p$ and $\mathfrak{s}$ a spin$^c$-structure preserved by $\tau$. Take $G = \mathbb{Z}_p = \langle \tau \rangle$ and $\mathbb{F} = \mathbb{Z}_p$. Let $\{ E_r^{p,q} , d_r \}_{r \ge 2}$ denote the spectral sequence relating equivariant and non-equivariant Seiberg--Witten--Floer cohomologies. So
\[
E_2^{p,q} = H^p( \mathbb{Z}_p , HSW^q(Y , \mathfrak{s}) )
\]
where $\mathbb{Z}_p$ acts on $HSW^q(Y , \mathfrak{s})$ via the action induced by $\tau$. To simplify notation we will write $H^q$ for $HSW^q(Y , \mathfrak{s})$ and $d$ for $d(Y , \mathfrak{s})$. So $E_2^{p,q} = H^p( \mathbb{Z}_p , H^q)$. For fixed $q$, $H^q$ is a finite dimensional representation of $\mathbb{Z}_p$ over $\mathbb{F}$. Moreover, for all sufficiently large $k$, we have
\begin{equation}\label{equ:largek}
H^{d+2k} = \mathbb{F}, \quad H^{d+2k+1} = 0.
\end{equation}

Recall that $H^*_G$ is isomorphic to $\mathbb{F}[Q]$ for $p=2$ and to $\mathbb{F}[R,S]/(R^2)$ for odd $p$. In the case $p=2$ we will set $S = Q^2$, so in all cases $S \in H^2_G$.

\begin{lemma}\label{lem:q}
If $V$ is a finite dimensional representation of $\mathbb{Z}_p$ over $\mathbb{F} = \mathbb{Z}_p$, then $S \colon H^i( \mathbb{Z}_p ; V) \to H^{i+1}( \mathbb{Z}_p ; V)$ is surjective for all $i \ge 0$ and an isomorphism for all $i \ge 1$. Furthermore, we have $dim_{\mathbb{F}}( H^i( \mathbb{Z}_p ; V )) \le dim_{\mathbb{F}}(V)$.
\end{lemma}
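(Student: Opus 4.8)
The plan is to reduce everything to a direct computation of group cohomology of $\mathbb{Z}_p$ with coefficients in a finite-dimensional $\mathbb{F}[\mathbb{Z}_p]$-module, where $\mathbb{F}=\mathbb{Z}_p$. The key structural fact is that over $\mathbb{F}_p$ the group algebra $\mathbb{F}_p[\mathbb{Z}_p]$ is isomorphic to $\mathbb{F}_p[t]/(t^p)$, so every finite-dimensional module $V$ decomposes as a direct sum of Jordan blocks $J_k = \mathbb{F}_p[t]/(t^k)$ with $1 \le k \le p$. Since both sides of the asserted statements are additive in $V$, it suffices to prove the lemma for each indecomposable $J_k$.

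First I would recall the standard periodicity: the Tate cohomology $\widehat{H}^*(\mathbb{Z}_p; -)$ is $2$-periodic, and for $i \ge 1$ ordinary cohomology $H^i$ agrees with $\widehat{H}^i$; concretely $H^i(\mathbb{Z}_p; V) \cong \widehat{H}^i(\mathbb{Z}_p; V)$ for $i \ge 1$ and the periodicity isomorphism is multiplication by the degree-$2$ generator $S \in H^2(\mathbb{Z}_p;\mathbb{F}_p)$ (when $p$ is odd this is the polynomial generator, and when $p=2$ we have set $S = Q^2$, which is again the periodicity class). This already gives that $S \colon H^i \to H^{i+2}$ is an isomorphism for $i \ge 1$; to get the stated claim that $S\colon H^i \to H^{i+1}$ is an isomorphism for $i \ge 1$ I would compute $H^i(\mathbb{Z}_p; J_k)$ explicitly using the standard free resolution of $\mathbb{F}_p$ over $\mathbb{F}_p[\mathbb{Z}_p]$ with differentials alternately $t-1$ (i.e. $N' = t-1$) and the norm $N = 1 + t + \cdots + t^{p-1}$. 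For $J_k$ with $k < p$, one finds $H^i(\mathbb{Z}_p; J_k) \cong \mathbb{F}_p$ for \emph{every} $i \ge 0$ (the module is neither free nor does the norm act invertibly, so the complex $\cdots \xrightarrow{N} J_k \xrightarrow{t-1} J_k \xrightarrow{N} \cdots$ has one-dimensional homology at each spot), while for $J_p = \mathbb{F}_p[\mathbb{Z}_p]$ (the free module) $H^i = 0$ for $i \ge 1$ and $H^0 = \mathbb{F}_p$. In the non-free case a short check identifies the action of the multiplicative generators with the relevant isomorphisms/surjections; in the free case the statements about $S$ are vacuous for $i\ge1$ and the surjectivity at $i=0$ reads $H^0 = \mathbb{F}_p \twoheadrightarrow H^1 = 0$, which is fine. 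Summing over Jordan blocks then yields both the surjectivity of $S\colon H^i \to H^{i+1}$ for all $i \ge 0$ (at $i=0$ surjectivity may fail to be injectivity precisely because of free summands contributing to $H^0$ but not $H^1$) and the isomorphism statement for $i \ge 1$.

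For the dimension bound, the observation is that $\dim_{\mathbb{F}_p} H^i(\mathbb{Z}_p; J_k) \le 1$ for every $i$ and every $k$ (it is exactly $1$ for $k<p$ and exactly $1$ only at $i=0$ for $k=p$), while the number of Jordan blocks in $V$ is at most $\dim_{\mathbb{F}_p} V$. Hence $\dim_{\mathbb{F}_p} H^i(\mathbb{Z}_p; V) = \sum_{\text{blocks}} \dim_{\mathbb{F}_p} H^i(\mathbb{Z}_p; J_{k}) \le (\text{number of blocks}) \le \dim_{\mathbb{F}_p} V$.

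The main obstacle, such as it is, is bookkeeping rather than conceptual: one must be careful that the class called $S$ in the paper really is the periodicity generator in all cases (in particular handling $p=2$ where $S=Q^2$ and where the ring $H^*_G = \mathbb{F}_2[Q]$ has a degree-one generator, so the ``$S$ raises degree by one'' phrasing is about the square map composed appropriately), and that the identification of the module action of $Q$ and $S$ on $H^*(\mathbb{Z}_p;V)$ with the connecting maps in the periodic complex is done correctly for non-trivial coefficients. I would phrase the resolution-level computation once, cleanly, for $J_k$, and then invoke additivity; no delicate analysis is needed beyond that.
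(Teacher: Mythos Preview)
Your approach is correct and gives a genuinely different route from the paper's. First, note that the lemma as stated contains a typo: since $S \in H^2_G$, cup product by $S$ raises degree by $2$, so the map should read $S \colon H^i(\mathbb{Z}_p;V) \to H^{i+2}(\mathbb{Z}_p;V)$; the paper's own proof and the next lemma (which uses $S \colon E_r^{p,q} \to E_r^{p+2,q}$) confirm this. Your confusion in the final paragraph about ``$S$ raises degree by one'' stems entirely from this typo and can be discarded.

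With that corrected, your Jordan block decomposition argument works. The paper takes a shorter, more top-down route: it invokes the standard fact (citing Brown) that for a group acting freely on a sphere there is a periodicity class $\nu \in H^2(\mathbb{Z}_p;\mathbb{Z})$ such that cup product by $\nu$ is an isomorphism on $H^i$ for $i>0$ and surjective for $i=0$, for \emph{any} coefficient module; one then checks that the mod $p$ reduction of $\nu$ is a nonzero multiple of $S$ by testing on trivial coefficients. The dimension bound is obtained by noting $\dim H^i \le \dim H^0$ or $\dim H^1$ (by periodicity and the surjection at $i=0$), and both of these are visibly subquotients of $V$. Your approach is more hands-on: decomposing $V \cong \bigoplus J_{k_j}$ and computing $\dim H^i(\mathbb{Z}_p;J_k) \le 1$ gives the sharper bound $\dim H^i \le (\text{number of Jordan blocks})$, and the explicit periodic resolution makes the whole thing self-contained without an external reference. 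The one point you leave slightly implicit---that the periodicity isomorphism really is cup product by $S$ and not some other degree-$2$ operation---is exactly what the paper isolates and checks via the trivial-coefficients case; you should make that step explicit rather than fold it into ``a short check.''
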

\begin{proof}
Since $\mathbb{Z}_p$ acts freely on $S^1$, it follows from \cite[page 114]{bro} that there is an element $\nu \in H^2(\mathbb{Z}_p ; \mathbb{Z})$ (independent of $V$) such that the cup product $\nu \colon H^i(\mathbb{Z}_p ; V) \to H^{i+2}(\mathbb{Z}_p ; V)$ is an isomorphism for $i>0$ and surjective for $i=0$. Since $V$ is a representation of $\mathbb{Z}_p$ over $\mathbb{F}$, the same statement holds if we replace $\nu$ by its image in $H^2(\mathbb{Z}_p ; \mathbb{F})$, which must have the form $a S$ for some $a \in\mathbb{F}$. Moreover $a \neq 0$ follows by considering the case that $V = \mathbb{Z}_p$ is the trivial representation. Hence the cup product $S \colon H^i(\mathbb{Z}_p ; V) \to H^{i+2}(\mathbb{Z}_p ; V)$ is an isomorphism for $i>0$ and surjective for $i=0$. We have by induction that $dim_{\mathbb{F}}( H^i(\mathbb{Z}_p ; V) ) \le dim_{\mathbb{F}}( H^0(\mathbb{Z}_p ; V) )$ if $i$ is even and $dim_{\mathbb{F}}( H^i(\mathbb{Z}_p ; V) ) \le dim_{\mathbb{F}}( H^1(\mathbb{Z}_p ; V) )$ if $i$ is odd. Then since $H^0(\mathbb{Z}_p ; V)$ and $H^1(\mathbb{Z}_p ; V)$ can both be expressed as certain subquotients of $V$, it follows that $dim_{\mathbb{F}}( H^i(\mathbb{Z}_p ; V) ) \le dim_{\mathbb{F}}(V)$ for all $i$.
\end{proof}

\begin{lemma}\label{lem:isoq}
For each $r \ge 2$, the map $S \colon E_r^{p,q} \to E_r^{p+2,q}$ is surjective for all $p \ge 0$ and an isomorphism for all $p \ge r-1$.
\end{lemma}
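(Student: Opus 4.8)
The plan is to induct on $r$, using that the spectral sequence of Theorem~\ref{thm:ss} is a spectral sequence of modules over $H^*_G = H^*(BG;\mathbb{F})$. This is the standard multiplicativity of the Serre spectral sequence of the fibration $M_{S^1}\to M_{G_\mathfrak{s}}\to BG$, and it persists through the formal desuspension in $SWF(Y,\mathfrak{s},g)$ since the relevant Thom isomorphism is $H^*_G$-linear. In particular, cup product with $S\in H^2_G$ gives maps $S\colon E_r^{p,q}\to E_r^{p+2,q}$ on each page commuting with all differentials $d_r$ (the Leibniz sign is trivial as $\deg S$ is even). The base case $r=2$ is Lemma~\ref{lem:q} applied to the finite-dimensional $\mathbb{F}[\mathbb{Z}_p]$-module $V = HSW^q(Y,\mathfrak{s})$: $S$ is surjective on $E_2^{p,q}\to E_2^{p+2,q}$ for all $p\ge 0$ and an isomorphism for $p\ge 1 = 2-1$.

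For the inductive step, assume the statement for $E_r$. For surjectivity on $E_{r+1}$: a class in $E_{r+1}^{p+2,q}$ with $p\ge 0$ is represented by $x\in\ker d_r\subseteq E_r^{p+2,q}$; lift it to $y\in E_r^{p,q}$ with $Sy=x$ using surjectivity of $S$ on $E_r$. Then $S(d_ry)=d_r(Sy)=0$, and as $d_ry\in E_r^{p+r,\,q-r+1}$ with $p+r\ge r-1$, the inductive isomorphism statement forces $d_ry=0$; so $y$ survives to $E_{r+1}$ and hits the given class. For injectivity on $E_{r+1}^{p,q}$ with $p\ge r$: if $y\in\ker d_r\subseteq E_r^{p,q}$ survives and $Sy = d_rz$ with $z\in E_r^{p+2-r,\,q+r-1}$, then $p-r\ge 0$, so surjectivity of $S$ on $E_r$ gives $z=Sw$ with $w\in E_r^{p-r,\,q+r-1}$ and $Sy = S(d_rw)$; since $p\ge r\ge r-1$, $S$ is injective on $E_r^{p,q}$, hence $y=d_rw$ is a boundary and the class dies. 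Together these show $S$ is surjective on $E_{r+1}$ in all nonnegative $p$ and an isomorphism for $p\ge r = (r+1)-1$, completing the induction.

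The argument is an elementary diagram chase; the only thing to watch is the bidegree bookkeeping, which works precisely because "surjective" is available for all $p\ge 0$ while "isomorphism" is only ever invoked at indices $p+r$ or $p\ge r$, all $\ge r-1$. I expect the point most worth stating carefully, rather than a real difficulty, is the multiplicativity input — that the spectral sequence of Theorem~\ref{thm:ss} is genuinely one of $H^*_G$-modules with $d_r$ being $H^*_G$-linear — which is classical once one recognizes it as the Serre spectral sequence of $M_{S^1}\to M_{G_\mathfrak{s}}\to BG$ composed with an $H^*_G$-linear Thom isomorphism.
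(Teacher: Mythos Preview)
Your proof is correct and follows essentially the same approach as the paper: induction on $r$ with base case $r=2$ supplied by Lemma~\ref{lem:q}, then a diagram chase using surjectivity to lift cycles and the isomorphism range to kill the resulting obstructions. The only cosmetic difference is that the paper indexes the inductive step as $r-1 \Rightarrow r$ whereas you write $r \Rightarrow r+1$; the bidegree checks match up exactly under this shift, and your explicit remark on $H^*_G$-linearity of the differentials (which the paper uses without comment) is a welcome clarification.
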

\begin{proof}
Recall that $E_2^{p,q} = H^p( \mathbb{Z}_p , H^q )$. Hence $S \colon E_2^{p,q} \to E_2^{p+2,q}$ is surjective for all $p$ and an isomorphism for all $p \ge 1$, by Lemma \ref{lem:q}. This proves the case $r=2$. Now we proceed by induction. Let $r > 2$ and suppose that $S \colon E_{r-1}^{p,q} \to E_{r-1}^{p+2,q}$ is surjective for all $p \ge 0$ and an isomorphism for all $p \ge r-1$. Let $x \in E_r^{p+2,q}$. Then $x = [y]$ for some $y \in E_{r-1}^{p+2,q}$ with $d_{r-1}(y) = 0$. By the inductive hypothesis $y = Sz$ for some $z \in E_{r-1}^{p,q}$. Then $S d_{r-1}(z) = d_{r-1}(Sz) = d_{r-1}(y) = 0$. That is, $S d_{r-1}(z) = 0$. However,  $d_{r-1}(z) \in E_{r-1}^{p+r-1,q+2-r}$ and $p+r-1 \ge r-2$, so $S \colon E_{r-1}^{p+r-1,q+2-r} \to E_{r-1}^{p+r+1,q+2-r}$ is an isomorphism by the inductive hypothesis. Hence $S d_{r-1}(z) = 0$ implies that $d_{r-1}(z) = 0$. So $z$ defines a class $w = [z] \in E_r^{p,q}$. Then $Sw = [Sz] = [y] = x$. Hence $S \colon E_r^{p,q} \to E_r^{p+2,q}$ is surjective for all $p \ge 0$. 

Now suppose that $p \ge r-1$ and consider $x \in E_r^{p,q}$ satisfying $Sx = 0$. Write $x = [y]$ for some $y \in E_{r-1}^{p,q}$ satisfying $d_{r-1}(y) = 0$. Then $0 = Sx = S[y] = [Sy]$. Hence $Sy = d_{r-1}(z)$ for some $z \in E_{r-1}^{p-r+3,q+r-2}$. By the inductive hypothesis and since $p-r+3 \ge (r-1)-r+3 = 2$, we have that $z = Sw$ for some $w \in E_{r-1}^{p-r+1,q+r-2}$. Hence $Sy = d_{r-1}(z) = d_{r-1}(Sw) = S d_{r-1}(w)$. By the inductive hypothesis $S \colon E_{r-1}^{p,q} \to E_{r-1}^{p+2,q}$ is injective, hence $y = d_{r-1}(w)$ and $x = [y] = [d_{r-1}(w)] = 0$. So $S \colon E_r^{p,q} \to E_r^{p+2,q}$ is injective for $p \ge r-1$.
\end{proof}

From the above lemma, we see that $E_r^{p,q}$ does not depend on $p$, provided $p \ge r-1$. Let $M_r^q$ denote $E_r^{p,q}$ for $p \ge r-1$. Moreover, since the differentials $\{ d_r\}$ for the spectral sequence $E_r^{p,q}$ commute with $S$, they induce differentials $d_r \colon M_r^q \to M_r^{q+1-r}$ for which $M_{r+1}$ is the cohomology of $d_r \colon M_r \to M_r$. Thus $M_{r+1}$ is a subquotient of $M_r$.

For any module $V$ over $\mathbb{F}[U]$, we define
\[
V_{red} = \{ x \in V \; | \; U^k x = 0 \text{ for some } k \ge 0 \} \; \text{ and } \; V^\infty = V/V_{red}.
\]

\begin{lemma}\label{lem:red}
For each $r \ge 2$, the image of the differential $d_r$ is contained in $(E_r^{*,*})_{red}$.
\end{lemma}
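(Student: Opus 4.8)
The plan is to deduce the statement from the collapse, after inverting $U$, of the spectral sequence of Theorem~\ref{thm:ss}. Recall that this is a spectral sequence of modules over $\mathbb{F}[U]=H^*_{S^1}\subseteq H^*_{G_\mathfrak{s}}$, the differentials $d_r$ being $\mathbb{F}[U]$-linear (as used in the proof of Theorem~\ref{thm:degen}). Inverting $U$ produces a spectral sequence $U^{-1}E_r$ with differential $U^{-1}d_r$ and
\[
U^{-1}E_2^{p,q}=H^p(\mathbb{Z}_p;U^{-1}HSW^q(Y,\mathfrak{s})),
\]
abutting to $U^{-1}HSW^*_G(Y,\mathfrak{s})$. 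The kernel of the localisation map $E_r^{*,*}\to U^{-1}E_r^{*,*}$ is exactly the $U$-torsion submodule $(E_r^{*,*})_{red}$; hence $\mathrm{im}(d_r)\subseteq (E_r^{*,*})_{red}$ for all $r$ if and only if $U^{-1}d_r=0$ for all $r$, i.e.\ if and only if $U^{-1}E_r$ collapses at $E_2$. So the task is to prove this collapse.

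To prove it I would compare with the $S^1$-fixed locus. Write $X=SWF(Y,\mathfrak{s},g)$ and let $\iota\colon X^{S^1}\to X$ be the inclusion of the fixed points. It induces a morphism of spectral sequences $\iota^*\colon E_\bullet\to\mathcal{E}_\bullet$, where $\mathcal{E}_\bullet$ is the spectral sequence of the same type (obtained from Theorem~\ref{thm:ss} and the Thom isomorphism) converging to $\widetilde H^*_{G_\mathfrak{s}}(X^{S^1})$ with $\mathcal{E}_2^{p,q}=H^p(\mathbb{Z}_p;\widetilde H^q_{S^1}(X^{S^1}))$. Since $X$ is of type $G$-SWF, $S^1$ acts freely on $X\setminus X^{S^1}$, so by the localisation theorem (used exactly as in Section~\ref{sec:gswf}) the map $\iota^*\colon\widetilde H^*_{S^1}(X)\to\widetilde H^*_{S^1}(X^{S^1})$ is an isomorphism after inverting $U$. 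Therefore $\iota^*$ is an isomorphism of $E_2$-pages after inverting $U$, and hence $U^{-1}\iota^*\colon U^{-1}E_r\to U^{-1}\mathcal{E}_r$ is an isomorphism of spectral sequences for all $r\ge 2$. It thus suffices to show that $\mathcal{E}_\bullet$ — and a fortiori $U^{-1}\mathcal{E}_\bullet$ — collapses at $E_2$.

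This last point is formal, given that $G_\mathfrak{s}\cong\overline G=S^1\times G$ is trivial and $X$ is of type $G$-SWF at level $0$. Indeed $X^{S^1}$ is, as an object of $\mathfrak{C}(\overline G)$, of the form $\Sigma^{-V^0_\lambda}\bigl((V^0_\lambda(\mathbb{R}))^+\bigr)\cong\Sigma^{-V^0_\lambda(\mathbb{C})}S^0$, with $S^0$ carrying the trivial $\overline G$-action. Consequently, up to a shift of bidegree coming from the Thom isomorphism, $\mathcal{E}_\bullet$ agrees with the Leray--Serre spectral sequence of the fibration $BS^1\to B\overline G\to BG$; but $B\overline G=BS^1\times BG$, so this fibration is trivial and the spectral sequence collapses at $E_2$. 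Putting the three steps together gives $\mathrm{im}(d_r)\subseteq(E_r^{*,*})_{red}$.

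The step I expect to require the most care is the middle one: checking that the inclusion $\iota$ of fixed points induces a morphism between the formally desuspended Leray--Serre spectral sequences which is $\mathbb{F}[U]$-linear and an isomorphism of $E_2$-pages after localisation, and — in the final step — keeping track of the Thom-isomorphism regrading that identifies $\mathcal{E}_\bullet$ with the spectral sequence of $BS^1\to B\overline G\to BG$. Neither is a genuine obstacle, but the indexing must be handled with some care.
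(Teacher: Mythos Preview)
Your argument is correct and takes a genuinely different route from the paper's. The paper argues directly and concretely: using \eqref{equ:largek} it observes that $E_2^{p,q}$ is one-dimensional for $q$ sufficiently large and even (and zero otherwise), then invokes the localisation theorem to produce a single permanent cycle $x\in E_2^{0,d+2k_1}$ high up in the $q$-direction; since every class in $E_2^{p,q}$ with $q\ge d+2k_1$ is an $H^*_G[U]$-multiple of $x$, it follows that $d_r$ vanishes there, and hence $U^a d_r(y)=d_r(U^a y)=0$ for any $y$ once $a$ is large enough.

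Your approach instead localises the whole spectral sequence and compares it via $\iota^*$ to the (collapsing) spectral sequence for the $S^1$-fixed locus. This is more structural: it explains the lemma as a formal consequence of the localisation theorem and the triviality of the extension $\overline G=S^1\times G$, and as a bonus it shows that the entire localised spectral sequence $U^{-1}E_\bullet$ degenerates at $E_2$. The paper's argument is shorter and avoids setting up the comparison map of spectral sequences; your argument is cleaner conceptually and would generalise immediately to other situations where the fixed-point spectral sequence is known to collapse. Your caveat about carefully handling the Thom-isomorphism regrading and the $\mathbb{F}[U]$-linearity of $\iota^*$ at the level of spectral sequences is well placed, but both are routine.
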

\begin{proof}
From (\ref{equ:largek}) we have that there exists a $k_0$ such that $H^{d+2k} = \mathbb{F}$ and $H^{d+2k+1} = 0$ for all $k \ge k_0$. Hence the action of $\tau$ is trivial in these degrees and   we have
\[
E_2^{p,d+2k} = \mathbb{F}, \quad E_2^{p,d+2k+1} = 0
\]
for all $k \ge k_0$. Since $SWF(Y,\mathfrak{s},g)$ is a space of type $\mathbb{Z}_p$-SWF, the localisation theorem in equivariant cohomology implies that there exists a $k_1 \ge k_0$ such that the generator $x \in E_2^{0,d+2k_1} = \mathbb{F}$ satisfies $d_r(x) = 0$ for all $r \ge 2$. Then if $y \in E_2^{p,q}$ with $q \ge d+2k_1$, it follows that $y$ is of the form $y = c U^a x$ for some $a \ge 0$, where $c \in H^p_G$. Hence $d_r(y) = 0$ for all $r \ge 2$. Now let $y \in E_r^{p,q}$ where $p,q$ are arbitrary. Then there exists some $a \ge 0$ such that $q+2a \ge d + 2k_1$, hence $U^a d_r(y) = d_r( U^a y ) = 0$. Therefore $d_r(y) \in (E_r^{*,*})_{red}$.
\end{proof}

Recall that $H^\infty$ is a free $\mathbb{F}[U]$-module of rank $1$ with generator in degree $d$. Hence we may write $H^\infty = \mathbb{F}[U]\theta$ where $deg(\theta) = d$. Next, observe that $E_2^{0,*}$ is the $\tau$-invariant part of $H^*$, hence may be regarded as an $\mathbb{F}[U]$-submodule of $H^*$. Similarly, since $E_{r+1}^{0,*}$ is the kernel of $d_r$ restricted to $E_r^{0,*}$, it follows that $E_{r+1}^{0,*}$ can be identified with an $\mathbb{F}[U]$-submodule of $E_{r}^{0,*}$. Hence $\{ E_r^{0,*} \}$ may be regarded as a decreasing sequence of $\mathbb{F}[U]$-submodules of $H^*$. Let $S_r$ denote the image of $E_r^{0,*}$ under the quotient map $H^* \to H^\infty = H^*/H_{red}$. The localisation theorem in equivariant cohomology implies that $S_r$ is non-zero and that the sequence $S_r$ eventually stabilises. Then since $S_r$ is a non-zero graded submodule of $H^\infty = \mathbb{F}[U]\theta$, it follows that $S_r = \mathbb{F}[U] U^{m_r}\theta$ for some non-negative integer $m_r$. Note also that the sequence $\{ m_r \}$ is increasing and is eventually constant.

\begin{lemma}\label{lem:mr}
For each $r \ge 2$ we have
\[
m_{r+1} - m_r \le dim_{\mathbb{F}}( (M_r)_{red} ) - dim_{\mathbb{F}}( (M_{r+1})_{red} ).
\]
\end{lemma}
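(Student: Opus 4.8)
The plan is to relate the difference $m_{r+1}-m_r$, which measures how much the ``surviving'' part of the stable module $M_r^{*}$ in $U$-degree drops at the $r$-th stage, to the $\mathbb{F}$-dimensions of the reduced parts $(M_r)_{red}$. The key point is that passing from $M_r$ to $M_{r+1}$ happens by taking cohomology of $d_r\colon M_r\to M_r$, and by Lemma \ref{lem:red} the differential $d_r$ lands in $(M_r)_{red}$. So the non-reduced (``tower'') part of $M_{r+1}$ is obtained from that of $M_r$ only by killing elements in the image of $d_r$ and by quotienting out by $\mathrm{Im}(d_r)$; in particular the generator $U^{m_r}\theta$ of $S_r$ (viewed in $M_r$, or rather its image there) either survives to $M_{r+1}$, in which case $m_{r+1}=m_r$, or is hit by $d_r$, pushing the bottom of the surviving tower up. Each unit increase in $m_r$ should consume at least one dimension of reduced homology.

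Concretely, I would first recall that $M_r^{*}=E_r^{p,*}$ for $p\ge r-1$ is a module over $\mathbb{F}[U]$ (the $U$-action coming from the $H^*_{G_\mathfrak{s}}\cong H^*_G[U]$-module structure passing to the subquotients, compatibly with the identification $M_r=E_r^{p,*}$ for large $p$ via Lemma \ref{lem:isoq}), and that $S_r=\mathbb{F}[U]U^{m_r}\theta$ is exactly the image of $M_r$ (via $E_r^{0,*}\hookrightarrow H^*\to H^\infty$, or equivalently the ``tower part'' $M_r^\infty = M_r/(M_r)_{red}$, which is free of rank one). So $M_r$ sits in a short exact sequence $0\to (M_r)_{red}\to M_r\to \mathbb{F}[U]U^{m_r}\theta\to 0$. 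Then I would analyze $M_{r+1}=H(M_r,d_r)=\ker(d_r)/\mathrm{Im}(d_r)$. Since $d_r$ commutes with $U$ and $\mathrm{Im}(d_r)\subseteq (M_r)_{red}$ by Lemma \ref{lem:red}, the induced differential on $M_r^\infty$ is zero, hence $M_{r+1}^\infty=\ker(d_r|_{M_r^\infty})/\mathrm{Im}=M_r^\infty/(\text{image of }\ker d_r\text{ in }M_r^\infty)^{\perp}$ — more precisely, the tower part of $M_{r+1}$ is a submodule-quotient of $\mathbb{F}[U]U^{m_r}\theta$, so $M_{r+1}^\infty=\mathbb{F}[U]U^{m_{r+1}}\theta$ with $m_{r+1}\ge m_r$, and the ``lost'' tower elements $U^{m_r}\theta,\dots,U^{m_{r+1}-1}\theta$ — there are $m_{r+1}-m_r$ of them — are precisely those elements $x$ of the tower with $d_r(x)\ne 0$ (these get killed in $\ker d_r$) together with those that become boundaries. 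For each such $x$, $d_r(x)$ is a nonzero element of $(M_r)_{red}$, and these images are $\mathbb{F}[U]$-independent enough to account for a drop of at least $m_{r+1}-m_r$ in $\dim_{\mathbb{F}}(M_\bullet)_{red}$.

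The cleanest bookkeeping is a dimension count on a truncation: fix $N$ large (so that in degrees $\ge N$ the modules have stabilized and the tower is the full story) and compute $\dim_{\mathbb{F}}$ of $M_r$ in degrees $\le N$ as $\dim_{\mathbb{F}}(M_r)_{red}+ (\text{number of tower degrees }\le N)$. The number of tower degrees $\le N$ is $\lfloor (N-d)/2\rfloor - m_r +1$ for $r$ large, and in general decreases by exactly $m_{r+1}-m_r$ from stage $r$ to $r+1$. On the other hand $M_{r+1}$ (in degrees $\le N$) is a subquotient of $M_r$, so $\dim_{\mathbb{F}} M_{r+1}^{\le N}\le \dim_{\mathbb{F}} M_r^{\le N}$; combining, $\dim_{\mathbb{F}}(M_{r+1})_{red} - (m_{r+1}-m_r)\cdot 0 + \dots$ — writing it out, $\dim(M_{r+1})_{red} + (\text{tower}_{r+1}) \le \dim(M_r)_{red} + (\text{tower}_r)$, and $(\text{tower}_r)-(\text{tower}_{r+1})=m_{r+1}-m_r$, which rearranges exactly to the asserted inequality $m_{r+1}-m_r\le \dim_{\mathbb{F}}(M_r)_{red}-\dim_{\mathbb{F}}(M_{r+1})_{red}$.

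I expect the main obstacle to be the first step: verifying rigorously that $M_r$ is an $\mathbb{F}[U]$-module whose tower part $M_r^\infty$ is free of rank one with bottom degree $d+2m_r$, and that $S_r$ as defined via $E_r^{0,*}$ agrees with this tower part — this requires carefully tracking the $U$-action through the identification $E_r^{0,*}\hookrightarrow H^*$ and the stabilization $E_r^{p,*}\cong M_r$ for $p\ge r-1$, and invoking the localization theorem (as in Lemma \ref{lem:red}) to guarantee the tower is nonzero and that $d_r$ vanishes on it. Once that structural picture is in place, the inequality is a formal consequence of $M_{r+1}$ being the homology of $(M_r,d_r)$ together with $\mathrm{Im}(d_r)\subseteq (M_r)_{red}$, via the truncated dimension count above.
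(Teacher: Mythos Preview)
Your truncated dimension count has a sign error that makes the argument fail as written. From $\dim M_{r+1}^{\le N} \le \dim M_r^{\le N}$ together with $\dim M_s^{\le N} = \dim(M_s)_{red} + (\text{tower}_s)$ and $(\text{tower}_r)-(\text{tower}_{r+1}) = m_{r+1}-m_r$, you get
\[
\dim(M_{r+1})_{red} - m_{r+1} \le \dim(M_r)_{red} - m_r,
\]
which rearranges to $\dim(M_r)_{red} - \dim(M_{r+1})_{red} \ge -(m_{r+1}-m_r)$, the \emph{opposite} of what you want. The point is that when the tower shrinks, the total truncated dimension drops, so the subquotient inequality only bounds the reduced part from \emph{above}, not below. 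To salvage this route you would need the Euler-characteristic equality $\dim M_r^{\le N} - \dim M_{r+1}^{\le N} = 2\dim\mathrm{Im}(d_r)$ together with the bound $\dim\mathrm{Im}(d_r) \ge m_{r+1}-m_r$, and the latter is exactly the content you are trying to avoid.

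The paper's proof is both shorter and sidesteps your ``main obstacle'' entirely: it never analyzes $M_r^\infty$. Instead it lifts $U^{m_r}\theta$ to $x_r \in E_r^{0,d+2m_r}$ and observes directly that $d_r(U^j x_r) \neq 0$ for $0 \le j < m_{r+1}-m_r$ (else $U^{j+m_r}\theta$ would lie in $S_{r+1}$). These land in $E_r^{r,*} = M_r$, lie in $(M_r)_{red}$ by Lemma~\ref{lem:red}, have distinct $q$-degrees hence are independent, and sit in the image of the incoming differential at $p=r$. Since $(M_{r+1})_{red}$ is the quotient of $\ker(d_r)\cap (M_r)_{red}$ by that image, the inequality follows. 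Your earlier paragraph about ``lost tower elements with $d_r(x)\neq 0$'' is essentially this argument --- that is the thread to follow, not the dimension count.
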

\begin{proof}
The classes $U^{j+m_r} \theta$, $0 \le j < m_{r+1}-m_r$ form a basis for $S_r/S_{r+1}$. Choose a lift $x_r \in E_r^{0,d+2m_r}$ of $U^{m_r} \theta \in S_r$. Then $d_r( U^j x_r ) \neq 0$ for $0 \le j < m_{r+1}-m_r$, for if $d_r( U^j x_r) = 0$ for some $0 \le j < m_{r+1} - m_r$, then we would have $U^{j+m_r}\theta \in S_{r+1}$. Observe that $d_r( U^j x_r ) \in E_r^{r,*}$. By Lemma \ref{lem:isoq} and the definition of $M_r$, we see that $d_r(U^j x_r)$ can be identified with a non-zero element of $M_r$. Moreover, we have that $d_r( U^j x_r ) \in (M_r)_{red}$, by Lemma \ref{lem:red}. Now since the $d_r( U^j x_r)$ are non-zero and have distinct degrees, they span a subspace of $(M_r)_{red}$ of dimension $m_{r+1}-m_r$. Furthermore, this subspace lies in the image of $d_r$, hence $m_{r+1} - m_r \le dim_{\mathbb{F}}( (M_r)_{red} ) - dim_{\mathbb{F}}( (M_{r+1})_{red} )$.
\end{proof}

\begin{proposition}\label{prop:trivialtau}
Suppose that $\tau$ acts trivially on $HSW^*(Y , \mathfrak{s})$. Then
\[
\delta_1( Y , \mathfrak{s}) - \delta(Y , \mathfrak{s}) \le dim_{\mathbb{F}}( HSW_{red}(Y , \mathfrak{s}) ).
\]
\end{proposition}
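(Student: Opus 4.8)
The plan is to reduce the statement to the algebraic machinery of the spectral sequence $\{E_r^{p,q}\}$ and the invariants $m_r$ introduced above, using the hypothesis that $\tau$ acts trivially on $H^{*}:=HSW^{*}(Y,\mathfrak{s})$ to control both the $r=2$ term and the initial shift $m_2$.

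\emph{Step 1: reduce to $\delta_0$.} Put $c_2=Q$ if $p=2$ and $c_2=S$ if $p$ is odd, so that $d_1(Y,\mathfrak{s})=d_{\mathbb{Z}_p,c_2}(Y,\mathfrak{s})$. Applying Proposition \ref{prop:din} with $c_1=1$ gives $d_1(Y,\mathfrak{s})=d_{G,1\cdot c_2}(Y,\mathfrak{s})\le d_{G,1}(Y,\mathfrak{s})=d_0(Y,\mathfrak{s})$, hence $\delta_1(Y,\mathfrak{s})\le\delta_0(Y,\mathfrak{s})$. It therefore suffices to show $\delta_0(Y,\mathfrak{s})-\delta(Y,\mathfrak{s})\le\dim_{\mathbb{F}}HSW_{red}(Y,\mathfrak{s})$.

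\emph{Step 2: identify $\delta_0-\delta$ with $m_\infty$.} Let $m_\infty$ be the stable value of the increasing, eventually constant sequence $\{m_r\}$, so that the image $S_\infty\subseteq H^\infty=\mathbb{F}[U]\theta$ of $E_\infty^{0,*}$ equals $\mathbb{F}[U]U^{m_\infty}\theta$. Unwinding the definition, $d_{G,1}(Y,\mathfrak{s})$ is the least cohomological degree of a class $x\in HSW^{*}_G(Y,\mathfrak{s})$ with $\iota^{*}(x)\neq 0 \bmod F_1$. Now $\iota^{*}$ is compatible with the forgetful map $HSW^{*}_G(Y,\mathfrak{s})\to HSW^{*}(Y,\mathfrak{s})$, whose image is exactly the $\mathbb{F}[U]$-submodule $E_\infty^{0,*}\subseteq H^{*}$ (the fibre-restriction edge term of the Leray--Serre spectral sequence of Theorem \ref{thm:ss}); moreover the kernel of the non-equivariant $\iota^{*}$ is precisely $H_{red}$ by the localisation theorem in equivariant cohomology. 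Hence the least degree above equals the least degree of an element of $E_\infty^{0,*}$ with non-zero image in $H^\infty$, namely $\deg(U^{m_\infty}\theta)=d(Y,\mathfrak{s})+2m_\infty$. Matching this with the parallel description of $d(Y,\mathfrak{s})$ (least degree of a class in $HSW^{*}(Y,\mathfrak{s})$ not in $H_{red}$) gives $d_{G,1}(Y,\mathfrak{s})=d(Y,\mathfrak{s})+2m_\infty$, i.e.\ $\delta_0(Y,\mathfrak{s})-\delta(Y,\mathfrak{s})=m_\infty$. (Only the inequality $d_{G,1}(Y,\mathfrak{s})\le d(Y,\mathfrak{s})+2m_\infty$ is strictly needed, and that is the easy ``construct a class'' direction: lift a preimage of $U^{m_\infty}\theta$ in $E_\infty^{0,*}$ to $HSW^{*}_G$ in the same degree.)

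\emph{Step 3: bound $m_\infty$.} Since $\tau$ acts trivially on $H^{*}$, we have $E_2^{p,q}=H^p(\mathbb{Z}_p;H^q)\cong H^q$ for all $p\ge 0$, and this identification is $\mathbb{F}[U]$-linear on $M_2=\bigoplus_q E_2^{1,q}$; thus $M_2\cong H^{*}$ as $\mathbb{F}[U]$-modules and $\dim_{\mathbb{F}}(M_2)_{red}=\dim_{\mathbb{F}}HSW_{red}(Y,\mathfrak{s})$. Also $E_2^{0,*}=(H^{*})^{\tau}=H^{*}$ surjects onto $H^\infty$, so $m_2=0$. Summing the inequalities of Lemma \ref{lem:mr} over $2\le r\le R$ and telescoping the right-hand side gives $m_{R+1}-m_2\le \dim_{\mathbb{F}}(M_2)_{red}-\dim_{\mathbb{F}}(M_{R+1})_{red}\le\dim_{\mathbb{F}}(M_2)_{red}$; letting $R\to\infty$ yields $m_\infty\le\dim_{\mathbb{F}}HSW_{red}(Y,\mathfrak{s})$. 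Combining with Steps 1 and 2, $\delta_1(Y,\mathfrak{s})-\delta(Y,\mathfrak{s})\le\delta_0(Y,\mathfrak{s})-\delta(Y,\mathfrak{s})=m_\infty\le\dim_{\mathbb{F}}HSW_{red}(Y,\mathfrak{s})$.

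I expect the main obstacle to be Step 2: getting the degree normalisations in the definitions of $d(Y,\mathfrak{s})$ and $d_{G,1}(Y,\mathfrak{s})$ to line up, and carefully justifying that the image of the forgetful map is exactly the submodule $E_\infty^{0,*}\subseteq HSW^{*}(Y,\mathfrak{s})$ with the compatible $\mathbb{F}[U]$-module structure and its edge-map description. Steps 1 and 3 are essentially bookkeeping with results already in hand.
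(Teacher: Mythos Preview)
Your proof is correct and follows essentially the same route as the paper: both identify $\delta_{\{0,1\}}-\delta$ with the stable value $m_\infty$ of the sequence $\{m_r\}$, use the trivial $\tau$-action to get $m_2=0$ and $(M_2)_{red}\cong HSW_{red}(Y,\mathfrak{s})$, and then telescope Lemma~\ref{lem:mr} to bound $m_\infty$. The only cosmetic difference is that the paper asserts $\delta_1-\delta=m_\infty$ directly from the definition, whereas you pass through $\delta_1\le\delta_0$ and then identify $\delta_0-\delta=m_\infty$ via the edge homomorphism; your Step~2 is in fact a more explicit justification of what the paper leaves implicit.
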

\begin{proof}
Recall that $d = d(Y ,\mathfrak{s})$. Hence $\delta(Y , \mathfrak{s}) = d/2$. From the definition of the invariant $\delta_{1}(Y,\mathfrak{s})$, it follows that for all sufficiently large $r$, we have
\[
\delta_{1}(Y , \mathfrak{s} ) = m_r + \delta(Y , \mathfrak{s}).
\]
By Lemma \ref{lem:mr}, for each $r \ge 2$, we have
\[
m_{r+1} - m_r \le dim_{\mathbb{F}}( (M_r)_{red} ) - dim_{\mathbb{F}}( (M_{r+1})_{red})
\]
and summing from $2$ to $r-1$, we get
\[
m_r - m_2 \le dim_{\mathbb{F}}( (M_2)_{red} ).
\]
However since $\tau$ acts trivially on $HSW^*(Y , \mathfrak{s})$, we have that $E_2^{p,*} = HSW^*(Y , \mathfrak{s})$ for all $p \ge 0$. Hence $m_2 = 0$, $M_2 = HSW^*(Y , \mathfrak{s})$ and $(M_2)_{red} = HSW_{red}(Y , \mathfrak{s})$. Taking $r$ sufficiently large, we have
\[
\delta_1(Y , \mathfrak{s}) - \delta(Y , \mathfrak{s}) = m_r = m_r - m_2 \le dim_{\mathbb{F}}( HSW_{red} ).
\]
\end{proof}

\section{Branched double covers of knots}\label{sec:bdc}

\subsection{Concordance invariants}\label{sec:double}

Let $K \subset S^3$ be a knot in $S^3$. Let $Y = \Sigma_2(K)$ be the branched double cover of $S^3$, branched over $K$. Let $\pi \colon Y \to S^3$ denote the covering map. One finds that $b_1(Y) = 0$. Manolescu and Owens \cite{mo} define a knot invariant
\[
\delta(K) = 2d( \Sigma_2(K) , \mathfrak{t}_0) = 4 \delta( \Sigma_2(K) , \mathfrak{t}_0),
\]
where $\mathfrak{t}_0$ is the spin$^c$-structure induced from the unique spin structure on $\Sigma_2(K)$ (see \cite[Section 2]{mo} for an explanation of this). It is shown in \cite{mo} that $\delta(K)$ is always integer-valued and descends to a surjective group homomorphism $\delta \colon \mathcal{C} \to \mathbb{Z}$, where $\mathcal{C}$ is the smooth concordance group of knots in $S^3$.

The covering involution on $Y$ determines an action of $G = \mathbb{Z}_2$ on $Y$ preserving $\mathfrak{t}_0$ (by uniqueness of the underlying spin structure). Hence we may define the following knot invariants, for each $j\ge 0$:
\[
\delta_j(K) = 2d_j( \Sigma_2(K) , \mathfrak{t}_0) = 4 \delta_j( \Sigma_2(K) , \mathfrak{t}_0).
\]
Since $d_j( \Sigma_2(K) , \mathfrak{t}_0) - d( \Sigma_2(K) , \mathfrak{t}_0) \in 2 \mathbb{Z}$, it follows that $\delta_j(K) - \delta(K) \in 4 \mathbb{Z}$. Then, since $\delta(K)$ is integer-valued, it follows that the $\delta_j(K)$ are also integer-valued and moreover we have $\delta_j(K) = \delta(K) \; ({\rm mod} \; 4)$. 

\begin{proposition}\label{prop:conc}
For each $j \ge 0$, $\delta_j(K)$ depends only on the concordance class of $K$, hence $\delta_j$ descends to a concordance invariant $\delta_j \colon \mathcal{C} \to \mathbb{Z}$.
\end{proposition}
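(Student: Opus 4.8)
The plan is to reduce the concordance invariance of $\delta_j(K)$ to the equivariant rational homology cobordism invariance of $\delta_{\mathbb{Z}_2,Q^j}$ established in Corollary \ref{cor:inv}. The key geometric input is the standard fact that if $K_0$ and $K_1$ are concordant knots, then there is a cobordism between their branched double covers $\Sigma_2(K_0)$ and $\Sigma_2(K_1)$ obtained by taking the double cover of $S^3 \times [0,1]$ branched over the concordance annulus $A \subset S^3 \times [0,1]$. First I would recall why this branched double cover $W$ is a \emph{rational} (in fact integral) homology cobordism: since $A$ is an annulus with $[A] = 0$ in $H_2(S^3 \times [0,1], \partial)$, a Mayer--Vietoris / Smith theory argument shows $H_*(W ; \mathbb{Q}) \cong H_*(S^3 \times [0,1] ; \mathbb{Q})$, so $W$ is a $\mathbb{Q}$HS$^3 \times [0,1]$. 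The second key point is that $W$ carries a natural involution $\tau_W$, namely the deck transformation of the branched cover, which restricts on the two boundary components to the covering involutions $\tau_0$ on $\Sigma_2(K_0)$ and $\tau_1$ on $\Sigma_2(K_1)$. Thus $(W, \tau_W)$ is a $\mathbb{Z}_2$-equivariant rational homology cobordism from $(\Sigma_2(K_0), \tau_0)$ to $(\Sigma_2(K_1), \tau_1)$.

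Next I would address the spin$^c$-structures. The spin structure on $\Sigma_2(K_i)$ giving rise to $\mathfrak{t}_0$ is unique (as $b_1 = 0$ and $H_1$ has no $2$-torsion for a branched double cover of a knot), hence automatically $\tau_i$-invariant, and likewise $W$ has a unique spin structure restricting correctly on the boundary; call the associated spin$^c$-structure $\mathfrak{s}_W$, with $\mathfrak{s}_W|_{\Sigma_2(K_i)} = \mathfrak{t}_0$. One then checks that the induced $S^1$-central extensions $(\mathbb{Z}_2)_{\mathfrak{t}_0}$ are trivial: this is already guaranteed by the general discussion at the start of Section \ref{sec:zp}, where it is shown that for $G = \mathbb{Z}_p$ the extension $G_{\mathfrak{s}}$ is always a trivial extension. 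So all the hypotheses of Corollary \ref{cor:inv} are met, and it yields $\delta_{\mathbb{Z}_2, Q^j}(\Sigma_2(K_0), \mathfrak{t}_0) = \delta_{\mathbb{Z}_2, Q^j}(\Sigma_2(K_1), \mathfrak{t}_0)$ for all $j \geq 0$. Multiplying by $4$ gives $\delta_j(K_0) = \delta_j(K_1)$. Since $\delta_j$ is then constant on concordance classes, it descends to a function $\delta_j \colon \mathcal{C} \to \mathbb{Z}$ (integer-valuedness having already been noted just before the statement).

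The main obstacle I anticipate is verifying cleanly that the branched double cover of the concordance annulus is genuinely a rational homology cobordism carrying the deck involution with the stated boundary behavior — this is "well known" but deserves a careful one-paragraph justification using Smith theory or the Wang sequence for the infinite cyclic cover, together with the observation that the branch locus is null-homologous. A secondary, more routine point is confirming that $\mathfrak{s}_W$ restricts to $\mathfrak{t}_0$ on each end; this follows from the uniqueness of the relevant spin structures. I do not expect the equivariance bookkeeping to cause trouble, since the deck transformation of a branched cover restricts tautologically to the deck transformation on each boundary component. One should also remark (as the paragraph preceding the statement does) that the congruence $\delta_j(K) \equiv \delta(K) \pmod 4$ and the integrality of $\delta(K)$ from \cite{mo} are what make $\delta_j$ integer-valued, so no separate argument is needed there.
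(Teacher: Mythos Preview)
Your proposal is correct and follows essentially the same route as the paper: build the branched double cover $W$ of $[0,1]\times S^3$ over the concordance annulus, observe it is a $\mathbb{Z}_2$-equivariant rational homology cobordism carrying a spin$^c$-structure restricting to $\mathfrak{t}_0$ on each end, and invoke Corollary~\ref{cor:inv}. The only notable difference is in the justification of the auxiliary facts: the paper cites \cite{kata} for $W$ being a rational homology cobordism (rather than Smith theory), and for $W$ being spin it gives an explicit argument by capping off one end with a branched double cover of $D^4$ and citing \cite{kau}, whereas you assert a unique spin structure on $W$ without proof. Existence (not uniqueness) of a spin structure is what is actually needed, and the paper's embedding argument supplies it cleanly; you should replace your uniqueness claim with that argument or an equivalent one.
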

\begin{proof}
For an oriented knot $K$, recall that $-K$ denotes the knot obtained by reversing orientation on $S^3$ and $K$. It follows that $\Sigma_2(-K) = \overline{\Sigma_2(K)}$. A concordance of oriented knots $K_1,K_2$ is a smooth embedding of $\Sigma = [0,1] \times S^1$ in $[0,1] \times S^3$ having boundary $-K_1 \cup K_2$. Taking the double cover of $[0,1] \times S^3$ branched along $\Sigma$ gives a $\mathbb{Z}_2$-equivariant cobordism $W$ from $\Sigma_2(K_1)$ to $\Sigma(K_2)$. From the calculations in \cite[Section 3]{kata}, one sees that $W$ is a rational homology cobordism. We claim that $W$ is spin. To see this, choose a smoothly embedded surface $\Sigma$ in $D^4$ whose boundary is $K_1$. Let $W'$ be the double cover of $D^4 \cup [0,1] \times S^3 \cong D^4$ branched over $\Sigma \cup [0,1] \times S^1$. From \cite{kau} we see that $W'$ is spin. Since $W$ is embedded in $W'$, it follows that $W'$ is spin as well. Any spin structure $\mathfrak{t}$ on $W$ will restrict on each component of the boundary to the unique spin structure on the branched double cover $\Sigma_2(K_i)$. The result now follows by applying Corollary \ref{cor:inv} to $(W , \mathfrak{t})$.
\end{proof}

We note that the $\delta_j$ are {\em not} group homomorphisms.

Let $\sigma(K)$ denote the signature of $K$ and $g_4(K)$ the smooth $4$-genus. Set $\sigma'(K) = -\sigma(K)/2$. We also define $b_+(K) = g_4(K)-\sigma'(K)$, $b_-(K) = g_4(K) + \sigma'(K)$.

\begin{proposition}\label{prop:deltaknot}
The knot concordance invariants $\delta_j$ have the following properties:
\begin{itemize}
\item[(1)]{$\delta_0(K) \ge \delta(K)$.}
\item[(2)]{$\delta_{j+1}(K) \le \delta_j(K)$ for all $j \ge 0$.}
\item[(3)]{$\delta_j(K) \ge \sigma'(K)$ for all $j \ge 0$ and $\delta_j(K) = \sigma'(K)$ for $j \ge b_-(K)$.}
\item[(4)]{$\delta_j(-K) \ge -\sigma'(K)$ for all $j \ge 0$ and $\delta_j(-K) = -\sigma'(K)$ for $j \ge b_+(K)$.} 
\item[(5)]{If $\Sigma_2(K)$ is an $L$-space, then $\delta_j(K) = \delta(K)$ and $\delta_j(-K) = \delta(-K)$ for all $j \ge 0$.}
\end{itemize}

\end{proposition}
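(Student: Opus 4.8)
The plan is to derive all five properties from the general results of Section~\ref{sec:zp}, keeping in mind the dictionary $\delta_j(K) = 4\,\delta_j(\Sigma_2(K),\mathfrak{t}_0)$ and $\delta(K) = 4\,\delta(\Sigma_2(K),\mathfrak{t}_0)$, that the covering involution $\tau$ is the relevant $\mathbb{Z}_2$-action (it preserves $\mathfrak{t}_0$ by uniqueness of the underlying spin structure), and that $(\Sigma_2(K))_{\mathfrak{t}_0}$ is a trivial extension, as observed at the start of Section~\ref{sec:zp}. With this in hand, properties (1), (2) and (5) are immediate: (1) and (2) are Theorem~\ref{thm:deltaz2}(1) and (2) for $(\Sigma_2(K),\mathfrak{t}_0)$ multiplied by $4$, and (5) is Theorem~\ref{thm:deltaz2}(5) applied to both $\Sigma_2(K)$ and $\Sigma_2(-K) = \overline{\Sigma_2(K)}$, using the standard fact that $\overline{Y}$ is an $L$-space precisely when $Y$ is.

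The substance is in (3). First I would fix a smoothly and properly embedded, connected, oriented surface $\Sigma \subset D^4$ of genus $g_4(K)$ with $\partial\Sigma = K$ (and trivial normal Euler number, which may be arranged without raising the genus), and form the double branched cover $W = \Sigma_2(D^4,\Sigma)$ with its covering involution $\tau$. I then need the standard topological facts about $W$: that $\partial W = \Sigma_2(K)$; that $b_1(W) = 0$; that the intersection form has signature $\sigma(W) = \sigma(K)$ and rank $b_2(W) = 2g_4(K)$, so that $b_\pm(W) = g_4(K)\mp\sigma'(K) = b_\pm(K)$ (recall $\sigma'(K) = -\sigma(K)/2$); that $W$ admits a $\tau$-invariant spin structure restricting on $\partial W$ to the spin structure underlying $\mathfrak{t}_0$, so the induced spin$^c$-structure $\mathfrak{s}$ is $\tau$-invariant with $\mathfrak{s}|_{\partial W} = \mathfrak{t}_0$; and that the $\tau$-fixed subspace of $H^2(W;\mathbb{R})$ vanishes, which follows from the transfer isomorphism $H^2(W;\mathbb{Q})^\tau \cong H^2(D^4;\mathbb{Q}) = 0$. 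Granting these, Corollary~\ref{cor:deltasigma} (with $p = 2$, so $b'_\pm(W) = b_\pm(W)$) gives $\delta_j(\Sigma_2(K),\mathfrak{t}_0) \ge -\sigma(K)/8$ for all $j \ge 0$ and equality for $j \ge b_-(K)$; multiplying by $4$ and using $-\sigma(K)/2 = \sigma'(K)$ yields (3).

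For (4) I would simply apply (3) to the mirror $-K$: since $\Sigma_2(-K) = \overline{\Sigma_2(K)}$, $g_4(-K) = g_4(K)$ and $\sigma(-K) = -\sigma(K)$, one has $\sigma'(-K) = -\sigma'(K)$ and $b_-(-K) = g_4(-K)+\sigma'(-K) = g_4(K)-\sigma'(K) = b_+(K)$, so (3) for $-K$ is precisely (4) for $K$.

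The main obstacle will be assembling the topological input for $W$ in the equivariant form required — especially producing a $\tau$-invariant spin$^c$-structure with the prescribed boundary restriction and checking $H^2(W;\mathbb{R})^\tau = 0$. These are classical facts about double branched covers (Viro's and Kauffman--Taylor's signature formulae, the spin computations already used for Proposition~\ref{prop:conc} via \cite{kau}, and homology computations in the style of \cite{kata}), but they should be stated with some care; everything else is bookkeeping with Theorem~\ref{thm:deltaz2} and Corollary~\ref{cor:deltasigma}.
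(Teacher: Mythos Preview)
Your proposal is correct and follows essentially the same approach as the paper: (1), (2), (5) come directly from Theorem~\ref{thm:deltaz2}, and (3)--(4) from Corollary~\ref{cor:deltasigma} applied to the branched double cover $W$ of $D^4$ over a genus-$g_4(K)$ surface, using \cite{kau} for spin-ness and $H^2(W;\mathbb{R})^{\mathbb{Z}_2} = H^2(D^4;\mathbb{R}) = 0$. The only cosmetic difference is that you obtain (4) by applying (3) to $-K$, whereas the paper reads off (3) and (4) simultaneously from parts (1) and (2) of Corollary~\ref{cor:deltasigma} for the same $W$; since part (2) of that corollary is itself proved by reversing orientation, the two routes are equivalent.
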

\begin{proof}
(1), (2) and (5) follow from (1), (2) and (5) of Theorem \ref{thm:deltaz2}. For (3) and (4), choose a smooth embedded surface $\Sigma \subset D^4$ in the $4$-ball of genus $g_4(K)$ which bounds $K$. Let $W$ be the double cover of $D^4$ branched along $\Sigma$. From \cite{kau} it follows that $W$ is spin. Let $\mathfrak{t}$ be any spin structure on $W$, then $\mathfrak{t}|_{\Sigma_2(K)} = \mathfrak{t}_0$ by uniqueness of $\mathfrak{t}_0$. Next, observe that $H^2(W ; \mathbb{R})^{\mathbb{Z}_2} = H^2(D^4 ; \mathbb{R}) = 0$. Then (3) and (4) follow by applying Corollary \ref{cor:deltasigma} to $(W , \mathfrak{t})$. 
\end{proof}

\begin{corollary}
If $K$ is a knot such that $\Sigma_2(K)$ is an $L$-space, then $\delta(K) = \sigma'(K)$.
\end{corollary}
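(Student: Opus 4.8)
The plan is to deduce the statement directly by combining parts (3) and (5) of Proposition \ref{prop:deltaknot}, with no new input required. The key point is that the $L$-space hypothesis forces the entire sequence $\{\delta_j(K)\}_{j\ge 0}$ to be constant and equal to $\delta(K)$, while the $4$-genus bound pins down the eventual value of that sequence as $\sigma'(K)$.

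Concretely, I would argue as follows. Assume $\Sigma_2(K)$ is an $L$-space (with respect to $\mathfrak{t}_0$ and $\mathbb{F}=\mathbb{Z}_2$). By Proposition \ref{prop:deltaknot}(5) we have $\delta_j(K) = \delta(K)$ for every $j \ge 0$. On the other hand, Proposition \ref{prop:deltaknot}(3) asserts that $\delta_j(K) = \sigma'(K)$ for all $j \ge b_-(K) = g_4(K) + \sigma'(K)$. Choosing any single index $j$ with $j \ge b_-(K)$ and comparing the two identities at that index yields $\delta(K) = \delta_j(K) = \sigma'(K)$, which is the claim. (Equivalently, one could invoke only the inequality side of (3), namely $\delta_j(K)\ge \sigma'(K)$ together with the eventual-constancy from Theorem \ref{thm:deltaz2}(3) pulled back through the factor of $4$, but using (3) directly is cleanest.)

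I do not anticipate any genuine obstacle here: the corollary is a one-line consequence of the already-established Proposition \ref{prop:deltaknot}. The only point worth a word of care is the bookkeeping of normalisations — $\delta(K) = 4\delta(\Sigma_2(K),\mathfrak{t}_0)$ and $\delta_j(K) = 4\delta_j(\Sigma_2(K),\mathfrak{t}_0)$, and $\sigma'(K) = -\sigma(K)/2$ — so that the equality $\delta(K)=\sigma'(K)$ really is the statement extracted from the two cited parts and not an off-by-a-factor variant of it. Once this is checked, the proof is complete; this is exactly how the analogous recovery of the result of \cite{liow} for quasi-alternating knots (where $\Sigma_2(K)$ is an $L$-space by \cite{os3}) is obtained.
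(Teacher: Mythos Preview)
Your proof is correct and matches the paper's own argument exactly: the paper simply states that the corollary follows from parts (3) and (5) of Proposition \ref{prop:deltaknot}, which is precisely the combination you spell out.
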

\begin{proof}
This follows by (3) and (5) of Proposition \ref{prop:deltaknot}
\end{proof}

\begin{remark}
In particular, if $K$ is quasi-alternating, then $\Sigma_2(K)$ is an $L$-space \cite{os3}. This recovers the main result of \cite{liow} that $\delta(K) = \sigma'(K)$ for quasi-alternating knots.
\end{remark}

\begin{theorem}\label{thm:genus}
For a knot $K$, let $j_+(K)$ be the smallest positive integer such that $\delta_j(K) = \sigma'(K)$ and $j_-(K)$ the smallest positive integer such that $\delta_j(-K) = -\sigma'(K)$. Then 
\[
g_4(K) \ge \max\{ \sigma'(K)+j_-(K) , -\sigma'(K)+j_+(K)\}.
\]
\end{theorem}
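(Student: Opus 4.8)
The plan is to deduce the bound directly from parts (3) and (4) of Proposition~\ref{prop:deltaknot}: all of the genuine input (the branched-double-cover construction, the $\tau$-invariant positive-definite subspace being trivial, and Corollary~\ref{cor:deltasigma}) is already packaged into that proposition, so the present statement is just a rearrangement of inequalities together with one classical fact about $b_\pm(K)$.

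First I would record that the two numbers $b_-(K)=g_4(K)+\sigma'(K)$ and $b_+(K)=g_4(K)-\sigma'(K)$ are non-negative integers. They are integers because $\sigma(K)$ is even (so $\sigma'(K)=-\sigma(K)/2\in\mathbb{Z}$), and they are non-negative because of the classical signature bound $g_4(K)\ge|\sigma(K)|/2$, i.e.\ $g_4(K)\ge|\sigma'(K)|$. This is the only place the inequality $g_4(K)\ge|\sigma(K)|/2$ is used, and it guarantees that $b_\pm(K)$ fall in the range of indices over which the minima defining $j_\pm(K)$ are taken.

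Next, Proposition~\ref{prop:deltaknot}(3) gives $\delta_j(K)=\sigma'(K)$ for every $j\ge b_-(K)$; in particular $j=b_-(K)$ is an index at which $\delta_j(K)$ attains the value $\sigma'(K)$, so by the definition of $j_+(K)$ as the least such index we get $j_+(K)\le b_-(K)=g_4(K)+\sigma'(K)$, that is, $g_4(K)\ge -\sigma'(K)+j_+(K)$. Applying Proposition~\ref{prop:deltaknot}(4) in exactly the same way — equivalently, applying part (3) to $-K$ and using $g_4(-K)=g_4(K)$, $\sigma'(-K)=-\sigma'(K)$ — yields $\delta_j(-K)=-\sigma'(K)$ for all $j\ge b_+(K)$, hence $j_-(K)\le b_+(K)=g_4(K)-\sigma'(K)$, i.e.\ $g_4(K)\ge \sigma'(K)+j_-(K)$. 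Taking the maximum of these two inequalities gives the claim. There is no real obstacle in the argument; the only point meriting a sentence in the writeup is the sign bookkeeping in $b_\pm(K)$ and the observation that $b_\pm(K)\ge 0$, which is precisely where the Murasugi inequality enters, so that the bounds $j_+(K)\le b_-(K)$ and $j_-(K)\le b_+(K)$ are not vacuous.
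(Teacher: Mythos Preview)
Your argument is correct and is essentially identical to the paper's own proof: both simply read off from Proposition~\ref{prop:deltaknot}(3)--(4) the bounds $j_+(K)\le b_-(K)$ and $j_-(K)\le b_+(K)$ and rearrange. The only difference is that you spell out the non-negativity of $b_\pm(K)$ via the Murasugi inequality, which the paper leaves implicit.
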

\begin{remark}
Observe that the right hand side of this inequality is at least $|\sigma(K)|/2$. Hence we have obtained a strengthening of the well-known inequality $g_4(K) \ge | \sigma(K)|/2$ \cite{mur}.
\end{remark}
\begin{proof}
From Proposition \ref{prop:deltaknot} we have that $\delta_j(K) = \sigma'(K)$ for $j \ge g_4(K)+\sigma'(K)$ and $\delta_j(-K) = -\sigma'(K)$ for $j \ge g_4(K)-\sigma'(K)$. Hence $j_+(K) \le g_4(K)+\sigma'(K)$ and $j_-(K) \le g_4(K)-\sigma'(K)$.
\end{proof}

\begin{remark}\label{rem:higher}
In this section we have used branched double covers $\Sigma_2(K)$ of knots equipped with their natural $\mathbb{Z}_2$-action to obtain a sequence of concordance invariants. Similarly for any odd prime $p$ we may consider the cyclic branched cover $\Sigma_p(K)$ with its natural $\mathbb{Z}_p$-action. Once again there is a canonical spin$^c$-structure $\mathfrak{t}_0$ \cite{grs} and so we may define a sequence of invariants 
\[
\delta_{(p),j}(K) = 2 d_{\mathbb{Z}_p , S^j}( \Sigma_p(K) , \mathfrak{t}_0 ) 
\]
depending on a prime $p$ and an integer $j \ge 0$. By similar arguments to the $p=2$ case one finds that these are integer-valued knot concordance invariants of $K$.
\end{remark}

\section{Computations and applications}\label{sec:caa}

\subsection{Brieskorn homology spheres}\label{sec:brie}

Let $p,q,r$ be pairwise coprime positive integers and let $Y = \Sigma(p,q,r)$ be the corresponding Brieskorn integral homology $3$-sphere. Then $Y$ has a unique spin$^c$-structure and so when speaking of the Floer homology of $Y$ we omit the mention of the spin$^c$-structure.

Recall that $\Sigma(p,q,r)$ can be realised as the $p$-fold cyclic cover of $S^3$ branched along the torus knot $T_{q,r}$. In particular, this construction defines an action of $\mathbb{Z}_p$ on $Y$. Let $\tau : Y \to Y$ denote the generator of this action. Recall that $\Sigma(p,q,r)$ is obtained by taking the link of the singularity $\{ (x,y,z) \in \mathbb{C}^3 \; | \; x^p + y^q + z^r = 0 \}$. Then $\tau$ is given by $(x,y,z) \mapsto ( e^{2\pi i/p} x,y,z)$. This map is isotopic to the identity through the homotopy $(x,y,z) \mapsto ( e^{2\pi i qr t}x , e^{2\pi i pr t}y , e^{2\pi i pqt}z)$, $t \in [0, (qr)^*]$, where $0 < (qr)^* < p$ denotes the multiplicative inverse of $qr$ mod $p$. It follows that $\tau$ acts trivially on $HF^+(Y)$.

Henceforth we will assume that $p$ is a prime number. Set $\mathbb{F} = \mathbb{Z}_p$ and recall that $H_{\mathbb{Z}_p}^* \cong \mathbb{F}[Q]$ where $deg(Q) = 1$ if $p=2$ and $H_{\mathbb{Z}_p}^* \cong \mathbb{F}[R,S]/(R^2)$ where $deg(R)=1$, $deg(S)=2$ if $p$ is odd. Let $\mathfrak{s}$ denote the unique spin$^c$-structure on $Y$. As in Section \ref{sec:zp}, we let $\delta_j( Y , \mathfrak{s} ,\tau , p)$ denote $\delta_{\mathbb{Z}_p , Q^p}(Y , \mathfrak{s})$ for $p=2$ or $\delta_{\mathbb{Z}_p , S^p}(Y,\mathfrak{s})$ for odd $p$. We will further abbreviate this to $\delta_j(Y , \tau)$. When $p=2$, $\delta_j( Y , \tau ) = \delta_j ( T_{q,r})/4$, where $\delta_j( K )$ denotes the knot concordance invariant introduced in Section \ref{sec:double}. More generally, $\delta_j(Y , \tau) = \delta_{(p) , j}( T_{q,r} )/4$, where $\delta_{(p) , j}( K )$ is the knot concordance invariant defined in Remark \ref{rem:higher}.

\begin{example}\label{ex:235}
$(p,q,r) = (2,3,5)$. Then $Y = \Sigma(2,3,5)$ is the Poincar\'e homology $3$-sphere. Since $\Sigma(2,3,5)$ has spherical geometry it is an $L$-space \cite[Proposition 2.3]{os}. Therefore
\[
\delta_j( T_{3,5} ) = \delta( T_{3,5} ) = \sigma'(T_{3,5}) = 4, \text{ for all } j \ge 0.
\]
The property of being an $L$-space does not depend on the choice of orientation, so we also have
\[
\delta_j( -T_{3,5} ) = \delta( -T_{3,5} ) = -4, \text{ for all } j \ge 0.
\]
The same argument applied to $p=3$ or $5$ gives
\[
\delta_{(3),j}( T_{2,5} ) = \delta_{(5),j}( T_{2,3}) = 4, \text{ for all } j \ge 0
\]
and
\[
\delta_{(3),j}( -T_{2,5} ) = \delta_{(5),j}( -T_{2,3}) = -4, \text{ for all } j \ge 0.
\]
\end{example}

\begin{proposition}\label{prop:casson}
Let $p,q,r$ be positive, pairwise coprime integers and assume that $p$ is prime. Then $\delta_j( \Sigma(p,q,r) , \tau ) = -\lambda( \Sigma(p,q,r))$ for all $j \ge 0$, where $\lambda( \Sigma(p,q,r))$ is the Casson invariant of $\Sigma(p,q,r)$. Furthermore, we have that
\[
\delta_{(p) , j}( T_{q,r} ) = -\frac{1}{2} \sum_{j=1}^{p-1} \sigma_{j/p}( T_{q,r}), \text{ for all } j \ge 0
\]
where $\sigma_{\alpha}(K)$ is the Tristram--Levine signature of $K$.
\end{proposition}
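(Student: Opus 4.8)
The plan is to obtain Proposition~\ref{prop:casson} from the equivariant Fr\o yshov inequality applied to the Milnor fibre of the Brieskorn singularity, combined with the triviality of the $\tau$-action on Floer homology recorded in Section~\ref{sec:brie}.

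First I would introduce the relevant four-manifold $W$. Writing $K=T_{q,r}$, push a fibre surface of $K$ into $D^4$ and let $W$ be the $p$-fold cyclic branched cover of $D^4$ branched along it. Since $p$ is coprime to $qr$, $W$ is the Milnor fibre of $x^p+y^q+z^r$, hence a smooth, simply connected spin four-manifold (so $b_1(W)=0$) with $\partial W=\Sigma(p,q,r)=Y$, and the generator of its deck group restricts on the boundary to $\tau$. Because $W\to D^4$ is a branched $\mathbb{Z}_p$-cover and $p$ is invertible in $\mathbb{Q}$, the transfer gives $H^2(W;\mathbb{R})^{\tau}\cong H^2(D^4;\mathbb{R})=0$, so the $\tau$-fixed subspace of $H^2(W;\mathbb{R})$ vanishes. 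Choosing a spin structure $\mathfrak{s}$ on $W$ (automatically $\tau$-invariant, with $\mathfrak{s}|_Y=\mathfrak{t}_0$ and $c_1(\mathfrak{s})=0$) one has $\delta(W,\mathfrak{s})=-\sigma(W)/8$. By Viro's signature formula for cyclic branched covers of $D^4$ along pushed-in Seifert surfaces, $\sigma(W)=\sum_{\ell=1}^{p-1}\sigma_{\ell/p}(K)$, and by the theorem of Fintushel--Stern and Neumann--Wahl identifying the signature of the Milnor fibre of a Brieskorn singularity with $8$ times the Casson invariant of its link, $\sigma(W)=8\lambda(\Sigma(p,q,r))$. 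Hence $\delta(W,\mathfrak{s})=-\lambda(Y)=-\tfrac{1}{8}\sum_{\ell=1}^{p-1}\sigma_{\ell/p}(K)$.

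Next I would apply the cobordism inequalities of Section~\ref{sec:zp}. Corollary~\ref{cor:deltasigma}, applied to $(W,\mathfrak{s})$ and to $(\overline{W},\mathfrak{s})$, yields $\delta_j(Y,\tau)\ge -\sigma(W)/8=-\lambda(Y)$ for every $j\ge 0$, with equality once $j\ge b'_-(W)$. To close the gap for small values of $j$ I would use that $\tau$ acts trivially on $HSW^*(Y,\mathfrak{t}_0)\cong HF^+(Y)$ (Section~\ref{sec:brie}). The argument proving Proposition~\ref{prop:trivialtau} shows that in this situation $\delta_{\mathbb{Z}_p,c}(Y,\mathfrak{t}_0)=\delta(Y,\mathfrak{t}_0)+m_\infty$ for \emph{every} non-zero $c\in H^*_{\mathbb{Z}_p}$, the integer $m_\infty$ (the stable value of the quantity $m_r$ occurring there) being independent of $c$; hence $\delta_0(Y,\tau)=\delta_1(Y,\tau)=\delta_2(Y,\tau)=\cdots$. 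Since this common value equals $-\lambda(Y)$ for large $j$, it equals $-\lambda(Y)$ for all $j\ge 0$, proving the first assertion. The second follows at once, since $\delta_{(p),j}(T_{q,r})=4\,\delta_j(\Sigma(p,q,r),\tau)=-4\lambda(\Sigma(p,q,r))=-\tfrac{1}{2}\sum_{\ell=1}^{p-1}\sigma_{\ell/p}(T_{q,r})$ by the identity $8\lambda(\Sigma(p,q,r))=\sum_{\ell=1}^{p-1}\sigma_{\ell/p}(T_{q,r})$ already used (and when $p=2$, $\delta_j(\Sigma(q,r),\tau)=\delta_j(T_{q,r})/4$, recovering the stated formula for $\delta_j(K)$).

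The main obstacle is this last step. Proposition~\ref{prop:trivialtau} as stated bounds only $\delta_1$, so to conclude that \emph{all} the $\delta_j(Y,\tau)$ --- in particular $\delta_0=\delta_{\mathbb{Z}_p,1}$ --- coincide when $\tau$ acts trivially on $HF^+$, one must revisit its proof and observe that the quantity $m_r$ there does not depend on the chosen class $c$. If one prefers to use Proposition~\ref{prop:trivialtau} only as a black box, a substitute is the numerical identity $\dim_{\mathbb{F}}HSW_{red}(\Sigma(p,q,r))=-\lambda(\Sigma(p,q,r))-\delta(\Sigma(p,q,r))$, a consequence of $\lambda=\chi(HF^+_{red})-\tfrac{1}{2}d$ together with the fact that $HF^+_{red}(\Sigma(p,q,r))$ is free abelian and concentrated in odd degrees; this pins down $\delta_j$ for $j\ge 1$, while $j=0$ still needs the refinement above. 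The remaining ingredients --- Viro's formula and the Fintushel--Stern/Neumann--Wahl computation of the Milnor fibre signature --- are standard and are merely quoted.
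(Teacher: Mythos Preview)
Your overall architecture---Milnor fibre plus Corollary~\ref{cor:deltasigma} for the lower bound, triviality of the $\tau$-action together with the numerical identity $\dim_{\mathbb{F}}HSW_{red}(Y)=-\lambda(Y)-\delta(Y)$ for the upper bound---is exactly the paper's. But your primary route to closing the gap for small $j$ contains a genuine error.

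You claim that the proof of Proposition~\ref{prop:trivialtau} shows $\delta_{\mathbb{Z}_p,c}(Y)=\delta(Y)+m_\infty$ for \emph{every} non-zero $c$, so that all the $\delta_j(Y,\tau)$ coincide once $\tau$ acts trivially on $HSW^*(Y)$. This is false. The very computations in Propositions~\ref{ex:36n-1} and~\ref{ex:36n+1} give counterexamples: on $\overline{\Sigma(2,3,6n+1)}$ the involution $\tau$ is still isotopic to the identity (the isotopy in Section~\ref{sec:brie} does not see the orientation), hence acts trivially on $HF^+$, yet $\delta_j(\overline{\Sigma(2,3,6n+1)},\tau)$ genuinely depends on $j$. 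The quantity $m_\infty$ in Proposition~\ref{prop:trivialtau} tracks only the $p=0$ column of the spectral sequence and therefore controls $\delta_{G,1}=\delta_0$; it says nothing about $\delta_{G,c}$ for $|c|>0$ beyond the monotonicity $\delta_{j+1}\le\delta_j$.

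Your fallback \emph{is} the paper's argument, and it does cover $j=0$. The numerical identity $\dim_{\mathbb{F}}HF^+_{red}(Y)=-\lambda(Y)-\delta(Y)$ (from the odd-degree concentration of $HF^+_{red}$ for negative-definite plumbings with one bad vertex, together with $\chi(HF^+_{red})=\lambda+\delta$) feeds into Proposition~\ref{prop:trivialtau} to give $\delta_0(Y,\tau)\le\delta(Y)+\dim_{\mathbb{F}}HF^+_{red}(Y)=-\lambda(Y)$. (The proof of Proposition~\ref{prop:trivialtau} in fact bounds $\delta_{G,1}=\delta_0$; the subscript $1$ in its statement refers to the unit of $H^*_G$, and the paper invokes it for $\delta_0$ in the proof of Proposition~\ref{prop:casson}.) Since $\delta_0\ge\delta_j\ge-\lambda(Y)$ for all $j$, this single upper bound on $\delta_0$ pins down every $\delta_j$ at once. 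No separate argument for $j\ge 1$ is needed, and no ``refinement'' showing all $\delta_j$ agree a priori is available.
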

\begin{proof}
Recall that $Y = \Sigma(p,q,r)$ is the boundary of a negative definite plumbing \cite{nr} whose plumbing graph has only one bad vertex in the terminology of \cite{os2}. Then it follows from \cite[Corollary 1.4]{os2} that $HF^+( \overline{Y})$ is concentrated in even degrees. Consequently $HF^+_{red}( Y)$ is concentrated in odd degrees. (Note that \cite{os2} uses $\mathbb{Z}$ coefficients. But it is shown there that $HF^+_{red}(Y ; \mathbb{Z})$ has no torsion and hence by the universal coefficient theorem, \cite[Corollary 1.4]{os2} also holds for $\mathbb{Z}_p$ coefficients). Therefore
\begin{equation}\label{equ:chi1}
\chi( HF^+_{red}( Y )) = dim_{\mathbb{F}}( HF^+_{red,even}(Y)) - dim_{\mathbb{F}}( HF^+_{red,odd}(Y)) = -dim_{\mathbb{F}}( HF^+_{red}(Y)).
\end{equation}
From \cite[Theorem 1.3]{os4} we have that $\chi( HF^+_{red}(Y))$ is related to the Casson invariant $\lambda( Y)$ via the formula
\begin{equation}\label{equ:chi2}
\chi( HF^+_{red}( Y )) = \lambda( Y ) + \delta( Y).
\end{equation}
Hence
\begin{equation}\label{equ:hfred}
dim_{\mathbb{F}}( HF^+_{red}(Y)) = -\lambda(Y) - \delta(Y).
\end{equation}

Moreover, from \cite{fs}, \cite{cs}, we have that
\[
\lambda(\Sigma(p,q,r)) = \frac{1}{8}\sum_{j=1}^{p-1} \sigma_{j/p}( T_{q,r}) = \frac{1}{8} \sigma( M(p,q,r))
\]
where $M(p,q,r)$ is the Milnor fibre 
\[
M(p,q,r) = \{ (x,y,z) \in \mathbb{C}^3 \; | \; x^p + y^q + z^r = \delta \} \cap D^6
\]
(where $\delta$ is a sufficiently small non-zero complex number). Recall that $M(p,q,r)$ is a compact smooth $4$-manifold with boundary diffeomorphic to $\Sigma(p,q,r)$. Moreover, $M(p,q,r)$ has the homotopy type of a wedge of $2$-spheres, so $b_1( M(p,q,r)) =0$. Further, $M(p,q,r)$ is a $p$-fold cyclic cover of $D^4$ branched along a surface bounding $T_{q,r}$. Hence the action of $\mathbb{Z}_p = \langle \tau \rangle$ on $Y$ extends to $M(p,q,r)$. From \cite[Lemma 2.1]{grs} it follows that there is a $\mathbb{Z}_p$ invariant spin structure $\mathfrak{t}_0$ on $M(p,q,r)$. Since $M(p,q,r)$ is a cyclic $p$-fold cover of $D^4$ it follows that the subspace of $H^2( M(p,q,r) ; \mathbb{R})$ fixed by $\tau$ is zero. Hence Corollary \ref{cor:deltasigma} may be applied, giving
\[
\delta_j( Y , \tau ) \ge -\frac{\sigma( M(p,q,r) )}{8} = -\lambda(Y), \text{ for all } j \ge 0.
\]

Since $\tau$ acts trivially on $HF^+(Y)$, Proposition \ref{prop:trivialtau} implies that
\[
\delta_0(Y , \tau) - \delta(Y) \le dim_{\mathbb{F}}( HF^+_{red}(Y) ) = -\lambda(Y) - \delta(Y).
\]
Hence $\delta_0(Y , \tau) \le -\lambda(Y)$. On the other hand, $\delta_0(Y , \tau) \ge \delta_j(Y , \tau) \ge -\lambda(Y)$ for any $j \ge 0$. Hence $\delta_j(Y,\tau) = -\lambda(Y)$ for all $j \ge 0$. Therefore we also have
\[
\delta_{(p) , j}( T_{q,r}) = 4 \delta_j(Y , \tau) = -4 \lambda(Y) = -\frac{1}{2} \sum_{j=1}^{p-1} \sigma_{j/p}( T_{q,r})
\]
for all $j \ge 0$.
\end{proof}

The above result shows that the values of $\delta_{(p), j}( T_{q,r})$ do not depend on $j$. In contrast, the values of $\delta_{(p),j}( -T_{q,r})$ usually do depend on $j$, as the following propositions illustrate.

\begin{proposition}\label{ex:36n-1}
Let $(a,b) = (3,6n-1)$ for $n \ge 1$. Then
\[
\delta( -T_{3,6n-1} ) = -4, \quad \sigma'( -T_{3,6n-1} ) = -4n
\]
and
\[
\delta_j( -T_{3,6n-1}  ) = \begin{cases} -4\left( \left\lfloor \frac{j}{2} \right\rfloor + 1\right) & 0 \le j \le 2n-3, \\ -4n & j \ge 2n-2. \end{cases}
\]
\end{proposition}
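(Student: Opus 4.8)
\emph{Setup and classical input.} The plan is to work on the branched double cover $\overline{Y}$, where $Y:=\Sigma(2,3,6n-1)$ carries its covering involution $\tau$ and $\overline{Y}=\Sigma_2(-T_{3,6n-1})=\overline{\Sigma(2,3,6n-1)}$, so that $\delta_j(-T_{3,6n-1})=4\,\delta_j(\overline{Y},\mathfrak{t}_0)$ by definition. The two displayed equalities are standard input: $\sigma(T_{3,6n-1})=-8n$, equivalently $\lambda(\Sigma(2,3,6n-1))=-n$ by the Neumann--Wahl/Fintushel--Stern signature formula already used in Proposition~\ref{prop:casson}, whence $\sigma'(-T_{3,6n-1})=-\sigma'(T_{3,6n-1})=\tfrac{1}{2}\sigma(T_{3,6n-1})=-4n$; and $d(\Sigma(2,3,6n-1))=2$, from the Ozsv\'ath--Szab\'o computation of $HF^+$ for these Brieskorn spheres (boundaries of negative-definite plumbings with a single bad vertex), giving $\delta(-T_{3,6n-1})=2\,d(\Sigma_2(-T_{3,6n-1}))=-2\,d(\Sigma(2,3,6n-1))=-4$.

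\emph{The stable range.} The bound $\delta_j(-T_{3,6n-1})\ge -4n$ for all $j$, with equality for $j\ge 2n-2$, is immediate from Proposition~\ref{prop:deltaknot}(4): here $-\sigma'(T_{3,6n-1})=-4n$ and $b_+(T_{3,6n-1})=g_4(T_{3,6n-1})-\sigma'(T_{3,6n-1})=(6n-2)-4n=2n-2$. (This bound originates with the Milnor fibre $M=M(2,3,6n-1)$: it is spin with $b_1=0$, double-branch-covers $D^4$, has $\partial M=Y$, $\sigma(M)=8\lambda(Y)=-8n$ and $b_+(M)=2n-2$, so Corollary~\ref{cor:deltasigma}(1) applied to $\overline{M}$ gives $\delta_j(\overline{Y},\mathfrak{t}_0)\ge -n$ with equality for $j\ge 2n-2$.) Together with monotonicity $\delta_{j+1}\le\delta_j$ (Proposition~\ref{prop:deltaknot}(2)) and $\delta_0(-T_{3,6n-1})\ge\delta(-T_{3,6n-1})=-4$ (Proposition~\ref{prop:deltaknot}(1)), this settles $j\ge 2n-2$.

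\emph{The staircase.} What remains is $\delta_j(\overline{Y},\mathfrak{t}_0)=-(\lfloor j/2\rfloor+1)$ for $0\le j\le 2n-3$; by monotonicity it suffices to prove $\delta_{2k}(\overline{Y})\le-(k+1)$ for $0\le k\le n-1$ and $\delta_{2k+1}(\overline{Y})\ge-(k+1)$ for $0\le k\le n-2$. Since $\tau$ acts trivially on $HF^+(\Sigma(2,3,6n-1))$ by \S\ref{sec:brie}, hence on $HSW^*(\overline{Y})$, and $\dim_{\mathbb{F}}HSW_{red}(\overline{Y})=\dim_{\mathbb{F}}HF^+_{red}(\Sigma(2,3,6n-1))=n-1$ by~(\ref{equ:hfred}), these bounds are a statement purely about the $\mathbb{Z}_2$-equivariant Seiberg--Witten--Floer cohomology of $\Sigma(2,3,6n-1)$ and the higher differentials in the spectral sequence of \S\ref{sec:zp}. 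I would establish them by computing that equivariant cohomology explicitly: $\Sigma(2,3,6n-1)$ is Seifert fibred and $\tau$ is the element $-1$ of its Seifert circle action, so its Seiberg--Witten--Floer spectrum (from finite-dimensional approximation of the Seifert model as in \S\ref{sec:swfs}) carries a residual circle action containing $\tau$, and one reads off $HSW^*_{\mathbb{Z}_2}(\Sigma(2,3,6n-1))$ from the explicit $HF^+$ (whose reduced part is $\mathbb{F}^{n-1}$ in a single grading with $U$ acting as zero) together with the $m_r$-analysis of Lemmas~\ref{lem:isoq}--\ref{lem:mr}; the outcome is that each of the $n-1$ reduced generators raises the relevant correction term by one at every other stage, producing the staircase. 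Alternatively, the bounds can be packaged through Theorem~\ref{thm:don2} applied to the $\tau$-equivariant cobordisms obtained as branched double covers of the band moves carrying the Seifert surface of $T_{3,5}$ to that of $T_{3,6n-1}$, exhibiting $Y$ as $\Sigma(2,3,5)$ with $n-1$ successive ``batches'' attached.

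\emph{Main obstacle.} The crux is precisely this last computation: triviality of the $\tau$-action on $HF^+$ does \emph{not} force the spectral sequence of \S\ref{sec:zp} to degenerate, and the staircase is carried entirely by the higher differentials, so one must genuinely pin down the $\mathbb{Z}_2$-equivariant homotopy type $SWF(\Sigma(2,3,6n-1))$ (equivalently, produce enough $\tau$-equivariant $4$-manifolds). Everything else — the reduction to $\overline{Y}$, the classical invariants, the stable range, and the bookkeeping of $b_\pm$ — is routine given the results already established.
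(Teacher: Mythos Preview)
Your setup, classical input, and stable-range argument are correct and match the paper's approach. The gap is in the staircase portion, and specifically in your diagnosis of what mechanism produces it.

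You write that ``the staircase is carried entirely by the higher differentials'' in the spectral sequence. This is incorrect for $\overline{Y}=\overline{\Sigma(2,3,6n-1)}$: the entire graded piece $HSW^*(\overline{Y})$ --- both the tower generator $\theta$ and the reduced part $V=(\mathbb{F}_{-2})^{n-1}$ --- sits in the single internal degree $q=-2$, so every differential $d_r\colon E_r^{p,-2}\to E_r^{p+r,-1-r}$ lands in zero and the spectral sequence degenerates at $E_2$. Consequently the $m_r$-analysis of Lemmas~\ref{lem:isoq}--\ref{lem:mr} yields only $m_r=0$ for all $r$, which recovers $\delta_0(\overline{Y})=\delta(\overline{Y})=-1$ but nothing about $\delta_j$ for $j\ge 1$. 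The staircase is instead an \emph{extension problem}: degeneration gives $HSW^*_{\mathbb{Z}_2}(\overline{Y})\cong\mathbb{F}[U,Q]\theta\oplus V[Q]$ as $\mathbb{F}[Q]$-modules, but the $U$-action is deformed to $\hat{U}=U+Q^2U_0$ for some $U_0\colon V\to V\oplus\mathbb{F}\theta$, and the $\delta_j$ are governed by the Jordan structure of $U_0$.

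The paper's argument for pinning down $U_0$ is the piece you are missing. It runs backwards from the stable range: you already know $\delta_j(\overline{Y})=-n$ for large $j$, so via Proposition~\ref{prop:altd} there must exist some $v\in V$ and integers $a,b$ with $\hat{U}^r(Q^a v)=U^mQ^j\theta\pmod{Q^{j+1}}$ and $\deg(Q^a v)=j-2n$. Expanding $(U+Q^2U_0)^r$ and using that $U|_V=0$ forces the existence of a chain $v,U_0v,\ldots,U_0^{n-2}v$ spanning $V$ with $U_0^{n-1}v\equiv\theta\pmod V$. Once $U_0$ is cyclic of maximal length in this sense, the formula $\delta_j(\overline{Y})=-(\lfloor j/2\rfloor+1)$ for $0\le j\le 2n-3$ drops out directly from Proposition~\ref{prop:altd}. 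Neither of your two proposed routes (residual circle action, or band-move cobordisms to $\Sigma(2,3,5)$) is developed enough to substitute for this, and the first is aimed at the wrong target.
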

\begin{proof}The case $n=1$ is already covered in Example \ref{ex:235}, so we assume $n \ge 2$. Set $Y_{a,b} = \Sigma_2( T_{a,b} ) = \Sigma(2,a,b)$ and let $\tau$ be the covering involution. So $\delta_j( -T_{3,6n-1} ) = 4 \delta_j( \overline{Y_{3,6n-1}})$. From the computations in \cite[Section 8]{os4} we find that $d(\overline{Y_{3,6n-1}}) = -2$, $SWF^*_{red}(\overline{Y_{3,6n-1}}) = (\mathbb{F}_{-2})^{n-1}$, where the subscript indicates degree. To simplify notation we let $V = SWF^*_{red}(\overline{Y_{3,6n-1}}) = (\mathbb{F}_{-2})^{n-1}$.
Then 
\[
E_2^{*,*} \cong \mathbb{F}[U,Q] \theta \oplus V[Q]
\]
where the bi-degree is given as follows. $\theta$ and all elements of $V$ have bi-degree $(0,-2)$, $U$ has bidegree $(0,2)$ and $Q$ has bidegree $(1,0)$. Then $E_2^{p,q} = 0$ for $q < -2$. It follows that all the differentials in the spectral sequence are zero on $\theta$ and on $V$, since $d_r$ sends $E_r^{p,-2}$ to $E_r^{p+r, -1-r}$ and $-1-r < -2$ for $r \ge 2$. Hence $d_r$ is zero on all of $E_r$ and $E_\infty^{*,*} \cong E_2^{*,*}$. Let $\mathcal{F}_j$ denote the filtration on $HSW^*_{\mathbb{Z}_2}( \overline{Y_{3,6n-1}})$ corresponding to the spectral sequence, so $\mathcal{F}_j / \mathcal{F}_{j+1} \cong E_\infty^{j,*}$. In particular, $\mathcal{F}_1/\mathcal{F}_2 \cong \mathbb{F}[U]\theta \oplus V$. Choose lifts of $\theta$ and $V$ to $\mathcal{F}_1$. We lift $U^j \theta$ by taking the lift of $\theta$ and applying $U^j$. Hence we obtain a short exact sequence of $\mathbb{F}[U,Q]$-modules
\[
0 \to \mathbb{F}[U] \oplus V \to HSW^*_{\mathbb{Z}_2}( \overline{ Y_{3,6n-1} } ) \to \mathcal{F}_2 \to 0.
\]
Next, for each $j \ge 0$, $Q$ induces an isomorphism $Q \colon \mathcal{F}_j/\mathcal{F}_{j+1} \to \mathcal{F}_{j+1}/\mathcal{F}_{j+2}$ hence by applying $Q$ repeatedly to $\mathbb{F}[U]\theta \oplus V$, we obtain a splitting of the filtration $\{ \mathcal{F}_j \}$ as $\mathbb{F}[Q]$-modules. The splittings give an isomorphism of $\mathbb{F}[Q]$-modules
\[
HSW^*_{\mathbb{Z}_2}( \overline{ Y_{3,6n-1} } ) \cong \mathbb{F}[U,Q] \theta \oplus V[Q].
\]
However, this is not necessarily an isomorphism of $\mathbb{F}[U,Q]$-modules. Under this isomorphism, $U$ corresponds to an endomorphism of the form
\[
\hat{U} = U_{2} + QU_{1} + Q^2 U_{0} + Q^3 U_{-1} + \cdots 
\]
where $U_j \colon HSW^*( \overline{ Y_{3,6n-1} } ) \to HSW^{*+j}( \overline{ Y_{3,6n-1} } )$ and $U_2 = U$. Since $HSW^*( \overline{ Y_{3,6n-1} } )$ is concentrated in even degrees we have that $U_j = 0$ for odd $j$. Moreover, our construction is such that $U_{j}\theta = 0$ for $j \neq 2$. It follows that $U_j = 0$ for $j < 0$, as $V$ is concentrated in a single degree. So we get
\[
\hat{U} = U + Q^2 U_0
\]
for some $U_0 \colon V \to HSW^0( \overline{ Y_{3,6n-1} } )$.

To simplify notation set $d_j = d_j( \overline{ Y_{3,6n-1} })$. Using Proposition \ref{prop:altd} we obtain the following characterisation of $d_j$.
\[
d_j = \min\{ i \; | \; \exists x \in HSW^i_{\mathbb{Z}_2}( \overline{ Y_{3,6n-1} } ), \; \hat{U}^r x = U^m Q^j \theta \; ({\rm mod} \; Q^{j+1} ) \text{ for some } r,m \ge 0 \} - j.
\]
Recall that $\delta_j( -T_{3,6n-1} ) = \sigma'( -T_{3,6n-1} ) = -4n$ for sufficiently large $j$. Hence $d_j = -2n$ for sufficiently large $j$. Choose such a $j$. From the above characterisation of $j$ there exists $x \in HSW^{j-2n}_{\mathbb{Z}_2}(\overline{ Y_{3,6n-1} })$ such that $\hat{U}^r x = U^m Q^j \theta + \cdots$ where $\cdots$ denotes terms of higher order in $Q$. We have that $x = Q^a y$ for some $a \le j$. Then $\hat{U}^r Q^a y = U^m Q^j \theta + \cdots$. Since $Q$ is injective we may cancel, giving $\hat{U}^r y = U^m Q^{j-a} \theta + \cdots$. If $a = j$, then $\hat{U}^r y = U^m \theta + \cdots$. But $\hat{U} = U + Q^2U_0$, so $\hat{U}^r y = U^r y + \cdots$, hence $U^r y = U^m \theta + \cdots$. From the definition of the usual $d$-invariant we must have $deg(y) \ge d( \overline{Y_{3,6n-1} } ) = -2$. Hence $j-2n = deg(x) = a + deg(y) = j-2$, which is a contradiction since we have assumed that $n > 1$. It follows that $a < j$. We must have $y \in V$ for if $y = U^b \theta \; ({\rm mod} \; V)$, then we would have $\hat{U}^r x = U^{r+b} Q^a \theta + \cdots$, which contradicts $\hat{U}^r x = U^m Q^j \theta + \cdots$ as $a < j$. Therefore $y \in V$. In particular $deg(y) = -2$ and $j-2n = deg(x) = a-2$. 

Let $b$ be the smallest positive integer such that $U_0^b y \notin V$. Such a $b$ exists since $\hat{U}^r y = (U + Q^2 U_0)^r y = U^m Q^{j-a} \theta + \cdots$ and $U$ is zero on $V$. Then it follows that $r \ge b$ and $\hat{U}^r y = (U + Q^2 U_0)^r y = U^{r-b} Q^{2b} (U_0^b y) + \cdots = U^m Q^{j-a} \theta + \cdots$. Hence $2b = j-a$. So we have shown that $j = a+2b$ and $j-2n = a-2$. Hence $b = n-1$. But since $\dim_{\mathbb{F}}(V) = n-1 = b$, it follows that there exists a $v \in V$ such that $v, U_0v, U_0^2 v , \dots , U_0^{n-2}v$ is a basis for $V$ and $U_0^{n-1}v = \theta \; ({\rm mod} \; V)$. Now it is straightforward to see that the sequence $\{d_j\}$ must have the form $-2, -2, -4, -4, -6, -6, \dots , $ for $j \le 2n-3$ and $d_j = -2n$ for $j \ge 2n-2$.
\end{proof}

\begin{proposition}\label{ex:36n+1} Let
$(a,b) = (3,6n+1)$ for $n \ge 1$. Then
\[
\delta( -T_{3,6n+1} ) = 0, \quad \sigma'( -T_{3,6n+1} ) = -4n
\]
and
\[
\delta_j( -T_{3,6n+1}  ) = \begin{cases} -4 \left\lfloor \frac{j}{2} \right\rfloor & 0 \le j \le 2n-1, \\ -4n & j \ge 2n. \end{cases}
\]
\end{proposition}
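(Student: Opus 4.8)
The plan is to mirror the proof of Proposition \ref{ex:36n-1} almost verbatim, so I will concentrate on setting up the reduction, listing the Floer-theoretic input that must be imported, and isolating the one place where care is needed. As before, set $Y = Y_{3,6n+1} = \Sigma_2(T_{3,6n+1}) = \Sigma(2,3,6n+1)$, equip it with the covering involution $\tau$, and take $\mathbb{F} = \mathbb{Z}_2$. Since $\delta_j(-T_{3,6n+1}) = 4\delta_j(\overline{Y}) = 2 d_j(\overline{Y})$, the whole statement is equivalent to the formula for the sequence $d_j(\overline{Y})$. The two ``bookkeeping'' equalities $\delta(-T_{3,6n+1}) = 2d(\overline{Y}) = 0$ and $\sigma'(-T_{3,6n+1}) = -4n$ are not part of the Floer argument: the first records the known value $d(\Sigma(2,3,6n+1)) = 0$ (see \cite{os4}), and the second is the standard Tristram--Levine signature computation for torus knots, $\sigma(T_{3,6n+1}) = -8n$.

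First I would assemble the non-equivariant input. From the plumbing computations of \cite[Section 8]{os4}, translated into Seiberg--Witten--Floer cohomology of the reversed orientation via \cite{lima}, one extracts $d(\overline{Y}) = 0$ together with $SWF^*_{red}(\overline{Y}) = V$, where $\dim_{\mathbb{F}} V = n$ and $V$ is concentrated in a single degree (the same degree as the bottom of the $U$-tower, namely $0$, exactly as the reduced part sits in degree $d$ in the $3,6n-1$ case). By the discussion in Section \ref{sec:brie}, $\tau$ is smoothly isotopic to the identity on $Y$, hence acts trivially on $HSW^*(\overline{Y})$, so $E_2^{*,*} \cong \mathbb{F}[U,Q]\theta \oplus V[Q]$; and, as in Proposition \ref{ex:36n-1}, all differentials in the spectral sequence vanish for degree reasons (the targets $E_r^{p+r,\,1-r}$ of $d_r$ out of the bottom degree lie below the bottom degree), so $E_\infty = E_2$. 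Splitting the resulting filtration as a sequence of $\mathbb{F}[Q]$-modules then identifies $HSW^*_{\mathbb{Z}_2}(\overline{Y})$ with $\mathbb{F}[U,Q]\theta \oplus V[Q]$ as an $\mathbb{F}[Q]$-module, the $U$-action being given by an endomorphism $\widehat{U} = U + Q^2 U_0$ with $U_0 \colon V \to HSW^*(\overline{Y})$; the reasons that only the $Q^0$ and $Q^2$ components survive (even degrees, $V$ in one degree, vanishing of the $\theta$-component of $\widehat U \theta$ outside the tower) are identical to those given there.

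Next I would run the counting argument. Proposition \ref{prop:altd} recasts $d_j(\overline{Y})$ as the minimal degree of a class $x$ with $\widehat{U}^{\,r} x = U^m Q^j \theta \pmod{Q^{j+1}}$ for some $r,m \ge 0$, minus $j$. The stabilised value $d_j(\overline{Y}) = -2n$ for $j \gg 0$, and the lower bound $d_j(\overline Y) \ge -2n$ for all $j$, follow from applying Corollary \ref{cor:deltasigma} to the double branched cover $W$ of $D^4$ along a minimal-genus Seifert surface of $T_{3,6n+1}$ (equivalently the Milnor fibre $M(2,3,6n+1)$), which has a $\mathbb{Z}_2$-invariant spin structure, $H^2(W;\mathbb{R})^{\mathbb{Z}_2} = 0$, $\sigma(W) = -8n$ and $b_+(W) = 2n$, just as in Proposition \ref{prop:casson}. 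Feeding the three numbers $d(\overline{Y}) = 0$, $\dim_{\mathbb{F}} V = n$ and the stable value $-2n$ into the dimension count from the proof of Proposition \ref{ex:36n-1} forces the existence of $v \in V$ with $v, U_0 v, \dots, U_0^{\,n-1} v$ a basis of $V$ and $U_0^{\,n} v \equiv \theta \pmod V$; that is, $U_0$ is a single nilpotent Jordan block of size $n+1$ on $\mathbb{F}\theta \oplus V$. Reading $d_j(\overline{Y})$ off this module structure (the optimal $x$ is $U_0^{\,n-k} v$ or $Q\, U_0^{\,n-k} v$ according to the parity of $j$, giving $d_{2k} = -2k$, $d_{2k+1} = -2k$, with $a = j-2n \ge 0$ needed for the stable regime) yields $d_j(\overline{Y}) = -2\lfloor j/2 \rfloor$ for $0 \le j \le 2n-1$ and $d_j(\overline{Y}) = -2n$ for $j \ge 2n$; doubling gives the claimed values of $\delta_j(-T_{3,6n+1})$.

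The main obstacle is the same one that is implicit in Proposition \ref{ex:36n-1}: verifying that the $U_0$-action on $V$ is a single Jordan block of the maximal possible size relative to $\dim V = n$. As there, this is not an independent computation but is pinned down by the combination of $d(\overline{Y}) = 0$, $\dim_{\mathbb{F}} V = n$ and the stable value $-2n$, so the only genuine care needed beyond the $3,6n-1$ case is the bookkeeping of the grading shift (the tower generator $\theta$ now sits in degree $0$ rather than $-2$) and of the rank shift (the reduced group now has dimension $n$ rather than $n-1$), both of which propagate transparently through every step of the earlier argument.
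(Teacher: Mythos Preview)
Your proposal is correct and follows essentially the same approach as the paper, which simply records the inputs $d(\overline{Y_{3,6n+1}}) = 0$, $SWF^*_{red}(\overline{Y_{3,6n+1}}) = (\mathbb{F}_0)^n$, $\sigma'(-T_{3,6n+1}) = -4n$ from \cite{os4} and then says ``essentially the same argument as in Proposition \ref{ex:36n-1} gives the result.'' Your walkthrough of how the bookkeeping changes (tower generator in degree $0$ rather than $-2$, $\dim V = n$ rather than $n-1$, Jordan block of size $n$ on $V$ with $U_0^{\,n}v \equiv \theta \pmod V$) is exactly what that sentence unpacks to, and your use of Corollary \ref{cor:deltasigma} for both the stable value and the lower bound $d_j(\overline{Y}) \ge -2n$ matches how the earlier proposition implicitly imports Proposition \ref{prop:deltaknot}.
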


\begin{proof} From \cite[Section 8]{os4} we find that $d(\overline{Y_{3,6n+1}}) = 0$, $SWF^*_{red}(\overline{Y_{3,6n+1}}) = (\mathbb{F}_{0})^{n}$ and $\sigma'( -T_{3,6n+1}) = 4n$. From here essentially the same argument as in Proposition \ref{ex:36n-1} gives the result.
\end{proof}

\begin{remark}
We can use Theorem \ref{thm:genus} and the computations in Propositions \ref{ex:36n-1}, \ref{ex:36n+1} to obtain a lower bound for the $4$-genus.
From Proposition \ref{ex:36n-1}, we see that $\sigma'(T_{3,6n-1}) = 4n$ and $j_-(T_{3,6n-1}) = 2n-2$, hence $g_4(T_{3,6n-1}) \ge 2n-2 + 4n = 6n-2$. On the other hand from the positive solution to the Milnor conjecture \cite{km2}, we know that $g_4( T_{a,b} ) = (a-1)(b-1)/2$. In particular $g_4( T_{3,6n-1} ) = 6n-2$. Hence the above estimate for $g_4( T_{3,6n-1} )$ is actually sharp.

Similarly, from Proposition \ref{ex:36n+1}, we see that $\sigma'(T_{3,6n+1}) = 4n$ and $j_-(T_{3,6n+1}) = 2n$. So we obtain an estimate $g_4( T_{3,6n+1} ) \ge 6n$. Once again, this estimate is sharp since $g_4( T_{3,6n+1}) = (3-1)(6n+1-1)/2 = 6n$.

\end{remark}

\subsection{Non-extendable actions}\label{sec:app1}

Let $Y$ be a rational homology $3$-sphere equipped with an orientation preserving action of $G$. Let $W$ be a smooth $4$-manifold with boundary $Y$. In this section we are concerned with the question of whether the $G$-action can be extended to $W$. In particular we are interested in finding obstructions to such an extension.

\begin{proposition}\label{prop:zero}
Let $Y$ be an integral homology $3$-sphere and $\mathfrak{s}$ the unique spin$^c$-structure on $Y$. Let $G$ act orientation preservingly on $Y$ and suppose that the extension $G_{\mathfrak{s}}$ is trivial. Suppose that $Y$ is the boundary of a contractible $4$-manifold $W$. If the action of $G$ extends over $W$ then $\delta_{G,c}(Y,\mathfrak{s}) = \delta_{G,c}(\overline{Y} , \mathfrak{s}) = 0$ for every non-zero $c \in H^*_G$.
\end{proposition}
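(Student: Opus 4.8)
The plan is to derive the statement directly from the equivariant Fr\o yshov inequality (Theorem \ref{thm:don}) applied to $W$ and to $\overline{W}$. First I would record the elementary topological facts about a contractible bounding $4$-manifold: since $W$ is contractible, $H^*(W;\mathbb{Z})$ vanishes in positive degrees, so $b_1(W) = 0$, $b_+(W) = 0$, $\sigma(W) = 0$, and $W$ carries a unique spin$^c$-structure $\mathfrak{s}_W$ with $c_1(\mathfrak{s}_W) = 0$; hence $\delta(W,\mathfrak{s}_W) = (c_1(\mathfrak{s}_W)^2 - \sigma(W))/8 = 0$, and likewise $\delta(\overline{W},\mathfrak{s}_W) = 0$. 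Since $Y$ carries a unique spin$^c$-structure we have $\mathfrak{s}_W|_Y = \mathfrak{s}$. The $G$-action on $W$ automatically preserves the unique spin$^c$-structure $\mathfrak{s}_W$, and the associated $S^1$-extension $G_{\mathfrak{s}_W}$ restricts isomorphically to $G_{\mathfrak{s}}$ on $Y$ (as in Section \ref{sec:cobordismfda}), which is trivial by hypothesis; hence $G_{\mathfrak{s}_W}$ is trivial and Theorem \ref{thm:don} applies to $(W, \mathfrak{s}_W)$.

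Next I would observe that the $\mathbb{F}$-Euler class $e$ appearing in Theorem \ref{thm:don} is the image of the equivariant Euler class of a $G$-invariant maximal positive definite subspace of $H^2(W;\mathbb{R})$; but $H^2(W;\mathbb{R}) = 0$, so this subspace is zero and $e = 1 \in H^0_G$. Consequently the hypothesis $ce \neq 0$ of Theorem \ref{thm:don} reduces to $c \neq 0$, which holds for every nonzero $c \in H^*_G$. Applying Theorem \ref{thm:don}(1) to $(W,\mathfrak{s}_W)$ with $\partial W = Y$ connected then yields $0 = \delta(W,\mathfrak{s}_W) \le \delta_{G,c}(Y,\mathfrak{s})$ and $\delta_{G,c}(\overline{Y},\mathfrak{s}) = \delta_{G,ce}(\overline{Y},\mathfrak{s}) \le \delta(\overline{W},\mathfrak{s}_W) = 0$, for every nonzero $c$.

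Finally I would run the same argument with $W$ replaced by $\overline{W}$, which is again a contractible $4$-manifold, now bounding $\overline{Y}$, over which the $G$-action still extends (reversing orientation does not affect extendability of a smooth action). This gives $0 \le \delta_{G,c}(\overline{Y},\mathfrak{s})$ and $\delta_{G,c}(Y,\mathfrak{s}) \le 0$ for every nonzero $c$. Combining the two pairs of inequalities yields $\delta_{G,c}(Y,\mathfrak{s}) = \delta_{G,c}(\overline{Y},\mathfrak{s}) = 0$, as claimed. There is no substantial obstacle beyond correctly identifying $e = 1$ and $\delta(W,\mathfrak{s}_W) = 0$ from contractibility; the entire content is carried by Theorem \ref{thm:don}.
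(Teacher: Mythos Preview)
Your proof is correct and follows essentially the same approach as the paper: exploit the uniqueness of the spin$^c$-structure on the contractible $W$ to see it is $G$-invariant, note that $\delta(W,\mathfrak{s}_W)=0$ and $e=1$, apply Theorem~\ref{thm:don} to get one pair of inequalities, and reverse orientation for the other. The paper's version is simply terser, leaving the computations of $e$ and $\delta(W,\mathfrak{s}_W)$ implicit.
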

\begin{proof}
Suppose that the $G$-action extends to $W$. Since $W$ is contractible, there is a unique spin$^c$-structure $\mathfrak{t}$ on $W$. By uniqueness it is $G$-invariant and $\mathfrak{t}|_Y = \mathfrak{s}$. Theorem \ref{thm:don} gives $\delta_{G,c}(Y,\mathfrak{s}) \le 0$ and $\delta_{G,c}(Y,\mathfrak{s}) \ge 0$, hence $\delta_{G,c}(Y,\mathfrak{s}) = 0$. Reversing orientations, we also find that $\delta_{G,c}(\overline{Y}\mathfrak{s})=0$.
\end{proof}

\begin{example}\label{ex:nonex}
Akbulut--Kirby constructed examples of contractible $4$-manifolds bounding integral homology spheres, in particular $\Sigma(2,5,7), \Sigma(3,4,5)$ and $\Sigma(2,3,13)$ bound contractible $4$-manifolds \cite[Theorem 2]{ak}. Further examples were given by Casson--Harer, in particular $\Sigma(2,2s-1,2s+1)$ for odd $s$ bounds a contractible $4$-manifold \cite{ch}.

Now let $Y = \Sigma(2,3,13)$ and let $\tau$ be the involution obtained by viewing $Y$ as the branched double cover $\Sigma_2( T_{3,13})$. Then $\delta_2(\overline{Y}) = -1$ by Proposition \ref{ex:36n+1}. Then it follows from Proposition \ref{prop:zero} that $\tau$ does not extend to an involution on any contractible $4$-manifold $W$ bounded by $Y$. On the other hand, since $\tau$ is isotopic to the identity, $\tau$ does extend to a diffeomorphism on $W$.

Similarly if we let $Y = \Sigma(2,2s-1,2s+1) = \Sigma_2( T_{2s-1,2s+1})$ where $s$ is odd and let $\tau$ be the covering involution, then $Y$ bounds a contractible $4$-manifold $W$ but $\tau$ does not extend to an involution on $W$ because $\delta_j(Y) = -\sigma( T_{2s-1,2s+1})/8 = (s^2-1)/2 \neq 0$, for all $j \ge 0$.

More generally, let $Y = \Sigma(p,q,r)$ where $p,q,r$ are pairwise coprime positive integers. Assume that $p$ is prime and let $\mathbb{Z}_p = \langle \tau \rangle$ act on $Y$ by realising $Y$ as the $p$-fold cyclic branched cover $\Sigma_p( T_{q,r})$. Then $\delta_{0}(Y , \tau) = -\lambda( \Sigma(p,q,r) )$. From \cite[Chapter 19]{sav}, it can be seen that $\lambda( \Sigma(p,q,r) ) < 0$ and hence the $\mathbb{Z}_p$-action on $Y = \Sigma(p,q,r)$ is non-extendable over contractible $4$-manifolds bounded by $Y$. We have thus recovered a special case of the non-extendability results of Anvari--Hambleton \cite{ah1}, \cite{ah2}.
\end{example}

If we relax the condition that $W$ is contractible to being a rational homology $4$-ball, then we get a similar result, except that we have to make an assumption on the order of $H^2(W ; \mathbb{Z})$.

\begin{proposition}\label{prop:zero2}
Let $Y$ be an integral homology $3$-sphere and $\mathfrak{s}$ the unique spin$^c$-structure on $Y$. Let $G = \mathbb{Z}_p$ for a prime $p$ act orientation preservingly on $Y$ and suppose that the extension $G_{\mathfrak{s}}$ is trivial. Suppose that $Y$ is the boundary of a compact, oriented, smooth rational homology $4$-ball $W$ and assume that $p$ does not divide the order of $H^2(W ; \mathbb{Z})$. If the action of $G$ extends over $W$ then $\delta_{G,c}(Y,\mathfrak{s}) = \delta_{G,c}(\overline{Y} , \mathfrak{s}) = 0$ for every non-zero $c \in H^*_G$.
\end{proposition}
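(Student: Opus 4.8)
The plan is to follow the proof of Proposition \ref{prop:zero}, the one new ingredient being the construction of a $G$-invariant spin$^c$-structure on $W$, which is where the coprimality hypothesis enters. Since $W$ is a compact, oriented, connected rational homology $4$-ball we have $H^*(W;\mathbb{Q}) \cong H^*(pt;\mathbb{Q})$; in particular $b_1(W) = 0$, $\sigma(W) = 0$, $H^2(W;\mathbb{R}) = 0$, and $H^2(W;\mathbb{Z})$ is a finite abelian group. Because $Y$ is an integral homology $3$-sphere, $H^2(Y;\mathbb{Z}) = 0$ and $Y$ carries the unique spin$^c$-structure $\mathfrak{s}$; hence every spin$^c$-structure on $W$ restricts to $\mathfrak{s}$, and the set $\mathcal{S}$ of isomorphism classes of spin$^c$-structures on $W$ is a non-empty torsor over $H^2(W;\mathbb{Z})$.

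Next I would produce the invariant spin$^c$-structure. The $G$-action on $W$ extending the given action on $Y$ is orientation preserving (its restriction to $Y$ is, and $W$ is connected with connected boundary), so $G$ acts on the torsor $\mathcal{S}$. Fixing $\mathfrak{t}_0 \in \mathcal{S}$, one writes $g^*\mathfrak{t}_0 = \mathfrak{t}_0 + \phi(g)$ with $\phi(g) \in H^2(W;\mathbb{Z})$; then $\phi$ is a $1$-cocycle for the action of $G$ on $H^2(W;\mathbb{Z})$, and a $G$-invariant element of $\mathcal{S}$ exists precisely when $[\phi] = 0$ in $H^1(G;H^2(W;\mathbb{Z}))$. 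This group is annihilated by $|G| = p$ and, $H^2(W;\mathbb{Z})$ being finite, also by $|H^2(W;\mathbb{Z})|$; as $p$ does not divide $|H^2(W;\mathbb{Z})|$ it vanishes. So there is a spin$^c$-structure $\mathfrak{t}$ on $W$ whose isomorphism class is $G$-invariant and with $\mathfrak{t}|_Y = \mathfrak{s}$. Restriction to $Y$ identifies the associated $S^1$-extension $G_{\mathfrak{t}}$ with $G_{\mathfrak{s}}$, which is trivial by hypothesis, so $G_{\mathfrak{t}}$ is trivial as well.

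Finally I would invoke Theorem \ref{thm:don}. Since $H^2(W;\mathbb{R}) = 0$ we have $b_+(W) = 0$, a maximal positive definite subspace of $H^2(W;\mathbb{R})$ is $\{0\}$, and its $\mathbb{F}$-Euler class is $e = 1 \in H^0_G$; moreover $c_1(\mathfrak{t})$ is torsion, so $c_1(\mathfrak{t})^2 = 0$ and $\delta(W,\mathfrak{t}) = -\sigma(W)/8 = 0 = \delta(\overline{W},\mathfrak{t})$. For any non-zero $c \in H^*_G$ we have $ce = c \neq 0$, so part (1) of Theorem \ref{thm:don} applied to $(W,\mathfrak{t})$ gives $0 = \delta(W,\mathfrak{t}) \le \delta_{G,c}(Y,\mathfrak{s})$ and $\delta_{G,c}(\overline{Y},\mathfrak{s}) \le \delta(\overline{W},\mathfrak{t}) = 0$. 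Applying the same reasoning to $\overline{W}$, which is again a rational homology $4$-ball bounding $\overline{Y}$ with the same finite, $p$-coprime $H^2$ and over which the $G$-action still extends, yields $0 \le \delta_{G,c}(\overline{Y},\mathfrak{s})$ and $\delta_{G,c}(Y,\mathfrak{s}) \le 0$. Combining the two pairs of inequalities gives $\delta_{G,c}(Y,\mathfrak{s}) = \delta_{G,c}(\overline{Y},\mathfrak{s}) = 0$.

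The only non-routine step is the middle one: the coprimality hypothesis is used exactly to force $H^1(G;H^2(W;\mathbb{Z})) = 0$ and thereby guarantee a spin$^c$-structure on $W$ whose isomorphism class is $G$-invariant; the rest is a direct transcription of the contractible case, with the vanishing $\delta(W,\mathfrak{t}) = \delta(\overline{W},\mathfrak{t}) = 0$ producing the two-sided bound on $\delta_{G,c}$.
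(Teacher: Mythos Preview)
Your proof is correct and follows essentially the same strategy as the paper: produce a $G$-invariant spin$^c$-structure on $W$ and then apply Theorem \ref{thm:don} exactly as in Proposition \ref{prop:zero}. The one minor difference is in how you obtain the invariant spin$^c$-structure: the paper uses the elementary orbit-counting observation that $\mathbb{Z}_p$ acting on a set of size coprime to $p$ must have a fixed point, whereas you phrase it cohomologically via the vanishing of $H^1(G;H^2(W;\mathbb{Z}))$; both arguments are valid and yield the same conclusion.
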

\begin{proof}
The set of spin$^c$-structures on $W$ has cardinality $| H^2(W ; \mathbb{Z}) |$ and $G = \mathbb{Z}_p$ acts on this set. By assumption, $p$ does not divide this number and hence there must exist a spin$^c$-structure $\mathfrak{t}$ whose stabiliser group is not trivial. Since $p$ is prime, this means $\mathfrak{t}$ is fixed by all of $G$. From here, the rest of the proof is the same as for Proposition \ref{prop:zero}.
\end{proof}

\begin{example}\label{ex:nonex2}
Let $Y = \Sigma(p,q,r)$ where $p,q,r$ are relatively prime and assume that $p$ is prime. Let $\mathbb{Z}_p$ act on $Y$ as described in Section \ref{sec:brie}. Recall from Proposition \ref{prop:casson} that $\delta_{\mathbb{Z}_p , 1}(Y , \mathfrak{s}) = -\lambda( \Sigma(p,q,r))$. As in Example \ref{ex:nonex}, $\lambda( \Sigma(p,q,r) ) < 0$ and hence $\delta_{\mathbb{Z}_p , 1}(Y , \mathfrak{s}) > 0$.

Therefore by Proposition \ref{prop:zero2}, if $W$ is a compact, oriented, smooth rational homology $4$-ball bounded by $Y$ and if $p$ does not divide the order of $H^2(W ; \mathbb{Z})$, then the action of $G$ does not extend over $W$. Thus we have obtained a partial extension of the results of Anvari--Hambleton to the case of rational homology $4$-balls.

Fintushel--Stern showed that $\Sigma(2,3,7)$ bounds a rational homology $4$-ball, although it does not bound an integral $4$-ball \cite{fs0}. Akbulut--Larson showed that $\Sigma(2,4n+1,12n+5)$ and $\Sigma(3,3n+1,12n+5)$ for $n$ odd bound rational $4$-balls but not integral $4$-balls \cite{al}. More examples, $\Sigma(2,4n+3,12n+7)$ and $\Sigma(3,3n+2,12n+7)$ for even $n$ were constructed by \c{S}avk \cite{savk}. Taking $p=2$ or $3$, the above Brieskorn spheres admit $\mathbb{Z}_p$-actions with non-zero delta invariants, as in Example \ref{ex:nonex}. Hence the $\mathbb{Z}_p$-action does not extend to any oriented rational homology $4$-ball $W$ with boundary $Y$, provided the order of $H^2(W ; \mathbb{Z})$ is coprime to $p$. However, it does not seem straightforward to determine whether the above examples are bounded by rational $4$-balls satisfying this coprimality condition.
\end{example}

\begin{proposition}\label{prop:trivialaction}
Let $Y$ be an integral homology $3$-sphere and $\mathfrak{s}$ the unique spin$^c$-structure on $Y$. Let $G$ act orientation preservingly on $Y$ and suppose that the extension $G_{\mathfrak{s}}$ is trivial. Suppose that $Y$ is the boundary of a smooth, compact, oriented $4$-manifold with $b_1(W) = 0$ and suppose that $H^2(W ; \mathbb{Z})$ has no $2$-torsion. 

\begin{itemize}
\item[(1)]{If $H^2(W ; \mathbb{R})$ is positive definite and $\delta_{G,1}(Y , \mathfrak{s}) > 0$, then the $G$-action on $Y$ can not be extended to a smooth $G$-action on $W$ acting trivially on $H^2(W ; \mathbb{Z})$.}
\item[(2)]{If $H^2(W ; \mathbb{R})$ is negative definite and $\delta_{G,c}(Y , \mathfrak{s}) < 0$ for some $c \in H^*_G$, then the $G$-action on $Y$ can not be extended to a smooth $G$-action on $W$ acting trivially on $H^2(W ; \mathbb{Z})$.}
\end{itemize}

\end{proposition}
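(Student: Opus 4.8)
The plan is to argue by contradiction using the equivariant Fr\o yshov inequality of Theorem~\ref{thm:don}. Suppose the $G$-action on $Y$ extends to a smooth $G$-action on $W$ acting trivially on $H^2(W;\mathbb{Z})$. I will produce a $G$-invariant spin$^c$-structure $\mathfrak{t}$ on $W$ with $\delta(W,\mathfrak{t})\le 0$ in case (1), and with $\delta(W,\mathfrak{t})\ge 0$ in case (2), and then feed $(W,\mathfrak{t})$ into Theorem~\ref{thm:don} to bound $\delta_{G,c}(Y,\mathfrak{s})$ the wrong way.

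First I would verify that the hypotheses of Theorem~\ref{thm:don} can be met. Since $G$ acts trivially on $H^2(W;\mathbb{Z})$, for every spin$^c$-structure $\mathfrak{t}$ on $W$ and every $g\in G$ one has $c_1(g^*\mathfrak{t})=g^*c_1(\mathfrak{t})=c_1(\mathfrak{t})$; as $\mathrm{Spin}^c(W)$ is a torsor over $H^2(W;\mathbb{Z})$ and the latter has no $2$-torsion, this forces $g^*\mathfrak{t}=\mathfrak{t}$, so \emph{every} spin$^c$-structure on $W$ is $G$-invariant. Fixing such a $\mathfrak{t}$, uniqueness of the spin$^c$-structure on the integral homology sphere $Y$ gives $\mathfrak{t}|_Y=\mathfrak{s}$. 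Restricting spinor-bundle automorphisms of $W$ to $\partial W=Y$ (choosing the reference connection on $W$ to restrict to that on $Y$) gives a morphism of $S^1$-central extensions $G_{\mathfrak{t}}\to G_{\mathfrak{t}|_Y}=G_{\mathfrak{s}}$ covering the identity of $G$, and such a morphism is automatically an isomorphism; hence $G_{\mathfrak{t}}$ is trivial because $G_{\mathfrak{s}}$ is. Note finally that $G$ acts trivially on $H^2(W;\mathbb{R})=H^2(W;\mathbb{Z})\otimes\mathbb{R}$, so the $\mathbb{F}$-Euler class of a positive-dimensional $G$-invariant subspace of $H^2(W;\mathbb{R})$ vanishes in $H^*_G$; this is why in case (1) I apply Theorem~\ref{thm:don} to $\overline{W}$ (whose maximal positive definite subspace is $\{0\}$, with Euler class $1$) and in case (2) to $W$ itself.

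Next I would arrange $|c_1(\mathfrak{t})^2|$ to be small. Because $\partial W=Y$ is an integral homology $3$-sphere, the intersection form on $L=H^2(W;\mathbb{Z})/\mathrm{torsion}$ is unimodular of rank $b_2(W)$; and a standard computation with the long exact sequence of $(W,\partial W)$, using $b_1(W)=0$ and that $Y$ is a homology sphere, shows that the no-$2$-torsion hypothesis on $H^2(W;\mathbb{Z})$ forces $H^3(W;\mathbb{Z})$ to have no $2$-torsion, which in turn guarantees that every characteristic vector of $L$ is of the form $c_1(\mathfrak{t})$ for some spin$^c$-structure $\mathfrak{t}$ on $W$. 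By the classical fact (Elkies) that a definite unimodular lattice of rank $r$ admits a characteristic vector whose square has absolute value at most $r$, I may choose $\mathfrak{t}$ with $|c_1(\mathfrak{t})^2|\le b_2(W)$. In case (1) the form on $L$ is positive definite, so $\sigma(W)=b_2(W)$ and $\delta(W,\mathfrak{t})=(c_1(\mathfrak{t})^2-\sigma(W))/8\le 0$; in case (2) it is negative definite, so $\sigma(W)=-b_2(W)$ and $c_1(\mathfrak{t})^2\ge -b_2(W)=\sigma(W)$, whence $\delta(W,\mathfrak{t})\ge 0$.

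To conclude, apply Theorem~\ref{thm:don}(1). In case (1), $\overline{W}$ is negative definite with boundary $\overline{Y}$ and the relevant Euler class is $1$, so the second inequality of Theorem~\ref{thm:don}(1) applied to $\overline{W}$ with $c=1$ reads $\delta_{G,1}(Y,\mathfrak{s})\le\delta(W,\mathfrak{t})\le 0$, contradicting $\delta_{G,1}(Y,\mathfrak{s})>0$. In case (2), $H^+(W)=\{0\}$ so its Euler class is $1$ and $ce=c\ne 0$ for every nonzero $c\in H^*_G$; the first inequality of Theorem~\ref{thm:don}(1) applied to $W$ then gives $0\le\delta(W,\mathfrak{t})\le\delta_{G,c}(Y,\mathfrak{s})$ for all nonzero $c$, contradicting the existence of a (necessarily nonzero) $c$ with $\delta_{G,c}(Y,\mathfrak{s})<0$. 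I expect the main obstacle to be the realisability of characteristic vectors of $L$ by honest spin$^c$-structures on $W$: this is precisely where the no-$2$-torsion hypothesis is needed (it is also what makes every spin$^c$-structure $G$-invariant), and it is the only point where one must be genuinely careful---given it, the triviality of $G_{\mathfrak{t}}$ and the rest of the argument are routine.
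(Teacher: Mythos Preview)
Your proof is correct and follows essentially the same route as the paper: assume the action extends, observe that the no-$2$-torsion hypothesis makes every spin$^c$-structure $G$-invariant, invoke Elkies' theorem on the unimodular intersection lattice to find a spin$^c$-structure $\mathfrak{t}$ with $\delta(W,\mathfrak{t})\ge 0$ (negative definite case) or $\delta(W,\mathfrak{t})\le 0$ (positive definite case), and then feed this into Theorem~\ref{thm:don} (applied to $W$ or $\overline{W}$ so that $b_+=0$ and $e=1$) to contradict the hypothesis on $\delta_{G,c}$. Your additional verifications that $G_{\mathfrak{t}}$ is trivial and that characteristic vectors of $L$ are realised by honest spin$^c$-structures are details the paper leaves implicit; note that the latter actually holds without any torsion hypothesis (once one spin$^c$-structure exists, the images of $c_1$ in $L$ form a full coset of $2L$ because $H^2(W;\mathbb{Z})\to L$ is surjective), so your detour through $H^3(W;\mathbb{Z})$ is unnecessary, though not incorrect.
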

\begin{proof}
Suppose the $G$-action on $Y$ extends to a smooth $G$-action on $W$ acting trivially on $H^2(W ; \mathbb{Z})$. Since $H^2(W ; \mathbb{Z})$ has no $2$-torsion, a spin$^c$-structure $\mathfrak{t}$ on $W$ is determined uniquely by $c_1(\mathfrak{t})$. Since $G$ acts trivially on $H^2(W ; \mathbb{Z})$, it follows that $G$ preserves every spin$^c$-structure. Furthermore $\mathfrak{t}|_Y = \mathfrak{s}$ for any spin$^c$-structure on $W$ by uniqueness of $\mathfrak{t}$.

If $H^2(W ; \mathbb{R})$ is negative definite, then Theorem \ref{thm:don} may be applied to any spin$^c$-structure $\mathfrak{t}$ on $W$, giving
\[
\delta( W , \mathfrak{t} ) \le \delta_{G,c}(Y , \mathfrak{s})
\]
for all $\mathfrak{t}$ and all $c \in H^*_G$. Since $Y$ is an integral homology sphere, the intersection form on the $H^2(W ; \mathbb{Z})/{torsion}$ is unimodular. By the main theorem of \cite{elk}, there exists a spin$^c$-structure $\mathfrak{t}$ such that $\delta(W , \mathfrak{t}) \ge 0$. Hence $\delta_{G,c}(Y , \mathfrak{s}) \ge 0$. The proof in the case that $H^2(W ; \mathbb{R})$ is positive definite is similarly obtained.
\end{proof}

\begin{example}
Consider again $Y = \Sigma(p,q,r)$ with the same $\mathbb{Z}_p$-action. Recall from Proposition \ref{prop:casson} that $\delta_{\mathbb{Z}_p , 1}(Y , \mathfrak{s}) = -\lambda( \Sigma(p,q,r))$. As in Example \ref{ex:nonex2}, $\delta_{\mathbb{Z}_p , 1}(Y , \mathfrak{s}) > 0$. So by Proposition \ref{prop:trivialaction}, the action of $\mathbb{Z}_p$ on $Y$ can not be extended to any smooth, compact, oriented $4$-manifold $W$ such that $b_1(W) = 0$, $H^2(W ; \mathbb{Z})$ has no $2$-torsion and with $\mathbb{Z}_p$ acting trivially on $H^2(W ; \mathbb{Z})$.
\end{example}

\subsection{Realisation problems}\label{sec:app2}

In this section we are concerned with the following realisation problem. Let $W$ be a smooth $4$-manifold with boundary an integral homology sphere $Y$. Suppose that a finite group $G$ acts on $H^2(W ; \mathbb{Z})$ preserving the intersection form. We say that the action of $G$ on $H^2(W ; \mathbb{Z})$ can be realised by diffeomorphisms if there is a smooth orientation preserving action of $G$ on $W$ inducing the given action on $H^2(W ; \mathbb{Z})$. 

For simplicity we will assume that $G = \mathbb{Z}_p$ for a prime $p$ so that all extensions $G_{\mathfrak{s}}$ are trivial.

\begin{proposition}\label{prop:realise}
Let $W$ be a smooth, compact, oriented $4$-manifold with $b_1(W) = 0$ and with boundary $Y = \partial W$ an $L$-space integral homology sphere. Suppose that an action of $G = \mathbb{Z}_p$ on $H^2(W ; \mathbb{Z})$ is given and suppose that $H^2(W ; \mathbb{Z})$ has no $2$-torsion. Suppose that the subspace of $H^2(W ; \mathbb{R})$ fixed by $G$ is negative definite. If the action of $G$ on $H^2(W ; \mathbb{Z})$ can be realised by diffeomorphisms, then
\[
\delta(W , \mathfrak{s}) \le \delta( Y , \mathfrak{s}|_Y )
\]
for every spin$^c$-structure $\mathfrak{s}$ on $W$ for which $c_1(\mathfrak{s})$ is invariant.
\end{proposition}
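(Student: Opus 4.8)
The plan is to apply the equivariant Fr\o yshov inequality in the form of Theorem~\ref{thm:don2} to a realising diffeomorphism, and then use the $L$-space hypothesis, via Proposition~\ref{prop:lspaced}, to replace the equivariant $\delta$-invariants of $Y$ by the ordinary correction term $\delta(Y,\mathfrak{s}|_Y)$.

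Suppose the given action of $G=\mathbb{Z}_p$ on $H^2(W;\mathbb{Z})$ is realised by a smooth orientation preserving action of $\mathbb{Z}_p$ on $W$, and let $\tau\colon W\to W$ be a generator. First I would record that, since $W$ is connected with connected boundary $Y$ and $\tau$ preserves the orientation, $\tau$ sends $Y$ to itself and restricts to an orientation preserving diffeomorphism $\tau|_Y$ of $Y$ of order dividing $p$. As $Y$ is an integral homology sphere it carries a unique spin$^c$-structure $\mathfrak{s}|_Y$, which is automatically $\tau|_Y$-invariant, and by the argument given at the start of Section~\ref{sec:zp} the corresponding $S^1$-extension $G_{\mathfrak{s}|_Y}$ is trivial; since $G_{\mathfrak{s}}\cong G_{\mathfrak{s}|_Y}$, the extension $G_{\mathfrak{s}}$ is trivial as well, so the invariants $\delta_j(Y,\mathfrak{s}|_Y)$ are defined.

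Next I would check that $\tau$ preserves the spin$^c$-structure $\mathfrak{s}$ on $W$. Because $H^2(W;\mathbb{Z})$ has no $2$-torsion, the assignment $\mathfrak{t}\mapsto c_1(\mathfrak{t})$ is injective on the set of spin$^c$-structures on $W$: two spin$^c$-structures differing by $x\in H^2(W;\mathbb{Z})$ have first Chern classes differing by $2x$, and $2x=0$ forces $x=0$. By hypothesis $c_1(\mathfrak{s})$ is fixed by the induced action $\tau^*$ on $H^2(W;\mathbb{Z})$, so $c_1(\tau^*\mathfrak{s})=\tau^*c_1(\mathfrak{s})=c_1(\mathfrak{s})$ and hence $\tau^*\mathfrak{s}\cong\mathfrak{s}$, that is, $\tau$ preserves $\mathfrak{s}$. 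Since by hypothesis the subspace of $H^2(W;\mathbb{R})$ fixed by $\tau$ is negative definite, all of the hypotheses of Theorem~\ref{thm:don2} are now in place. Applying part~(1) of that theorem with $j=0$ yields $\delta(W,\mathfrak{s})\le\delta_0(Y,\mathfrak{s}|_Y)$. Finally, as $Y$ is an $L$-space, Proposition~\ref{prop:lspaced} (equivalently Theorem~\ref{thm:deltaz2}(5)) gives $\delta_0(Y,\mathfrak{s}|_Y)=\delta(Y,\mathfrak{s}|_Y)$, and combining the two statements gives the desired inequality $\delta(W,\mathfrak{s})\le\delta(Y,\mathfrak{s}|_Y)$.

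The proof is essentially a bookkeeping exercise assembling results already established, so there is no genuine analytic obstacle; the only steps that require care are the two verifications above — that the realising diffeomorphism actually preserves $\mathfrak{s}$, which is exactly where the no-$2$-torsion hypothesis enters, and that the extension $G_{\mathfrak{s}}$ is trivial so that the invariants occurring in Theorem~\ref{thm:don2} are defined. It is also worth remarking that the argument is unaffected if the induced diffeomorphism $\tau|_Y$ of $Y$ happens to be trivial, since Theorem~\ref{thm:don2} applies equally well to the trivial $\mathbb{Z}_p$-action on $Y$.
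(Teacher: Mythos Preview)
Your proof is correct and follows essentially the same approach as the paper: use the no-$2$-torsion hypothesis to see that $\tau$ preserves $\mathfrak{s}$, apply Theorem~\ref{thm:don2}(1) to obtain $\delta(W,\mathfrak{s})\le\delta_0(Y,\mathfrak{s}|_Y)=\delta_{G,1}(Y,\mathfrak{s}|_Y)$, and then invoke the $L$-space hypothesis to identify this with $\delta(Y,\mathfrak{s}|_Y)$. Your additional remarks on the triviality of $G_{\mathfrak{s}}$ and the case where $\tau|_Y$ is trivial are accurate and simply make explicit points the paper leaves implicit.
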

\begin{proof}
This is essentially a special case of Theorem \ref{thm:don2}. Note that since $H^2(W ; \mathbb{Z})$ is assumed to have no $2$-torsion, any spin$^c$-structure $\mathfrak{s}$ for which $c_1(\mathfrak{s})$ is invariant is preserved by $G$. So if $G$ is realisable by diffeomorphisms, then Theorem \ref{thm:don2} gives $\delta(W , \mathfrak{s}) \le \delta_{G,1}( Y , \mathfrak{s}|_Y )$. But we have assumed that $Y$ is an $L$-space, so $\delta_{G,1}( Y , \mathfrak{s}|_Y ) = \delta( Y , \mathfrak{s}|_Y )$.
\end{proof}

\begin{example}\label{ex:nonr}
We consider a specialisation of Proposition \ref{prop:realise} as follows. Take $G = \mathbb{Z}_p$. Assume $Y$ is an $L$-space integral homology $3$-sphere and let $\mathfrak{s}$ be the unique spin$^c$-structure. Suppose that $W$ is a smooth, compact, oriented $4$-manifold with $b_1(W) = 0$ and with boundary $Y$. Suppose that the intersection form on  $H^2(W ; \mathbb{Z})$ is even and that $H^2(W ; \mathbb{Z})$ has no $2$-torsion. Then $W$ is spin and it has a unique spin structure $\mathfrak{t}$. By uniqueness, the restriction of $\mathfrak{t}$ to the boundary equals $\mathfrak{s}$. Suppose that an action of $G = \mathbb{Z}_p$ on $H^2(W ; \mathbb{Z})$ is given and that the subspace of $H^2(W ; \mathbb{R})$ fixed by $G$ is negative definite. Then applying Proposition \ref{prop:realise} to $(W , \mathfrak{t})$, we find that $\delta(W , \mathfrak{t}) = -\sigma(W)/8 \le \delta(Y , \mathfrak{s})$. Therefore, if $\sigma(W)/8 < -\delta(Y,\mathfrak{s})$ then the action of $\mathbb{Z}_p$ on $H^2(W ; \mathbb{Z})$ is not realisable by a smooth $\mathbb{Z}_p$-action on $W$.

For example if $W = K3 \# W_0$ is the connected sum of a $K3$ surface with $W_0$, the negative definite plumbing of the $E_8$ graph, then $\partial W = Y = \Sigma(2,3,5)$ is the Poincar\'e homology $3$-sphere which is an $L$-space. Then $W$ satisfies all the above conditions and $\sigma(W)/8 = -3 < \delta(Y,\mathfrak{s}) = -1$. Hence for any prime $p$, any $\mathbb{Z}_p$-action on $H^2(W ; \mathbb{Z})$ such that the invariant subspace of $H^2(W ; \mathbb{R})$ is negative definite can not be realised by a smooth $\mathbb{Z}_p$-action on $W$.
\end{example}

\begin{corollary}\label{cor:realise}
Let $W$ be a smooth, compact, oriented $4$-manifold with $b_1(W) = 0$ and with boundary $Y = \partial W$ an $L$-space integral homology sphere. Suppose that $W$ is spin and that $H^2(W ; \mathbb{Z})$ has no $2$-torsion. If there is a smooth involution on $W$ which acts as $-1$ on $H^2(W ; \mathbb{R})$, then $\delta(Y,\mathfrak{s}) = -\sigma(W)/8$.
\end{corollary}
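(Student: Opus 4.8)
The plan is to read this off from Corollary~\ref{cor:deltasigma} applied to $W$ equipped with its unique spin structure, combined with the $L$-space hypothesis on $Y$. First I would assemble the input data. Since $W$ is spin and $H^2(W;\mathbb{Z})$ has no $2$-torsion, we have $H^1(W;\mathbb{Z}/2)=0$, so $W$ carries a \emph{unique} spin structure $\mathfrak{t}$; uniqueness forces $\mathfrak{t}$ to be invariant under the given involution $\tau$, and since $Y$ is an integral homology sphere, $\mathfrak{t}|_Y=\mathfrak{s}$ is the unique spin$^c$-structure on $Y$. As $\mathfrak{t}$ is spin we have $c_1(\mathfrak{t})=0$, so $\delta(W,\mathfrak{t})=-\sigma(W)/8$. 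Moreover $G=\mathbb{Z}_2$, so the extension $G_{\mathfrak{t}}$ is automatically trivial, as noted at the start of Section~\ref{sec:zp}.

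Next I would verify the hypotheses of Corollary~\ref{cor:deltasigma}, the only delicate point being that $\tau$ is orientation preserving. If $b_2(W)>0$ this is automatic: $H^2(W;\mathbb{R})$ carries a non-degenerate intersection form (as $\partial W$ is a rational homology sphere), and since $\tau$ acts on $H^2(W;\mathbb{R})$ as $-1$ it acts as $+1$ on the image of the intersection pairing in $H^4(W,\partial W;\mathbb{R})\cong\mathbb{R}$, which is all of it, whence $\tau$ preserves the fundamental class. If $b_2(W)=0$, then $W$ is an integral homology $4$-ball with $\partial W=Y$ and $\sigma(W)=0$, and applying the ordinary Fr\o yshov inequality (the $c=1$, trivial-group case of Theorem~\ref{thm:don}) to both $W$ and $\overline{W}$ gives $\delta(Y,\mathfrak{s})=0=-\sigma(W)/8$ outright; so we may assume $b_2(W)>0$. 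Since $\tau$ acts as $-1$ on $H^2(W;\mathbb{R})$, its fixed subspace there is zero, so Corollary~\ref{cor:deltasigma}(1) applies to $(W,\mathfrak{t})$ and yields $\delta_j(Y,\mathfrak{s})=-\sigma(W)/8$ for all $j\ge b'_-(W)$.

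To finish, I would invoke the $L$-space hypothesis. By Theorem~\ref{thm:deltaz2}(5) (equivalently Proposition~\ref{prop:lspaced}), since $Y$ is an $L$-space with respect to $\mathfrak{s}$ and $\mathbb{F}=\mathbb{Z}_2$, the sequence $\delta_j(Y,\mathfrak{s})$ is constant: $\delta_j(Y,\mathfrak{s})=\delta(Y,\mathfrak{s})$ for every $j\ge 0$. Taking $j\ge b'_-(W)$ and comparing with the previous paragraph gives $\delta(Y,\mathfrak{s})=-\sigma(W)/8$, as claimed. I do not anticipate a real obstacle here, as the statement is essentially a repackaging of results already established; the only steps requiring any care are confirming that the spin structure (hence the set-up of Corollary~\ref{cor:deltasigma}) is genuinely $\tau$-invariant and disposing of the degenerate case $b_2(W)=0$, both of which are routine.
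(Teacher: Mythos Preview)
Your argument is correct. The paper's own proof is organized slightly differently: it applies Proposition~\ref{prop:realise} (which already packages the $L$-space hypothesis into the inequality $\delta(W,\mathfrak{s})\le\delta(Y,\mathfrak{s})$) to a spin structure on $W$ to obtain $-\sigma(W)/8\le\delta(Y,\mathfrak{s})$, and then applies the same proposition to $\overline{W}$ to obtain the reverse inequality. You instead invoke Corollary~\ref{cor:deltasigma}, which already combines the two orientations to produce the exact value $\delta_j(Y,\mathfrak{s})=-\sigma(W)/8$ for large $j$, and then use the $L$-space hypothesis once at the end via Theorem~\ref{thm:deltaz2}(5) to identify $\delta_j$ with $\delta$. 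Both routes rest on the same underlying equivariant Fr\o yshov inequality and are essentially repackagings of one another. Your proof has the minor advantage of explicitly verifying that the involution is orientation preserving (when $b_2(W)>0$) and of handling the degenerate case $b_2(W)=0$ separately, points the paper leaves implicit.
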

\begin{proof}
Since $W$ is spin, there is a spin$^c$-structure $\mathfrak{s}$ for which $c_1(\mathfrak{s}) = 0$. Proposition \ref{prop:realise} then implies that $-\sigma(W)/8 \le \delta(Y,\mathfrak{s})$. The same argument applied to $\overline{W}$ gives $\sigma(W)/8 \le \delta(Y,\mathfrak{s})$.
\end{proof}

\subsection{Equivariant embeddings of $3$-manifolds in $4$-manifolds}\label{sec:app3}

Let $Y$ be a rational homology $3$-sphere equipped with an orientation preserving action of $G$. By an equivariant embedding of $Y$ into a $4$-manifold $X$, we mean an embedding $Y \to X$ such that the action of $G$ on $Y$ extends over $X$. We consider some existence and non-existence results for equivariant embeddings.

\begin{proposition}
Suppose that $Y$ is an integral homology $3$-sphere. Let $\mathfrak{s}$ be the unique spin$^c$-structure on $Y$ and assume that $G_{\mathfrak{s}}$ is a trivial extension. If $Y$ can be equivariantly embedded in $S^4$, then $\delta_{G,c}(Y,\mathfrak{s}) = \delta_{G,c}(\overline{Y},\mathfrak{s}) = 0$ for every non-zero $c \in H^*_G$.
\end{proposition}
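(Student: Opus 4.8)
The plan is to run the same argument used for Propositions~\ref{prop:zero} and \ref{prop:zero2}, but now applied to \emph{both} pieces into which the embedded $Y$ cuts $S^4$, so that the inequalities coming from the equivariant Fr\o yshov inequality (Theorem~\ref{thm:don}) squeeze the invariants from above and below.

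First I would record the topological setup. Since $Y$ is a rational homology $3$-sphere, Alexander duality gives $\widetilde H_0(S^4\setminus Y)\cong \widetilde H^3(Y)\cong\mathbb{Z}$, so $Y$ separates $S^4$ into two components; let $W_1,W_2$ be their closures, so that $S^4=W_1\cup_Y W_2$ with $\partial W_1=Y$ and $\partial W_2=\overline Y$ as oriented manifolds. A Mayer--Vietoris computation with $\mathbb{Z}$ coefficients, using that $Y$ is an \emph{integral} homology $3$-sphere and that $H_*(S^4;\mathbb{Z})$ is that of $S^4$, shows that each $W_i$ is an integral homology $4$-ball: $b_1(W_i)=0$, $H^2(W_i;\mathbb{Z})=0$, the intersection form is trivial, $\sigma(W_i)=0$ and $b_+(W_i)=0$. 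Hence each $W_i$ carries a unique spin$^c$-structure $\mathfrak t_i$, necessarily with $c_1(\mathfrak t_i)=0$ and with $\mathfrak t_i|_Y=\mathfrak s$, and $\delta(W_i,\mathfrak t_i)=(c_1(\mathfrak t_i)^2-\sigma(W_i))/8=0$.

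Next I would verify that the $G$-action on $S^4$ preserves each $W_i$. The only alternative is that some $g\in G$ interchanges the two sides of $Y$; but such a $g$ reverses the normal direction to $Y$, so, being orientation preserving on $S^4$, it must reverse orientation on $Y$, contradicting the standing hypothesis that $G$ acts on $Y$ by orientation-preserving diffeomorphisms. Therefore $G$ acts on each $W_i$ preserving orientation and (by uniqueness) preserving $\mathfrak t_i$, sending the connected boundary to itself; moreover, since $\mathfrak t_i|_Y=\mathfrak s$, restriction of the spinor-lift extension gives $G_{\mathfrak t_i}\cong G_{\mathfrak s}$, which is trivial by hypothesis. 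Now I would apply Theorem~\ref{thm:don}(1) twice. As $b_+(W_i)=0$ the $\mathbb{F}$-Euler class is $e=1\in H^0_G$, so $ce\neq0$ for every non-zero $c\in H^*_G$. Applying the theorem to $W_1$ (with $\partial W_1=Y$) gives
\[
0=\delta(W_1,\mathfrak t_1)\le \delta_{G,c}(Y,\mathfrak s)\qquad\text{and}\qquad \delta_{G,c}(\overline Y,\mathfrak s)\le\delta(\overline{W_1},\mathfrak t_1)=0,
\]
and applying it to $W_2$ (with $\partial W_2=\overline Y$) gives symmetrically $0\le\delta_{G,c}(\overline Y,\mathfrak s)$ and $\delta_{G,c}(Y,\mathfrak s)\le0$. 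Combining the four inequalities yields $\delta_{G,c}(Y,\mathfrak s)=\delta_{G,c}(\overline Y,\mathfrak s)=0$ for every non-zero $c\in H^*_G$, which is the claim.

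The only step that is not purely formal is checking that $G$ preserves each of the two complementary regions; everything else is the separation/Mayer--Vietoris bookkeeping and a direct invocation of the already-established equivariant Fr\o yshov inequality. If one prefers not to assume the extended action on $S^4$ is orientation preserving, the same conclusion follows after first restricting to the subgroup of index at most $2$ consisting of the elements that preserve the two sides, but I expect the intended hypotheses make this detour unnecessary.
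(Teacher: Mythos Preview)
Your proof is correct and follows essentially the same approach as the paper: split $S^4$ along $Y$, show the two pieces are integral homology $4$-balls, and then feed them into the equivariant Fr\o yshov inequality. The only cosmetic difference is that the paper goes one step further, using Whitehead's theorem to conclude the pieces are actually contractible and then invoking Proposition~\ref{prop:zero}, whereas you bypass contractibility and apply Theorem~\ref{thm:don} directly (which is exactly what Proposition~\ref{prop:zero} does internally); you also make explicit the orientation argument showing $G$ preserves each piece, a point the paper leaves implicit.
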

\begin{proof}
If $Y$ embeds equivariantly in $S^4$, then we obtain an equivariant decomposition $S^4 = W_+ \cup_Y W_-$. Mayer--Vietoris and Poincar\'e--Lefschetz imply that $W_{\pm}$ are integral homology $4$-balls, hence are contractible by Whitehead's theorem. The result now follows from Proposition \ref{prop:zero}.
\end{proof}

\begin{example}
Let $Y = \Sigma(2,3,13)$, equipped with the involution $\tau$ obtained from viewing $Y$ as the branched double cover $\Sigma_2( T_{3,13})$. Then $Y$ embeds in $S^4$ \cite[Theorem 2.13]{bb}. On the other hand, $\delta_2(\overline{Y} , \mathfrak{s}) = -1$ by Proposition \ref{ex:36n+1}. Hence $Y$ can not be equivariantly embedded in $S^4$.
\end{example}

It is known that every $3$-manifold $Y$ embeds in the connected sum $\#^n (S^2 \times S^2)$ of $n$ copies of $S^2 \times S^2$ for some sufficiently large $n$ \cite[Theorem 2.1]{agl}. Aceto--Golla-Larson define the {\em embedding number} $\varepsilon(Y)$ of $Y$ to be the smallest $n$ for which $Y$ embeds in $\#^n( S^2 \times S^2)$. Here we consider an equivariant version of the embedding number. To obtain interesting results we need to make an assumption on the kinds of group actions allowed.

\begin{definition}
Let $G = \mathbb{Z}_p = \langle \tau \rangle$ where $p$ is a prime number. We say that a smooth, orientation preserving action of $G$ on $X = \#^n(S^2 \times S^2)$ is {\em admissible} if $H^2( X ; \mathbb{Z})^{\tau} = 0$, where $H^2(X ; \mathbb{Z})^{\tau} = \{ x \in H^2(X ; \mathbb{Z}) \; | \; \tau(x) = x \}$.
\end{definition}

One way of constructing admissible actions is as follows. Let $X$ be the $p$-fold cyclic cover of $S^4$, branched over an unknotted embedded surface $\Sigma \subset S^4$ of genus $g$. Then $X$ is diffeomorphic to $\#^{g(p-1)}(S^2 \times S^2)$ \cite[Corollary 4.3]{ak2} and the action of $\mathbb{Z}_p$ on $X$ as a cyclic branched cover is admissible (as can be seen from the proof of Theorem 9.3 in \cite{bar}).

Let $\mathbb{Z}_p = \langle \tau \rangle$ act on a rational homology $3$-sphere $Y$. We define the {\em equivariant embedding number $\varepsilon(Y,\tau)$} of $(Y,\tau)$ to be the smallest $n$ for which $Y$ embeds equivariantly in $\#^n (S^2 \times S^2)$ for some admissible $\mathbb{Z}_p$-action on $\#^n( S^2 \times S^2)$, if such an embedding exists. We set $\varepsilon(Y,\tau) = \infty$ if there is no such embedding.

Recall that the {\em double slice genus} \cite[\textsection 5]{lime} $g_{ds}(K)$ of a knot $K$ in $S^3$ is defined as the minimal genus of an unknotted compact oriented surface $S$ embedded in $S^4$ whose intersection with the equator $S^3$ is $K$. From the definition, it follows that $2g_4(K) \le g_{ds}(K) \le 2 g_3(K)$, where $g_3(K)$ is the $3$-genus of $K$.

\begin{proposition}\label{prop:emb1}
Let $Y = \Sigma_2(K)$ be the branched double cover of a knot $K$ and let $\tau$ be the covering involution on $Y$. Then $\varepsilon(Y , \tau) \le g_{ds}(K)$.
\end{proposition}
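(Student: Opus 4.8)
The plan is to construct an explicit equivariant embedding of $Y=\Sigma_2(K)$ into a connected sum of copies of $S^2\times S^2$ by taking branched double covers of a suitable decomposition of $S^4$. First I would realize the double slice genus: by definition there is an unknotted, compact, oriented surface $S\subset S^4$ of genus $g=g_{ds}(K)$ whose intersection with the equatorial $S^3$ is $K$. Cutting $S^4$ along the equator gives $S^4 = D^4_+\cup_{S^3} D^4_-$, and $S$ is cut into two surfaces $S_\pm\subset D^4_\pm$, each with boundary $K$. Let $X$ be the double cover of $S^4$ branched along $S$; since $S$ is unknotted of genus $g$, by \cite[Corollary 4.3]{ak2} (the same fact cited just before the statement, in the special case $p=2$) we have $X\cong \#^{g}(S^2\times S^2)$, and $X$ carries the branched covering involution $\sigma$, which is admissible (its fixed set is the lift of $S$, so $H^2(X;\mathbb{Z})^\sigma$ is pulled back from $H^2(S^4;\mathbb{Z})=0$; this is exactly the computation referenced from the proof of Theorem 9.3 in \cite{bar}). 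So the ambient manifold and its involution are already of the required admissible type.

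Next I would locate $Y$ equivariantly inside $X$. The decomposition $S^4=D^4_+\cup_{S^3}D^4_-$ lifts to a decomposition $X = W_+\cup_{\Sigma_2(K)} W_-$, where $W_\pm$ is the double cover of $D^4_\pm$ branched along $S_\pm$, and the separating hypersurface is precisely the double cover of $S^3$ branched along $K$, namely $Y=\Sigma_2(K)$. Because the whole picture is built by taking branched double covers, the involution $\sigma$ on $X$ restricts on the separating hypersurface $Y$ to the covering involution $\tau$. Thus $Y\hookrightarrow X$ is an embedding, it is $\mathbb{Z}_2$-equivariant with respect to $(\tau,\sigma)$, and $\sigma$ is an admissible action on $X\cong\#^{g}(S^2\times S^2)$. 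By definition of the equivariant embedding number this gives $\varepsilon(Y,\tau)\le g = g_{ds}(K)$.

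The main point to be careful about is the identification of the branched cover of the hypersurface: one must check that branched covering is compatible with the decomposition $S^4=D^4_+\cup D^4_-$, i.e.\ that the double cover of $S^3$ branched along $K=S\cap S^3$ is the separating submanifold in $X$, and that the deck transformation of $X$ restricts to the deck transformation of $Y$. This is routine once one notes that $S$ meets the equator transversally in $K$, so a tubular neighborhood of $S^3$ in $S^4$, together with $S$, has the product form $(-\epsilon,\epsilon)\times(S^3,K)$, and its preimage in $X$ is $(-\epsilon,\epsilon)\times(Y,\text{branch locus})$ with the obvious $\mathbb{Z}_2$-action; hence $Y\subset X$ is a (two-sided, $\sigma$-invariant) separating hypersurface with $\sigma|_Y=\tau$. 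I would also double-check the admissibility claim using the transfer/Smith-theory argument alluded to in \cite{bar}: $H^2(X;\mathbb{Z})\otimes\mathbb{Q}$ splits as the $(+1)$- and $(-1)$-eigenspaces of $\sigma^*$, the $(+1)$-eigenspace is the image of $H^2(S^4;\mathbb{Q})=0$ under the transfer-pullback, so $\sigma^*$ acts as $-1$ on all of $H^2(X;\mathbb{R})$ and $H^2(X;\mathbb{Z})^\tau=0$; this is what is needed. With these two points verified, the inequality $\varepsilon(Y,\tau)\le g_{ds}(K)$ follows immediately from the construction.
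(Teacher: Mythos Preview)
Your argument is correct and follows essentially the same route as the paper: take the branched double cover of $S^4$ over an unknotted genus-$g_{ds}(K)$ surface realizing the double slice genus, identify it with $\#^{g_{ds}(K)}(S^2\times S^2)$ via \cite{ak2}, observe that the covering involution is admissible, and note that the preimage of the equatorial $S^3$ is $\Sigma_2(K)$ with its covering involution. You have simply filled in more of the details (the explicit splitting $X=W_+\cup_Y W_-$ and the admissibility check) than the paper's terse version.
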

\begin{proof}
Let $S$ be an unknotted embedded surface in $S^4$ of genus $g_{ds}(K)$ intersecting the equator in $K$. Let $W$ be the double cover of $S^4$ branched along $S$. Then from \cite[Corollary 4.3]{ak2}, $W$ is diffeomorphic to $\#^{g_{ds}(K)} (S^2 \times S^2)$. The covering involution on $W$ is admissible since $W$ is a branched double cover of $S^4$. Clearly $Y = \Sigma_2(K) = \partial W$ embeds equivariantly in $W$ and so $\varepsilon(Y,\tau) \le g_{ds}(K)$.
\end{proof}

\begin{proposition}\label{prop:emb2}
Let $\mathbb{Z}_p = \langle \tau \rangle$ act orientation preservingly on an integral homology $3$-sphere $Y$. Let $j(Y,\tau)$ be the smallest positive integer such that $\delta_j(Y,\mathfrak{s},\tau,p) + \delta_j( \overline{Y},\mathfrak{s},\tau,p) = 0$, or $j(Y,\tau) = \infty$ if no such $j$ exists. Here $\mathfrak{s}$ is the unique spin$^c$-structure on $Y$. Then $\varepsilon(Y,\tau) \ge j(Y,\tau)$ if $p=2$ and $\varepsilon(Y,\tau) \ge 2j(Y,\tau)$ if $p$ is odd. 
\end{proposition}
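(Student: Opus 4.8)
The plan is to suppose $Y$ embeds equivariantly in $X = \#^n(S^2\times S^2)$ for some admissible $\mathbb{Z}_p$-action, and to derive the inequality $n \ge j(Y,\tau)$ (resp. $n \ge 2j(Y,\tau)$) by splitting $X$ along $Y$ and applying Theorem \ref{thm:don2} to the two pieces. First I would record that an equivariant embedding $Y \hookrightarrow X$ produces an equivariant decomposition $X = W_+ \cup_Y W_-$ where $\overline{W_+}$ has boundary $\overline{Y}$ and $W_-$ has boundary $Y$. Since $Y$ is an integral homology sphere, a Mayer--Vietoris argument together with Poincar\'e--Lefschetz duality shows $H_1(W_\pm;\mathbb{Z}) = 0$, that $H^2(W_+;\mathbb{Z})$ and $H^2(W_-;\mathbb{Z})$ are free, and that the intersection forms of $W_+$ and $W_-$ are direct summands whose orthogonal sum is the (even, indefinite) form of $\#^n(S^2\times S^2)$, so $\sigma(W_+) + \sigma(W_-) = 0$ and $b_+(W_+) + b_+(W_-) = b_-(W_+) + b_-(W_-) = n$. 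Moreover the intersection forms of $W_\pm$ are even, since $X$ is spin and $Y$ is a homology sphere; hence $W_\pm$ are spin and carry $\mathbb{Z}_p$-invariant spin structures $\mathfrak{t}_\pm$ restricting to $\mathfrak{s}$ on the boundary (uniqueness of $\mathfrak{t}_0$ need not be invoked, only that a $\mathbb{Z}_p$-invariant spin structure exists, which follows from the averaging/obstruction argument already used in the paper, e.g. as in Section \ref{sec:brie} via \cite{grs}).

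The key point is the admissibility hypothesis: $H^2(X;\mathbb{Z})^\tau = 0$. Since the intersection form of $W_+$ is a $\mathbb{Z}_p$-invariant direct summand of that of $X$, we get $H^2(W_+;\mathbb{R})^\tau = 0$ and likewise $H^2(W_-;\mathbb{R})^\tau = 0$; in particular the $\tau$-fixed subspace of $H^2(W_\pm;\mathbb{R})$ is zero, which is precisely the hypothesis of Corollary \ref{cor:deltasigma}. Applying Corollary \ref{cor:deltasigma}(1) to $(W_-,\mathfrak{t}_-)$ (with boundary $Y$) gives $\delta_j(Y,\mathfrak{s}) = -\sigma(W_-)/8$ for all $j \ge b'_-(W_-)$, and Corollary \ref{cor:deltasigma}(2) to the same manifold gives $\delta_j(\overline{Y},\mathfrak{s}) = \sigma(W_-)/8$ for all $j \ge b'_+(W_-)$. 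Hence for $j \ge \max\{b'_+(W_-), b'_-(W_-)\}$ we have $\delta_j(Y,\mathfrak{s}) + \delta_j(\overline{Y},\mathfrak{s}) = 0$, so by definition of $j(Y,\tau)$,
\[
j(Y,\tau) \le \max\{ b'_+(W_-), b'_-(W_-) \}.
\]
When $p = 2$ this reads $j(Y,\tau) \le \max\{b_+(W_-), b_-(W_-)\} \le b_+(W_-) + b_-(W_-) = n$ (using that both $b_\pm(W_-)$ are $\le n$, indeed their sum is $n$). When $p$ is odd, $b'_\pm(W_-) = b_\pm(W_-)/2$, so $j(Y,\tau) \le \max\{b_+(W_-),b_-(W_-)\}/2 \le n/2$, i.e. $n \ge 2j(Y,\tau)$. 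Taking the minimum over all admissible equivariant embeddings yields $\varepsilon(Y,\tau) \ge j(Y,\tau)$ for $p=2$ and $\varepsilon(Y,\tau) \ge 2j(Y,\tau)$ for $p$ odd.

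The step I expect to require the most care is the homological bookkeeping for the splitting $X = W_+\cup_Y W_-$: one must check that the two pieces really do have $b_1 = 0$, torsion-free $H^2$, even intersection forms realized as invariant orthogonal summands, and that the $\tau$-invariant part of each $H^2(W_\pm;\mathbb{R})$ vanishes given only $H^2(X;\mathbb{Z})^\tau = 0$. This last implication is clean once one knows the form of $W_+$ is an invariant summand of the form of $X$ (an invariant sublattice of a lattice with no invariant vectors has no invariant vectors), but making the summand statement precise — using that $Y$ is a $\mathbb{Z}$-homology sphere so that $H^2(W_+;\mathbb{Z})\oplus H^2(W_-;\mathbb{Z}) \cong H^2(X;\mathbb{Z})$ compatibly with the intersection pairing and the $\mathbb{Z}_p$-action — is the crux. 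Everything after that is a direct invocation of Corollary \ref{cor:deltasigma} and the numerology $b_+(W_-)+b_-(W_-) = n$.
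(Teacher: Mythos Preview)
Your argument is essentially identical to the paper's: split $X=\#^n(S^2\times S^2)$ equivariantly along $Y$, use admissibility to see that the $\tau$-fixed part of $H^2$ of each piece vanishes, and apply Corollary~\ref{cor:deltasigma} to one of the pieces to conclude $\delta_j(Y)+\delta_j(\overline{Y})=0$ once $j\ge \max\{b'_+,b'_-\}$ of that piece. The paper's proof is terser and simply uses $b'_\pm(X_+)\le b'_\pm(X)=n'$; your extra homological bookkeeping (checking $b_1(W_\pm)=0$, evenness, spin, and that the $\tau$-invariants vanish on each summand) is correct and fills in hypotheses the paper leaves implicit.

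One small slip: you write $b_+(W_-)+b_-(W_-)=n$, but what the Mayer--Vietoris splitting actually gives is $b_+(W_+)+b_+(W_-)=n$ and $b_-(W_+)+b_-(W_-)=n$; the quantity $b_+(W_-)+b_-(W_-)=b_2(W_-)$ can be anything between $0$ and $2n$. This does not damage the argument, since the bound you need, $\max\{b_+(W_-),b_-(W_-)\}\le n$, follows directly from $b_\pm(W_-)\le b_\pm(X)=n$.
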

\begin{proof}
To simplify notation we write $\delta_j(Y)$ for $\delta_j(Y,\mathfrak{s},\tau,p)$. Suppose that $Y$ embeds equivariantly in $X = \#^n( S^2 \times S^2)$, for an admissible action of $\tau$. Then we obtain an equivariant splitting $X = X_+ \cup_Y X_-$. Let $\mathfrak{t}$ be the unique spin structure on $X$. By uniqueness, $\mathfrak{t}$ is $\tau$-invariant and $\mathfrak{t}|_Y = \mathfrak{s}$. Corollary \ref{cor:deltasigma} applied to $X_{+}$ gives $\delta_j(Y) = -\sigma(X_+)/8$ for $j \ge b'_-(X_+)$ and $\delta_j(\overline{Y}) = \sigma(X_+)/8$ for $j \ge b'_+(X_+)$. Let $n' = n$ if $p=2$ or $n/2$ if $p$ is odd. Since $b'_{\pm}(X_+) \le b'_{\pm}(X) = n'$, we see that $\delta_j(Y) + \delta_j(\overline{Y}) = 0$ for $j \ge n'$. Hence $j(Y,\tau) \le n'$. Therefore $\varepsilon(Y,\tau) \ge j(Y,\tau)$ if $p=2$ and $\varepsilon(Y,\tau) \ge 2j(Y,\tau)$ if $p$ is odd. 
\end{proof}

\begin{example}
Let $Y = \Sigma(2,3,6n+1) = \Sigma_2(T_{3,6n+1})$ and equip $Y$ with the covering involution $\tau$. Then $g_3(T_{3,6n+1}) = 6n$, hence $\varepsilon(Y,\tau) \le 12n$, by Proposition \ref{prop:emb1}. From Proposition \ref{ex:36n+1}, we see that $j(Y,\tau) = 2n$ and so $\varepsilon(Y,\tau) \ge 2n$. So we have an estimate on the equivariant embedding number of the form
\[
2n \le \varepsilon( \Sigma(2,3,6n+1) , \tau ) \le 12n.
\]
Suppose that $n$ is odd. Then from \cite[Proposition 3.5]{agl}, the (non-equivariant) embedding number of $\Sigma(2,3,6n+1)$ is given by $\varepsilon( \Sigma(2,3,6n+1) )=10$. In particular, we see that $\varepsilon( \Sigma(2,3,6n+1) , \tau ) > \varepsilon( \Sigma(2,3,6n+1) )$ for all odd $n > 5$. Also, since we obviously have $\varepsilon( Y , \tau ) \ge \varepsilon(Y)$, we see that
\[
10 \le \varepsilon( \Sigma( 2 , 3 , 7) , \tau ) \le 12.
\]
In fact, we will now prove that $\varepsilon( \Sigma( 2 , 3 , 7) , \tau ) = 12$. Suppose that $Y = \Sigma(2,3,7)$ embeds equivariantly in $X = \#^n (S^2 \times S^2)$ for some admissible involution, where $n \le 12$. Then we obtain an equivariant splitting $X = X_+ \cup_{Y} X_-$. Since $Y$ is an integral homology sphere, the intersection forms on $X_{\pm}$ are unimodular. They are also even, since $X$ is spin. Moreover the Rochlin invariant of $Y$ is $1$. So the intersection forms of $X_\pm$ must contain at least one $E_8$ or $-E_8$ summand. Proposition \ref{prop:casson} implies that $\delta_j(Y) = 1$ for all $j \ge 0$ and Proposition \ref{ex:36n+1} implies that $\delta_j(\overline{Y})=0$ for $j=0,1$ and $\delta_j(\overline{Y})=-1$ for $j \ge 2$. Since $n \le 12$, Corollary \ref{cor:deltasigma} applied to $X_{\pm}$ then implies that the intersection form of $X_+$ must be of the form $\alpha H \oplus (-E_8)$ for some $\alpha \ge 2$ (where $H$ is the hyperbolic lattice) and similarly the intersection form of $X_-$ must be of the form $\alpha' H \oplus (E_8)$ for some $\alpha' \ge 2$. The intersection form of $X$ is then $(\alpha + \alpha' + 8)H$ and so $n = \alpha + \alpha' + 8 \ge 2+2+8 = 12$. This proves that
\[
\varepsilon( \Sigma( 2 , 3 , 7) , \tau ) = 12.
\]
\end{example}

\begin{example}
Let $Y = \Sigma(2,3,5) = \Sigma_2(T_{3,5})$ and equip $Y$ with the covering involution $\tau$. Then $g_3(T_{3,5}) = 4$, hence $\varepsilon(Y,\tau) \le 8$, by Proposition \ref{prop:emb1}. On the other hand, $\varepsilon(Y,\tau) \ge \varepsilon(Y)$ and $\varepsilon(Y) = 8$ by \cite[Proposition 3.4]{agl}, so $\varepsilon( \Sigma(2,3,5),\tau ) = 8$.
\end{example}

\bibliographystyle{amsplain}

\end{document}